\newtheorem{thm}{Theorem}[section]
\newtheorem*{thm*}{Theorem}
\newtheorem{lem}[thm]{Lemma}
\newtheorem*{lem*}{Lemma}
\newtheorem{cor}[thm]{Corollary}
\newtheorem{prop}[thm]{Proposition}
\theoremstyle{definition}
\newtheorem*{case*}{Case}
\newtheorem{defn}[thm]{Definition}
\newtheorem*{defn*}{Definition}
\newtheorem{exmp}[thm]{Example}
\newtheorem*{exmp*}{Example}
\newtheorem{hyp}[thm]{Hypothesis}
\newtheorem{step}{Step}\renewcommand{\thestep}{}
\theoremstyle{remark}
\newtheorem{case}{Case}\renewcommand{\thecase}{}
\newtheorem{rmk}[thm]{Remark}
\newtheorem*{rmk*}{Remark}
\def\alphenumi{
  \def\theenumi{\alph{enumi}}
  \def\p@enumi{\theenumi}
  \def\labelenumi{(\@alph\c@enumi)}}
\def\thecase{\@arabic\c@case}
\def\thestep{\@arabic\c@step}
\def\hhmm{\number\hh:\ifnum\mm<10{}0\fi\number\mm}
\let\oldmarginpar\marginpar
\renewcommand\marginpar[1]{\-\oldmarginpar[\raggedleft\footnotesize #1]%
{\raggedright\footnotesize #1}}
\renewcommand\emptyset{\varnothing}
\newcommand\HH{\mathbb{H}}
\newcommand\RR{\mathbb{R}}
\newcommand\fa{{\mathfrak{a}}}
\newcommand\fb{{\mathfrak{b}}}
\newcommand\fK{{\mathfrak{K}}}
\newcommand\fw{{\mathfrak{w}}}
\newcommand\sD{{\mathscr{D}}}
\newcommand\sO{{\mathscr{O}}}
\newcommand\sS{{\mathscr{S}}}
\newcommand\sU{{\mathscr{U}}}
\newcommand\eps{\varepsilon}
\newcommand\less{\setminus}
\newcommand\dist{\operatorname{dist}}
\newcommand{\essinf}{\operatornamewithlimits{ess\ inf}}
\newcommand{\esssup}{\operatornamewithlimits{ess\ sup}}
\DeclareMathOperator{\Int}{int}
\newcommand\loc{\operatorname{loc}}
\newcommand\sign{\operatorname{sign}}
\newcommand\supp{\operatorname{supp}}
\newcommand\tr{\operatorname{tr}}
\newcommand\apriori{{\emph{a priori }}}
\numberwithin{equation}{section}
\begin{document}

\title[Maximum principles for elliptic operators]{Maximum principles for boundary-degenerate second-order linear elliptic differential operators}

\author[Paul M. N. Feehan]{Paul M. N. Feehan}
\address{Department of Mathematics, Rutgers, The State University of New Jersey, 110 Frelinghuysen Road, Piscataway, NJ 08854-8019, United States}
\email{feehan@math.rutgers.edu}

\date{September 9, 2013. Incorporates final galley proof corrections corresponding to published version. To appear in Communications in Partial Differential Equations, doi:10.1080/03605302.2013.831446.}

\begin{abstract}
We prove weak and strong maximum principles, including a Hopf lemma, for $C^2$ subsolutions to equations defined by second-order, linear elliptic partial differential operators whose principal symbols vanish along a portion of the domain boundary. The boundary regularity property of the $C^2$ subsolutions along this boundary vanishing locus ensures that these maximum principles hold irrespective of the sign of the Fichera function. Boundary conditions need only be prescribed on the complement in the domain boundary of the principal symbol's vanishing locus. We obtain uniqueness and \apriori maximum principle estimates for $C^2$ solutions to boundary value and obstacle problems defined by these boundary-degenerate elliptic operators with partial Dirichlet or Neumann boundary conditions. We also prove weak maximum principles and uniqueness for $W^{1,2}$ solutions to the corresponding variational equations and inequalities defined with the aide of weighted Sobolev spaces. The domain is allowed to be unbounded when the operator coefficients and solutions obey certain growth conditions.
\end{abstract}

%
%
%
%

\subjclass[2010]{Primary 35B50, 35B51, 35J70, 35J86, 35R45; secondary 49J20, 49J40, 60J60}

\keywords{Degenerate elliptic differential operator; Degenerate diffusion process; Non-negative definite characteristic form; Stochastic volatility process; Mathematical finance; Obstacle problem; Variational inequality; Weighted H\"older space; Weighted Sobolev space}

\thanks{The author was partially supported by NSF grant DMS-1059206.}

\maketitle

\tableofcontents
\listoffigures

\section{Introduction}
The classical maximum principles of Fichera \cite{Fichera_1956, Fichera_1960} and Ole{\u\i}nik and Radkevi{\v{c}} \cite{Oleinik_Radkevic, Radkevich_2009a, Radkevich_2009b} provide uniqueness theorems for degenerate elliptic and parabolic boundary value problems which do not take into account a more modern view of the appropriate function spaces in which uniqueness is sought, such as \cite{Daskalopoulos_Feehan_statvarineqheston, DaskalHamilton1998, Daskalopoulos_Rhee_2003, Ekstrom_Tysk_bcsftse, Epstein_Mazzeo_annmathstudies, Feehan_Pop_regularityweaksoln, Feehan_Pop_mimickingdegen_pde, Koch}. Indeed, their maximum principles lead to the imposition of additional Dirichlet boundary conditions which are not necessarily motivated by the underlying application, whether in biology, finance, or physics. These additional Dirichlet boundary conditions, usually for certain ranges of parameters defining the partial differential equation, are often less natural than the physically-motivated regularity properties suggested by choices of appropriate weighted H\"older spaces \cite{DaskalHamilton1998, Daskalopoulos_Rhee_2003, Epstein_Mazzeo_annmathstudies, Feehan_Pop_mimickingdegen_pde} or Sobolev spaces \cite{Daskalopoulos_Feehan_statvarineqheston, Feehan_Pop_regularityweaksoln, Koch}, which automatically encode special regularity or integrability up to portions of the domain boundary where the operator becomes degenerate. In the case of weighted H\"older spaces, these boundary regularity properties may be viewed as a type of `second-order' or Ventcel boundary condition \cite{Anderson_1976a, Anderson_1976b, Taira_2004} (see Section  \ref{subsec:Boundary_degenerate_elliptic_operators_and_second_order_boundary_regularity} for further discussion).

However, the question of exactly how regular the solution should be near these boundary portions is delicate. If we ask for too much regularity, such as $C^2$ up to the boundary, the boundary value problem may have no solution or be supported by any existence theory, such as the $C^{2+\alpha}_s$ Schauder theory of Daskalopoulos and Hamilton \cite{DaskalHamilton1998}, which was further developed by the author and Pop \cite{Feehan_classical_perron_elliptic, Feehan_Pop_elliptichestonschauder}. If we ask for too little regularity, such as $C^0$ up to the boundary, we may need to require an unphysical Dirichlet boundary condition to ensure the problem is well-posed, with the unintended consequence that the solutions thus selected can be no more than continuous up to the boundary. An illustration of this point for an ordinary differential equation is provided by Example \ref{exmp:CIR} and a more extended discussion for partial differential equation on open subset of the half-plane is provided in Appendix \ref{sec:FicheraAndHeston}; see also Section  \ref{subsec:Boundary_degenerate_elliptic_operators_and_second_order_boundary_regularity}. Our Theorems \ref{thm:Weak_maximum_principle_C2_connected_domain}, \ref{thm:Weak_maximum_principle_C2} show that a useful intermediate concept of regularity up to the portion of the domain boundary where the principal symbol vanishes, namely $C^2_s$, is given by our Definition \ref{defn:Second_order_boundary_regularity}.

In Part \ref{part:BoundaryValueObstacleProblems} of our article, we prove weak and strong maximum principles for $C^2_s$ solutions to equations defined by linear, second-order, elliptic partial differential operators which are \emph{boundary-degenerate} in the sense that their principal symbols vanish along a portion, $\partial_0\sO$, of the topological boundary, $\partial\sO$, of an open subset $\sO\subset\RR^d$. (We use the term `boundary-degenerate' in this article to help clarify the distinction with the term `degenerate elliptic' as used by Crandall et al. in \cite{Crandall_Ishii_Lions_1992}; see Section  \ref{subsec:Boundary_degenerate_elliptic_operators_and_second_order_boundary_regularity} for further discussion.)
The $C^2_s$ boundary regularity property of the solutions along $\partial_0\sO$ ensures that our maximum principles hold irrespective of the sign of the Fichera function \cite{Fichera_1956, Fichera_1960}. In particular, we only require boundary comparisons for subsolutions and supersolutions along $\partial_1\sO=\partial\sO\less\overline{\partial_0\sO}$ and not $\partial\sO$. In Part \ref{part:VariationalEquationInequality} of our article, we prove weak maximum principles and \apriori maximum principle estimates for $W^{1,2}$ solutions to the corresponding variational equations and inequalities defined using weighted Sobolev spaces.

Although the boundary-degenerate elliptic operators discussed in this article are degenerate along a portion $\partial_0\sO\subset\partial\sO$ of the boundary of the open subset $\sO\subset\RR^d$, it is possible to prove \emph{existence} of solutions which are $C^{2+\alpha}_s$ up to $\partial_0\sO$ but possibly only $C^0$ up to $\partial_1\sO=\partial\sO\less\overline{\partial_0\sO}$. Indeed, a Schauder approach employing Daskalopoulos-Hamilton weighted H\"older spaces to such an existence result is described by the author in \cite{Feehan_classical_perron_elliptic}, building on earlier results of the author and C. Pop \cite{Feehan_Pop_elliptichestonschauder}, while a variational approach employing weighted Sobolev spaces, due to P. Daskalopoulos, the author, and C. Pop can be found in our articles \cite{Daskalopoulos_Feehan_statvarineqheston, Feehan_Pop_regularityweaksoln, Feehan_Pop_higherregularityweaksoln}. Under suitable hypotheses on the coefficients of $A$, the regularity of the boundary $\partial\sO$ of the open subset $\sO$, and the geometry of the intersection involving the boundary portions $\partial_0\sO$ and $\partial_1\sO$, which often meet at a domain corner as illustrated in Figure \ref{fig:domain}, it should be possible to prove that solutions are actually smooth up to the whole boundary, $\partial\sO$. However, this appears to be a difficult problem (see \cite{Feehan_classical_perron_elliptic, Feehan_Pop_higherregularityweaksoln} for a discussion of the issues) which has not been addressed in the literature thus far (to the best of our knowledge) and remains one we plan to address in future articles. As far as \emph{uniqueness} of solutions is concerned, however, that is the topic addressed by our present article.

\subsection{Boundary value and obstacle problems for boundary-degenerate linear, second-order partial differential operators}
\label{subsec:Boundary_value_problems}
We now describe a class of boundary-degenerate operators which we shall consider in this article, along with degenerate and non-degenerate boundary portions and boundary value and obstacle problems with partial boundary data.

Let $\sO\subset\RR^d$, where $d\geq 1$, be an open, possibly unbounded, subset with boundary $\partial\sO$ and suppose that $\Sigma\subseteqq\partial\sO$ is a relatively open subset. Given $1\leq p\leq\infty$, we let $W^{2,p}(\sO)$ (respectively, $W^{2,p}_{\loc}(\sO)$) denote the Sobolev space of measurable functions, $u$ on $\sO$, such that $u$ and its weak first and second derivatives, $u_{x_i}$ and $u_{x_ix_j}$ for $1\leq i,j\leq d$, belong to $L^p(\sO)$  (respectively, $L^p_{\loc}(\sO)$) \cite[Section 3.1]{Adams_1975}. (We summarize frequently-used notation in Section \ref{subsec:Notation}.) In Part \ref{part:BoundaryValueObstacleProblems} of this article, we consider the question of uniqueness and \apriori maximum principle estimates for solutions in $C^2(\sO)$ or $W^{2,p}_{\loc}(\sO)$ to the elliptic equation,
\begin{equation}
\label{eq:Elliptic_equation}
Au = f \quad \hbox{(a.e.) on  } \sO,
\end{equation}
or solutions in $W^{2,p}_{\loc}(\sO)$ to the elliptic obstacle problem,
\begin{equation}
\label{eq:Elliptic_obstacle_problem}
\min\{Au-f,u-\psi\} = 0 \quad\hbox{a.e. on }\sO,
\end{equation}
both with \emph{partial} Dirichlet boundary condition,
\begin{equation}
\label{eq:Elliptic_boundary_condition}
u =g \quad\hbox{on } \partial\sO\less\bar\Sigma.
\end{equation}
Here, $\psi$ is compatible with $g$ in the sense that
\begin{equation}
\label{eq:Elliptic_obstacle_boundarydata_compatibility}
\psi\leq g \quad\hbox{on } \partial\sO\less\bar\Sigma.
\end{equation}
In typical applications to mathematical finance \cite{Bensoussan_Lions, Shreve2}, where one can have a non-empty boundary portion $\Sigma\subsetneqq\partial\sO$, the partial boundary condition \eqref{eq:Elliptic_boundary_condition} represents a barrier condition in option valuation problems; the obstacle condition, $u\geq\psi$ on $\sO$, in \eqref{eq:Elliptic_obstacle_problem} arises in all American-style option valuation problems; the absence of a boundary condition along $\Sigma$ is natural in valuation problems for options contingent on an asset modeled by stochastic volatility processes such as the Heston process \cite{Heston1993}. See Example \ref{exmp:HestonPDE} and Appendix \ref{sec:FicheraAndHeston} for further discussion involving the generator of the Heston process.

Part \ref{part:VariationalEquationInequality} of our article is concerned with the question of uniqueness and \apriori maximum principle estimates for solutions in $W^{1,2}_{\loc}(\sO)$ to variational equations or inequalities corresponding to \eqref{eq:Elliptic_equation} or \eqref{eq:Elliptic_obstacle_problem}.

Let $\sS(d)\subset\RR^{d\times d}$ denote the subset of symmetric matrices and $\sS^+(d)\subset\sS(d)$ denote the subset of non-negative definite, symmetric matrices. The operator,
\begin{equation}
\label{eq:Generator}
Au := -\tr(aD^2u) - \langle b,Du\rangle + cu, \quad u \in C^\infty(\sO),
\end{equation}
is defined by coefficients,
\begin{subequations}
\label{eq:A_coefficients}
\begin{align}
\label{eq:a_nonnegative_symmetric_matrix_valued}
  {}& a:\sO \to \sS^+(d),
  \\
\label{eq:b_coefficient}
  {}& b:\sO \to \RR^d,
  \\
\label{eq:c_coefficient}
  {}& c:\sO \to \RR,
\end{align}
\end{subequations}
which may be everywhere-defined or measurable on $\sO$, depending on whether we consider solutions in $C^2(\sO)$ or $W^{2,p}_{\loc}(\sO)$, respectively. We use $D^2u$ and $Du$ to denote the Hessian matrix and gradient of $u$, respectively, so $\tr(aD^2u) = a^{ij}u_{x_ix_j}$ and $\langle b,Du\rangle = b^iu_{x_i}$, where Einstein's summation convention is used throughout this article.

For the sake of clarity, we shall confine our attention to linear operators in this article, although many of the results can be seen to extend to \emph{semilinear} operators,
$$
S(u) = -\tr(aD^2u) - \langle b,Du\rangle + c(\cdot,u), \quad u \in C^2(\sO),
$$
or \emph{quasilinear} operators \cite[Section 10]{GilbargTrudinger},
$$
Q(u) = -\tr(aD^2u) - b(\cdot,u,Du), \quad u \in C^2(\sO),
$$
as well as linear and quasilinear \emph{parabolic} operators with non-negative definite characteristic form. We shall discuss these applications in future articles.

We call
\begin{equation}
\label{eq:Degeneracy_locus_elliptic}
\partial_0\sO := \Int\left\{x\in\partial \sO: \lim_{\sO\ni x'\to x}a(x') = 0\right\},
\end{equation}
the \emph{degenerate boundary} defined by \eqref{eq:a_nonnegative_symmetric_matrix_valued}, where $\Int S$ denotes the interior of a subset $S$ of a topological space. (Our definition is a variant of that used by Fichera \cite{Fichera_1960} and Ole{\u\i}nik and Radkevi{\v{c}} \cite{Oleinik_Radkevic}, \cite[p. 308]{Radkevich_2009a}. We follow the definition of Ole{\u\i}nik, and Radkevi{\v{c}} \cite[p. 308]{Radkevich_2009a} rather than Tricomi \cite[p. 298]{Radkevich_2009a}, which requires in addition that $\langle a\eta, \eta\rangle>0$ on $\sO$ for all $\eta\in\RR^d\less\{0\}$.)

When the matrix $a$ in \eqref{eq:a_nonnegative_symmetric_matrix_valued} is continuous on $\partial\sO$, then the definition of $\partial_0\sO$ in \eqref{eq:Degeneracy_locus_elliptic} is equivalent to the definition
$$
\partial_0\sO = \Int\{x\in\partial\sO: \langle a(x)\eta,\eta\rangle = 0, \forall\, \eta\in\RR^d\},
$$
used by Fichera \cite{Fichera_1960} and Ole{\u\i}nik and Radkevi{\v{c}} \cite{Oleinik_Radkevic}, \cite[p. 308]{Radkevich_2009a} (where $\partial_0\sO$ is denoted by $\Sigma^0$ and the topological boundary of $\sO$ is denoted by $\Sigma$). In \cite[Section 1.1]{Radkevich_2009a}, an operator, $A$, as in \eqref{eq:Generator} is said to have a \emph{non-negative definite characteristic form} when the coefficient matrix, $a$, is non-negative definite on $\sO$, as assumed in \eqref{eq:a_nonnegative_symmetric_matrix_valued}.

In this article, we allow $\partial_0\sO$ to be non-empty and denote
\begin{equation}
\label{eq:Domain_plus_degenerate_boundary_elliptic}
\underline \sO := \sO\cup\partial_0\sO.
\end{equation}
We also call
\begin{equation}
\label{eq:Elliptic_nondegeneracy_locus}
\partial_1\sO := \partial \sO\less\overline{\partial_0\sO},
\end{equation}
the \emph{non-degenerate boundary} defined by \eqref{eq:a_nonnegative_symmetric_matrix_valued} and observe that
\begin{equation}
\label{eq:Degenerate_boundary_decomposition}
\partial\sO = \overline{\partial_0\sO}\cup\partial_1\sO = \partial_0\sO\cup\overline{\partial_1\sO},
\end{equation}
where $\overline{T}$ indicates closure of a subset $T\subset\partial\sO$ with respect to the topological boundary, $\partial\sO$. For the Kummer operator in Example \ref{exmp:CIR}, if $\sO=(0,\ell)$ and $0<\ell<\infty$, then
$$
\partial\sO = \{0, \ell\}, \quad \partial_0\sO = \{0\}, \quad\hbox{and}\quad  \partial_1\sO = \{\ell\}.
$$
In certain contexts, we may require that\footnote{It is likely that $C^1$ would suffice, but an assumption that $\partial_0\sO$ is a boundary portion of class $C^{1,\alpha}$ simplifies certain proofs --- see \cite[Lemma B.1]{Feehan_perturbationlocalmaxima}.}
\begin{equation}
\label{eq:C1alpha_degenerate_boundary}
\partial_0\sO \quad\hbox{is } C^{1,\alpha},
\end{equation}
and let $\vec n$ denote the \emph{inward}-pointing unit normal vector field along $\partial_0\sO$. The partial boundary condition \eqref{eq:Elliptic_boundary_condition} may arise, for example, by choosing
$$
\Sigma = \partial_0\sO.
$$
See Figure \ref{fig:domain}.

\begin{figure}
 \centering
 \begin{picture}(210,210)(0,0)
 \put(0,0){\includegraphics[scale=0.6]{./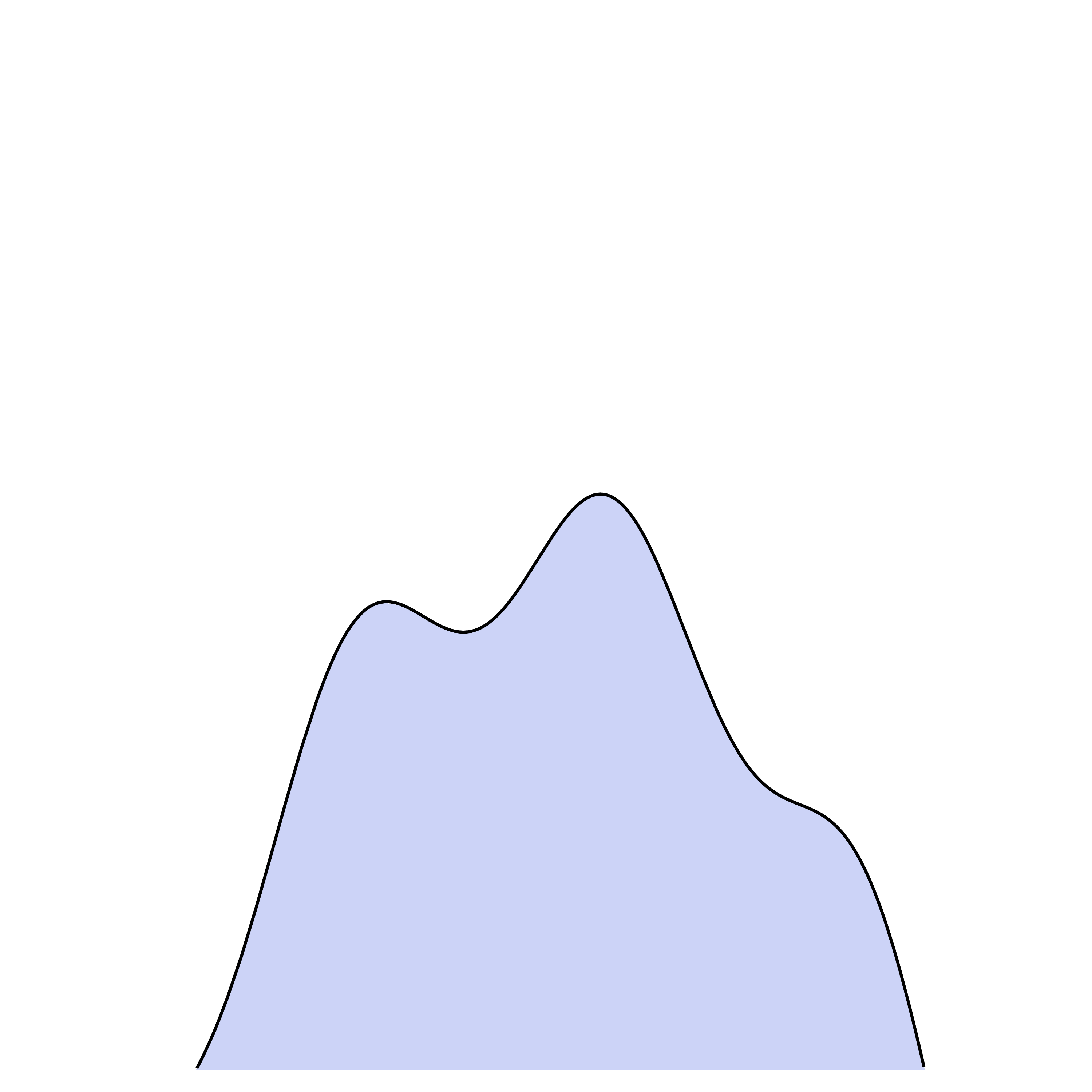}}
 \put(90,10){$\partial_0\sO$}
 \put(105,50){$\sO$}
 \put(110,95){$\partial_1\sO$}
 \put(150,80){$\RR^d$}
 \end{picture}
 \caption[A domain and its `degenerate' and `non-degenerate' boundaries.]{A subdomain, $\sO\subset\RR^d$, and its `degenerate' and `non-degenerate' boundaries, $\partial_0\sO$ and $\partial_1\sO$. In maximum principles, the degenerate boundary portion, $\partial_0\sO$, plays the same role as the interior of the domain, $\sO$.}
 \label{fig:domain}
\end{figure}

However, when deriving \apriori maximum principle estimates (as in Section \ref{sec:Applications_weak_maximum_principle_property_boundary_value_problems} or  \ref{sec:Applications_weak_maximum_principle_property_obstacle_problems}) it is convenient to allow for possibly greater generality. For example, if the condition $\lim_{\sO\ni x'\to x}a(x')=0$ in the definition \eqref{eq:Degeneracy_locus_elliptic} of $\partial_0\sO$ is replaced by $\lim_{\sO\ni x'\to x}\det a(x')=0$, selected results from this article might still be expected to hold, as suggested by Example \ref{exmp:Keldys} due to M. V. Keld\v{y}s.

\subsection{Examples}
\label{subsec:Examples}
The conditions on the coefficients of $A$ in \eqref{eq:Generator} assumed in this article are mild enough that they allow for many examples of partial differential operators, $A$, with non-negative definite characteristic form and which are of interest in mathematical biology, finance, and physics. Before proceeding to a discussion of our main results, we shall first provide some specific examples of operators to which our results apply. While the examples in this section often discuss operators, $A$, defined on a half-space, $\sO=\HH$, so $\partial_0\sO=\partial\sO=\partial\HH$, applications may require us to consider subdomains $\sO\subsetneqq\HH$ where $\partial_0\subsetneqq\partial\sO$. Such situations arise frequently in mathematical finance due to barrier conditions in option contracts \cite{Shreve2}.

We begin by describing a family of examples which includes certain stochastic volatility models occurring in mathematical finance \cite{Heston1993} and the linearization of the porous medium equation \cite{DaskalHamilton1998}.

\begin{exmp}[Affine coefficients and degeneracy on the boundary of a half-space]
\label{exmp:ScalarAffine}
Suppose the coefficients of $A$ in \eqref{eq:Generator} are affine functions of $x\in\RR^d$, with $a(x)$ positive definite for all $x\in\HH$, where $\HH=\RR^{d-1}\times\RR_+$ is a half-space and $\RR_+=(0,\infty)$, while $a(x)=0$ if $x\in\partial\HH$. Then
\begin{equation}
\label{eq:ScalarAffinePDE}
Au = -\tr(x_da_1D^2u) - \langle b_0+b_1x,Du\rangle + (c_0+\langle c_1,x\rangle)u, \quad u\in C^\infty(\HH),
\end{equation}
where $a_1,b_1\in\RR^{d\times d}$ and $b_0, c_1\in\RR^d$ and $c_0\in\RR$. Thus, $A$ is an elliptic partial differential operator on $C^\infty(\HH)$ which becomes degenerate along the boundary $\partial_0\sO=\partial\HH=\{x_d=0\}$ of the half-space $\HH=\{x_d>0\}$.

When $a_1$ is symmetric, the operator $-A$ is the generator of a degenerate diffusion process. Imposing the condition $\langle b,\vec n\rangle \geq 0$ along $\partial\HH$, where $\vec n$ is the \emph{inward}-pointing unit normal vector field, ensures that the diffusion process remains in the half-space $\bar\HH=\{x_d\geq 0\}$ if started in $\bar\HH$. Since $\vec n = e_d$, this translates to the requirement that $b^d(x)\geq 0$ for all $x\in\partial\HH$, and thus $b_0^d\geq 0$ and $b_1^{dj}=0$ for $1\leq j\leq d-1$ and $b_1^{dd}\geq 0$.

To ensure uniqueness of solutions to \eqref{eq:Elliptic_equation}, \eqref{eq:Elliptic_boundary_condition} on $\HH$ via our maximum principle, it is necessary (though not sufficient) to impose the condition $c\geq 0$ on $\HH$, and hence $c_0\geq 0$ and $c_1^i=0$ for $1\leq i\leq d-1$ and $c_1^d\geq 0$.

Examples of this kind occur frequently in mathematical finance, such as the stochastic volatility process defined by Heston \cite{Heston1993}, where $a(x)=x_da_1$ for $x\in\HH$, and $a_1\in\RR^{d\times d}$ is positive definite, and $d=2$. See Example \ref{exmp:HestonPDE} for a description of the Heston process generator and an important example of a degenerate affine process. The linearization of the porous medium operator is another important example of this type; see Example \ref{exmp:GeneralizedPorousMediumEquation} for details.
\end{exmp}

\begin{exmp}[Elliptic Heston operator]
\label{exmp:HestonPDE}
The generator of the \emph{Heston process} \cite{Heston1993} provides a well-known example in mathematical finance of the operator \eqref{eq:ScalarAffinePDE} when $d=2$:
\begin{equation}
\label{eq:HestonPDE}
Au := -\frac{x_2}{2}\left(u_{x_1x_1} + 2\varrho\sigma u_{x_1x_2} + \sigma^2 u_{x_2x_2}\right) - \left(r-q-\frac{x_2}{2}\right)u_{x_1} - \kappa(\theta-x_2)u_{x_2} + ru,
\end{equation}
where $q\in\RR, r\geq 0, \kappa>0, \theta>0, \sigma\neq 0$, and $\varrho\in (-1,1)$ are constants (their financial interpretation is provided in \cite{Heston1993}), and $u\in C^\infty(\HH)$, with $\HH=\RR\times\RR_+$. The variables $x_1$ and $x_2$ represent the log price and stochastic variance, respectively, of a financial asset while $u$ represents the option value. Finite maturity options will also involve the time variable, $t$, and yield parabolic boundary value or obstacle problems, though perpetual, American-style option pricing problems are stationary and yield elliptic obstacle problems such as \eqref{eq:Elliptic_obstacle_problem}.

When pricing an American or European-style option contingent on an asset modeled by the Heston process, without a barrier condition, one would have $\sO=\RR\times\RR_+$, so $\Sigma=\partial_0\sO=\partial\sO=\RR\times\{0\}$. For a single (upper) barrier option with barrier at $x_1$, one would take $\sO = (-\infty,x_1)\times\RR_+$, so $\partial_0\sO=(-\infty,x_1)\times\{0\}$ and $\partial_1\sO=\{x_1\}\times\RR_+$. For a double barrier option with barriers at $x_1<x_2$, one would take $\sO = (x_1,x_2)\times\RR_+$, so $\partial_0\sO=(x_1,x_2)\times\{0\}$ and $\partial_1\sO=\{x_1,x_2\}\times\RR_+$. The text \cite{Shreve2} by Shreve provides an introduction to the concepts of mathematical finance mentioned in this example.
\end{exmp}

\begin{exmp}[Generator of the Feller square-root or Cox-Ingersoll-Ross process]
\label{exmp:CIR}
The (negative of the) generator of the \emph{Feller square-root} process \cite{Feller1951}, known as the \emph{Cox-Ingersoll-Ross} process in mathematical finance \cite{CoxIngersollRoss1985}, \cite[Example 6.5.2]{Shreve2}, provides a simple example of the operator \eqref{eq:ScalarAffinePDE} when $d=1$,
\begin{equation}
\label{eq:CIRODE}
Au := -\frac{\sigma^2}{2}xu_{xx} - \kappa(\theta-x)u_x + ru,
\end{equation}
where $u\in C^\infty(0,\infty)$, and which takes the form, after a change of variables, of the \emph{Kummer} operator,
\begin{equation}
\label{eq:KummerODE}
\tilde Av := -xv_{xx} - (\beta-x)v_x + \alpha v,
\end{equation}
where $v\in C^\infty(0,\infty)$ and $\beta := 2\kappa\theta/\sigma^2 > 0$ and $\alpha := r/\kappa \geq 0$. The homogeneous Kummer equation, $\tilde Av=0$ on $(0,\infty)$, has two independent
solutions, the \emph{confluent hypergeometric function of the first kind\/} $M(\alpha,\beta;x)$ (or Kummer function) and the  \emph{confluent hypergeometric function of the second kind} $U(\alpha,\beta;x)$ (or Tricomi function) \cite[Sections 13.1.2 and 13.1.3]{AbramStegun}. The solution $M(\alpha,\beta;x)$ is in $C^\infty[0,\infty)$, with $M(\alpha,\beta;0)=1$, $M_x(\alpha,\beta;0)=\alpha/\beta$, and $M_{xx}(\alpha,\beta;0)=\alpha(\alpha+1)/(\beta(\beta+1))$ \cite[Section 13.4.9]{AbramStegun}. Near $x=0$, the solution $U(\alpha,\beta;x)$ is comparable to $x^{1-\beta}$ when $\beta\neq 1$ and $\ln x$ when $\beta=1$  \cite[Sections 13.5.6--12]{AbramStegun}, and thus is in $C[0,\infty)$ for $0<\beta<1$, with $U_x(\alpha,\beta;x)$ comparable to $x^{-\beta}$ and $U_{xx}(\alpha,\beta;x)$ comparable to $x^{-\beta-1}$ near $x=0$, for any $\beta>0$ \cite[Section 13.4.22]{AbramStegun}, and thus not even in $C^1[0,\infty)$ when $\beta>0$.

In the context of Definition \ref{defn:Second_order_boundary_regularity}, we see that $\sO=(0,\infty)$, $\partial_0\sO=\{0\}$, and $M\in C^2_s[0,\infty)$ while $U\notin C^2_s[0,\infty)$.
\end{exmp}

\begin{exmp}[Linearization of the porous medium operator]
\label{exmp:GeneralizedPorousMediumEquation}
In a landmark article, Daskalopoulos and Hamilton \cite{DaskalHamilton1998} proved existence and uniqueness of $C^\infty$ solutions, $u$, to the Cauchy problem for the porous medium equation \cite[p. 899]{DaskalHamilton1998} (when $d=2$),
\begin{equation}
\label{eq:PorousMediumEquation}
-u_t + \sum_{i=1}^d (u^m)_{x_ix_i} = 0 \quad\hbox{on }(0,T)\times\RR^d, \quad u(\cdot, 0) = g \quad\hbox{on }\RR^d,
\end{equation}
with constant $m>1$ and initial data, $g\geq 0$, compactly supported on $\RR^d$, together with $C^\infty$-regularity of its free boundary, $\partial\{u>0\}$, provided the initial pressure function is non-degenerate (that is, $D u^{m-1}  \geq a >0$) on the boundary of its support at $t=0$. Their analysis is based on their development of existence, uniqueness, and regularity results for the linearization of the porous medium equation near the free boundary and, in particular, their \emph{model linear degenerate operator} \cite[p. 901]{DaskalHamilton1998} (generalized from $d=2$ in their article),
\begin{equation}
\label{eq:DHModel}
Au := -x_d\sum_{i=1}^d u_{x_ix_i} - \beta u_{x_d}, \quad u\in C^\infty(\HH),
\end{equation}
where $\beta$ is a positive constant, analogous to the combination of parameters $2\kappa\theta/\sigma^2$ in \eqref{eq:HestonPDE}, and $\HH=\RR^{d-1}\times\RR_+$, following a suitable change of coordinates \cite[p. 941]{DaskalHamilton1998}. The porous medium equation and the same model linear degenerate operator (for $d\geq 2$) were studied independently by Koch \cite[Equation (4.43)]{Koch} and, in a remarkable Habilitation thesis, he obtained existence, uniqueness, and regularity results for solutions to \eqref{eq:PorousMediumEquation} which complement those of Daskalopoulos and Hamilton \cite{DaskalHamilton1998}.
\end{exmp}

Example \ref{exmp:ScalarAffine} describes a class of elliptic differential operators which become degenerate along the boundary of a half-space, $\RR^{d-1}\times\RR_+$; their coefficients are affine functions of $x\in\RR^d$ and $\RR^{d-1}\times\bar\RR_+$ is a state space for the corresponding Markov process when the coefficient $a(x)$ is symmetric. More generally, mathematical finance and biology provide examples of elliptic differential operators which become degenerate along the boundary of a `quadrant', $\RR^{d-m}\times\RR_+^m$, and $\RR^{d-m}\times\bar\RR_+^m$ is a state space for the corresponding Markov process. Examples primarily motivated by mathematical finance include affine processes \cite{Abdelkoddousse_Aurelien_2013, Bru_1991, Cuchiero_Filipovic_Mayerhofer_Teichmann_2011, Duffie_2005, Duffie_Filipovic_Schachermayer_2003, DuffiePanSingleton2000, Filipovic_2005, Filipovic_Mayerhofer_2009}, which may be viewed as extensions of geometric Brownian motion (see, for example, \cite{Shreve2}), the Heston stochastic volatility process \cite{Heston1993}, and the Wishart process \cite{Bru_1991, DaFonseca_Grasselli_Tebaldi_2008, Gnoatto_Grasselli_2014, Grasselli_Tebaldi_2007, Grasselli_Tebaldi_2008}. Examples of this kind which arise in mathematical biology include the multi-dimensional Kimura diffusions and their local model processes \cite[Equations (1.5) and (1.20)]{Epstein_Mazzeo_annmathstudies}. Another example along these lines is provided by the articles of Athreya et al. \cite{Athreya_Barlow_Bass_Perkins_2002, Bass_Perkins_2003, Bass_Lavrentiev_2007} concerning generators of super-Markov chains.

For many of the elliptic operators in the references just cited, the degeneracy is of the kind $\det a(x)=0$ rather than $a(x)=0$ at boundary points $x\in\partial\sO$. Two well-known operators where only part of the principal symbol vanishes at the boundary are described in Examples \ref{exmp:SABR} and \ref{exmp:Keldys} and which also illustrate an order of vanishing which is not linear. While the order of vanishing does not impact proofs of our maximum principles for $C^2$ functions (see Theorems \ref{thm:Strong_maximum_principle}, \ref{thm:Weak_maximum_principle_C2_connected_domain}, and \ref{thm:Weak_maximum_principle_C2}), arbitrarily high order of vanishing is not permitted by the proofs of some our weak maximum principles for weak solutions to variational equations (see Theorem \ref{thm:WeakMaximumPrincipleVarEquationBoundedDomain}).

\begin{exmp}[SABR model in interest rate derivative modeling]
\label{exmp:SABR}
The generator, $-A$, of the two-dimensional `SABR process' due to Hagan et al. \cite{Hagan_Kumar_Lesniewski_Woodward_2002}, in suitable coordinates, can be shown to be
\begin{equation}
\label{eq:SABR}
Au = -\frac{1}{2}\left(x_2^{2\beta} e^{2x_1}u_{x_2x_2} + 2\varrho\alpha x_2^\beta e^{x_1}u_{x_1x_2} + \alpha^2u_{x_1x_1}\right) + \frac{\alpha^2}{2}u_{x_1}, \quad u \in C^\infty(\HH),
\end{equation}
where $\alpha > 0$ and $0<\beta<1$ and $-1<\varrho<1$ are constants. The SABR process is widely used by financial engineers for interest rate modeling \cite{Henry-Labordere_2009}, but the results of our present article do not immediately extend to cover operators such as \eqref{eq:SABR}.
\end{exmp}

\begin{exmp}[Keldy{\v{s}} operator]
\label{exmp:Keldys}
Keldy\v{s} \cite{Keldys_1951} provided another important early example, related to the SABR partial differential operator described in Example \ref{exmp:SABR}, which is also not covered by the results of this article. Let $a(x_1,x_2)$, $b(x_1,x_2)$, $c(x_1,x_2)$ be analytic functions of their real variables, with $c\leq 0$ on $\sO\subset\RR^2$, and\footnote{For ease of comparison with  \cite{Keldys_1951}, we keep the notation and sign conventions of Keldy{\v{s}} in our example, though they differ from our usual choice in \eqref{eq:Generator}.}
\begin{equation}
\label{eq:Keldys}
Au := x_2^m u_{x_2x_2} + u_{x_1x_1} + au_{x_2} + bu_{x_1} + cu,
\end{equation}
where $\sO$ is a simply-connected domain bounded by the segment $(0,1)$ of the $x_1$-axis and a smooth curve $\Gamma$ situated in the upper half-plane. Two problems are considered for the equation $Au=0$ on $\sO$: (D) $u$ must assume given continuous values on the whole boundary $\partial\sO$; (E) $u$ must assume given continuous values on $\Gamma$ and be bounded in $\sO$. Keldy\v{s} proves the following results. (D) is uniquely solvable if $m<1$, or $m=1$ and $a(x_1,0)<1$, or $1<m<2$ and $a(x_1,0)\leq 0$, or $m\geq 2$ and $a(x_1,0)<0$. (E) is uniquely solvable if $m=1$, $a(x_1,0)\geq 1$, or $1<m<2$ and $a(x_1,0)>0$, or $m\geq 2$ and $a(x_1,0)\geq 0$. See \cite[Section 6.6 and Problem 6.10]{GilbargTrudinger} for a related discussion.
\end{exmp}

In a sequel to this article, we shall consider maximum principles which allow us to include operators such as those described in Examples \ref{exmp:SABR} and \ref{exmp:Keldys} and mentioned in the preceding discussion, where $A$ may become degenerate along a \emph{stratified space} $\Sigma\subseteqq\partial\sO$, in the broader sense that
$$
\Sigma = \{x\in\partial\sO:\det a(x) = 0 \},
$$
rather than $\Sigma = \Int\{x\in\partial\sO: a(x)=0\}$.

\subsection{Properties of coefficients of boundary-degenerate elliptic operators}
\label{subsec:Properties_coefficients}
Before summarizing our main results, it is convenient to collect here the main properties, along with their variants, for the coefficients of the operator $A$ used in Part \ref{part:BoundaryValueObstacleProblems} of this article which are motivated by the examples discussed in Section \ref{subsec:Examples}. The reader should keep in mind that unless stated otherwise, a coefficient property is \emph{only} assumed to hold when explicitly invoked in the statement of a lemma, proposition, or theorem.

Given $a$ as in \eqref{eq:a_nonnegative_symmetric_matrix_valued} (everywhere-defined or measurable), let $\lambda(x)$ denote the smallest eigenvalue of the matrix, $a(x)$, for $x\in\sO$, and let
$$
\lambda_*:\sO\to[0,\infty)
$$
be the lower semi-continuous envelope\footnote{When $f:X\to[0,\infty)$ is a measurable function on a measure space $(X,\Sigma,\mu)$, then $f_*:X\to[0,\infty)$ is the largest lower-semicontinuous function on $X$ such that $f_*\leq f$ $\mu$-a.e. on $X$.} of the resulting least eigenvalue function, $\lambda:\sO\to[0,\infty)$, for $a:\sO\to\sS^+(d)$. We may require that $a:\sO\to\sS^+(d)$ be \emph{locally strictly elliptic on the interior}, $\sO$, in the sense that
\begin{equation}
\label{eq:a_locally_strictly_elliptic_interior_domain}
\lambda_* > 0 \quad\hbox{on } \sO.
\end{equation}
The vector field, $\vec n:\partial_0\sO\to\RR^d$, may be extended to a tubular neighborhood $N(\partial_0\sO)$ of $\partial_0\sO \subset \underline\sO$. We can then split the vector field, $b:N(\partial_0\sO)\to\RR^d$, into its normal and tangential components, with respect to the extended vector field, $\vec n:N(\partial_0\sO)\to\RR^d$, so
\begin{equation}
\label{eq:b_splitting_elliptic}
b^\perp := \langle b, \vec n \rangle \quad\hbox{and}\quad b^\parallel := b - b^\perp \vec n \quad\hbox{on } N(\partial_0\sO).
\end{equation}
We may require that the vector field $b^\perp$ obey one of the following conditions,
\begin{align}
\label{eq:b_perp_nonnegative_boundary_elliptic}
b^\perp &\geq 0 \quad \hbox{on } \partial_0\sO, \quad\hbox{or}
\\
\label{eq:b_perp_positive_boundary_elliptic}
\tag{\ref*{eq:b_perp_nonnegative_boundary_elliptic}$'$}
b^\perp &> 0 \quad \hbox{on } \partial_0\sO.
\end{align}
The coefficient $c$ in \eqref{eq:Generator} may obey one of
\begin{align}
\label{eq:c_nonnegative_domain}
c &\geq 0 \quad\hbox{(a.e.) on }\sO, \quad\hbox{or}
\\
\label{eq:c_positive_domain}
\tag{\ref*{eq:c_nonnegative_domain}$'$}
c &> 0 \quad\hbox{(a.e.) on }\sO, \quad\hbox{or}
\\
\label{eq:c_lower*_positive_domain}
\tag{\ref*{eq:c_nonnegative_domain}$''$}
c_* &> 0 \quad \hbox{on } \sO, \quad\hbox{or}
\\
\label{eq:c_positive_lower_bound_domain}
\tag{\ref*{eq:c_nonnegative_domain}$'''$}
c &\geq c_0 \quad\hbox{(a.e.) on }\sO,
\end{align}
for some constant $c_0>0$ and where $c_*:\sO\to[0,\infty)$ is the lower semicontinuous envelope of $c$. We may also require that $c$ obey one of the conditions,
\begin{align}
\label{eq:c_nonnegative_boundary}
c &\geq 0 \quad \hbox{on } \partial_0\sO, \quad\hbox{or}
\\
\label{eq:c_positive_boundary}
\tag{\ref*{eq:c_nonnegative_boundary}$'$}
c &> 0 \quad \hbox{on } \partial_0\sO.
\end{align}
We may require that $b$ or $c$ be (essentially) locally bounded in the following senses (note the distinction between $\sO$ and $\underline\sO$)
\begin{subequations}
\label{eq:bc_locally_bounded}
\begin{align}
\label{eq:b_locally_bounded_on_domain}
b &\in L^\infty_{\loc}(\sO;\RR^d), \quad\hbox{or}
\\
\label{eq:c_locally_bounded_on_domain_plus_degenerate_boundary}
c &\in L^\infty_{\loc}(\underline\sO),
\end{align}
\end{subequations}
where we slightly abuse notation by, for example, writing $w\in L^\infty_{\loc}(\underline\sO)$ as an abbreviation for saying that $w$ is a locally bounded function on $\underline\sO$, irrespective of whether $w$ is measurable or everywhere-defined.

We may also require that the coefficient $b$ be continuous along $\partial_0\sO$,
\begin{equation}
\label{eq:b_continuous_on_degenerate_boundary}
b \in C(\partial_0\sO;\RR^d).
\end{equation}
When the open subset $\sO$ is \emph{unbounded}, we may need a growth condition,
\begin{equation}
\label{eq:Quadratic_growth}
\tr a(x) + \langle b(x),x\rangle \leq K(1+|x|^2), \quad\forall\, x \in \underline\sO,
\end{equation}
for some positive constant $K$.

\subsection{Summary of main results and outline of our article}
\label{subsec:Summary}
We shall leave detailed statements of our main results to the body of our article and simply provide a short outline of our article here to facilitate the reader seeking a particular conclusion of interest. Part \ref{part:BoundaryValueObstacleProblems} of our article (Sections \ref{sec:Applications_weak_maximum_principle_property_boundary_value_problems}--\ref{sec:Weak_maximum_principle_elliptic_C2}) develops weak and strong maximum principles for operators on smooth functions and applications to boundary value and obstacle problems, while Part \ref{part:VariationalEquationInequality} of our article (Sections \ref{sec:ApplicationsWeakMaxPrinPropertyVarEq}--\ref{sec:WeakMaxPrincipleSobolev}) develops weak maximum principles for bilinear maps and operators on functions in Sobolev spaces and applications to variational equations and inequalities.

\subsubsection{Weak and strong maximum principles for operators on smooth functions and applications to boundary value and obstacle problems}
We review some terminology from \cite{Feehan_perturbationlocalmaxima} to help summarize our results. Given a real-valued function $u$ on an open subset $\sO\subset\RR^d$, we let $u^*:\bar\sO\to[-\infty,\infty]$ denote its upper semicontinuous envelope on $\bar\sO$.

\begin{defn}[Generalized subharmonic functions]
\label{defn:A_subharmonic}
Given an open subset $\sO\subset\RR^d$ and $1\leq p\leq \infty$ and a linear, second-order, partial differential operator, $A$, as in \eqref{eq:Generator}, we shall say that a function $u \in C^2(\sO)$ (respectively, $W^{2,p}_{\loc}(\sO)$) is \emph{(strictly) $A$-subharmonic} if $Au\leq 0$ (respectively, $Au < 0$) (a.e.) on $\sO$.
\end{defn}

Recall from the Sobolev Embedding Theorem \cite[Theorem 5.6, Part I (C)]{Adams_1975} that $W^{2,d}_{\loc}(\sO) \hookrightarrow C(\sO)$; for most applications involving Definition \ref{defn:Weak_maximum_principle_property_elliptic_C2_W2d}, it would make no difference if we replaced $W^{2,d}_{\loc}(\sO)$ by $W^{2,p}_{\loc}(\sO)\cap C(\sO)$ with $p\geq 1$ but we keep $W^{2,d}_{\loc}(\sO)$ for consistency with \cite[Theorem 9.1]{GilbargTrudinger}. Partly motivated by an abstract description of weak maximum principle properties due to Trudinger \cite[p. 292]{Trudinger_1977}, we make the

\begin{defn}[Weak maximum principle property for $A$-subharmonic functions in $C^2(\sO)$ or $W^{2,d}_{\loc}(\sO)$]
\label{defn:Weak_maximum_principle_property_elliptic_C2_W2d}
Let $\sO\subset\RR^d$ be an open subset, let $\Sigma \subseteqq \partial\sO$ be an open subset, and let $\fK\subset C^2(\sO)$ (respectively, $W^{2,d}_{\loc}(\sO)$) be a convex cone. We say that an operator $A$ in \eqref{eq:Generator} obeys the \emph{weak maximum principle property on $\sO\cup\Sigma$ for $\fK$} if whenever $u\in \fK$ obeys
$$
Au \leq 0 \quad \hbox{(a.e.) on } \sO \quad\hbox{and}\quad u^* \leq 0 \quad\hbox{on } \partial\sO\less\bar\Sigma,
$$
then $u \leq 0$ on $\sO$.
\end{defn}

Examples in this article of operators, $A$, with the weak maximum principle property on $\sO\cup\Sigma$ arise as follows:
\begin{inparaenum}[\itshape a\upshape)]
\item Theorems \ref{thm:Weak_maximum_principle_C2} and \ref{thm:Weak_maximum_principle_C2_unbounded_opensubset}, where $\Sigma = \partial_0\sO$ and $\fK=\{u\in C^2_s(\underline\sO):\sup_\sO u<\infty\}$ (see Definition \ref{defn:Second_order_boundary_regularity} for a description of $C^2_s(\underline\sO)$);
\item Theorem \ref{thm:Weak_maximum_principle_C2_W2d_unbounded_function} provides a weak maximum principle property for functions $u$ in $C^2(\sO)$ or $W^{2,d}_{\loc}(\sO)$ such that $u^+$ obeys a growth condition on a possibly unbounded domain, $\sO$;
\item Theorem 3.1 and Corollary 3.2 in \cite{GilbargTrudinger}, where $\Sigma=\emptyset$ and $\fK=C^2(\sO)\cap C(\bar\sO)$;
\item Theorem 9.1  in \cite{GilbargTrudinger}, where $\Sigma=\emptyset$ and $\fK=W^{2,d}_{\loc}(\sO)\cap C(\bar\sO)$.
\end{inparaenum}
See \cite{Feehan_perturbationlocalmaxima} for results allowing $C^2_s(\underline\sO)$ to be relaxed to $C^2(\sO)\cap C^1(\underline\sO)$ and $W^{2,d}_{\loc}(\sO)$ to be replaced by $W^{2,d}_{\loc}(\sO)\cap C^1(\underline\sO)$, with $\Sigma=\partial_0\sO$ non-empty. While Trudinger considers linear subspaces in \cite[p. 292]{Trudinger_1977} rather than convex cones as in our Definition \ref{defn:Weak_maximum_principle_property_elliptic_C2_W2d}, we prefer to use convex cones since, for example, it is only the fact that a subharmonic function is bounded above (rather than bounded) which is relevant to the weak maximum principle property.

The condition \eqref{eq:c_nonnegative_domain}, namely $c\geq 0$ on $\sO$, is not explicitly required in Definition \ref{defn:Weak_maximum_principle_property_elliptic_C2_W2d}. However, simple counterexamples to the weak maximum principle exist when this condition is relaxed in general \cite[p. 33]{GilbargTrudinger}; the condition $c\geq 0$ on $\sO$ is a necessary condition for the strong maximum principle \cite[Exercise 2.1]{Pucci_Serrin_2007book}.

We shall find it very convenient to cleanly separate a discussion of when Definition \ref{defn:Weak_maximum_principle_property_elliptic_C2_W2d} holds, which is provided in Section \ref{sec:Weak_maximum_principle_elliptic_C2} for $\Sigma = \partial_0\sO$ non-empty and $\fK\subset C^2(\sO)$ --- the primary motivation for our article --- from the applications to boundary value and obstacle problems which flow from this abstract property and which are discussed in Sections \ref{sec:Applications_weak_maximum_principle_property_boundary_value_problems} and \ref{sec:Applications_weak_maximum_principle_property_obstacle_problems}.

In Section \ref{sec:Applications_weak_maximum_principle_property_boundary_value_problems}, for functions in $C^2(\sO)$ or $W^{2,d}_{\loc}(\sO)$, we consider applications of the weak maximum principle property to boundary value problems, including a comparison principle for subsolutions and supersolutions and uniqueness for solutions to the Dirichlet boundary problem (Proposition \ref{prop:Comparison_principle_elliptic_boundary_value_problem_C2_W2d}), \apriori maximum principle estimates (Proposition \ref{prop:Elliptic_weak_maximum_principle_apriori_estimates_C2_W2d}), and an extension to the case of functions which obey a growth condition on unbounded open subsets (Theorem \ref{thm:Weak_maximum_principle_C2_W2d_unbounded_function}).

Section \ref{sec:Applications_weak_maximum_principle_property_obstacle_problems}, for functions in $W^{2,d}_{\loc}(\sO)$,  contains applications of the weak maximum principle property to obstacle problems, including a comparison principle (Theorem \ref{prop:Comparison_principle_uniqueness_obstacle_problem}) and \apriori maximum principle estimates (Proposition \ref{prop:Elliptic_weak_maximum_principle_apriori_estimates_obstacle_problem}) for supersolutions and uniqueness for solutions to the obstacle problem.

Having discussed simple applications of the weak maximum principle property in the context of partial Dirichlet boundary conditions, we now turn to a discussion of when $A$ as in \eqref{eq:Generator} has the weak maximum principle property with a partial Dirichlet boundary condition if we choose $\Sigma=\partial_0\sO$. For this purpose, we shall need the

\begin{defn}[Second-order boundary condition and boundary regularity]
\label{defn:Second_order_boundary_regularity}
Let $\sO\subset\RR^d$ be an open subset and let $a:\sO\to\sS^+(d)$ be a function. We say that $u\in C^2(\sO)\cap C^1(\underline\sO)$ obeys a \emph{second-order boundary condition} along the boundary portion, $\partial_0\sO\subset\partial\sO$, defined by $a:\sO\to\sS^+(d)$ if
\begin{subequations}
\label{eq:Second_order_boundary_regularity}
\begin{align}
\label{eq:Second_order_boundary_continuity}
\tr(a D^2u) &\in C(\underline\sO),
\\
\label{eq:Second_order_boundary_vanishing}
\tr(a D^2u) &= 0 \quad\hbox{on }\partial_0\sO,
\end{align}
\end{subequations}
and write $u \in C^2_s(\underline \sO)$ if $u\in C^2(\sO)\cap C^1(\underline \sO)$ obeys \eqref{eq:Second_order_boundary_regularity}.
\end{defn}

Definition \ref{defn:Second_order_boundary_regularity} is motivated by the following observations. The property \eqref{eq:Second_order_boundary_continuity} is a mild boundary regularity condition one can impose on a function $u\in C^2(\sO)\cap C^1(\underline\sO)$ which ensures that $Au$ will be continuous up to $\partial_0\sO$ (but not necessarily up to the whole boundary, $\partial\sO$). The second-order vanishing condition \eqref{eq:Second_order_boundary_vanishing} ensures that consideration of the normal and tangential components of $Du$ along $\partial_0\sO$ permit a proof of the weak maximum principle property on $\underline\sO$ for a boundary-degenerate operator, $A$ (see Theorem \ref{thm:Weak_maximum_principle_C2} and its proof). Moreover, \eqref{eq:Second_order_boundary_vanishing} is a property of functions in weighted H\"older spaces defined by Daskalopoulos and Hamilton \cite{DaskalHamilton1998} and which provide a framework for \emph{existence} of solutions to equations such as \eqref{eq:Elliptic_equation} defined by boundary-degenerate elliptic operators, $A$, with partial Dirichlet boundary condition \eqref{eq:Elliptic_boundary_condition}, as demonstrated by \cite{Daskalopoulos_Feehan_statvarineqheston, Feehan_classical_perron_elliptic, Feehan_Pop_regularityweaksoln, Feehan_Pop_higherregularityweaksoln, Feehan_Pop_elliptichestonschauder}. See Section \ref{subsec:Boundary_degenerate_elliptic_operators_and_second_order_boundary_regularity} for a further discussion of the examples which motivate Definition \ref{defn:Second_order_boundary_regularity}.

In Section \ref{sec:Strong_maximum_principle_elliptic}, we prove a strong maximum principle for $A$-subharmonic functions in $C^2_s(\underline \sO)$ and develop its applications to boundary value problems with Neumann boundary conditions. We first prove a Hopf boundary point lemma (see Lemma \ref{lem:Degenerate_hopf_lemma}) for operators $A$ in \eqref{eq:Generator} which may become degenerate along $\partial_0\sO$ using a novel choice of barrier function. We then apply our version of the Hopf lemma to prove a strong maximum principle suitable for operators $A$ in \eqref{eq:Generator} (Theorem \ref{thm:Strong_maximum_principle}) and corresponding uniqueness results for solutions to equations with Neumann boundary conditions along $\overline{\partial_1\sO}$ (Theorem \ref{thm:Neumann_boundary_condition_uniqueness_equation} and Corollary \ref{cor:Neumann_boundary_condition_uniqueness_equation}). We use our strong maximum principle to deduce a version of the weak maximum principle, Theorem \ref{thm:Weak_maximum_principle_C2_connected_domain}, for connected open subsets.

Finally, in Section \ref{sec:Weak_maximum_principle_elliptic_C2}, we establish specific conditions on the coefficients which ensure that the operator $A$ in \eqref{eq:Generator} has the weak maximum principle property on $\sO\cup\Sigma$, when $\Sigma=\partial_0\sO$ and $\fK=\{u\in C^2_s(\underline\sO):\sup_\sO u < \infty\}$, initially for bounded $\sO$  (Theorem \ref{thm:Weak_maximum_principle_C2}), and then for unbounded $\sO$ (Theorem \ref{thm:Weak_maximum_principle_C2_unbounded_opensubset}).

\subsubsection{Weak maximum principles for bilinear maps and operators on functions in Sobolev spaces and applications to variational equations and inequalities}
We next consider variational equations and inequalities defined by bilinear maps on weighted Sobolev spaces, $H^1(\sO,\fw)$, defined by a \emph{weight}, $\fw \in C(\sO)\cap L^1(\sO)$ with $\fw > 0$ on $\sO$, and a \emph{degeneracy coefficient}, $\vartheta \in C_{\loc}(\bar\sO)$ with $\vartheta > 0$ on $\sO$, where a measurable function, $u$ on $\sO$, belongs to $H^1(\sO,\fw)$ if
$$
\int_\sO \left(\vartheta|Du|^2 + (1+\vartheta)u^2\right)\fw\,dx < \infty.
$$
Let $H^1_0(\sO\cup\Sigma,\fw)$ be the closure of $C^\infty_0(\sO\cup\Sigma)$ in $H^1(\sO,\fw)$, where $C^\infty_0(\sO\cup\Sigma)\subset C^\infty(\sO)$ is the linear subspace of smooth functions which have compact support in $\sO\cup\Sigma$. We say that $u\leq 0$ on $\partial\sO\less\Sigma$ in the sense of $H^1(\sO,\fw)$ if $u^+ \in H^1_0(\sO\cup\Sigma,\fw)$, where $u^+ = \max\{u,0\}$. We can now state the following analogue of Definition \ref{defn:Weak_maximum_principle_property_elliptic_C2_W2d}.

\begin{defn}[Weak maximum principle property for a bilinear map]
\label{defn:WeakMaxPrinciplePropertyBilinearForm}
Let $\sO\subseteqq\RR^d$ be an open subset, let
$$
\fa: H^1(\sO,\fw) \times H^1(\sO,\fw) \to \RR,
$$
be a bilinear map, let $\Sigma\subseteqq\partial\sO$ be a relatively open subset and let $\fK\subset H^1(\sO,\fw)$ be a convex cone. We say that $\fa$ obeys the \emph{weak maximum principle property on $\sO\cup\Sigma$ for $\fK$} if whenever $u\in \fK$ obeys
$$
\begin{cases}
\fa(u,v) \leq 0, \forall\, v\in H^1_0(\sO\cup\Sigma,\fw)\hbox{ with }v\geq 0\hbox{ a.e. on }\sO,
\\
u\leq 0 \hbox{ on }\partial\sO\less\Sigma\hbox{ in the sense of $H^1(\sO,\fw)$},
\end{cases}
$$
then $u \leq 0$ a.e. on $\sO$.
\end{defn}

See Section \ref{sec:ApplicationsWeakMaxPrinPropertyVarEq} for additional background and details for Definition \ref{defn:WeakMaxPrinciplePropertyBilinearForm}. (We shall assume that $\Sigma\subseteqq\partial\sO$ is relatively open for the sake of consistency with Part \ref{part:BoundaryValueObstacleProblems} of our article, although $\Sigma$ could now be any non-empty subset, not necessarily relatively open. However, the assumption of relative openness involves no loss of generality.) Typically, $\fK = H^1(\sO,\fw)$ and examples of bilinear maps, $\fa$, with the weak maximum principle property on $\sO\cup\Sigma$ are provided by Theorems \ref{thm:WeakMaximumPrincipleVarEquationCompactDomain}, \ref{thm:WeakMaximumPrincipleVarEquationBoundedDomain}, \ref{thm:WeakMaximumPrincipleVarEquationBoundedDomainGeneralWeight}, when $\Sigma\neq\emptyset$ and $\fK = H^1(\sO,\fw)$, and, when $\Sigma=\emptyset$ and $\fK = H^1(\sO)$, by \cite[Theorem 8.1]{GilbargTrudinger}. Theorem \ref{thm:WeakMaximumPrincipleH1UnboundedDomainBoundedSolution} provides an example with $\Sigma\neq\emptyset$ and $\fK = \{u\in H^1(\sO,\fw): \esssup_\sO u < \infty\}$ when $\sO$ is unbounded.

In Section \ref{sec:ApplicationsWeakMaxPrinPropertyVarEq}, we consider applications of the weak maximum principle property, including the comparison principle (Proposition \ref{prop:ComparisonUniquenessVariationalBVProblem}) and \apriori estimates (Proposition \ref{prop:H1WeakMaxPrincipleAprioriEstimates}) for $H^1(\sO,\fw)$ supersolutions and solutions to variational equations. We also show that when a bilinear map $\fa$ on $H^1(\sO,\fw)$ has a weak maximum principle property for subsolutions (on unbounded open subsets) which are bounded above, the property may extend to subsolutions which instead obey a growth condition (Theorem \ref{thm:Weak_maximum_principle_H1_unbounded_function}).

Section \ref{sec:ApplicationsWeakMaxPrinPropertyVarIneq} contains applications of the weak maximum principle property to variational inequalities. We prove uniqueness for solutions to variational inequalities defined by bilinear maps $\fa$ on $H^1(\sO,\fw)$, a comparison principle for supersolutions and uniqueness for solutions to variational inequalities (Theorem \ref{thm:WeakMaximumPrincipleVariationalInequality}), and \apriori estimates (Proposition \ref{prop:Rodrigues}) for $H^1(\sO,\fw)$ supersolutions and solutions to variational inequalities.

Finally, in Section \ref{sec:WeakMaxPrincipleSobolev}, we prove the weak maximum principle property for a class of bilinear maps $\fa:H^1(\sO,\fw)\times H^1(\sO,\fw)\to\RR$ and $\Sigma=\partial_0\sO$, when $\sO$ is bounded (Theorem \ref{thm:WeakMaximumPrincipleVarEquationBoundedDomainGeneralWeight}) or unbounded (Theorem \ref{thm:WeakMaximumPrincipleH1UnboundedDomainBoundedSolution}).

Appendix \ref{sec:WeakMaxPrincipleUnboundedFunction} provides examples illustrating when the weak maximum principle holds for functions obeying growth conditions on unbounded open subsets. In appendix \ref{sec:FicheraAndHeston}, we compare the weak maximum principles and uniqueness theorems provided by our article with those of Fichera, Ole{\u\i}nik, and Radkevi{\v{c}} \cite{Radkevich_2009a} in the case of the elliptic Heston operator, $A$, in Example \ref{exmp:HestonPDE} on an open subset $\sO\subseteqq\HH$ and show that those of Fichera, Ole{\u\i}nik, and Radkevi{\v{c}} are weaker.

\subsection{Boundary-degenerate elliptic operators and the second-order boundary regularity condition}
\label{subsec:Boundary_degenerate_elliptic_operators_and_second_order_boundary_regularity}
We provide some additional background and motivation for Definition \ref{defn:Second_order_boundary_regularity}. Clearly, if $u\in C^2(\bar\sO)$ as assumed by Amano in \cite{Amano_1979, Amano_1981}, then $Au\in C(\bar\sO)$ but, as we hinted in the introduction, when $A$ is boundary-degenerate, the work of Daskalopoulos and Hamilton \cite{DaskalHamilton1998} (where $\partial_0\sO=\partial\sO$) indicates that this is too strong a condition to initially impose on $u$ for the purpose of proving existence of solutions with Dirichlet data only prescribed along $\partial_1\sO$, whereas a condition such as $u\in C(\bar\sO)$ or $C^1(\bar\sO)$ would be too weak. See Example \ref{exmp:CIR} for a discussion in the context of the Kummer ordinary differential equation.

As we noted earlier, the second-order boundary condition \eqref{eq:Second_order_boundary_vanishing} is a property of functions in the weighted H\"older space, $C^{2+\alpha}_s(\underline\sO)$, defined by Daskalopoulos and Hamilton in \cite[pp. 901--902]{DaskalHamilton1998}. Here, $\sO$ is an open subset of the upper half-space $\HH=\{x_d>0\}\subset\RR^d$ equipped with the `cycloidal metric', $ds^2 = x_d^{-1}(dx_1^2+\cdots+dx_d^2)$, where one replaces the usual Euclidean distance function on $\HH$ in the definition of standard H\"older spaces by the cycloidal distance function defined by the metric $ds^2$. See \cite[Proposition I.12.1]{DaskalHamilton1998}, \cite[Lemma 3.1]{Feehan_Pop_mimickingdegen_pde}, and \cite[Lemma C.1]{Feehan_perturbationlocalmaxima} for further discussion.

The second-order boundary condition \eqref{eq:Second_order_boundary_vanishing} may be viewed as a special case of a generalized Ventcel boundary condition \cite[Section 7.1]{Taira_2004} defined by an auxiliary, degenerate second-order operator, $L$, which may be distinct from $A$. Around 1950, W. Feller completely characterized the analytic structure of one-dimensional diffusion processes, giving an intrinsic representation of the infinitesimal generator $\-A$ of a one-dimensional diffusion process and determined all possible boundary conditions which describe the domain, $\sD(A)$. (A parallel analysis from the point of view of Sturm-Liouville operators may be found in \cite{Zettl}.) In 1959, A. D. Ventcel studied the problem of determining all possible boundary conditions for multi-dimensional diffusion processes and a generalization of Ventcel's results to Feller processes, by Taira and many others, is described in \cite[Chapter 7]{Taira_2004}, while Amano has developed associated maximum principles in \cite{Amano_1981}, albeit by requiring $u\in C^2(\bar\sO)$ and a type of Fichera decomposition to $\partial\sO$, neither of which we require in our present article.

As noted in our Introduction, we use the term `boundary-degenerate elliptic' in this article because of the different meaning of the term `degenerate elliptic' in the context of viscosity solutions \cite{Crandall_Ishii_Lions_1992}. Consider a fully non-linear, second-order partial differential equation,
$$
F(x,u,Du,D^2u) = 0, \quad\forall\, x\in\sO.
$$
The map, $F:\sO\times\RR\times\RR^d\times \sS(d) \to \RR$, is called \emph{degenerate elliptic} (in the sense of viscosity solutions) \cite[p. 2]{Crandall_Ishii_Lions_1992} if it is increasing with respect to its matrix argument,
$$
F(x,r,\eta,X) \leq F(x,r,\eta,Y), \quad\forall\, X\leq Y\hbox{ and } (x,r,\eta) \in \sO\times\RR\times\RR^d,
$$
where $X\leq Y$ in the sense of $\sS(d)$ if $Y-X \in \sS^+(d)$. The linear operators $A$ in \eqref{eq:Generator} and equations considered in this article will always define degenerate elliptic maps, $F(x,u,Du,D^2u) = Au-f$ or $\min\{Au-f,u-\psi\}$, since the coefficient matrix $a(x)$ of $D^2u$ belongs to $\sS^+(d)$ by \eqref{eq:A_coefficients} and the coefficient of $u$ always obeys $c\geq 0$ on $\sO$ by \eqref{eq:c_nonnegative_domain}. Of course, the converse is not true: when $F$ is degenerate elliptic in the sense of  \cite[p. 2]{Crandall_Ishii_Lions_1992} --- which includes the case where $a$ is strictly elliptic, so $a\geq \lambda_0 I_d$ on $\sO$ for some positive constant $\lambda_0$ --- that does not imply that $A$ is boundary degenerate along any subset of $\partial\sO$.

If $u \in C^2_s(\underline\sO)$ and $Au\leq f$ on $\sO$, with $A$ as in \eqref{eq:Generator} and $f\in C(\underline\sO)$, then the (linear) second-order boundary condition \eqref{eq:Second_order_boundary_vanishing} is equivalent to the \emph{non-linear, first-order, oblique boundary condition},
\begin{equation}
\label{eq:EquivalentFirstOrderBC}
-\langle b,Du\rangle + cu \leq f \quad\hbox{on }\partial_0\sO.
\end{equation}
Indeed, when we have $Au=f$ on $\sO$, and thus equality in \eqref{eq:EquivalentFirstOrderBC}, one obtains the elliptic analogue of the boundary condition proposed by Heston \cite[Equation (9)]{Heston1993} for the parabolic terminal/boundary problem corresponding to the elliptic boundary value problem \eqref{eq:Elliptic_equation}, \eqref{eq:Elliptic_boundary_condition}:
\begin{equation}
\label{eq:FinancialEngineerFirstOrderBC}
-\langle b,Du\rangle + cu = f \quad\hbox{on }\partial_0\sO.
\end{equation}
The parabolic version of the condition \eqref{eq:FinancialEngineerFirstOrderBC} (normally when $f=0$) is often used in the numerical solution of parabolic boundary value or obstacle problems in mathematical finance \cite[Equation (22.19)]{DuffyFDM}, \cite[Equation (15)]{ZvanForsythVetzal}.

\subsection{Comparison with previous research}
A detailed comparison between our weak maximum principle for $A$-subharmonic functions in $C^2(\sO)$ (Theorems \ref{thm:Weak_maximum_principle_C2} and \ref{thm:Weak_maximum_principle_C2_unbounded_opensubset}) and that of Fichera \cite{Fichera_1956, Fichera_1960, Oleinik_Radkevic, Radkevich_2009a} is provided in Appendix \ref{sec:FicheraAndHeston} in the case of Example \ref{exmp:HestonPDE}. To describe one of the principal differences between the two weak maximum principles, we recall the definition of the \emph{Fichera function} \cite[Equations (1.1.2) and (1.1.3)]{Radkevich_2009a} (taking into account our sign convention in \eqref{eq:Generator} for the coefficients $a,b,c$ of the non-divergence-form operator $A$),
$$
\fb = \left(b^k - a^{kj}_{x_j}\right)n_k \quad\hbox{on }\partial\sO,
$$
where $\vec n$ denotes \emph{inward}-pointing unit normal vector field along $\partial\sO$ (now assumed, for example, to be $C^1$). By using the Fichera weak maximum principle to decide whether to impose a Dirichlet condition for $u$ along $\partial_0\sO$ to achieve uniqueness of solutions to \eqref{eq:Elliptic_equation} or \eqref{eq:Elliptic_obstacle_problem}, one finds that a Dirichlet boundary condition is required for $u$ on $\partial_0\sO$ when $\fb < 0$ on $\partial_0\sO$, in addition to the usual Dirichlet condition \eqref{eq:Elliptic_boundary_condition} on $\partial\sO\less\overline{\partial_0\sO}$, whereas only a Dirichlet boundary condition for $u$ on $\partial\sO\less\overline{\partial_0\sO}$ is required when $\fb\geq 0$ on $\partial_0\sO$.

However, by instead imposing a Dirichlet boundary condition for $u$ on $\partial\sO\less\overline{\partial_0\sO}$ and the second-order boundary regularity condition, $u\in C^2_s(\underline\sO)$, we achieve uniqueness \emph{independent} of the sign of the Fichera function, $\fb$, on $\partial_0\sO$. Since the second-order boundary condition is automatically obeyed by, for example, functions in the weighted H\"older spaces introduced by Daskalopoulos, Hamilton, and Koch \cite{DaskalHamilton1998, Koch}, we are naturally led to a more convenient and powerful framework for establishing existence, uniqueness, and regularity of solutions.

Similar remarks apply in the case of solutions to variational equations and inequalities and the framework of weighted Sobolev spaces introduced by Daskalopoulos and the author in \cite{Daskalopoulos_Feehan_statvarineqheston} and by Koch in \cite{Koch}.

Uniqueness results for solutions to the parabolic porous medium equation (and its linearization) were established by Daskalopoulos, Hamilton, Rhee, and Koch in \cite{DaskalHamilton1998, Daskalopoulos_Rhee_2003, Koch} but, unlike the linearization of the porous medium, for the coefficients $a,b$ in \eqref{eq:Generator} we permit $a(x)$ and $x\cdot b(x)$ to have quadratic growth in $x$ as $x\to\infty$ and, even when the coefficient vector field, $b$, is constant, we do not require that $b^\parallel=0$. A weak maximum principle for the parabolic (model) Kimura diffusion operator is given by Epstein and Mazzeo in \cite[Proposition 4.1.1]{Epstein_Mazzeo_annmathstudies}, who also employ a form of second-order boundary condition, together with a Hopf lemma and a strong maximum principle in \cite[Lemmas 4.2.4 and 4.2.5]{Epstein_Mazzeo_annmathstudies}. Related uniqueness results and weak maximum principles for classical (sub-)solutions to second-order, linear, degenerate elliptic and parabolic operators are proved by Pozio et al. in \cite{Pozio_Punzo_Tesei_2008, Punzo_Tesei_2009a, Punzo_Tesei_2009b}, but they do not make use of second-order boundary conditions.

A weak maximum principle for variational (sub-)solutions to second-order, nonlinear, degenerate elliptic operators is established by Bonafede \cite{Bonafede_1996}, under complex hypotheses, while a weak maximum principle for variational (sub-)solutions to \eqref{eq:Elliptic_equation} is proved by Monticelli and Payne \cite{Monticelli_Payne_2009} when the coefficient matrix, $a(x)$, of $A$ in \eqref{eq:Generator} has a uniformly elliptic direction and $\sO$ is bounded (conditions, among others, which we do not impose in this article). Borsuk obtains a weak maximum principle, a Hopf lemma, and a strong maximum principle \cite[Theorem 6.2.1, Lemma 6.2.1 and Theorem 6.2.2]{Borsuk_2005} for (sub-)solutions to variational equations defined by certain degenerate elliptic quasilinear operators on open subsets with non-smooth boundaries using weighted Sobolev spaces. Our maximum principle for variational (sub-)solutions to \eqref{eq:Elliptic_equation}, \eqref{eq:Elliptic_boundary_condition} (Theorems \ref{thm:WeakMaximumPrincipleVarEquationBoundedDomainGeneralWeight} and \ref{thm:WeakMaximumPrincipleH1UnboundedDomainBoundedSolution}) generalizes that of Trudinger \cite[Theorem 1]{Trudinger_1977}.

\subsection{Notation and conventions}
\label{subsec:Notation}
For $x, y\in\RR$, we denote $x\wedge y := \min\{x,y\}$ and $x\vee y := \max\{x,y\}$, while $x^+ := x\vee 0$ and $x^- := -(x\wedge 0)$. We let $B(x^0,r)\subset\RR^d$ denote the open ball with radius $r>0$ and center $x^0\in\RR^d$.

If $X$ is a subset of a topological space, we let $\bar X$ denote its closure and let $\partial X := \bar X\less X$ denote its topological boundary. If $V\subset U\subset \RR^d$ are open subsets, we write $V\Subset U$ when $U$ is bounded with closure $\bar U \subset V$.

For an open subset of a topological space, $U\subset X$, we let $u^*:\bar U\to[-\infty,\infty]$ (respectively, $u_*:\bar U\to[-\infty,\infty]$) denote the upper (respectively, lower) semicontinuous envelope of a function $u:U\to[-\infty,\infty]$; when $u$ is continuous on $U$, then $u_* = u = u^*$ on $U$.

In the definition and naming of function spaces, we follow Adams \cite{Adams_1975} and alert the reader to occasional differences in definitions between \cite{Adams_1975} and standard references such as Gilbarg and Trudinger \cite{GilbargTrudinger}. Since $\sO\subseteqq\RR^d$ may often denote an unbounded open subset in this article, we distinguish between
\begin{inparaenum}[\itshape a\upshape)]
\item $C_{\loc}(\bar\sO)$, the vector space of functions, $u$, such that, for any precompact open subset $U\Subset \bar\sO$, we have $u \in C(\bar U)$; and
\item $C(\bar\sO)$, the Banach space of functions which are \emph{uniformly continuous} and \emph{bounded} on $\sO$.
\end{inparaenum}

For an integer $k\geq 0$, we let $C^k(\sO)$ denote the vector space of functions whose derivatives up to order $k$ are continuous on $\sO$ and let $C^k(\bar\sO)$ denote the Banach space of functions whose derivatives up to order $k$ are uniformly continuous and bounded on $\sO$, and thus have unique bounded, continuous extensions to $\bar \sO$ \cite[Sections 1.25 and 1.26]{Adams_1975}.

If $T \subseteqq \partial\sO$ is a relatively open set, we let $C^k_{\loc}(\sO\cup T)$ denote the vector space of functions, $u$, such that, for any precompact open subset $U\Subset \sO \cup T$, we have $u \in C^k(\bar U)$. We adopt the convention that $C(\sO\cup T) = C_{\loc}(\bar\sO)$ even when $T=\partial\sO$.

\subsection{Acknowledgments} I am grateful to Panagiota Daskalopoulos and Camelia Pop for many engaging discussions on degenerate partial differential equations. I am especially grateful to the anonymous referee for many helpful suggestions and comments. I was very saddened to learn of the recent death of Peter Laurence, with whom I had many conversations on mathematics closely related to this article and who had alerted me to several important references. I will always be grateful for his kindness to me and for generously sharing his ideas, insights, and questions.

\part{Weak and strong maximum principles for operators on smooth functions and applications to boundary value and obstacle problems}
\label{part:BoundaryValueObstacleProblems}
In this part of our article (sections \ref{sec:Applications_weak_maximum_principle_property_boundary_value_problems}, \ref{sec:Applications_weak_maximum_principle_property_obstacle_problems}, \ref{sec:Strong_maximum_principle_elliptic}, and \ref{sec:Weak_maximum_principle_elliptic_C2}), we develop weak and strong maximum principles for operators on smooth functions and their applications to boundary value and obstacle problems.

\section{Applications of the weak maximum principle property to boundary value problems}
\label{sec:Applications_weak_maximum_principle_property_boundary_value_problems}
We shall encounter many different situations (for example, operators on bounded or unbounded open subsets, bounded functions or unbounded functions with prescribed growth, and so on) where a basic maximum principle holds for a linear, second-order, partial differential operator acting on a convex cone of functions in $C^2(\sO)$ or $W^{2,d}_{\loc}(\sO)$. In order to unify our treatment of applications, we find it useful to isolate a key `weak maximum principle property' (Definition \ref{defn:Weak_maximum_principle_property_elliptic_C2_W2d}) and then derive the consequences which necessarily follow in an essentially formal manner. In this section, we consider applications to boundary value problems. In Section \ref{subsec:Applications_weak_maximum_principle_property_boundary_value_problems}, for functions in $C^2(\sO)$ or $W^{2,d}_{\loc}(\sO)$, we establish a comparison principle for subsolutions and supersolutions and uniqueness for solutions to the Dirichlet boundary problem (Proposition \ref{prop:Comparison_principle_elliptic_boundary_value_problem_C2_W2d}) and \apriori weak maximum principle estimates (Proposition \ref{prop:Elliptic_weak_maximum_principle_apriori_estimates_C2_W2d}). In Section \ref{subsec:Weak_maximum_principle_C2_W2d_unbounded_function}, we show that when an operator has the weak maximum principle property for functions which are bounded above, the property may also hold for unbounded functions which obey a growth condition (Theorem \ref{thm:Weak_maximum_principle_C2_W2d_unbounded_function}).

\subsection{Applications of the weak maximum principle property to boundary value problems}
\label{subsec:Applications_weak_maximum_principle_property_boundary_value_problems}
The weak maximum principle property (Definition \ref{defn:Weak_maximum_principle_property_elliptic_C2_W2d}) immediately yields a comparison principle and thus uniqueness for solutions to the equation \eqref{eq:Elliptic_equation} with partial Dirichlet condition \eqref{eq:Elliptic_boundary_condition}.

\begin{prop}[Comparison principle and uniqueness for solutions to the Dirichlet boundary value problem]
\label{prop:Comparison_principle_elliptic_boundary_value_problem_C2_W2d}
Let $\sO\subset\RR^d$ be an open subset and $A$ in \eqref{eq:Generator} have the weak maximum principle property on $\sO\cup\Sigma$ in the sense of Definition \ref{defn:Weak_maximum_principle_property_elliptic_C2_W2d}, for a convex cone $\fK\subset C^2(\sO)$ (respectively, $W^{2,d}_{\loc}(\sO)$) and open subset $\Sigma\subseteqq\partial\sO$. Suppose that $u, -v\in \fK$. If $Au \leq Av$ (a.e.) on $\sO$ and $u^* \leq v_*$ on $\partial\sO\less\bar\Sigma$, then $u \leq v$ on $\sO$. If $Au = Av$ (a.e.) on $\sO$ and $u^* = v_*$ on $\partial\sO\less\bar\Sigma$, then $u = v$ on $\sO$ and $u=v\in C(\sO\cup \partial\sO\less\bar\Sigma)$.
\end{prop}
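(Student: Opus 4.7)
The plan is to reduce the comparison principle to the weak maximum principle property (Definition \ref{defn:Weak_maximum_principle_property_elliptic_C2_W2d}) applied to $w := u - v$. First, since $\fK$ is a convex cone and $u,\, -v \in \fK$, we have $w = u + (-v) = 2\bigl(\tfrac{1}{2}u + \tfrac{1}{2}(-v)\bigr) \in \fK$ by convexity followed by nonnegative scaling. By linearity of $A$ in \eqref{eq:Generator}, the hypothesis $Au \leq Av$ (a.e.) on $\sO$ becomes $Aw \leq 0$ (a.e.) on $\sO$.

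Next I would verify the boundary comparison $w^* \leq 0$ on $\partial\sO \less \bar\Sigma$. Using the subadditivity of the upper semicontinuous envelope on $\bar\sO$, namely $(f+g)^* \leq f^* + g^*$, and the identity $(-v)^* = -v_*$, one obtains
\[
w^* = (u-v)^* \leq u^* + (-v)^* = u^* - v_*.
\]
The hypothesis $u^* \leq v_*$ on $\partial\sO \less \bar\Sigma$ then yields $w^* \leq 0$ there. Invoking the weak maximum principle property for $\fK$ gives $w \leq 0$ on $\sO$, that is, $u \leq v$ on $\sO$.

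For the equality statement, I would apply the first part twice: first to $(u,v)$ and then to $(v,u)$, each time noting that the pair $(u,-v)$ (resp.\ $(v,-u)$) lies in $\fK$ by hypothesis, so both give $u \leq v$ and $v \leq u$, hence $u = v$ on $\sO$. For the continuity claim on $\sO \cup (\partial\sO \less \bar\Sigma)$, since $u \equiv v$ on $\sO$ we have $u_* = v_*$ on all of $\bar\sO$ (envelopes depend only on values in $\sO$). Combined with the hypothesis $u^* = v_*$ on $\partial\sO \less \bar\Sigma$, this gives $u^* = u_*$ on $\partial\sO \less \bar\Sigma$, so at each such boundary point $x_0$ the limits $\limsup_{\sO \ni x' \to x_0} u(x')$ and $\liminf_{\sO \ni x' \to x_0} u(x')$ coincide, defining a continuous extension of $u$ (and of $v$) to $\sO \cup (\partial\sO \less \bar\Sigma)$.

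The argument is essentially formal, so there is no real obstacle; the only mildly delicate points are confirming that $\fK$ is closed under addition (using both defining properties of a convex cone) and invoking subadditivity of the upper semicontinuous envelope rather than trying to pass limits separately in $u$ and $v$. In the $W^{2,d}_{\loc}(\sO)$ case, Sobolev embedding $W^{2,d}_{\loc}(\sO) \hookrightarrow C(\sO)$ (cited just before Definition \ref{defn:Weak_maximum_principle_property_elliptic_C2_W2d}) ensures these envelopes are well defined, so no separate argument is needed in that setting.
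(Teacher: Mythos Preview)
Your proof is correct and follows essentially the same approach as the paper's: apply the weak maximum principle property to $w = u - v$, swap roles for uniqueness, and deduce continuity from $u^* = u_*$ on $\partial\sO\less\bar\Sigma$. You are in fact more careful than the paper in justifying $w \in \fK$ via the convex-cone axioms and in invoking subadditivity of the upper envelope to get $(u-v)^* \leq u^* - v_*$.
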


\begin{proof}
Since $u$ is a subsolution and $v$ a supersolution, then $u-v$ is a subsolution with $u^*-v_*\leq 0$ on $\partial\sO\less\bar\Sigma$ and thus
$$
u-v \leq 0 \quad\hbox{on }\sO,
$$
because $A$ has  weak maximum principle property on $\sO\cup\Sigma$. When $u, v$ are both solutions, then we also obtain $v-u \leq 0$ on $\sO$ and so $v = u$ on $\sO$. Since $u^*=u_*$ on $\partial\sO\less\bar\Sigma$, we must have that $u$ is continuous on $\partial\sO\less\bar\Sigma$ and thus on $\sO\cup \partial\sO\less\bar\Sigma$.
\end{proof}

We can now proceed to give the expected \apriori estimates.

\begin{prop}[\emph{A priori} weak maximum principle estimates for functions in $C^2(\sO)$ or $W^{2,d}_{\loc}(\sO)$]
\label{prop:Elliptic_weak_maximum_principle_apriori_estimates_C2_W2d}
Let $\sO\subset\RR^d$ be an open subset and $A$ in \eqref{eq:Generator} have the weak maximum principle property on $\sO\cup\Sigma$ in the sense of Definition \ref{defn:Weak_maximum_principle_property_elliptic_C2_W2d}, for a convex cone $\fK\subset C^2(\sO)$ (respectively, $W^{2,d}_{\loc}(\sO)$) containing the constant function $1$ and open subset $\Sigma\subseteqq\partial\sO$. Suppose that $u, -v\in \fK$.
\begin{enumerate}
\item\label{item:Subsolution_Au_leq_zero} If $c \geq 0$ on $\sO$ and $Au\leq 0$ on $\sO$, then
$$
u\leq 0 \vee \sup_{\partial\sO\less\Sigma}u^* \quad\hbox{on } \sO.
$$
\item\label{item:Subsolution_Au_arb_sign} If $Au$ has arbitrary sign and there is a constant $c_0>0$ such that
$c\geq c_0$ on $\sO$, then
$$
u\leq 0 \vee \frac{1}{c_0}\sup_\sO Au \vee \sup_{\partial\sO\less\Sigma}u^* \quad\hbox{on } \sO.
$$
\item\label{item:Supersolution_Au_geq_zero} If $c \geq 0$ on $\sO$ and $Av\geq 0$ on $\sO$, then
$$
v\geq 0 \wedge \inf_{\partial\sO\less\Sigma}v_* \quad\hbox{on } \sO.
$$
\item\label{item:Supersolution_Au_arb_sign} If $Av$ has arbitrary sign and $c\geq c_0$ on $\sO$ for a positive constant $c_0$, then
$$
v\geq 0 \wedge \frac{1}{c_0}\inf_\sO Av \wedge \inf_{\partial\sO\less\Sigma}v_* \quad\hbox{on } \sO.
$$
\item\label{item:Solution_Au_zero} If $c \geq 0$ on $\sO$ and $Au=0$ on $\sO$ and $u \in C(\sO\cup\partial\sO\less\bar\Sigma)$ and $u \in \fK\cap -\fK$, then
$$
\|u\|_{C(\bar\sO)} \leq \|u\|_{C(\partial\sO\less\Sigma)}.
$$
\item\label{item:Solution_Au_arb_sign} If $Au$ has arbitrary sign and $c\geq c_0$ on $\sO$ for a positive constant $c_0$ and $u \in C(\sO\cup\partial\sO\less\bar\Sigma)$ and $u \in \fK\cap -\fK$, then
$$
\|u\|_{C(\bar\sO)} \leq \frac{1}{c_0}\|Au\|_{C(\bar \sO)}\vee\|u\|_{C(\partial\sO\less\Sigma)}.
$$
\end{enumerate}
The terms $\sup_{\partial\sO\less\Sigma} u^*$, and $\inf_{\partial\sO\less\Sigma}v_*$, and $\|u\|_{C(\partial\sO\less\Sigma)}$ in the preceding items are omitted when $\Sigma=\partial\sO$. When $\fK\subset W^{2,d}_{\loc}(\sO)$, then inequalities involving $c$ and $Au$ or $Av$ may hold a.e. on $\sO$ and we write $\esssup_\sO Au$ and $\essinf_\sO Av$ and $\|Au\|_{L^\infty(\sO)}$ in place of $\sup_\sO Au$ and $\inf_\sO Av$ and $\|Au\|_{C(\bar \sO)}$.
\end{prop}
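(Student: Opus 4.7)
My plan is to reduce each of the six items to a direct application of the weak maximum principle property (Definition \ref{defn:Weak_maximum_principle_property_elliptic_C2_W2d}) to a suitably shifted auxiliary function of the form $\tilde u := u - K$ for a non-negative constant $K$ tailored to that item. Because $\fK$ is a convex cone containing the constant function $1$, and the cones actually appearing in the applications (for instance, $\{u \in C^2_s(\underline\sO) : \sup_\sO u < \infty\}$, $C^2(\sO)\cap C(\bar\sO)$, and $W^{2,d}_{\loc}(\sO)\cap C(\bar\sO)$) are stable under subtraction of a non-negative constant, the auxiliary function $\tilde u$ again lies in $\fK$, so Definition \ref{defn:Weak_maximum_principle_property_elliptic_C2_W2d} applies to it.

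For item \eqref{item:Subsolution_Au_leq_zero} I would take $K := 0 \vee \sup_{\partial\sO\less\Sigma} u^*$; then $K \geq 0$, so $A\tilde u = Au - cK \leq 0$ thanks to $Au \leq 0$ and $c \geq 0$, while $\tilde u^* = u^* - K \leq 0$ on $\partial\sO\less\bar\Sigma$ by construction, and Definition \ref{defn:Weak_maximum_principle_property_elliptic_C2_W2d} yields $\tilde u \leq 0$ on $\sO$, i.e., $u \leq K$. For item \eqref{item:Subsolution_Au_arb_sign} I would instead set $K := 0 \vee c_0^{-1}\sup_\sO Au \vee \sup_{\partial\sO\less\Sigma} u^*$; the chain $cK \geq c_0 K \geq \sup_\sO Au \geq Au$ (using $K \geq 0$ and $c \geq c_0 > 0$) again gives $A\tilde u \leq 0$, the boundary bound holds by construction, and the weak maximum principle property closes the argument.

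Items \eqref{item:Supersolution_Au_geq_zero} and \eqref{item:Supersolution_Au_arb_sign} are dual: since $-v \in \fK$ and $A(-v) = -Av \leq 0$, I would apply items \eqref{item:Subsolution_Au_leq_zero} and \eqref{item:Subsolution_Au_arb_sign} to $-v$ in place of $u$ and rearrange. Items \eqref{item:Solution_Au_zero} and \eqref{item:Solution_Au_arb_sign} then combine these: the assumption $u \in \fK \cap (-\fK)$ together with $Au = 0$ (respectively, with $Au$ of arbitrary sign) lets both $u$ and $-u$ serve as subsolutions in the earlier items, and the continuity $u \in C(\sO \cup \partial\sO\less\bar\Sigma)$ identifies $u^* = u_* = u$ along $\partial\sO\less\Sigma$, so combining the two-sided bounds produces the claimed sup-norm estimates. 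The $W^{2,d}_{\loc}$ variants are identical with $\sup$/$\inf$ replaced by $\esssup$/$\essinf$ throughout. The only non-routine checkpoint is the stability of $\fK$ under the downward shift $u \mapsto u - K$ for $K \geq 0$, which is automatic for the cones of interest and which I would verify in place rather than promote to an abstract hypothesis.
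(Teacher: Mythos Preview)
Your proposal is correct and follows essentially the same route as the paper: the paper defines the same constant $M$ (your $K$), observes $AM = cM \geq Au$ and $M \geq u^*$ on $\partial\sO\less\bar\Sigma$, and then invokes the comparison principle (Proposition~\ref{prop:Comparison_principle_elliptic_boundary_value_problem_C2_W2d}), whose proof is itself just the weak maximum principle property applied to $u-M$. The only cosmetic difference is that you apply Definition~\ref{defn:Weak_maximum_principle_property_elliptic_C2_W2d} directly to $u-K$ rather than passing through Proposition~\ref{prop:Comparison_principle_elliptic_boundary_value_problem_C2_W2d}; your flagged caveat about stability of $\fK$ under $u\mapsto u-K$ is exactly the implicit requirement $-M\in\fK$ that the paper's route also needs, and both are handled the same way (verified for the concrete cones).
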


The \apriori estimate in Item \eqref{item:Solution_Au_arb_sign} may be compared with \cite[Theorem 1.1.2]{Radkevich_2009a} (in the case of $C^2$ functions) and \cite[Theorem 1.5.1 and 1.5.5]{Radkevich_2009a} and \cite[Lemma 2.8]{Troianiello} (in the case of $H^1$ functions). However, because the coefficient matrix $a$ of $A$ in \eqref{eq:Generator} is zero along $\Sigma\subseteqq\partial\sO$ in applications considered in this article, there is no analogue of \cite[Theorem 3.7 and Corollary 3.8]{GilbargTrudinger}.

\begin{proof}[Proof of Proposition \ref{prop:Elliptic_weak_maximum_principle_apriori_estimates_C2_W2d}]
For Items \eqref{item:Subsolution_Au_leq_zero} and \eqref{item:Subsolution_Au_arb_sign}, we describe the proof when $\Sigma\subsetneqq\partial\sO$; the proof for the case $\Sigma=\partial\sO$ is the same except that the suprema on the right-hand side are replaced by zero. When $Au$ has arbitrary sign, choose
$$
M :=  0 \vee \frac{1}{c_0}\sup_\sO Au \vee \sup_{\partial\sO\less\Sigma}u^*,
$$
while if $Au\leq 0$, choose
$$
M :=  0 \vee \sup_{\partial\sO\less\Sigma}u^*.
$$
We may assume without loss of generality that $M<\infty$. We have $M\geq u^*$ on $\partial\sO\less\bar\Sigma$ and
$$
AM = cM \geq c_0M \geq Au \quad\hbox{on }\sO \quad\hbox{(by \eqref{eq:c_positive_lower_bound_domain})},
$$
when $Au$ has arbitrary sign and, when $Au\leq 0$, we have
$$
AM = cM \geq 0 \geq Au \quad\hbox{on }\sO \quad\hbox{(by \eqref{eq:c_nonnegative_domain})}.
$$
Thus, $u\leq M$ on $\sO$ by Proposition \ref{prop:Comparison_principle_elliptic_boundary_value_problem_C2_W2d}, which gives Items \eqref{item:Subsolution_Au_leq_zero} and \eqref{item:Subsolution_Au_arb_sign}. Items \eqref{item:Supersolution_Au_geq_zero} and \eqref{item:Supersolution_Au_arb_sign} follow from Items \eqref{item:Subsolution_Au_leq_zero} and \eqref{item:Subsolution_Au_arb_sign} by choosing $u = -v$. Item \eqref{item:Solution_Au_zero} follows by combining Items \eqref{item:Subsolution_Au_leq_zero} and \eqref{item:Supersolution_Au_geq_zero}, while Item \eqref{item:Solution_Au_arb_sign} follows by combining Items \eqref{item:Subsolution_Au_arb_sign} and \eqref{item:Supersolution_Au_arb_sign}.
\end{proof}

\subsection{Applications of the weak maximum principle property to unbounded subharmonic functions}
\label{subsec:Weak_maximum_principle_C2_W2d_unbounded_function}
If an operator has the weak maximum principle property for subsolutions which are bounded above, we obtain an extension for subsolutions which instead obey a growth condition.

\begin{thm}[Weak maximum principle for unbounded $A$-subharmonic functions in $C^2(\sO)$ or $W^{2,d}_{\loc}(\sO)$ on unbounded open subsets]
\label{thm:Weak_maximum_principle_C2_W2d_unbounded_function}
Let $\sO\subseteqq\RR^d$ be a possibly unbounded open subset and $\varphi\in C^2(\sO)$ obey $0<\varphi\leq 1$ on $\sO$. Let $A$ be an operator as in \eqref{eq:Generator} and
\begin{equation}
\label{eq:First_order_operator}
Bv := -[A, \varphi](\varphi^{-1}v), \quad\forall\, v\in C^2(\sO),
\end{equation}
and suppose that the differential operator,
\begin{equation}
\label{eq:Defn_hatA_operator}
\widehat A := (A+B)v, \quad\forall\, v\in C^2(\sO),
\end{equation}
has the weak maximum principle property on $\sO\cup\Sigma$ in the sense of Definition \ref{defn:Weak_maximum_principle_property_elliptic_C2_W2d}, for a convex cone $\fK\subset C^2(\sO)$ (respectively, $W^{2,d}_{\loc}(\sO)$) and open subset $\Sigma\subseteqq\partial\sO$, for functions $u\in\fK$ which are \emph{bounded above}, so $\sup_\sO u<\infty$. Then $A$ has the weak maximum principle property on $\sO\cup\Sigma$ for functions $u\in\fK$ which obey the growth condition,
\begin{equation}
\label{eq:Upper_bound_subsolution}
u \leq C\left(1+\varphi^{-1}\right) \quad\hbox{on }\sO.
\end{equation}
\end{thm}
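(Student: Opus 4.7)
The plan is to reduce the theorem to the hypothesized weak maximum principle for $\widehat A$ by means of the substitution $v := \varphi u$, so that $u = \varphi^{-1}v$. The key observation is the identity
$$
\widehat A v = \varphi\, A u \quad\hbox{on } \sO.
$$
To verify it, note that by the definition of the commutator, $[A,\varphi]u = A(\varphi u) - \varphi A u = Av - \varphi Au$, hence $Av = \varphi A u + [A,\varphi](\varphi^{-1}v)$. Subtracting $[A,\varphi](\varphi^{-1}v)$ from both sides and using $Bv = -[A,\varphi](\varphi^{-1}v)$ gives $\widehat A v = Av + Bv = \varphi A u$. This is the only algebraic input; all coefficient conditions are inherited through $\varphi$ rather than imposed separately.

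Granting the identity, I would next transfer the three hypotheses on $u$ into the corresponding hypotheses on $v$ required by the weak maximum principle property of $\widehat A$. Since $\varphi > 0$, the assumption $Au \leq 0$ (a.e.) on $\sO$ yields $\widehat A v \leq 0$ (a.e.) on $\sO$. The growth hypothesis $u \leq C(1+\varphi^{-1})$ and $\varphi \leq 1$ give
$$
v = \varphi u \leq C\varphi + C \leq 2C \quad\hbox{on }\sO,
$$
so $v$ is bounded above. Finally, because $0<\varphi\leq 1$, we have the pointwise inequality $\varphi u \leq u^+$ on $\sO$, and since $t\mapsto t^+$ is continuous and nondecreasing, taking upper semicontinuous envelopes yields $v^* \leq (u^+)^* = (u^*)^+$ on $\bar\sO$. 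Thus on $\partial\sO\less\bar\Sigma$ the hypothesis $u^* \leq 0$ forces $v^* \leq 0$.

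With $v\in \fK$ bounded above, $\widehat A v \leq 0$ on $\sO$, and $v^* \leq 0$ on $\partial\sO\less\bar\Sigma$, the hypothesized weak maximum principle property of $\widehat A$ on $\sO\cup\Sigma$ for bounded-above elements of $\fK$ yields $v\leq 0$ on $\sO$. Since $\varphi>0$, this gives $u\leq 0$ on $\sO$, establishing the weak maximum principle property for $A$ on functions obeying the growth bound \eqref{eq:Upper_bound_subsolution}.

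The only nontrivial steps are the commutator identity, which is a direct computation, and the upper semicontinuity inequality $v^*\leq (u^*)^+$, which is essentially the monotonicity of $\limsup$ under continuous nondecreasing transformations. A minor point to flag is that applying the weak maximum principle for $\widehat A$ to $v$ requires $v=\varphi u \in \fK$; in the intended applications $\fK$ is $C^2(\sO)$ or $W^{2,d}_{\loc}(\sO)$ itself (possibly intersected with a growth class), so closure under multiplication by $\varphi\in C^2(\sO)$ is automatic, but this assumption on $\fK$ should be recorded explicitly.
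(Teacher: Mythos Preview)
Your proof is correct and follows essentially the same route as the paper: set $v=\varphi u$, verify $\widehat A v=\varphi Au$ via the commutator identity, use the growth bound and $\varphi\leq 1$ to get $v\leq 2C$, and apply the weak maximum principle for $\widehat A$. In fact you are slightly more careful than the paper, which omits the explicit boundary-condition check $v^*\leq 0$ on $\partial\sO\less\bar\Sigma$ and the remark about $\varphi u\in\fK$.
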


\begin{proof}
Suppose $u\in \fK$ obeys \eqref{eq:Upper_bound_subsolution} and $Au\leq 0$ (a.e.) on $\sO$. Clearly, we have
\begin{align*}
\widehat A(\varphi u) &= A\varphi u + B\varphi u \quad\hbox{(by \eqref{eq:Defn_hatA_operator})}
\\
&= \varphi Au + [A, \varphi]u + B\varphi u
\\
&= \varphi Au \quad\hbox{(by \eqref{eq:First_order_operator})}
\\
&\leq 0 \quad\hbox{(a.e.) on }\sO.
\end{align*}
Since $\varphi u\leq C(\varphi+1) \leq 2C$ on $\sO$ by \eqref{eq:Upper_bound_subsolution}, then $\varphi u \leq 0$ on $\sO$ since $\widehat A$ has the weak maximum principle property on $\sO\cup\Sigma$ for functions $u$ on $\sO$ which are bounded above. Thus, $u \leq 0$ on $\sO$.
\end{proof}

\section{Applications of the weak maximum principle property to obstacle problems}
\label{sec:Applications_weak_maximum_principle_property_obstacle_problems}
We now turn to the application of the weak maximum principle property to obstacle problems. The application is complicated by the fact that the optimal interior regularity of solutions to obstacle problems is $C^{1,1}(\sO)$, rather than $C^2(\sO)$, and indeed classical existence theory (see, for example, \cite[Theorem 1.3.2]{Friedman_1982}) initially only yields solutions in $W^{2,p}(\sO)$, for $1< p<\infty$. For this reason, we consider functions in $W^{2,d}_{\loc}(\sO)$ and prove a comparison principle for supersolutions and uniqueness for solutions to the obstacle problem (Proposition \ref{prop:Comparison_principle_uniqueness_obstacle_problem}) and then derive \apriori maximum principle estimates for supersolutions and solutions to the obstacle problem (Proposition \ref{prop:Elliptic_weak_maximum_principle_apriori_estimates_obstacle_problem}). We begin with the

\begin{defn}[Solution and supersolution to an obstacle problem]
\label{defn:Solution_supersolution_obstacle_problem}
Let $\sO\subset\RR^d$ be an open subset, $p\geq 1$, and $A$ as in \eqref{eq:Generator}. Given $f\in L^p_{\loc}(\sO)$ and $\psi\in L^p_{\loc}(\sO)$, we call $u\in W^{2,p}_{\loc}(\sO)$ a \emph{solution} (respectively, \emph{supersolution}) to the obstacle problem \eqref{eq:Elliptic_obstacle_problem} if
$$
\min\{Au-f,u-\psi\} = 0\ (\geq 0)  \quad\hbox{a.e. on }\sO.
$$
Furthermore, given $g \in C(\partial\sO\less\bar\Sigma)$ and $\psi$ also belonging to $C(\partial\sO\less\bar\Sigma)$ and obeying the compatibility condition \eqref{eq:Elliptic_obstacle_boundarydata_compatibility}, that is, $\psi\leq g$ on $\partial\sO\less\bar\Sigma$, we call $u$ a \emph{solution} to the obstacle problem with partial Dirichlet condition if in addition $u$ belongs to $C(\partial\sO\less\bar\Sigma)$ and is a \emph{solution} (respectively, \emph{supersolution}) to \eqref{eq:Elliptic_boundary_condition}, so
$$
u = g\ (\geq g)  \quad\hbox{a.e. on } \partial\sO\less\bar\Sigma.
$$
\end{defn}

Proposition \ref{prop:Comparison_principle_uniqueness_obstacle_problem} below for solutions to the obstacle problem is an analogue of Theorem \ref{thm:WeakMaximumPrincipleVariationalInequality}, which applies to $H^1(\sO,\fw)$ solutions to a variational inequality, and Theorem \ref{thm:WeakMaximumPrincipleH2ObstacleProblem}, which applies to $H^2(\sO,\fw)$, that is, strong solutions to the obstacle problem. We may also compare Propositions \ref{prop:Comparison_principle_uniqueness_obstacle_problem} and \ref{prop:Elliptic_weak_maximum_principle_apriori_estimates_obstacle_problem} with \cite[Theorems 4.5.1, 4.6.1, 4.6.6, and 4.7.4, and Corollary 4.5.2]{Rodrigues_1987} for the case of variational inequalities.

\begin{prop}[Comparison principle for $W^{2,d}_{\loc}$ supersolutions and uniqueness for $W^{2,d}_{\loc}$ solutions to the obstacle problem]
\label{prop:Comparison_principle_uniqueness_obstacle_problem}
Let $\sO\subset\RR^d$ be an open subset, $\fK\subset W^{2,d}_{\loc}(\sO)$ a convex cone
and $\Sigma\subseteqq\partial\sO$ an open subset. For every open subset $\sU\subset\sO$, let $A$ in \eqref{eq:Generator} have the weak maximum principle property on $\sU\cup\Sigma$ in the sense of Definition \ref{defn:Weak_maximum_principle_property_elliptic_C2_W2d} for the cone $\fK$. Let $f\in L^d_{\loc}(\sO)$ and $\psi\in L^d_{\loc}(\sO)$. Suppose $u\in \fK$ (respectively, $v\in -\fK$) is a solution (respectively, supersolution) to the obstacle problem,
$$
\min\{Au-f, \ u-\psi\} = 0 \ (\geq  0) \quad\hbox{a.e. on } \sO.
$$
If $v_*\geq u^*$ on $\partial\sO\less\bar\Sigma$, then $v \geq u$ on $\sO$; if $u, v$ are solutions and $v_* = u^*$ on $\partial\sO\less\bar\Sigma$, then $u = v$ on $\sO$.
\end{prop}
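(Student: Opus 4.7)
The plan is to reduce everything to a single application of the weak maximum principle property on the open subset where $u$ strictly exceeds $v$, where the obstacle constraint is automatically non-binding. Set $w := u - v$ and note that, by the Sobolev embedding $W^{2,d}_{\loc}(\sO) \hookrightarrow C(\sO)$, both $u$ and $v$ are continuous on $\sO$, so
$$
\sU := \{x \in \sO : u(x) > v(x)\}
$$
is an open subset of $\sO$. Since $\fK$ is a convex cone containing both $u$ and $-v$, the difference $w = u + (-v)$ also lies in $\fK$.

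Next I would verify that $w$ is $A$-subharmonic a.e.\ on $\sU$. The supersolution condition forces $v \geq \psi$ a.e.\ on $\sO$, so a.e.\ on $\sU$ one has $u > v \geq \psi$; in other words, $u - \psi > 0$ a.e.\ on $\sU$, so the obstacle constraint is non-binding there. The equation $\min\{Au - f,\, u - \psi\} = 0$ then forces $Au = f$ a.e.\ on $\sU$, while the supersolution condition on $v$ gives $Av \geq f$ a.e.\ on $\sU$. Subtracting yields $Aw \leq 0$ a.e.\ on $\sU$.

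It then remains to verify $w^* \leq 0$ on $\partial\sU \less \bar\Sigma$ and to invoke the weak maximum principle property of $A$ on $\sU \cup \Sigma$ (applied to the convex cone $\fK$) to deduce $w \leq 0$ on $\sU$, which is incompatible with $w > 0$ on $\sU$ unless $\sU = \emptyset$. For the boundary check, I would split $\partial\sU \less \bar\Sigma$ into the interior portion $\partial\sU \cap \sO$ and the exterior portion $\partial\sU \cap (\partial\sO \less \bar\Sigma)$. On the first, continuity of $u - v$ together with the definition of $\sU$ forces $u(x) = v(x)$, so $w^*(x) = 0$. On the second, the general envelope inequality $w^*(x) \leq u^*(x) - v_*(x)$, combined with the hypothesis $u^* \leq v_*$ on $\partial\sO \less \bar\Sigma$, gives $w^*(x) \leq 0$. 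This completes the proof that $u \leq v$ on $\sO$; the uniqueness statement follows by symmetrically reversing the roles of $u$ and $v$, which requires both to lie in $\fK \cap (-\fK)$.

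The only real subtlety is the application of the weak maximum principle on the variable subdomain $\sU$: Definition \ref{defn:Weak_maximum_principle_property_elliptic_C2_W2d} is stated with a fixed $\Sigma \subseteq \partial\sO$, and the hypothesis of the proposition supplies the property on every $\sU \subset \sO$ with the same $\Sigma$. One must interpret the relevant boundary portion for $\sU$ as $\Sigma \cap \partial\sU$ (or more precisely, as the portion of $\partial\sU$ whose complement consists of points where comparison is already provided), so that the bookkeeping forces the envelope inequality above to cover exactly the right set $\partial\sU \less \bar\Sigma$. Beyond this, and the routine verification that $u - v$ inherits the $\fK$-regularity used in the hypothesis on $\sU$, the argument is essentially formal.
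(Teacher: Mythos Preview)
Your proposal is correct and follows essentially the same route as the paper's own proof: define $\sU=\{u>v\}$, use the obstacle inequality $v\geq\psi$ to conclude that $Au=f$ a.e.\ on $\sU$ (hence $A(u-v)\leq 0$ there), verify $w^*\leq 0$ on $\partial\sU\less\bar\Sigma$ by splitting into interior and exterior boundary portions, and apply the weak maximum principle property on $\sU\cup(\Sigma\cap\partial\sU)$ to reach a contradiction. Your treatment is in fact slightly more explicit than the paper's, which simply asserts ``$A(u-v)\leq 0$ a.e.\ on $\sU$ by hypothesis'' without spelling out the non-binding argument, and your identification of the subtlety about interpreting the boundary portion $\Sigma\cap\partial\sU$ matches exactly what the paper does.
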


Note that the weak maximum principle property hypothesis on $A$ in Propositions \ref{prop:Comparison_principle_uniqueness_obstacle_problem} and \ref{prop:Elliptic_weak_maximum_principle_apriori_estimates_obstacle_problem} is stronger than that in Propositions \ref{prop:Comparison_principle_elliptic_boundary_value_problem_C2_W2d} and \ref{prop:Elliptic_weak_maximum_principle_apriori_estimates_C2_W2d}.

\begin{proof}[Proof of Proposition \ref{prop:Comparison_principle_uniqueness_obstacle_problem}]
The proof is similar to the argument for the argument for the parabolic version \cite[Proposition 3.2]{Feehan_parabolicmaximumprinciple} of Proposition \ref{prop:Comparison_principle_uniqueness_obstacle_problem}, but we include the details for completeness. Suppose $\sU := \sO\cap\{u>v\}$ is non-empty. We have
$$
\partial\sU = \left(\sO\cap\partial\{u>v\}\right)\cup\left(\{u>v\}\cap \partial\sO\right)\cup\left(\partial\{u>v\}\cap \partial\sO\right),
$$
and hence we see that
$$
\partial\sU\less\bar\Sigma = \left(\sO\cap\partial\{u>v\}\less\bar\Sigma\right)\cup\left(\{u>v\}\cap \partial\sO\less\bar\Sigma\right)\cup\left(\partial\{u>v\}\less\bar\Sigma\cap \partial\sO\less\bar\Sigma\right).
$$
Because $u\leq v$ on $\partial\sO\less\bar\Sigma$ (when non-empty) and $u=v$ on $\partial\sU$, so $u=v$ on $(\sO\cup\partial\sO\less\bar\Sigma)\cap\partial\{u>v\}\less\bar\Sigma$, we must have
$$
u-v \leq 0 \quad\hbox{on } \partial\sU\less\bar\Sigma.
$$
We have $u-v\in \fK$ and $A(u-v)\leq 0$ a.e on $\sU$ by hypothesis, so $u-v\leq 0$ on $\sU$ since $A$ has the weak maximum principle property on $\sU\cup(\Sigma\cap \partial\sU)$ for $\fK\cap W^{2,d}_{\loc}(\sU)$ in the sense of Definition \ref{defn:Weak_maximum_principle_property_elliptic_C2_W2d}, contradicting our assertion that $\sU$ is non-empty. Hence, $u\leq v$ on $\sO$.

If both $u$ and $v$ are solutions to the obstacle problem then, since any solution is also a supersolution by Definition \ref{defn:Solution_supersolution_obstacle_problem}, we may reverse the roles of $u$ and $v$ in the preceding argument to give $v\leq u$ on $\sO$ and thus $u=v$ on $\sO$.
\end{proof}

We then have the

\begin{prop}[Weak maximum principle and \apriori estimates for supersolutions and solutions to obstacle problems]
\label{prop:Elliptic_weak_maximum_principle_apriori_estimates_obstacle_problem}
Let $\sO\subset\RR^{d}$ be an open subset, $\fK\subset W^{2,d}_{\loc}(\sO)$ be a convex cone containing the constant function $1$, and
$\Sigma\subseteqq\partial\sO$ be an open subset. For every open subset $\sU\subset\sO$, let $A$ in \eqref{eq:Generator} have the weak maximum principle property on $\sU\cup\Sigma$ in the sense of
Proposition \ref{prop:Comparison_principle_uniqueness_obstacle_problem}. Assume that $c \geq 0$ a.e. on $\sO$. Let $f\in L^{d}_{\loc}(\sO)$, and $g\in C(\partial\sO\less\bar\Sigma)$, and $\psi\in C(\sO\cup\partial\sO\less\bar\Sigma)$ with $\psi\leq g$ on $\partial\sO\less\bar\Sigma$.
Suppose $u\in \fK\cap -\fK$ is a solution and $v\in -\fK$ is a supersolution to the obstacle problem in the sense of Definition \ref{defn:Solution_supersolution_obstacle_problem} for $f$ and $g$ and $\psi$.
\begin{enumerate}
\item \label{item:Obstacle_supersolution_f_geq_zero} If $f\geq 0$ a.e. on $\sO$, then
$$
v\geq 0 \wedge \inf_{\partial\sO\less\Sigma}g \quad\hbox{on }\sO.
$$
\item\label{item:Obstacle_supersolution_f_arb_sign} If $f$ has arbitrary sign but there is a constant $c_0>0$ such that $c\geq c_0$ a.e. on $\sO$, then
$$
v\geq 0 \wedge \frac{1}{c_0}\essinf_\sO f \wedge \inf_{\partial\sO\less\Sigma}g \quad\hbox{on }\sO.
$$
\item \label{item:Obstacle_solution_f_leq_zero} If $f\leq 0$ a.e on $\sO$, then
$$
u\leq 0 \vee \sup_{\partial\sO\less\Sigma}g \vee \sup_\sO\psi \quad\hbox{on }\sO.
$$
\item\label{item:Obstacle_solution_f_arb_sign} If $f$ has arbitrary sign but $c\geq c_0$ a.e. on $\sO$, then
$$
u\leq 0 \vee \frac{1}{c_0}\esssup_\sO f \vee \sup_{\partial\sO\less\Sigma}g \vee \sup_\sO\psi \quad\hbox{on }\sO.
$$
\item\label{item:Obstacle_comparison} If $u_1$ and $u_2$ are solutions, respectively, for $f_1\geq f_2$ a.e. on $\sO$ and $\psi_1\geq \psi_2$ on $\sO$, and $g_1\geq g_2$ on $\partial\sO\less\bar\Sigma$, then
$$
u_1\geq u_2 \quad\hbox{on }\sO.
$$
\item\label{item:Obstacle_stability} If $u_i$ is a solution for $f_i,\psi_i$ on $\sO$ and $g_i$ on $\partial\sO\less\bar\Sigma$ with $\psi_i\leq g_i$ on $\partial\sO\less\bar\Sigma$ for $i=1,2$, and $c\geq c_0$ a.e. on $\sO$, then
$$
\|u_1-u_2\|_{C(\bar \sO)} \leq \frac{1}{c_0}\|f_1-f_2\|_{L^\infty(\sO)} \vee \|g_1-g_2\|_{C(\partial\sO\less\Sigma)} \vee \|\psi_1-\psi_2\|_{C(\bar \sO)},
$$
and if $f_1=f_2$ and $c\geq 0$ a.e. on $\sO$, then
$$
\|u_1-u_2\|_{C(\bar \sO)} \leq \|g_1-g_2\|_{C(\partial\sO\less\Sigma)} \vee \|\psi_1-\psi_2\|_{C(\bar \sO)}.
$$
\end{enumerate}
The terms $\sup_{\partial\sO\less\Sigma} g$, and $\inf_{\partial\sO\less\Sigma}g$, and $\|g_1-g_2\|_{C(\partial\sO\less\Sigma)}$ in the preceding items are omitted when $\Sigma=\partial\sO$.
\end{prop}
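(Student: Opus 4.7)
The plan is to reduce each item to either Proposition \ref{prop:Elliptic_weak_maximum_principle_apriori_estimates_C2_W2d} (\emph{a priori} estimates for the equation) or Proposition \ref{prop:Comparison_principle_uniqueness_obstacle_problem} (the comparison principle for the obstacle problem) by constructing suitable comparison functions.

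For the supersolution lower bounds in Items \eqref{item:Obstacle_supersolution_f_geq_zero} and \eqref{item:Obstacle_supersolution_f_arb_sign}, I would observe that, by Definition \ref{defn:Solution_supersolution_obstacle_problem}, a supersolution $v\in -\fK$ of the obstacle problem automatically satisfies $Av \geq f$ a.e. on $\sO$ together with $v_* \geq g$ on $\partial\sO\less\bar\Sigma$. Hence $v$ is in particular a supersolution of the equation $Av = f$ in the sense of Proposition \ref{prop:Elliptic_weak_maximum_principle_apriori_estimates_C2_W2d}, and Items \eqref{item:Supersolution_Au_geq_zero} and \eqref{item:Supersolution_Au_arb_sign} of that proposition immediately yield the desired estimates (using $\essinf_\sO Av \geq \essinf_\sO f$ and $\inf_{\partial\sO\less\Sigma} v_* \geq \inf_{\partial\sO\less\Sigma} g$).

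For the upper bounds on solutions in Items \eqref{item:Obstacle_solution_f_leq_zero} and \eqref{item:Obstacle_solution_f_arb_sign}, I would let $M$ denote the constant appearing on the right-hand side of each bound and check that the constant function $M\in\fK$ (since $1\in\fK$ and $M\geq 0$) is a supersolution of the obstacle problem with data $f, \psi, g$. Indeed, $AM = cM$, so in Item \eqref{item:Obstacle_solution_f_leq_zero} we have $AM\geq 0\geq f$ (using $c\geq 0$ and $f\leq 0$), while in Item \eqref{item:Obstacle_solution_f_arb_sign} we have $AM \geq c_0 M\geq \esssup_\sO f \geq f$ (using $c\geq c_0$ and the choice of $M$); the obstacle inequality $M\geq \sup_\sO\psi\geq \psi$ and the boundary inequality $M\geq \sup_{\partial\sO\less\Sigma}g\geq g$ are built into the definition of $M$. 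Then Proposition \ref{prop:Comparison_principle_uniqueness_obstacle_problem}, applied with $u\in\fK$ and $M$ viewed as a supersolution in $-\fK$, gives $u\leq M$ on $\sO$. Item \eqref{item:Obstacle_comparison} follows similarly: since $Au_1\geq f_1\geq f_2$, $u_1\geq \psi_1\geq \psi_2$ on $\sO$, and $u_1=g_1\geq g_2$ on $\partial\sO\less\bar\Sigma$, the solution $u_1$ is itself a supersolution for the data $(f_2,\psi_2,g_2)$, so Proposition \ref{prop:Comparison_principle_uniqueness_obstacle_problem} gives $u_1\geq u_2$.

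For the stability estimate in Item \eqref{item:Obstacle_stability}, set $\delta := c_0^{-1}\|f_1-f_2\|_{L^\infty(\sO)} \vee \|g_1-g_2\|_{C(\partial\sO\less\Sigma)} \vee \|\psi_1-\psi_2\|_{C(\bar\sO)}$ and verify that $u_2+\delta$ is a supersolution of the obstacle problem with data $(f_1,\psi_1,g_1)$:
\begin{align*}
A(u_2+\delta) &= Au_2 + c\delta \geq f_2 + c_0\delta \geq f_2 + \|f_1-f_2\|_{L^\infty(\sO)} \geq f_1\quad\hbox{a.e. on }\sO,\\
u_2+\delta &\geq \psi_2 + \|\psi_1-\psi_2\|_{C(\bar\sO)} \geq \psi_1\quad\hbox{on }\sO,\\
u_2+\delta &= g_2 + \delta \geq g_1\quad\hbox{on }\partial\sO\less\bar\Sigma.
\end{align*}
Proposition \ref{prop:Comparison_principle_uniqueness_obstacle_problem} then yields $u_1\leq u_2+\delta$; swapping the roles of $u_1,u_2$ gives the reverse inequality, so $\|u_1-u_2\|_{C(\bar\sO)}\leq \delta$. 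In the special case $f_1=f_2$ under the weaker hypothesis $c\geq 0$, the same argument works with $\delta:=\|g_1-g_2\|_{C(\partial\sO\less\Sigma)}\vee\|\psi_1-\psi_2\|_{C(\bar\sO)}$, since now $A(u_2+\delta) = Au_2+c\delta \geq f_2 = f_1$ without needing a positive lower bound for $c$. The only substantive point where care is required is the cone-membership bookkeeping needed to invoke Proposition \ref{prop:Comparison_principle_uniqueness_obstacle_problem} (the comparison functions $M$ and $u_2+\delta$ must lie in $-\fK$), which is handled by the hypotheses that $\fK$ contains the constant $1$ and that $u_1,u_2\in\fK\cap-\fK$; this is the main structural subtlety, while the inequalities above are routine.
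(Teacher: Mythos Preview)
Your proof is correct and follows essentially the same approach as the paper's: Items \eqref{item:Obstacle_supersolution_f_geq_zero}--\eqref{item:Obstacle_supersolution_f_arb_sign} are reduced to the equation estimates of Proposition \ref{prop:Elliptic_weak_maximum_principle_apriori_estimates_C2_W2d}, while Items \eqref{item:Obstacle_solution_f_leq_zero}--\eqref{item:Obstacle_stability} are handled by constructing constant or shifted supersolutions and invoking the obstacle comparison principle, Proposition \ref{prop:Comparison_principle_uniqueness_obstacle_problem}. Your explicit remark on the cone-membership bookkeeping ($M\in -\fK$ since $1\in\fK$ and $M\geq 0$, and $u_2+\delta\in -\fK$ since $u_2\in -\fK$ and $\delta\geq 0$) is a point the paper leaves implicit.
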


\begin{proof}
Consider Items \eqref{item:Obstacle_supersolution_f_geq_zero} and \eqref{item:Obstacle_supersolution_f_arb_sign}. Since $u$ is a supersolution to the obstacle problem \eqref{eq:Elliptic_obstacle_problem}, then it is also a supersolution to the boundary value problem \eqref{eq:Elliptic_equation} (where $\psi$ plays no role) and so Items \eqref{item:Obstacle_supersolution_f_geq_zero} and \eqref{item:Obstacle_supersolution_f_arb_sign} here just restate Items \eqref{item:Supersolution_Au_geq_zero} and \eqref{item:Supersolution_Au_arb_sign} in Proposition \ref{prop:Elliptic_weak_maximum_principle_apriori_estimates_C2_W2d}.

Consider Items \eqref{item:Obstacle_solution_f_leq_zero} and \eqref{item:Obstacle_solution_f_arb_sign} here. When $f\leq 0$ and $c\geq 0$ a.e. on $\sO$, let
$$
M := 0\vee \sup_{\partial\sO\less\Sigma}g \vee\sup_\sO\psi,
$$
while if $f$ has arbitrary sign and $c\geq c_0$ a.e. on $\sO$, let
$$
M := 0\vee \frac{1}{c_0}\esssup_\sO f \vee \sup_{\partial\sO\less\Sigma}g \vee\sup_\sO\psi.
$$
We may assume without loss of generality that $M<\infty$. Then $M\geq \psi$ on $\sO$ and $M\geq g$ on $\partial\sO\less\Sigma$, while
$$
AM = cM \geq 0 \geq f \quad\hbox{a.e. on }\sO,
$$
when $f\leq 0$ a.e. on $\sO$ and
$$
AM = cM \geq c_0M \geq f \quad\hbox{a.e. on }\sO,
$$
when $f$ has arbitrary sign. Hence, $M$ is a supersolution and so Proposition \ref{prop:Comparison_principle_uniqueness_obstacle_problem} implies that $u\leq M$ on $\sO$, which establishes Items \eqref{item:Obstacle_solution_f_leq_zero} and \eqref{item:Obstacle_solution_f_arb_sign}. For Item \eqref{item:Obstacle_comparison}, observe that $u_1$ is a supersolution for the obstacle problem in Definition \ref{defn:Solution_supersolution_obstacle_problem} given by $f_2,g_2,\psi_2$ and thus $u_1\geq u_2$ on $\sO$ by Proposition \ref{prop:Comparison_principle_uniqueness_obstacle_problem}. For Item \eqref{item:Obstacle_stability}, define
$$
m := \frac{1}{c_0}\|f_1-f_2\|_{L^\infty(\sO)} \vee \|g_1-g_2\|_{C(\partial\sO\less\Sigma)} \vee \|\psi_1-\psi_2\|_{C(\bar\sO)} \quad\hbox{and}\quad u := u_2 + m.
$$
Then
$$
Au = Au_2 + Am \geq f_2 + cm \geq f_2 + \esssup_\sO(f_1 - f_2) \geq f_1 \quad\hbox{a.e. on }\sO,
$$
while
$$
u \geq \psi_2 + \sup_\sO(\psi_1 - \psi_2) \geq \psi_1 \quad\hbox{on }\sO,
$$
and
$$
u \geq g_2 + \sup_{\partial\sO\less\Sigma}(g_1 - g_2) \geq g_1 \quad\hbox{on }\partial\sO\less\Sigma.
$$
Therefore, $u$ is a supersolution for $f_1, g_1, \psi_1$ and so Proposition \ref{prop:Comparison_principle_uniqueness_obstacle_problem} implies that $u\geq u_1$ on $\sO$, and thus
$$
u_1-u_2 \leq m \quad\hbox{on }\sO.
$$
By interchanging the roles of $u_1, u_2$ in the preceding argument, the conclusion follows for the case $c\geq c_0>0$. For the case $c\geq 0$, we now define
$$
m := \|\psi_1-\psi_2\|_{C(\bar\sO)} \vee \|g_1-g_2\|_{C(\partial\sO\less\Sigma)} \quad\hbox{and}\quad u := u_2 + m,
$$
so that
$$
Au = Au_2 + Am \geq f + cm \geq f \quad\hbox{a.e. on }\sO,
$$
and the remainder of the argument is identical.
\end{proof}

\section{Strong maximum principle and applications to boundary value problems}
\label{sec:Strong_maximum_principle_elliptic}
The usual statements of the Hopf boundary point lemma \cite[Lemma 3.4]{GilbargTrudinger} require that $A$ in \eqref{eq:Generator} be strictly and uniformly elliptic, but a more careful analysis shows that it holds under much weaker hypotheses. We exploit our version of the Hopf lemma (see Lemma \ref{lem:Degenerate_hopf_lemma}) to prove a strong maximum principle suitable for boundary-degenerate elliptic operators (Theorem \ref{thm:Strong_maximum_principle}) and corresponding uniqueness results for solutions to equations with partial Neumann boundary conditions (Theorem \ref{thm:Neumann_boundary_condition_uniqueness_equation} and Corollary \ref{cor:Neumann_boundary_condition_uniqueness_equation}). Finally, we use our strong maximum principle to prove a version, Theorem \ref{thm:Weak_maximum_principle_C2_connected_domain}, of the weak maximum principle which complements our alternative version, Theorem \ref{thm:Weak_maximum_principle_C2}.

\begin{figure}
 \centering
 \begin{picture}(460,180)(0,0)
 \put(20,0){\includegraphics[scale=0.5]{./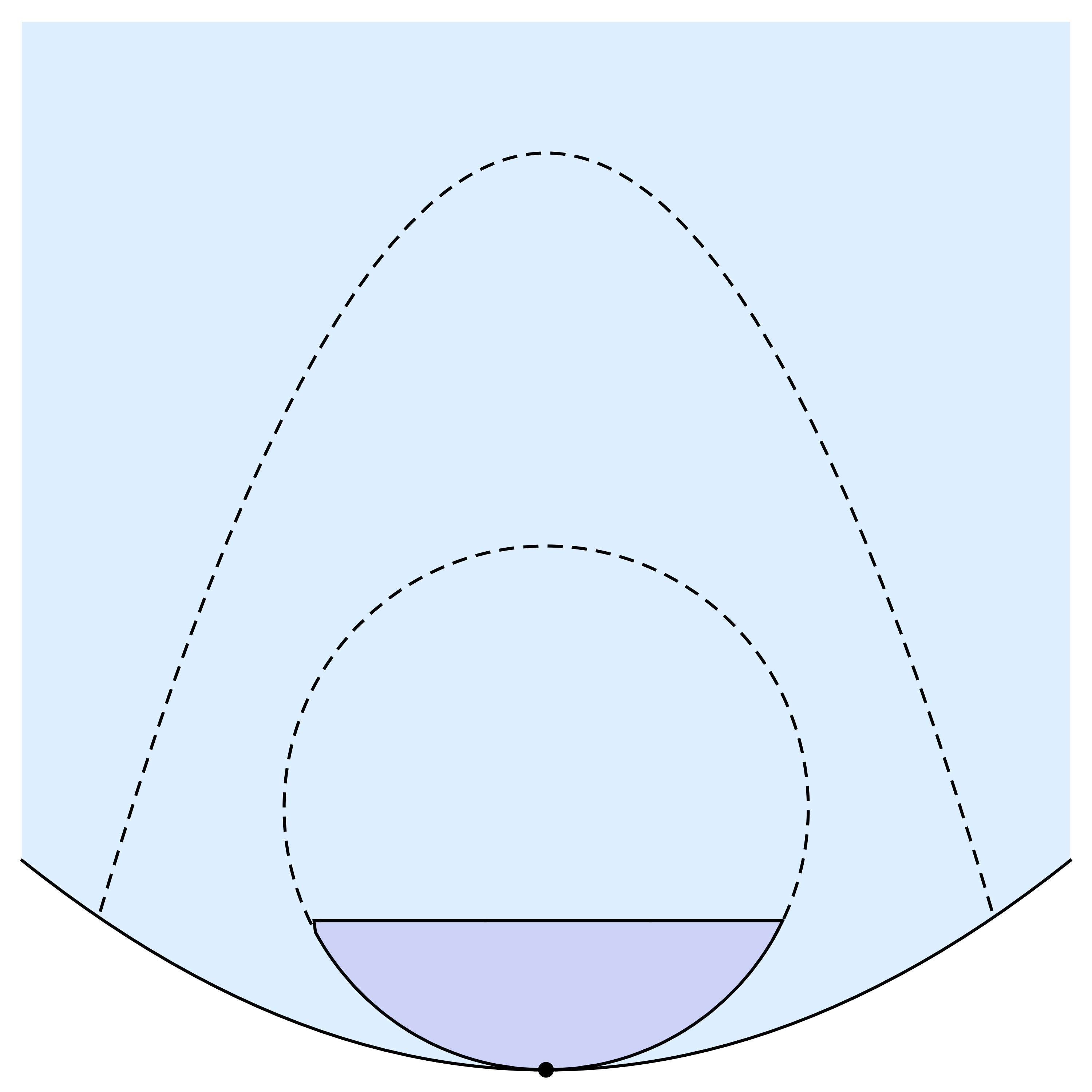}}
 \put(40,130){$\scriptstyle\sO$}
 \put(50,7){$\scriptstyle \partial\sO$}
 \put(102,-6){$\scriptstyle x^0$}
 \put(120,15){$\scriptstyle D$}
 \put(102,130){$\scriptstyle N$}
 \put(90,70){$\scriptstyle E$}
 \put(260,0){\includegraphics[scale=0.5]{./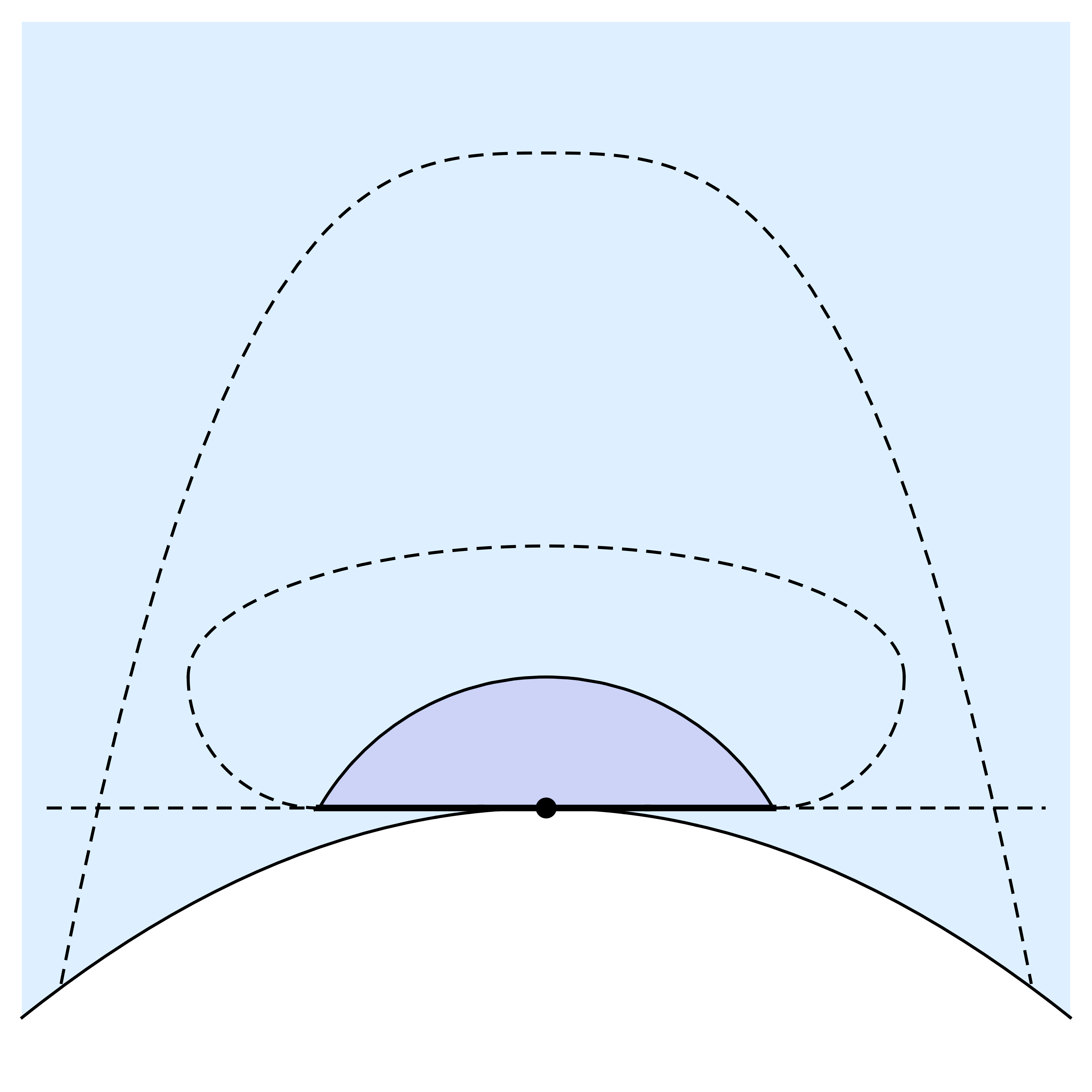}}
 \put(277,41){$\scriptstyle x_d=0$}
 \put(345,38){$\scriptstyle O$}
 \put(345,71){$\scriptstyle C_1$}
 \put(325,50){$\scriptstyle C_2$}
 \put(358,53){$\scriptstyle \widetilde D$}
 \put(280,130){$\scriptstyle \widetilde\sO$}
 \put(290,22){$\scriptstyle \partial\widetilde\sO$}
 \put(345,130){$\scriptstyle \widetilde N$}
 \put(310,70){$\scriptstyle \widetilde E$}
 \put(405,35){$\scriptstyle x_d<0$}
 \put(405,160){$\scriptstyle x_d>0$}
 \end{picture}
 \caption[A quarter-ball and its deformation.]{A quarter-ball and its deformation, denoting $E=B(x^*,R)$ and $\widetilde E = \Phi(E)$ and the quarter-ball by $D$ and $\widetilde D = \Phi(D)$.}
 \label{fig:elliptic_domain_interior_ball_and_deformation}
\end{figure}

\subsection{A generalization of the Hopf boundary point lemma to linear, second-order differential operators with non-negative definite characteristic form}
\label{subsec:HopfLemma}
We first recall refinements of the statements of the classical weak maximum principle for $A$-subharmonic functions in $C^2(\sO)$ or $W^{2,d}_{\loc}(\sO)$, where a Dirichlet boundary condition is imposed along the full boundary, $\partial\sO$, but the usual strict ellipticity requirement on $\sO$ for the coefficient matrix, $a$, in \eqref{eq:Generator} is relaxed.

We begin with a simple extension of the classical maximum weak maximum principle for $A$-subharmonic functions in $C^2(\sO)$ \cite[Theorem 3.1 and Corollary 3.2]{GilbargTrudinger} using elliptic regularization (see, for example, the proof of \cite[Theorem 6.5]{Crandall_Ishii_Lions_1992}; other versions of the extension are noted in a remark immediately following the statement of \cite[Theorem 3.1]{GilbargTrudinger} and in \cite[p. 33, top of page]{GilbargTrudinger}.

\begin{thm}[Classical weak maximum principle for $A$-subharmonic functions in $C^2(\sO)$ and nonnegative definite characteristic form]
\label{thm:Classical_weak_maximum_principle_C2_elliptic_relaxed}
\cite[Theorem 2.18]{Feehan_perturbationlocalmaxima}
Let $\sO\subset\RR^d$ be a bounded, open subset and $A$ as in \eqref{eq:Generator} with $b$ locally bounded on $\sO$, and $c$ obeying \eqref{eq:c_lower*_positive_domain}, that is, $c_*>0$ on $\sO$. Suppose $u\in C^2(\sO)$ and $\sup_\sO u < \infty$. If $Au \leq 0$ on $\sO$ and $u_*\leq 0$ on $\partial\sO$, then $u\leq 0$ on $\sO$.
\end{thm}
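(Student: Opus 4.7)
The proof strategy is by contradiction: assume $M := \sup_\sO u > 0$ and derive a failure of the subsolution inequality $Au \leq 0$. There are two main ingredients: an interior-maximum computation that exploits the non-negative definiteness of $a$ and the positivity of $c_*$, and a reduction step ensuring that the supremum is (nearly) attained at an interior point.

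First, I would carry out the interior-maximum computation. Suppose $M$ is attained at some $x^0 \in \sO$. Then $Du(x^0) = 0$ and $D^2 u(x^0)$ is negative semidefinite, so $a(x^0) \in \sS^+(d)$ forces $\tr(a(x^0) D^2 u(x^0)) \leq 0$. Consequently,
\[
Au(x^0) \;=\; -\tr\!\bigl(a(x^0) D^2 u(x^0)\bigr) + c(x^0)\,M \;\geq\; c(x^0)\,M \;>\; 0,
\]
where the last inequality uses $c(x^0) \geq c_*(x^0) > 0$ (from the hypothesis $c_* > 0$ on $\sO$) together with $M > 0$; this contradicts $Au(x^0) \leq 0$.

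Next, I would reduce the general case to this interior-attainment scenario. Since $\sO$ is bounded and $u$ is bounded above, I pick a maximizing sequence $\{x_n\} \subset \sO$ with $u(x_n) \to M$ and, upon passing to a subsequence, $x_n \to x^* \in \bar\sO$. If $x^* \in \sO$, continuity of $u$ gives $u(x^*) = M$ and the previous step concludes. If $x^* \in \partial\sO$, then certainly $u^*(x^*) \geq M > 0$; to exploit this against a boundary hypothesis phrased in terms of $u_*$ (which only controls the liminf and so is consistent with oscillatory boundary behaviour of $u$), I pass to a perturbed function $v_\delta := u - \delta \psi$ for small $\delta > 0$, where $\psi \in C^2(\sO)$ is a positive smooth barrier with $\psi(x) \to +\infty$ as $\dist(x, \partial\sO) \to 0$ (for instance a smoothed version of $1/\dist(\cdot,\partial\sO)$). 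Since $v_\delta \to -\infty$ at $\partial\sO$, the supremum of $v_\delta$ is attained at some interior $x^0_\delta \in \sO$, at which the interior-maximum computation applied to $v_\delta$ together with the identity $A v_\delta = Au - \delta A\psi$ yields
\[
c(x^0_\delta)\,v_\delta(x^0_\delta) \;\leq\; A v_\delta(x^0_\delta) \;\leq\; -\delta A\psi(x^0_\delta).
\]
Controlling $A\psi$ by local boundedness of $a, b, c$ on a compact set $K \Subset \sO$ containing $x^0_\delta$, and combining with the lower bound $v_\delta(x^0_\delta) = \sup_\sO v_\delta \geq u(y) - \delta\psi(y)$ for any test point $y$, I would send $\delta \downarrow 0$ to produce the desired contradiction.

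The main obstacle is ensuring that $x^0_\delta$ remains in a fixed compact set $K \Subset \sO$ independent of $\delta$, so that the coefficient bounds and the lower bound $\inf_K c_* > 0$ (afforded by lower semicontinuity of $c_*$ on $K$) are uniform, and so that $A\psi$ does not blow up along $x^0_\delta$ as $\delta \downarrow 0$. This is precisely where the boundary hypothesis $u_* \leq 0$ enters: it prevents $u$ from remaining uniformly positive throughout any one-sided neighbourhood of $\partial\sO$, so the competition between the barrier penalty $\delta\psi$ and $u$ forces the maximiser $x^0_\delta$ to retreat into the interior, where pointwise positivity of $c_*$ has the leverage needed to close the contradiction.
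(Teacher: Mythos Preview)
Your first step—the interior-maximum computation—is correct and, with one clarification, is the entire proof. The boundary hypothesis ``$u_* \leq 0$ on $\partial\sO$'' in the statement is almost certainly a typographical slip for ``$u^* \leq 0$ on $\partial\sO$'': compare Definition~\ref{defn:Weak_maximum_principle_property_elliptic_C2_W2d}, Proposition~\ref{prop:Comparison_principle_elliptic_boundary_value_problem_C2_W2d}, and Theorems~\ref{thm:Weak_maximum_principle_C2_connected_domain} and~\ref{thm:Weak_maximum_principle_C2}, all of which phrase the subsolution boundary condition via the \emph{upper} semicontinuous envelope $u^*$. With the intended hypothesis $u^* \leq 0$ on $\partial\sO$, the upper semicontinuous function $u^*$ attains its maximum on the compact set $\bar\sO$; if $M = \sup_\sO u > 0$ the maximiser $x^0$ must lie in $\sO$, and your computation yields $Au(x^0) \geq c(x^0)M > 0$, contradicting $Au \leq 0$. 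No barrier is needed. The paper does not give its own proof but attributes the result to elliptic regularization; that device is designed to manufacture strictness when the zeroth-order coefficient is absent or merely nonnegative, and is redundant here because $c_* > 0$ on $\sO$ already supplies the strict inequality at the interior maximum. So your first paragraph and the paper's indicated method converge on the same elementary argument.

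Your second step—the barrier $\psi$ blowing up at $\partial\sO$—is an attempt to cope with the literal $u_*$ hypothesis, and there the argument has a genuine gap. The lower bound $v_\delta(x^0_\delta) \geq u(y) - \delta\psi(y)$ yields only $\psi(x^0_\delta) = O(1/\delta)$, so $x^0_\delta$ may drift toward $\partial\sO$ at rate $\dist(x^0_\delta,\partial\sO)\sim\delta$. For a barrier of the type $\psi \sim 1/\dist$ one then has $|A\psi(x^0_\delta)| \sim \delta^{-3}$, so $\delta A\psi(x^0_\delta)$ does not tend to zero and the inequality $c(x^0_\delta)\,v_\delta(x^0_\delta) \leq -\delta A\psi(x^0_\delta)$ gives no contradiction. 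Your closing remark that $u_* \leq 0$ forces the maximiser to retreat into the interior is not substantiated: a bound on the $\liminf$ at a boundary point says nothing about the $\limsup$, so $u$ can remain uniformly large along \emph{some} sequences approaching $\partial\sO$, and the penalty $\delta\psi$ cannot distinguish those from the sequences along which $u$ dips below zero. (There is also a construction issue: a $C^2$ function with $\psi \to +\infty$ uniformly at $\partial\sO$ need not exist without regularity of $\partial\sO$, which is not assumed.) Fortunately none of this matters once the typo is corrected; your first paragraph then stands on its own.
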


We also have an analogue of Theorem \ref{thm:Classical_weak_maximum_principle_C2_elliptic_relaxed} for functions in $W^{2,d}_{\loc}(\sO)$.

\begin{thm}[Classical weak maximum principle for $A$-subharmonic functions in $W^{2,d}_{\loc}(\sO)$ and nonnegative definite characteristic form]
\label{thm:Classical_weak_maximum_principle_W2d_elliptic_relaxed}
Assume the hypotheses of Theorem \ref{thm:Classical_weak_maximum_principle_C2_elliptic_relaxed} on $\sO$ and $A$, except that the coefficients of $A$ are now required to be measurable. Suppose $u\in W^{2,d}_{\loc}(\sO)$ and $\sup_\sO u < \infty$. If $Au \leq 0$ a.e. on $\sO$ and $u_*\leq 0$ on $\partial\sO$, then $u\leq 0$ on $\sO$.
\end{thm}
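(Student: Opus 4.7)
The plan is to adapt the proof of Theorem \ref{thm:Classical_weak_maximum_principle_C2_elliptic_relaxed} (the $C^2$ version) to the $W^{2,d}_{\loc}$ setting. By the Sobolev embedding $W^{2,d}_{\loc}(\sO) \embed C_{\loc}(\sO)$ (see \cite[Theorem 5.6]{Adams_1975}), the function $u$ has a continuous representative on $\sO$, so $u_* = u$ pointwise there, and the boundary condition $u_* \leq 0$ on $\partial\sO$ retains its sense as a liminf condition at $\partial\sO$. Assume for contradiction that $M := \sup_\sO u > 0$. On any compact subdomain $\sO' \Subset \sO$, local boundedness of $b$ gives $\|b\|_{L^\infty(\sO')} \leq K < \infty$, while lower semicontinuity of $c_*$ on $\overline{\sO'}$ yields $c_0 := \inf_{\overline{\sO'}} c_* > 0$, so $c \geq c_0$ a.e.\ on $\sO'$.

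The central difference from the $C^2$ proof is that the pointwise second-derivative test at an interior maximum, which produces $-a^{ij}u_{ij}(x_0) \geq 0$ and hence the contradiction $c(x_0) u(x_0) \leq Au(x_0) \leq 0$ against $c(x_0) u(x_0) > 0$, must be replaced by a measure-theoretic analogue. I would use the Aleksandrov--Bakelman--Pucci (ABP) maximum principle for $W^{2,d}_{\loc}$ functions, exploiting that the upper contact set has positive Lebesgue measure whenever $u$ attains a strict interior maximum, and $D^2 u \leq 0$ holds almost everywhere on it. Since ABP requires strict ellipticity, I would regularize by setting $A_\varepsilon := A - \varepsilon\Delta$, with coefficient matrix $a_\varepsilon := a + \varepsilon I_d$ satisfying $\lambda(a_\varepsilon) \geq \varepsilon > 0$, and work with the shifted strict subsolution $v := u - \delta$ for small $\delta \in (0, M)$, which on $\sO'$ satisfies
\[
A_\varepsilon v = Av - \varepsilon\Delta u \leq -\delta c_0 - \varepsilon\Delta u \quad \text{a.e.\ on } \sO'.
\]
One then applies \cite[Theorem 9.1]{GilbargTrudinger} to $A_\varepsilon$ and passes to the limit $\varepsilon \downarrow 0$.

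The main obstacle is controlling the ABP constants for $A_\varepsilon$ as $\varepsilon \downarrow 0$: the standard constant in \cite[Theorem 9.1]{GilbargTrudinger} depends on $\|b\|_{L^\infty}/\varepsilon$ and on $(\det a_\varepsilon)^{-1/d}$, both of which blow up. The key technical device is to leverage the strict positivity $c \geq c_0 > 0$, via a $c$-weighted refinement of ABP or via a bounded auxiliary barrier chosen independently of $\varepsilon$, to absorb the divergent first-order and principal-part contributions; only then does the limit $\varepsilon \downarrow 0$ produce the contradiction $v(x_0) \leq 0$. This absorption argument, which uses hypothesis \eqref{eq:c_lower*_positive_domain} in an essential way, is the technical heart of the adaptation from the $C^2$ to the $W^{2,d}_{\loc}$ setting; without it the naive regularization would not close in the limit.
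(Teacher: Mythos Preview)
Your overall framework—elliptic regularization $A_\varepsilon = A - \varepsilon\Delta$, reduction to a compact subdomain $\sO' \Subset \sO$ where $|b| \leq K$ and $c \geq c_0 > 0$, and appeal to \cite[Theorem 9.1]{GilbargTrudinger}—matches the paper's terse proof, which cites GT~9.1 with $f=0$, Proposition~\ref{prop:Elliptic_weak_maximum_principle_apriori_estimates_C2_W2d}, and the method of the $C^2$ analogue from the companion paper. You are also right to flag the blow-up issue: with $f \neq 0$ the ABP constant scales badly in $\|b/\varepsilon\|_{L^d}$, while with $f = 0$ one would need $A_\varepsilon u \leq 0$, which fails because $-\varepsilon\Delta u$ is not controlled in $L^\infty$. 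This difficulty is real; the $C^2$ version closes because there $\Delta u \in L^\infty_{\loc}$, so $\esssup_{\sO'} A_\varepsilon u \to \esssup_{\sO'} Au \leq 0$, but that step does not survive passage to $W^{2,d}_{\loc}$.

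The gap in your proposal is that your resolution—``a $c$-weighted refinement of ABP or a bounded auxiliary barrier chosen independently of $\varepsilon$''—is a hope, not an argument. Your shift $v = u - \delta$ does not rescue matters: you still face $A_\varepsilon v \leq -\delta c_0 - \varepsilon\Delta u$, and the tail $\int_{\{-\Delta u > \delta c_0/\varepsilon\}}|\Delta u|^d$, for $\Delta u$ merely in $L^d$, decays too slowly to beat the exponential growth of the ABP constant. One clean way to close the argument is to bypass operator regularization entirely and work directly on the upper contact set $\Gamma^+ \subset \sO'$ (Aleksandrov, as in \cite[Lemma~9.2]{GilbargTrudinger}). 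On $\Gamma^+$ one has $D^2u \leq 0$ a.e., hence $-\tr(aD^2u) \geq 0$ since $a \geq 0$; combined with $Au \leq 0$ this forces $c_0\,u \leq K|Du|$ a.e.\ on $\Gamma^+$. Since the normal map $Du|_{\Gamma^+}$ covers a ball of radius $r = (M - \sup_{\partial\sO'}u)/\diam(\sO') > 0$, for every $\eta \in (0,r)$ the set $S_\eta = \{x \in \Gamma^+ : |Du(x)| < \eta\}$ has positive measure; on $S_\eta$ the preceding inequality gives $u(x) < (K/c_0)\eta$, while the contact condition gives $u(x) > M - \eta\,\diam(\sO')$. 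Letting $\eta \downarrow 0$ yields $M \leq 0$. This is precisely the content of ``leverage $c \geq c_0 > 0$'' that your sketch gestures at but does not supply.
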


\begin{proof}
This follows from the classical weak maximum principle \cite[Theorem 9.1]{GilbargTrudinger} (with $f=0$ on $\sO$) for a full Dirichlet boundary condition along $\partial\sO$ and $A$-subharmonic functions in $W^{2,d}_{\loc}(\sO)$ and our \apriori weak maximum principle estimates, Proposition \ref{prop:Elliptic_weak_maximum_principle_apriori_estimates_C2_W2d}, using the method of proof of \cite[Theorem 2.18]{Feehan_perturbationlocalmaxima}.
\end{proof}

Next, we have the crucial

\begin{lem}[Hopf boundary point lemma for a linear, second-order partial differential operator with non-negative definite characteristic form]
\label{lem:Degenerate_hopf_lemma}
Suppose that $\sO\subset\RR^d$ is an open subset and that $\sO$ obeys an interior sphere condition at $x^0\in\partial\sO$, with an open ball $B(x^*,R)\subset\sO$ such that $x^0\in\partial B(x^*,R)$. Require that the operator $A$ in \eqref{eq:Generator} have
\begin{equation}
\label{eq:c_nonnegative_ball}
c\geq 0 \quad\hbox{(a.e.) on }B(x^*,R),
\end{equation}
and that at least \emph{one} of the of the following conditions hold, where $h=\vec n(x^0)$ denotes the \emph{inward}-pointing unit normal vector at $x^0$:
\begin{align}
\label{eq:LocallyBoundedabcratioBoundary}
{}&\langle ah,h\rangle > 0
\quad\hbox{and}\quad
\frac{\langle b,h\rangle}{\langle ah,h\rangle} \geq -2K_1
\quad\hbox{and}\quad
\frac{c}{\langle ah,h\rangle} \leq K_0  \quad\hbox{(a.e) on } B(x^*,R), \quad\hbox{or}
\\
\label{eq:PositivebcRatio}
\tag{\ref*{eq:LocallyBoundedabcratioBoundary}$'$}
{}& \langle b,h\rangle \geq \frac{b_0}{2} \quad\hbox{and}\quad c \leq C_0 \quad\hbox{(a.e) on } B(x^*,R),
\end{align}
for some positive constants, $b_0, C_0, K_0, K_1$. Finally, when the coefficients of $A$ are measurable rather than everywhere-defined on $\sO$, require in addition that $b$ obeys \eqref{eq:b_locally_bounded_on_domain} and $c$ obeys \eqref{eq:c_lower*_positive_domain}. Suppose that $u\in C^2(\sO)$ or $u\in W^{2,d}_{\loc}(\sO)$, obeys
$Au\leq 0$ (a.e.) on $\sO$, and satisfies the conditions,
\begin{enumerate}
\renewcommand{\theenumi}{\roman{enumi}}
\item $u$ is continuous at $x^0$;
\item\label{item:pzeroStrictLocalMax} $u(x^0) > u(x)$, for all $x\in\sO$;
\item $D_{\vec n} u(x^0)$ exists,
\end{enumerate}
where $D_{\vec n} u(x^0)$ is the derivative of $u$ at $x^0$ in the direction of the \emph{inward}-pointing unit normal vector, $\vec n(x^0)$. Then the following hold.
\begin{enumerate}
\item\label{item:Hopf_c_zero} If $c=0$ on $\sO$, then $D_{\vec n} u(x^0)$ obeys the strict inequality,
\begin{equation}
\label{eq:PositiveInwardNormalDerivative}
D_{\vec n} u(x^0) < 0.
\end{equation}
\item\label{item:Hopf_c_geq_zero} If $c\geq 0$ on $\sO$ and $u(x^0)\geq 0$, then \eqref{eq:PositiveInwardNormalDerivative} holds.
\item\label{item:Hopfcnosign} If $u(x^0)=0$, then \eqref{eq:PositiveInwardNormalDerivative} holds irrespective of the sign of $c$.
\end{enumerate}
\end{lem}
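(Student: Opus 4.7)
My plan is to follow the classical Hopf outline --- reduce to $u(x^0)=0$, construct a barrier $v$ on a suitable annular region, and compare $u+\epsilon v$ via the weak maximum principle --- with a barrier carefully tailored to the boundary-degenerate hypotheses on $A$. First I reduce the three cases to $u(x^0)=0$: case~\ref{item:Hopfcnosign} gives this by assumption; in case~\ref{item:Hopf_c_zero} the shift $w:=u-u(x^0)$ satisfies $Aw=Au\le 0$ since $c\equiv 0$; and in case~\ref{item:Hopf_c_geq_zero} it satisfies $Aw=Au-c\,u(x^0)\le -c\,u(x^0)\le 0$ since $c\ge 0$ and $u(x^0)\ge 0$. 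In all three cases I then have $w(x^0)=0$, $w<0$ on $\sO\less\{x^0\}$, $Aw\le 0$ (a.e.) on $\sO$, and $c\ge 0$ on $B(x^*,R)$.

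Next I would construct a barrier $v\in C^2(\bar B(x^*,R))$ on an annulus $D:=B(x^*,R)\less\overline{B(x^*,R/2)}$ (or a cap of it about $x^0$) with $v=0$ on the outer sphere, $v\ge\delta>0$ on the inner sphere, $D_{\vec n}v(x^0)>0$, and $Av\le -\sigma<0$ (a.e.) on $D$ for some $\sigma>0$. Under hypothesis~\eqref{eq:LocallyBoundedabcratioBoundary} I would take an exponential barrier oriented along $h$, e.g.
\[
v(x)=e^{-\alpha\phi(x)}-e^{-\alpha R^2},\qquad \phi(x):=|x-x^*|^2-(1-\beta)\bigl(|x-x^*|^2-\langle x-x^*,h\rangle^2\bigr),
\]
with $\beta>0$ small and $\alpha>0$ large. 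A direct differentiation shows that the leading $O(\alpha^2)$ term of $Av$ is $-4\alpha^2 e^{-\alpha\phi}\langle a\nabla\phi,\nabla\phi\rangle$, bounded above by a negative multiple of $\alpha^2\,\langle ah,h\rangle\,\langle x-x^*,h\rangle^2\,e^{-\alpha\phi}$; the ratio bounds in~\eqref{eq:LocallyBoundedabcratioBoundary} then control the $O(\alpha)$ contributions from $-\tr(aD^2\phi)$ and $-\langle b,Dv\rangle$ and the zero-th order term $cv$ by multiples of $\langle ah,h\rangle$, so $\alpha$ sufficiently large yields $Av\le -\sigma$ on a cap of $D$ about $x^0$. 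Under hypothesis~\eqref{eq:PositivebcRatio} the classical radial barrier $v(x)=e^{-\alpha|x-x^*|^2}-e^{-\alpha R^2}$ suffices: near $x^0$, $x-x^*\approx -Rh$, so the drift contribution $2\alpha\langle b,x-x^*\rangle e^{-\alpha|x-x^*|^2}$ is bounded above by $-\tfrac{1}{2}\alpha R b_0\, e^{-\alpha|x-x^*|^2}$ and dominates both the $O(\alpha)$ trace term and the bounded $cv$ for $\alpha$ large.

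For the comparison step, by the strict-maximum hypothesis~(ii) and continuity there is $\eta>0$ with $w\le -\eta$ on the compact inner boundary of $D$, so I choose $\epsilon>0$ with $\epsilon v\le\eta$ there; then $f:=w+\epsilon v\le 0$ on $\partial D$ and $Af\le -\epsilon\sigma<0$ (a.e.) on $D$. If $f>0$ at some point of $D$, then, since $f\le 0$ on $\partial D$, the supremum of $f$ on $\bar D$ is attained at an interior point $x'$, where the usual calculus gives $Df(x')=0$ and $D^2f(x')\le 0$; using $c(x')\ge 0$ and $f(x')>0$,
\[
Af(x')=-\tr(aD^2f)(x')-\langle b,Df(x')\rangle+c(x')f(x')\ge 0,
\]
contradicting $Af\le -\epsilon\sigma<0$; hence $f\le 0$ on $D$. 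In the $W^{2,d}_{\loc}$ case the same conclusion follows from the weak maximum principle of Theorem~\ref{thm:Classical_weak_maximum_principle_W2d_elliptic_relaxed} applied after the perturbation $c\leadsto c+\eps'$ (so $(c+\eps')_*\ge\eps'>0$), using $A_{\eps'}f=Af+\eps' f\le -\epsilon\sigma+\eps'(\sup_D f)^+$, which is $\le 0$ for $\eps'$ small enough once $\sup_D f\le\epsilon\sup_D v$ is controlled; passage $\eps'\downarrow 0$ finishes the case. Since $f(x^0)=0$, the one-sided inward-normal derivative satisfies $D_{\vec n}f(x^0)\le 0$, whence
\[
D_{\vec n}u(x^0)=D_{\vec n}w(x^0)\le -\epsilon\,D_{\vec n}v(x^0)<0,
\]
which is~\eqref{eq:PositiveInwardNormalDerivative}.

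The principal obstacle is the barrier construction under hypothesis~\eqref{eq:LocallyBoundedabcratioBoundary}: the ellipticity is controlled only along the single direction $h$, so the purely radial Cauchy barrier of the classical argument (whose gradient points along $(x-x^*)/|x-x^*|$, agreeing with $-h$ only at $x^0$) does not produce the needed sign of $Av$ on the bulk of the annulus. The novelty is to \emph{orient} the barrier along $h$ via the function $\phi$ above, so that the dominant quadratic form $\langle a\nabla\phi,\nabla\phi\rangle$ is controlled by $\langle ah,h\rangle$ --- precisely the quantity the hypothesis bounds --- on a cap of $D$ about $x^0$, which is all that the subsequent comparison argument requires.
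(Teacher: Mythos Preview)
Your reduction to $u(x^0)=0$ and the overall comparison strategy are sound, but the barrier construction has a genuine gap under both hypotheses: any barrier built on the round annulus will introduce $\tr(a)$ into $Av$, and \emph{nothing} in \eqref{eq:LocallyBoundedabcratioBoundary} or \eqref{eq:PositivebcRatio} controls $\tr(a)$. Those hypotheses constrain only $\langle ah,h\rangle$, $\langle b,h\rangle$, and $c$; the behaviour of $a$ in directions orthogonal to $h$ is completely free.

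Concretely, under \eqref{eq:LocallyBoundedabcratioBoundary} your $\phi = \beta|x-x^*|^2 + (1-\beta)\langle x-x^*,h\rangle^2$ has $D^2\phi = 2\beta I + 2(1-\beta)h\otimes h$, so the $O(\alpha)$ part of $Av$ contains $2\alpha\beta\,\tr(a)\,e^{-\alpha\phi}$; the ratio bounds do not touch $\tr(a)/\langle ah,h\rangle$, and the $O(\alpha^2)$ term is only $\gtrsim \alpha^2\langle ah,h\rangle$, which may be arbitrarily small. Taking $\beta\to 0$ kills the offending term but makes $\phi$ constant on hyperplanes orthogonal to $h$, so $v$ no longer vanishes on the outer sphere; in fact even for $\beta\in(0,1)$ one has $\phi=R^2$ on $\partial B(x^*,R)$ only at the two poles, so $v>0$ elsewhere on the outer sphere and the boundary comparison $w+\eps v\le 0$ fails there. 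Restricting to a cap does not help, since on the lateral cap boundary $w<0$ but not uniformly, while $v>0$. The same obstruction appears under \eqref{eq:PositivebcRatio}: the classical radial barrier gives $Av\ni 2\alpha\,\tr(a)\,e^{-\alpha|x-x^*|^2}$, which the $O(\alpha)$ drift term $-\alpha R b_0\,e^{-\alpha|x-x^*|^2}$ cannot dominate without a bound on $\tr(a)$ that is nowhere assumed.

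The paper circumvents this by first applying a $C^2$ diffeomorphism that flattens the lower hemisphere of $\partial B(x^*,R)$ onto $\{x_d=0\}$, so the comparison region becomes the half-ball $B^+(0,R)$. A barrier depending on $x_d$ \emph{alone} then suffices: $v=e^{\alpha x_d}-1$ under \eqref{eq:LocallyBoundedabcratioBoundary}, or simply $v=x_d$ under \eqref{eq:PositivebcRatio}. For such $v$ one has $\tr(aD^2v)=a^{dd}v_{x_dx_d}$ and $\langle b,Dv\rangle=b^dv_{x_d}$, which involve only $\langle ah,h\rangle$ and $\langle b,h\rangle$ --- precisely the quantities the hypotheses control --- and no trace term appears. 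The boundary comparison is then clean: $v=0$ on the flat face $\{x_d=0\}$ where $u-u(x^0)\le 0$, and $v$ is bounded on the hemispherical face $\{x_d>0\}\cap\partial B^+(0,R)\Subset\sO$ where $u-u(x^0)\le -m_0<0$. (In the $W^{2,d}_{\loc}$ case, note also that the hypothesis already gives $c_*>0$ via \eqref{eq:c_lower*_positive_domain}, so Theorem~\ref{thm:Classical_weak_maximum_principle_W2d_elliptic_relaxed} applies directly without your $\eps'$-perturbation.)
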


The preceding version of the Hopf lemma is `local', in the sense that its hypotheses are given in terms of properties of the coefficients of $A$ in \eqref{eq:Generator} over the ball, $B$. In our application to the proof of our strong maximum principle (Theorem \ref{thm:Strong_maximum_principle}), it will be convenient to have a version of the Hopf lemma with simpler conditions on the coefficients of $A$ in \eqref{eq:Generator} over $\sO$ and $\partial\sO$.

\begin{cor}[Hopf boundary point lemma with simplified hypotheses]
\label{cor:Hopf_very_simple}
Assume the hypotheses of Lemma \ref{lem:Degenerate_hopf_lemma} except that the condition \eqref{eq:c_nonnegative_ball} is replaced by \eqref{eq:c_nonnegative_domain}, condition \eqref{eq:LocallyBoundedabcratioBoundary} is replaced by
\begin{subequations}
\label{eq:abc_hopf_very_simple_1}
\begin{align}
\label{eq:a_hopf_very_simple_1}
{}& \lambda_* > 0 \quad\hbox{on } \sO\cup\partial_1\sO,
\\
\label{eq:b_hopf_very_simple_1}
{}& b \in L^\infty_{\loc}(\sO\cup\partial_1\sO;\RR^d),
\\
\label{eq:c_hopf_very_simple_1}
{}& c \in L^\infty_{\loc}(\sO\cup\partial_1\sO),
\end{align}
\end{subequations}
and condition \eqref{eq:PositivebcRatio} is replaced  by
\begin{subequations}
\label{eq:bc_hopf_very_simple_2}
\begin{align}
\label{eq:b_perp_positive_hopf_very_simple_2}
{}& b^\perp > 0 \quad\hbox{on } \overline{\partial_0\sO},
\\
\label{eq:b_continuous_degenerate_boundary_hopf_very_simple_2}
{}& b \in C_{\loc}(\overline{\partial_0\sO}),
\\
\label{eq:c_hopf_very_simple_2}
{}& c \in L^\infty_{\loc}(\sO\cup \overline{\partial_0\sO}).
\end{align}
\end{subequations}
Then the conclusions of Lemma \ref{lem:Degenerate_hopf_lemma} continue to hold.
\end{cor}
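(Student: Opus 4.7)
The plan is to reduce the corollary to Lemma \ref{lem:Degenerate_hopf_lemma} by verifying its quantitative hypotheses on a possibly smaller interior tangent ball at $x^0$. The key observation is that the interior sphere condition refines freely: if $B(x^*, R) \subset \sO$ is tangent to $\partial\sO$ at $x^0$ with $\vec n := (x^* - x^0)/R$ the inward unit normal, then for every $r \in (0, R]$ the ball $B(x^0 + r\vec n, r)$ is contained in $B(x^*, R)$ and tangent to $\partial\sO$ at the same point $x^0$. Hence I may replace $B(x^*, R)$ by an arbitrarily small tangent ball without altering the hypotheses on $u$. By the decomposition \eqref{eq:Degenerate_boundary_decomposition}, either $x^0 \in \partial_1\sO$ or $x^0 \in \overline{\partial_0\sO}$, and I will treat each case separately.

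In Case~1, where $x^0 \in \partial_1\sO$: since $\partial_1\sO$ is relatively open in $\partial\sO$ and $\overline{B(x^*, R)}$ meets $\partial\sO$ only at $x^0$, we have $\overline{B(x^*, R)} \subset \sO \cup \partial_1\sO$. This set is open in $\RR^d$, since its complement $(\RR^d \less \bar\sO) \cup \overline{\partial_0\sO}$ is closed, so $\overline{B(x^*, R)}$ is a compact subset of an open set on which \eqref{eq:b_hopf_very_simple_1} and \eqref{eq:c_hopf_very_simple_1} furnish essential supremum bounds for $b$ and $c$. Combined with the strictly positive lower bound $\lambda_0 := \inf_{\overline{B(x^*, R)}} \lambda_* > 0$ obtained from \eqref{eq:a_hopf_very_simple_1} and lower semicontinuity of $\lambda_*$ on the compact ball, these bounds immediately produce $\langle ah, h\rangle \geq \lambda_0$ along with uniform bounds on the two ratios appearing in \eqref{eq:LocallyBoundedabcratioBoundary}. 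The requirement \eqref{eq:c_nonnegative_ball} is immediate from the standing hypothesis \eqref{eq:c_nonnegative_domain}.

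In Case~2, where $x^0 \in \overline{\partial_0\sO}$: set $b_0 := b^\perp(x^0) = \langle b(x^0), h\rangle$, which is strictly positive by \eqref{eq:b_perp_positive_hopf_very_simple_2}. Continuity of $b$ at $x^0$ from \eqref{eq:b_continuous_degenerate_boundary_hopf_very_simple_2} yields $\langle b(x), h\rangle \to b_0$ as $\sO \ni x \to x^0$, so after shrinking $R$ the inequality $\langle b, h\rangle \geq b_0/2$ holds on all of $B(x^*, R)$. The closed ball $\overline{B(x^*, R)} \subset \sO \cup \{x^0\} \subset \sO \cup \overline{\partial_0\sO}$ is compact, so the local boundedness \eqref{eq:c_hopf_very_simple_2} furnishes a constant $C_0$ with $|c| \leq C_0$ on this ball. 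Together these verify \eqref{eq:PositivebcRatio}, while \eqref{eq:c_nonnegative_ball} again follows from \eqref{eq:c_nonnegative_domain}.

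In both cases the hypotheses of Lemma \ref{lem:Degenerate_hopf_lemma} are satisfied on a suitable interior tangent ball at $x^0$, so its three conclusions transfer verbatim. The principal subtlety is Case~2: the continuity hypothesis \eqref{eq:b_continuous_degenerate_boundary_hopf_very_simple_2} is valued only at boundary points of $\overline{\partial_0\sO}$, yet it must be propagated into a uniform pointwise lower bound on $\langle b, h\rangle$ throughout an interior Euclidean ball; this is precisely where the freedom to shrink the tangent ball becomes indispensable.
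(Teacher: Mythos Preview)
Your approach is essentially identical to the paper's: split into the cases $x^0\in\partial_1\sO$ and $x^0\in\overline{\partial_0\sO}$, and in each case verify the quantitative hypothesis \eqref{eq:LocallyBoundedabcratioBoundary} or \eqref{eq:PositivebcRatio} of Lemma~\ref{lem:Degenerate_hopf_lemma} on a (possibly shrunken) interior tangent ball.

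One small correction in Case~1: the set $\sO\cup\partial_1\sO$ is in general \emph{not} open in $\RR^d$---its complement $(\RR^d\setminus\bar\sO)\cup\overline{\partial_0\sO}$ need not be closed, since a sequence in $\RR^d\setminus\bar\sO$ can converge to a point of $\partial_1\sO$ (take for instance $\sO$ the open upper half-space with $\partial_0\sO=\emptyset$, so that $\sO\cup\partial_1\sO$ is the closed half-space). Fortunately this openness claim is unnecessary: local boundedness of $b,c$ and lower semicontinuity plus positivity of $\lambda_*$ on $\sO\cup\partial_1\sO$ already yield uniform bounds on any compact subset such as $\overline{B(x^*,R)}$, which is exactly what you need and exactly what the paper invokes.
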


Note that conditions \eqref{eq:c_hopf_very_simple_1} and \eqref{eq:c_hopf_very_simple_2} in Corollary \ref{cor:Hopf_very_simple} could be combined and replaced by the single condition $c \in L^\infty_{\loc}(\bar\sO)$, but the present separation will be more convenient in our application.

\begin{rmk}[Application of Corollary \ref{cor:Hopf_very_simple} to the proof of the strong maximum principle]
\label{rmk:Application_hopf_lemma_to_strong_maximum_principle}
Corollary \ref{cor:Hopf_very_simple} provides a version of Lemma \ref{lem:Degenerate_hopf_lemma} with simplified hypotheses, analogous to the classical Hopf lemma \cite[Lemma 3.4]{GilbargTrudinger}. However, in our application to the proof of our strong maximum principle (Theorem \ref{thm:Strong_maximum_principle}), we need only consider the case $B\Subset\underline\sO$, by analogy with the case $B\Subset\sO$ in the application of the classical Hopf lemma to the proof of the classical strong maximum principle \cite[Theorem 3.5]{GilbargTrudinger}. For that purpose, the group of conditions \eqref{eq:abc_hopf_very_simple_1} can be relaxed to \eqref{eq:a_locally_strictly_elliptic_interior_domain}, $b\in L^\infty_{\loc}(\sO;\RR^d)$, and $c\in L^\infty_{\loc}(\sO)$ and the group of conditions \eqref{eq:bc_hopf_very_simple_2} can be relaxed to \eqref{eq:b_perp_positive_boundary_elliptic}, \eqref{eq:b_continuous_on_degenerate_boundary}, and \eqref{eq:c_locally_bounded_on_domain_plus_degenerate_boundary}.
\end{rmk}

\begin{proof}[Proof of Corollary \ref{cor:Hopf_very_simple}]
First, suppose that $x^0\in \partial B\cap\partial_1\sO$; for this case, we shall invoke the conditions \eqref{eq:abc_hopf_very_simple_1} on the coefficients $a,b,c$. Since $a$ obeys \eqref{eq:a_hopf_very_simple_1} on $\sO\cup \partial_1\sO$ and $\bar B\subset \sO\cup \partial_1\sO$, it follows that
$$
\langle a(x)h,h\rangle \geq \lambda_*(x) \geq \lambda_0,
\quad\hbox{(a.e.) } x\, \in B,
$$
for some constant $\lambda_0=\lambda_0(B)>0$. Moreover, $c \leq C_0$ (a.e.) on $B$, for some constant $C_0=C_0(B)<\infty$ by \eqref{eq:c_hopf_very_simple_1}, and hence we obtain
$$
\frac{c(x)}{\langle a(x)h,h\rangle} \leq \frac{c(x)}{\lambda_*(x)} \leq \frac{C_0}{\lambda_0} < \infty,
\quad\hbox{(a.e.) } x\, \in B.
$$
Because $b$ is locally bounded on $\sO\cup \partial_1\sO$ by \eqref{eq:b_hopf_very_simple_1}, we have $\langle b,h\rangle\leq L_0$ (a.e.) on $B$ for some constant $L_0=L_0(B)<\infty$, and thus
$$
\frac{\langle b(x),h\rangle}{\langle a(x)h,h\rangle} \leq \frac{L_0}{\lambda_0}, \quad\hbox{(a.e.) } x\, \in B,
$$
and therefore
$$
\frac{\langle b(x),h\rangle}{\langle a(x)h,h\rangle} \geq -\frac{L_0}{\lambda_0} > -\infty,
\quad \quad\hbox{(a.e.) } x\, \in B.
$$
Combining the preceding observations yields \eqref{eq:LocallyBoundedabcratioBoundary}.

Second, suppose that $x^0\in \partial B\cap\overline{\partial_0\sO}$; for this case, we shall invoke the conditions \eqref{eq:bc_hopf_very_simple_2} on the coefficients $b,c$, passing without loss of generality to a possibly smaller ball $B'\subset B\subset\sO$ with $x^0\in \partial B'\cap\overline{\partial_0\sO}$. We have that $b_0 := b^\perp(x^0)$ is positive by \eqref{eq:b_perp_positive_hopf_very_simple_2} and $b^\perp(x) = \langle b(x), \vec n(x)\rangle$ for (a.e.) $x \in N(\partial_0\sO)$ by \eqref{eq:b_splitting_elliptic}. Therefore, by \eqref{eq:b_continuous_degenerate_boundary_hopf_very_simple_2} we see that $\langle b(x^0),h\rangle = b^\perp(x^0) = b_0$ and, for small enough $B'$,
$$
\langle b(x),h\rangle \geq \frac{b_0}{2} \quad\hbox{(a.e.) } x \in B'.
$$
Moreover, $c(x) \leq C_0'<\infty$ for a.e. $x\in B'$ by \eqref{eq:c_hopf_very_simple_2}, for some constant $C_0'=C_0'(B')$. Combining the preceding observations yields \eqref{eq:PositivebcRatio} with $B$ replaced by $B'$.
\end{proof}

\begin{rmk}[Application of the Hopf lemma to the case of boundary points where the operator is degenerate]
\label{rmk:Hopf}
The Hopf lemma for points in $\{x_2=0\}$ for the Heston operator in Example \ref{exmp:HestonPDE} was proved independently by Daskalopoulos in an unpublished manuscript using a barrier function similar to that in the proof of \cite[Lemma 3.4]{GilbargTrudinger}. I am grateful to her for suggesting that such results should hold even at boundary points where the operator becomes degenerate. As pointed out to me by Pop, another version of the Hopf lemma for was obtained by Epstein and Mazzeo as \cite[Lemma 4.2.4]{Epstein_Mazzeo_annmathstudies} for their generalized Kimura diffusion operators, but also proved using a barrier function similar to that in the proof of \cite[Lemma 3.4]{GilbargTrudinger}.

The traditional proof of the Hopf lemma, as described by Gilbarg and Trudinger \cite[Lemma 3.4]{GilbargTrudinger}, by Evans \cite[Section 6.4.2]{Evans}, or by Han and Lin \cite[Theorem 2.5]{Han_Lin_2011}, exploits the interior sphere condition by choosing the barrier function
$$
v(x) = e^{-\alpha|x|^2} - e^{-\alpha R^2}, \quad x\in B(x^*,R)\less \bar B(x^*,\rho),
$$
where $\alpha>0$ is a constant which is ultimately depends on bounds on the coefficients of $A$ on the open annulus $B(x^*,R)\less \bar B(x^*,\rho)$, where $\rho\in (0,R)$ is a constant. This is the model for the barrier function chosen in the proof of \cite[Lemma 4.2.4]{Epstein_Mazzeo_annmathstudies}, but the resulting argument is quite difficult. As we shall see in our proof of Lemma \ref{lem:Degenerate_hopf_lemma}, however, a choice of exponential-linear or linear barrier function instead will easily lead to the desired result.

That such a Hopf lemma should hold even at points in $\partial_0\sO$ can be seen by examining the \emph{Kummer equation} \cite[Section 13.1.1]{AbramStegun},
$$
Au(x) := -xu_{xx}(x) - (b-x)u_x(x) + cu(x) = 0, \quad x\in\RR_+,
$$
where $b$ and $c$ are positive constants here. If $u_x(0)$ exists, then $u$ is necessarily a constant multiple of a \emph{confluent hypergeometric function of the first kind}, $u(x)=kM(x)$, by \cite[Sections 13.1.2--4, 13.4.8, 13.4.21, and 13.5.5--10]{AbramStegun} and $u$ is $C^\infty$ on $[0,\infty)$ with $M(0)=1$ and $u(0)=k\in\RR$. (Indeed, when $a=c$ then $M(x)=e^x$.) The continuity of $Au$ on $[0,\infty)$ implies that $u_x(0) = kc/b$. If $k<0$, then $Au=0$ on $(0,\infty)$ and $u$ has a strict local maximum at $x=0$ and $u_x(0)<0$, as predicted by Lemma \ref{lem:Degenerate_hopf_lemma}.
\end{rmk}

\begin{proof}[Proof of Lemma \ref{lem:Degenerate_hopf_lemma}]
We use the strategy of the proof of \cite[Lemma 3.4]{GilbargTrudinger}, with two choices of barrier functions, depending on whether condition \eqref{eq:LocallyBoundedabcratioBoundary} or \eqref{eq:PositivebcRatio} holds but both different from that in the proof of \cite[Lemma 3.4]{GilbargTrudinger} and an approach to exploiting the interior sphere condition which is also different from that in the proof of \cite[Lemma 3.4]{GilbargTrudinger}.

\begin{step}[Geometric set-up and application of a $C^2$ diffeomorphism]
\label{step:DegenerateHopfLemmaProof_C2diffeomorphism}
We may assume without loss of generality, using a translation of $\RR^d$ if needed, that $x^0=0\in\RR^d$ and $h=e_d$ and that $B(x^*,R)$ is contained in the open \emph{upper} half-space $\{x_d>0\}$. We now apply a $C^2$ diffeomorphism, $\Phi:\RR^d\to\RR^d$ with $\Phi(0)=0$, to \emph{flatten} the portion $\{0\leq x_d<R\}\cap\partial B(x^*,R)$ of the boundary of $B(x^*,R)$ by pushing it \emph{downward}, as in Figure \ref{fig:elliptic_domain_interior_ball_and_deformation}, so that
\begin{align*}
\Phi\left(\{0\leq x_d<R\}\cap\partial B(x^*,R)\right) &= \{x\in\RR^d: |x|<R, x_d=0\}
\\
&= B(0,R)\cap\{x_d=0\}.
\end{align*}
If $T\subset\partial\sO$ is a small, relatively open neighborhood of $x^0\in\partial\sO$, the map $\Phi$ pushes $T\less\{x^0\}$ \emph{downward} into the open \emph{lower} half-space, $ \{x_d<0\}$:
\begin{gather*}
\Phi(T\less\{x^0\})\subset \{x_d<0\},
\\
\Phi(\{x_d\geq R\}\cap\partial B(x^*,R)) \Subset \Phi(\sO).
\end{gather*}
Henceforth, \emph{after} applying the preceding diffeomorphism (and now denoting $\Phi(\sO)$ simply by $\sO$), we may assume, without loss of generality, that the open \emph{half-ball}
$$
B^+(0,R):=\{x\in\RR^d:|x|<R, x_d>0\}\subset\sO,
$$
has the property that
$$
\{x_d>0\}\cap\partial B^+(0,R) \Subset\sO.
$$
One can now check (see the proofs of \cite[Theorem 6.3.4]{Evans}, \cite[Lemma 6.5 or Theorem 8.12]{GilbargTrudinger}, \cite[Lemma 6.2.1]{Krylov_LecturesHolder} for similar arguments)
that the conditions \eqref{eq:LocallyBoundedabcratioBoundary} and \eqref{eq:PositivebcRatio} are equivalent to
\begin{align}
\label{eq:LocallyBoundedabcratioBoundary_halfball}
{}&\langle ah,h\rangle > 0
\quad\hbox{and}\quad
\frac{\langle b,h\rangle}{\langle ah,h\rangle} \geq -2K_1
\quad\hbox{and}\quad
\frac{c}{\langle ah,h\rangle} \leq K_0 \quad\hbox{(a.e.) on } B^+(0,R), \quad\hbox{or }
\\
\label{eq:PositivebcRatio_halfball}
\tag{\ref*{eq:LocallyBoundedabcratioBoundary_halfball}$'$}
{}& b_d \geq \frac{b_0}{2} \quad\hbox{and}\quad c \leq C_0 \quad\hbox{(a.e.) on } B^+(0,R),
\end{align}
for a possibly smaller positive constant, $b_0$, and possibly larger positive constants, $C_0, K_0, K_1$.
\end{step}

\begin{step}[Construction of the barrier function when condition \eqref{eq:LocallyBoundedabcratioBoundary_halfball} holds]
\label{step:Degenerate_hopf_lemma_proof_barrier_one}
We choose
$$
v(x) := e^{\alpha x_d} - 1, \quad x\in\RR^d,
$$
where $\alpha>0$ is a constant yet to be determined. Clearly, $v(0)=0$ and $v\geq 0$ on the half-space $\{x_d\geq 0\}$ and, in particular, $v\geq 0$ on $B^+(0,R)$. Moreover,
\begin{align*}
Av &= -\alpha^2a^{dd} e^{\alpha x_d} - b^d\alpha e^{\alpha x_d} + c\left(e^{\alpha x_d} - 1\right)
\\
&\leq -a^{dd}\left(\alpha^2 + \alpha\frac{b^d}{a^{dd}} - \frac{c}{a^{dd}}\right)e^{\alpha x_d}
\\
&\leq -a^{dd}\left(\alpha^2 + 2K_1\alpha - K_0\right)e^{\alpha x_d} \quad\hbox{(a.e.) on }B^+(0,R),
\end{align*}
using the hypothesis \eqref{eq:c_nonnegative_ball} to obtain $c\geq 0$ (a.e.) on $B^+(0,R)$ and noting that $a^{dd}> 0$ (a.e.) on $B^+(0,R)$ by hypothesis \eqref{eq:LocallyBoundedabcratioBoundary_halfball} and that $a, b, c$ obey \eqref{eq:LocallyBoundedabcratioBoundary_halfball}. But
$$
\alpha^2 + 2K_1\alpha - K_0 = (\alpha-K_1)^2 - K_1^2-K_0 > 0
$$
provided $\alpha$ obeys
$$
\alpha > K_1+\sqrt{K_0+K_1^2}.
$$
We fix such an $\alpha$ and thus obtain $Av < 0$ (a.e.) on $B^+(0,R)$.
\end{step}

\begin{step}[Construction of the barrier function when condition \eqref{eq:PositivebcRatio_halfball} holds]
\label{step:Degenerate_hopf_lemma_proof__barrier_two}
We choose
$$
v(x) := x_d, \quad x\in\RR^d,
$$
and observe that
$$
Av = -b^dv_{x_d} + cv = -b^d + cx_d.
$$
Since $b_d \geq b_0/2$ and $c\leq C_0$ (a.e.) on $B^+(0,R)$ by \eqref{eq:PositivebcRatio_halfball}, we obtain
$$
Av = -b^d + cx_d \leq -\frac{b_0}{2} + C_0x_d \leq -\frac{b_0}{2} + C_0R' < 0 \quad\hbox{(a.e.) on }B^+(0,R'),
$$
provided $R'\in(0,R]$ is chosen small enough. Since the size of $R'$ is immaterial in the remainder of the proof, for notational simplicity we shall simply write $Av<0$ (a.e.) on $B^+(0,R)$ in this case as well.
\end{step}

\begin{step}[Verification that the weak maximum principle holds for $A$ on $B^+(0,R)$]
\label{step:Degenerate_hopf_lemma_proof_weak_max_principle}
We consider separately the cases where the coefficients of $A$ are everywhere-defined on $\sO$ (with $u\in C^2(\sO)$) and the coefficients of $A$ are measurable on $\sO$ (with $u\in W^{2,d}_{\loc}(\sO)$).

First, consider the case where the coefficients of $A$ are everywhere-defined on $\sO$. From Steps \ref{step:Degenerate_hopf_lemma_proof_barrier_one} and \ref{step:Degenerate_hopf_lemma_proof__barrier_two}, we obtain $Av<0$ on $B^+(0,R)$ for either choice of barrier function, $v$. Therefore, the proof of \cite[Theorem 3.1]{GilbargTrudinger}, with the role of $e^{\gamma x_1}$ in \cite[p. 32]{GilbargTrudinger} replaced by $v$, shows that the conclusions of  the classical weak maximum principle \cite[Theorem 3.1 and Corollary 3.2]{GilbargTrudinger} (that is, with full boundary comparison) hold for the operator $A$ on $B^+(0,R)$ and $A$-subharmonic functions\footnote{Alternatively, if we had assumed \eqref{eq:c_lower*_positive_domain} for this case too, Theorem \ref{thm:Classical_weak_maximum_principle_C2_elliptic_relaxed} would yield the same conclusion.}
$$
w \in C^2(B^+(0,R))\cap C(\bar B^+(0,R)).
$$
Second, consider the case where the coefficients of $A$ are measurable on $\sO$ and we assume in addition that \eqref{eq:c_lower*_positive_domain} holds. Then Theorem \ref{thm:Classical_weak_maximum_principle_W2d_elliptic_relaxed} implies that the classical weak maximum principle holds for $A$ on $B^+(0,R)$ and $A$-subharmonic functions,
$$
w \in W^{2,d}_{\loc}(B^+(0,R))\cap C(\bar B^+(0,R)),
$$
concluding this step.
\end{step}

\begin{step}[Application of the weak maximum principle]
\label{step:DegenerateHopfLemmaProof_AppWeakMaxPrin}
Since $u-u(0)<0$ on $\sO$ and $u\in C_{\loc}(\bar\sO)$ and $\{x_d>0\}\cap\partial B^+(0,R)\Subset\sO$, we obtain
$$
u(x) - u(0) \leq -m_0 < 0, \quad\forall\, x\in \{x_d>0\}\cap\partial B^+(0,R),
$$
for some positive constant, $m_0$, depending on $R$ and $u$. If \eqref{eq:LocallyBoundedabcratioBoundary_halfball} holds, then
$$
v(x) = e^{\alpha x_d}-1 \leq e^{\alpha R}-1, \quad\forall\, x\in \{x_d>0\}\cap\partial B^+(0,R),
$$
while if \eqref{eq:PositivebcRatio_halfball} holds, then
$$
v(x) = x_d \leq R, \quad\forall\, x\in \{x_d>0\}\cap\partial B^+(0,R).
$$
Hence, recalling that $R>0$, there is a positive constant $m_1 := (e^{\alpha R}-1)\vee R$ such that
$$
v(x) \leq m_1, \quad\forall\, x\in \{x_d>0\}\cap\partial B^+(0,R).
$$
Consequently,
$$
u(x) - u(0) + \eps v(x) \leq -m_0 + \eps m_1 \leq 0, \quad \forall\, x\in \{x_d>0\}\cap\partial B^+(0,R),
$$
provided we fix $\eps$ in the range $0<\eps\leq m_0/m_1$, while
$$
u(x) - u(0) + \eps v(x) = u(x) - u(0) \leq 0, \quad \forall\, x\in \{x_d=0\}\cap\partial B^+(0,R),
$$
since for either choice of barrier function we have $v(x)=0$ when $x_d=0$ and our hypothesis \eqref{item:pzeroStrictLocalMax} (with $x^0=0$) implies that $u(x) \leq u(0)$ on $\partial B^+(0,R)\subset\sO\cup\{0\}$. But
$$
A(u - u(0) + \eps v) = Au - cu(0) +\eps Av \leq -cu(0) \leq 0 \quad\hbox{(a.e.) on } B^+(0,R),
$$
where the last inequality holds if $c=0$ (as in Conclusion \eqref{item:Hopf_c_zero}), or $c\geq 0$ and $u(0)\geq 0$  (as in Conclusion \eqref{item:Hopf_c_geq_zero}), or $u(0)=0$ (as in Conclusion \eqref{item:Hopfcnosign}). (For the case $u(0)=0$, we simply note as in the proof of \cite[Lemma 3.4]{GilbargTrudinger} that we can replace $A$ by $A+c^-$, where we write $c=c^+-c^-$.)

The weak maximum principle (from Step \ref{step:Degenerate_hopf_lemma_proof_weak_max_principle}) therefore yields
$$
u - u(0) + \eps v \leq 0 \quad\hbox{on } B^+(0,R),
$$
by virtue of Step \ref{step:Degenerate_hopf_lemma_proof_weak_max_principle}.
\end{step}

\begin{step}[Sign of the directional derivative of the subsolution at the boundary]
\label{step:DegenerateHopfLemmaProof_SignDirectionalDeriv}
We have
$$
\frac{u(x)-u(0)}{x_d} \leq -\eps\frac{v(x)}{x_d} = -\eps\frac{v(x)-v(0)}{x_d}, \quad\forall\, x\in B^+(0,R).
$$
If $v(x) = e^{\alpha x_d}-1$, we have $v_{x_d}=\alpha e^{\alpha x_d}$ and $v_{x_d}(0) = \alpha > 0$, while if $v(x) = x_d$, we have $v_{x_d}=1$. Taking the limit as $x_d\downarrow 0$ and noting that
$$
v_{x_d}(0)
=
\begin{cases}
\alpha &\hbox{if $a,b,c$ obey \eqref{eq:LocallyBoundedabcratioBoundary_halfball},}
\\
1 &\hbox{if $b,c$ obey \eqref{eq:PositivebcRatio_halfball},}
\end{cases}
$$
yields $v_{x_d}(0) \geq \alpha\wedge 1$ and
$$
u_{x_d}(0) \leq -\eps v_{x_d}(0) \leq -\eps(\alpha\wedge 1) < 0,
$$
and thus \eqref{eq:PositiveInwardNormalDerivative} holds.
\end{step}
This completes the proof.
\end{proof}

\begin{rmk}[Application to the elliptic Heston operator]
\label{rmk:EllipticHestonDegenerateHopf}
The hypotheses of Lemma \ref{lem:Degenerate_hopf_lemma} on the coefficients of $A$ are obeyed in the case of the elliptic Heston operator, Example \ref{exmp:HestonPDE}, where $d=2$ and $\sO=\HH$. For example, if $x^0\in\partial\HH$ then $h=\vec n(x^0) = e_2$, while $a^{22} = \sigma^2 x_2/2$ and $b^2 = \kappa(\theta-x_2)$, so
$$
\frac{b^2}{a^{22}} = \frac{2\kappa(\theta-x_2)}{\sigma^2 x_2} \geq -\frac{2\kappa}{\sigma^2}, \quad\forall\, x_2\geq 0.
$$
Thus, condition \eqref{eq:LocallyBoundedabcratioBoundary} is obeyed when $r=0$, noting that $c=r$, while if $r>0$, then
$$
\frac{b^2}{c} = \frac{\kappa(\theta-x_2)}{r} \geq \frac{\kappa\theta}{2r}, \quad 0\leq x_2 < \theta/2,
$$
and thus condition \eqref{eq:PositivebcRatio} is obeyed.
\end{rmk}

\begin{rmk}[Application to linear, second-order, strictly and uniformly elliptic operators]
\label{rmk:DegenerateImpliesGTHopfLemma}
The classical Hopf boundary point lemma \cite[Lemma 3.4]{GilbargTrudinger} requires that the coefficients of $A$ in \eqref{eq:Generator} obey a uniformly ellipticity condition on $\sO$ \cite[p. 31]{GilbargTrudinger} and the bounds in \cite[Equation (3.2)]{GilbargTrudinger}. Such hypotheses imply that the coefficients of $A$ obey the inequalities \eqref{eq:LocallyBoundedabcratioBoundary} and hence that our Lemma \ref{lem:Degenerate_hopf_lemma} implies \cite[Lemma 3.4]{GilbargTrudinger}, although the converse is not true, as Remark \ref{rmk:EllipticHestonDegenerateHopf} illustrates.
\end{rmk}

\begin{rmk}[Hopf boundary point lemma for open subsets obeying an interior cone condition]
\label{rmk:HopfInteriorCone}
The interior sphere condition can be relaxed in the classical Hopf lemma \cite[Lemma 3.4]{GilbargTrudinger}, as noted in \cite[p. 35 and p. 46]{GilbargTrudinger}, and generalizations to open subsets with non-smooth points are described in \cite{Lieberman_2001b, Miller_1967, Nadirashvili_1981, Oddson_1968}.
\end{rmk}

\subsection{Strong maximum principle}
\label{subsec:Strong_maximum_principle}
Recall that by a `domain' in $\RR^d$, we always mean a \emph{connected}, open subset. We shall now adapt the proof of \cite[Theorem 3.5]{GilbargTrudinger}, applying our Lemma \ref{lem:Degenerate_hopf_lemma} (or more precisely its simpler form, Corollary \ref{cor:Hopf_very_simple}, as discussed in Remark \ref{rmk:Application_hopf_lemma_to_strong_maximum_principle}) instead of \cite[Lemma 3.4]{GilbargTrudinger}, to give

\begin{thm}[Strong maximum principle for $A$-subharmonic functions in $C^2(\sO)$]
\label{thm:Strong_maximum_principle}
Suppose that $\sO\subset\RR^d$ is a domain. Require that the operator $A$ as in \eqref{eq:Generator} have coefficients obeying \eqref{eq:a_locally_strictly_elliptic_interior_domain}, \eqref{eq:b_perp_positive_boundary_elliptic}, \eqref{eq:c_nonnegative_domain}, \eqref{eq:c_nonnegative_boundary}, \eqref{eq:bc_locally_bounded}, and \eqref{eq:b_continuous_on_degenerate_boundary}. Require, in addition, that $\partial_0\sO$ obey \eqref{eq:C1alpha_degenerate_boundary} for some $\alpha\in(0,1)$. If $u\in C^2_s(\underline\sO)$ obeys $Au\leq 0$ on $\sO$, then the following hold.
\begin{enumerate}
\item\label{item:Strong_maximum_principle_c_zero} If $c=0$ on $\underline\sO$ and $u$ attains a global maximum in $\underline\sO$, then $u$ is constant on $\sO$.
\item\label{item:Strong_maximum_principle_c_geq_zero} If $c\geq 0$ on $\underline\sO$ and $u$ attains a \emph{non-negative} global maximum in $\underline\sO$, then $u$ is constant on $\sO$.
\end{enumerate}
\end{thm}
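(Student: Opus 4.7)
The plan is to mimic the Gilbarg--Trudinger proof of the classical strong maximum principle \cite[Theorem~3.5]{GilbargTrudinger}, but with our degenerate Hopf boundary point lemma (Lemma~\ref{lem:Degenerate_hopf_lemma}, invoked via Corollary~\ref{cor:Hopf_very_simple} and Remark~\ref{rmk:Application_hopf_lemma_to_strong_maximum_principle}) in place of the classical one, together with an additional argument to handle the possibility that the maximum is attained on $\partial_0\sO$. Set $M:=\sup_{\underline\sO}u$, noting $M\geq 0$ in case~\eqref{item:Strong_maximum_principle_c_geq_zero}, and $F:=\{x\in\underline\sO:u(x)=M\}\neq\emptyset$. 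Suppose for contradiction that $u\not\equiv M$ on $\sO$, so $\{u<M\}\cap\sO$ is a non-empty open subset. I will distinguish two cases according to whether the maximum is attained in $\sO$.

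\emph{Case I:} $F\cap\sO\neq\emptyset$. Since $\sO$ is connected, one can select $y\in\sO$ with $u(y)<M$ and $d(y,F\cap\sO)<d(y,\partial\sO)$, so that the open ball $B:=B(y,d(y,F\cap\sO))$ satisfies $B\Subset\sO$, $B\subset\{u<M\}$, and $\partial B$ contains a point $z\in F\cap\sO$. Hypotheses~\eqref{eq:a_locally_strictly_elliptic_interior_domain} and~\eqref{eq:bc_locally_bounded} imply that condition~\eqref{eq:LocallyBoundedabcratioBoundary} of Lemma~\ref{lem:Degenerate_hopf_lemma} holds on $B$, so the appropriate conclusion of that lemma (item~\eqref{item:Hopf_c_zero} in case~\eqref{item:Strong_maximum_principle_c_zero}, item~\eqref{item:Hopf_c_geq_zero} in case~\eqref{item:Strong_maximum_principle_c_geq_zero}) gives $D_{\vec n}u(z)<0$ for $\vec n$ the inward normal to $B$ at $z$. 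But $z\in\sO$ is a global maximum of $u$, so $Du(z)=0$, a contradiction.

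\emph{Case II:} $F\cap\sO=\emptyset$, so $u<M$ on $\sO$ and $F\subseteq\partial_0\sO$. Pick $x^0\in F$. The regularity assumption~\eqref{eq:C1alpha_degenerate_boundary} furnishes an open ball $B\subset\sO$ internally tangent to $\partial_0\sO$ at $x^0$, on which $u<M=u(x^0)$. The relaxed coefficient conditions noted in Remark~\ref{rmk:Application_hopf_lemma_to_strong_maximum_principle} are supplied by~\eqref{eq:a_locally_strictly_elliptic_interior_domain},~\eqref{eq:b_perp_positive_boundary_elliptic},~\eqref{eq:bc_locally_bounded}, and~\eqref{eq:b_continuous_on_degenerate_boundary}, so Corollary~\ref{cor:Hopf_very_simple} yields $D_{\vec n}u(x^0)<0$, where $\vec n$ is now the inward unit normal to $\partial_0\sO$ at $x^0$.

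Closing Case~II is the main difficulty, because the interior-max argument of Case~I is unavailable on $\partial_0\sO$; here I exploit the full $C^2_s$-regularity. Since $x^0$ maximizes $u|_{\partial_0\sO}$ and $u\in C^1(\underline\sO)$, the tangential derivatives of $u$ at $x^0$ vanish, whence $Du(x^0)=D_{\vec n}u(x^0)\vec n(x^0)$ and $\langle b(x^0),Du(x^0)\rangle=b^\perp(x^0)\,D_{\vec n}u(x^0)$. The second-order boundary condition $\tr(aD^2u)(x^0)=0$ from Definition~\ref{defn:Second_order_boundary_regularity}, combined with the continuity of $b$ along $\partial_0\sO$ and the local boundedness of $c$ near $x^0$, lets me take the $\liminf$ of $Au(x)\leq 0$ as $\sO\ni x\to x^0$ to obtain
\[
0\geq -b^\perp(x^0)\,D_{\vec n}u(x^0)+\liminf_{\sO\ni x\to x^0}c(x)u(x).
\]
The last term is non-negative: in case~\eqref{item:Strong_maximum_principle_c_zero} it vanishes identically, while in case~\eqref{item:Strong_maximum_principle_c_geq_zero} either $M>0$ (so $cu\geq 0$ near $x^0$ by~\eqref{eq:c_nonnegative_domain} and continuity) or $M=0$ (so $c(x)u(x)\to 0$ by local boundedness of $c$ and $u(x)\to 0$). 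Since $b^\perp(x^0)>0$ by~\eqref{eq:b_perp_positive_boundary_elliptic}, this forces $D_{\vec n}u(x^0)\geq 0$, contradicting the Hopf inequality and completing the proof.
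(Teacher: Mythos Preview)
Your proof follows the paper's strategy: invoke the degenerate Hopf lemma at a point $x^0$ on the boundary of a ball contained in $\{u<M\}$, and when $x^0\in\partial_0\sO$, combine the second-order boundary vanishing $\tr(aD^2u)(x^0)=0$, the vanishing of the tangential derivatives, and $b^\perp(x^0)>0$ to contradict $Au\leq 0$ at $x^0$. Your Case~II endgame matches the paper's Case~2 essentially verbatim.

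One step does not go through as written. In Case~II you assert that $C^{1,\alpha}$ regularity of $\partial_0\sO$ ``furnishes an open ball $B\subset\sO$ internally tangent to $\partial_0\sO$ at $x^0$.'' But $C^{1,\alpha}$ with $\alpha<1$ does not in general imply an interior sphere condition --- for instance $\{x_d>|x'|^{1+\alpha}\}$ admits no interior ball tangent at the origin --- so you cannot place a ball tangent at a \emph{prescribed} boundary point. The paper avoids this by reversing the order of construction: rather than fixing $x^0\in F$ first and then seeking a tangent ball, it chooses $x^*\in\sO^-:=\{x\in\sO:u(x)<M\}$ with $\dist(x^*,\partial\sO^-)<\dist(x^*,\partial_1\sO)$, takes the \emph{largest} open ball $B$ about $x^*$ contained in $\sO^-$, and lets $x^0$ be whatever point of $\partial\sO^-$ this ball touches. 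The ball then exists for free because $\sO^-$ is open, the distance inequality forces $x^0\in\underline\sO$, and your two cases reappear as the alternatives $x^0\in\sO$ versus $x^0\in\partial_0\sO$ of this single construction.
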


\begin{proof}
Consider Conclusion \eqref{item:Strong_maximum_principle_c_zero}. Assume, to the contrary, that $u$ is non-constant on $\sO$ and achieves a global maximum $M$ at a point in $\underline\sO$. Let $\sO^- := \{x\in\sO:u(x)<M\}$ and observe that $\sO^-$ is non-empty by our assumption that $u$ is non-constant on $\sO$. Let $x^*\in\sO^-$ be such that $\dist(x^*,\partial\sO^-)<\dist(x^*,\partial_1\sO)$ (if $\partial_1\sO=\emptyset$, then any $x^*\in\sO^-$ will do) and let $B\subset\sO^-$ be the largest open ball centered at $x^*$ and contained in $\sO^-$. Then $u(x^0)=M$ for some $x^0\in \partial B \cap \partial\sO^-$ and $u<M$ on $B$. Note that $x^0\in\underline\sO$, since $\dist(x^*,\partial\sO^-)<\dist(x^*,\partial_1\sO)$ by choice of $x^*$.

\setcounter{case}{0}
\begin{case}[$x^0\in \sO$]
We must have $Du(x^0)=0$ since $x^0$ is an interior local maximum. However, by applying Conclusion \eqref{item:Hopf_c_zero} in Lemma \ref{lem:Degenerate_hopf_lemma} to the operator $A$ on the open subset $B$ and boundary point $x^0\in\partial B$, we obtain $Du(x^0)\neq 0$, a contradiction.
\end{case}

\begin{case}[$x^0\in\partial_0\sO$]
If $\vec\tau(x^0)\in\RR^d$ is tangential to $\partial_0\sO$ at $x^0$, then $\langle\tau(x^0), Du(x^0)\rangle = 0$, since $x^0$ is a local maximum for $u$ in $\partial_0\sO$ and $\partial_0\sO$ is $C^{1,\alpha}$ by hypothesis \eqref{eq:C1alpha_degenerate_boundary} and applying the boundary-straightening result \cite[Lemma B.1]{Feehan_perturbationlocalmaxima}. Therefore, by the splitting \eqref{eq:b_splitting_elliptic} of $b=b^\parallel + b^\perp\vec n$ near $\partial_0\sO$ and the property \eqref{eq:Second_order_boundary_vanishing} of functions $u\in C^2_s(\underline\sO)$ and our hypothesis that $c=0$ on $\underline\sO$, we obtain
\begin{align*}
Au(x^0) &= -\tr(aD^2u)(x^0) - \langle b(x^0),Du(x^0)\rangle + c(x^0)u(x^0)
\\
&= -b^\perp(x^0)\langle \vec n(x^0),Du(x^0)\rangle.
\end{align*}
But $b^\perp(x^0)>0$ by hypothesis \eqref{eq:b_perp_positive_boundary_elliptic} and Conclusion \eqref{item:Hopf_c_zero} of Lemma \ref{lem:Degenerate_hopf_lemma} yields $D_{\vec n}u(x^0) < 0$, so we obtain
$$
Au(x^0) > 0,
$$
contradicting the fact that $Au\leq 0$ on $\sO$ by hypothesis and hence $Au\leq 0$ on $\underline\sO$ by the property \eqref{eq:Second_order_boundary_continuity} of functions $u\in C^2_s(\underline\sO)$.
\end{case}

Conclusion \eqref{item:Strong_maximum_principle_c_geq_zero} follows by an identical argument when $u$ achieves a non-negative maximum in $\underline\sO$, except that we now appeal to Conclusion \eqref{item:Hopf_c_geq_zero} in Lemma \ref{lem:Degenerate_hopf_lemma}.
\end{proof}

\begin{rmk}[Strong maximum principle for $A$-subharmonic functions in $W^{2,d}_{\loc}(\sO)$]
\label{rmk:Strong_maximum_principle_W2d}
While the proof of the classical strong maximum principle for functions in  $C^2(\sO)$ \cite[Theorem 3.5]{GilbargTrudinger} may be modified easily to give a version for $A$-subharmonic functions in $W^{2,d}_{\loc}(\sO)$, as in \cite[Theorem 9.6]{GilbargTrudinger}, that is not the case for Theorem \ref{thm:Strong_maximum_principle} because of the need for $Au(x)$ to be defined at each point of $x\in\partial_0\sO$, as we see in the proof. A version of Theorem \ref{thm:Strong_maximum_principle} for $A$-subharmonic functions in $W^{2,d}_{\loc}(\sO)\cap C^1(\underline\sO)$ is developed in \cite{Feehan_perturbationlocalmaxima}, but the proof is considerably more difficult.
\end{rmk}

We next consider the question of uniqueness in the Neumann problem and note here that it is important to distinguish between $\partial\sO$ and $\partial_1\sO$.

\begin{thm}[Uniqueness for the Neumann problem for $C^2$ functions on bounded domains]
\label{thm:Neumann_boundary_condition_uniqueness_equation}
Let $\sO\subset\RR^d$ be a \emph{bounded} domain. Let $A$ as in \eqref{eq:Generator} and $\partial_0\sO\subsetneqq\partial\sO$ in \eqref{eq:Degeneracy_locus_elliptic} obey the hypotheses of Theorem \ref{thm:Strong_maximum_principle} and assume that $\sO$ satisfies an interior sphere condition at each point of $\overline{\partial_1\sO}$. Suppose that $u\in C^2_s(\underline\sO)\cap C(\bar\sO)$ obeys
$$
Au = 0 \quad\hbox{on } \sO.
$$
If the derivative, $D_{\vec n} u$, with respect to the inward-pointing normal vector field, $\vec n$, is defined everywhere on $\overline{\partial_1\sO}$ and
\begin{equation}
\label{eq:Neumann_boundary_condition_homogeneous}
D_{\vec n}u = 0 \quad\hbox{on } \overline{\partial_1\sO},
\end{equation}
then $u$ is constant on $\sO$. In addition, if $c>0$ at some point in $\underline\sO$, then $u\equiv 0$ on $\sO$.
\end{thm}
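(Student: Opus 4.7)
My plan is to combine the strong maximum principle, Theorem \ref{thm:Strong_maximum_principle}, with the generalized Hopf boundary point lemma, Lemma \ref{lem:Degenerate_hopf_lemma} (in the form of Corollary \ref{cor:Hopf_very_simple}), to rule out an extremum of $u$ occurring only on $\overline{\partial_1\sO}$; the homogeneous Neumann condition \eqref{eq:Neumann_boundary_condition_homogeneous} is precisely what obstructs a Hopf-type boundary extremum along $\overline{\partial_1\sO}$.

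Since $u\in C(\bar\sO)$ and $\bar\sO$ is compact, $u$ attains its maximum $M$ and minimum $m$ on $\bar\sO$. Because $A$ is linear and \eqref{eq:Neumann_boundary_condition_homogeneous} is homogeneous, $-u$ also solves $A(-u)=0$ with $D_{\vec n}(-u)=0$ on $\overline{\partial_1\sO}$ and has maximum $-m$; hence at least one of $u$ and $-u$ has a non-negative maximum, so without loss of generality I assume $M\geq 0$. By the decomposition $\bar\sO=\underline\sO\cup\overline{\partial_1\sO}$ from \eqref{eq:Degenerate_boundary_decomposition}, the maximum $M$ is attained either somewhere in $\underline\sO$ or only along $\overline{\partial_1\sO}$, and I would treat these two cases separately.

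The heart of the argument is to show the second possibility cannot occur. Suppose for contradiction that $u<M$ on $\underline\sO$ and $u(x^0)=M$ for some $x^0\in\overline{\partial_1\sO}$. The interior sphere condition provides a ball $B=B(x^*,R)\subset\sO$ with $x^0\in\partial B$, tangent to $\partial\sO$ at $x^0$; shrinking $R$ if necessary so that $\bar B\cap\partial\sO=\{x^0\}$, the standing hypotheses \eqref{eq:a_locally_strictly_elliptic_interior_domain} and \eqref{eq:bc_locally_bounded} supply the uniform strict ellipticity and local boundedness of $a,b,c$ on $B$ required by Corollary \ref{cor:Hopf_very_simple}. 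The remaining conditions (i)--(iii) of Lemma \ref{lem:Degenerate_hopf_lemma} hold because $u\in C(\bar\sO)$, $u<M$ on $\sO\supset B$, and $D_{\vec n}u(x^0)$ exists by hypothesis. Since $u(x^0)=M\geq 0$ and $c\geq 0$ by \eqref{eq:c_nonnegative_domain} and \eqref{eq:c_nonnegative_boundary}, Conclusion \eqref{item:Hopf_c_geq_zero} of Lemma \ref{lem:Degenerate_hopf_lemma} yields $D_{\vec n}u(x^0)<0$, where $\vec n$ is the inward normal to $B$ at $x^0$ which, by the tangency of the interior sphere, coincides with the inward normal to $\sO$ at $x^0$; this contradicts \eqref{eq:Neumann_boundary_condition_homogeneous}. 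Therefore $M$ is attained at some point of $\underline\sO$, and Theorem \ref{thm:Strong_maximum_principle}\eqref{item:Strong_maximum_principle_c_geq_zero} forces $u\equiv M$ on $\sO$, so $u$ is constant on $\sO$.

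Finally, to upgrade constancy to vanishing when $c(x_1)>0$ at some $x_1\in\underline\sO$, I would note that if $u\equiv C$ on $\sO$, then $Du=0$ and $D^2 u=0$ on $\sO$, whence $Au=cC$ on $\sO$; the equation $Au=0$ forces $c(x)C=0$ for all $x\in\sO$, so $C=0$ when $x_1\in\sO$ is an interior positivity point of $c$. When $x_1\in\partial_0\sO$, the same conclusion follows by extending $u\equiv C$ to $\underline\sO$ using $u\in C(\bar\sO)$, using \eqref{eq:Second_order_boundary_vanishing} to obtain $\tr(aD^2u)=0$ at $x_1$ and $u\in C^1(\underline\sO)$ to obtain $Du(x_1)=0$, so that $c(x_1)C=0$ at $x_1$ gives $C=0$; thus $u\equiv 0$ on $\sO$. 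The only delicate point I foresee is the geometric verification that the interior sphere tangent at a point of $\overline{\partial_1\sO}$ can be chosen with the Hopf hypotheses holding on it and with the two inward-normal directions matching, both of which are handled by taking the tangent interior ball sufficiently small.
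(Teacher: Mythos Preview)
Your proof is correct and follows essentially the same approach as the paper: both combine the strong maximum principle (Theorem \ref{thm:Strong_maximum_principle}) with the Hopf boundary point lemma (Lemma \ref{lem:Degenerate_hopf_lemma}) to rule out a boundary extremum on $\overline{\partial_1\sO}$ via the homogeneous Neumann condition, and then use $Au=cM$ to conclude $M=0$ when $c>0$ somewhere in $\underline\sO$. The paper organizes the argument as a direct contradiction on non-constancy rather than your case split on the location of the maximum, but the content is the same.
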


\begin{proof}
We modify the proof of \cite[Theorem 3.6]{GilbargTrudinger}. If $u$ is not identically constant on $\sO$, then either $u$ or $-u$ achieves a non-negative maximum $M$ on $\bar\sO$. Since $\sO$ is bounded and $u\in C(\bar\sO)$, we may suppose that $u$ achieves a non-negative maximum at some point $x^0\in\bar\sO$, as the argument when $-u$ achieves a non-negative maximum on $\bar\sO$ will be identical. Therefore, $u(x^0)=M$ for some $x^0\in \overline{\partial_1\sO}$ since, because $u$ is not identically constant on $\sO$, Theorem \ref{thm:Strong_maximum_principle} implies that $u<M$ on $\underline\sO$. But then Lemma \ref{lem:Degenerate_hopf_lemma} implies that $D_{\vec n} u(x^0)< 0$, contradicting our hypothesis \eqref{eq:Neumann_boundary_condition_homogeneous}. Thus, we must have $u=M$, a constant, on $\sO$. If $c>0$ at some point of $\underline\sO$, the facts that $Au=cM$ and $Au=0$ force $M=0$.
\end{proof}

As an immediate consequence, we obtain

\begin{cor}[Uniqueness for the Neumann problem for $C^2$ functions on bounded domains]
\label{cor:Neumann_boundary_condition_uniqueness_equation}
Assume the hypotheses of Theorem \ref{thm:Neumann_boundary_condition_uniqueness_equation} on $\sO$, $\partial\sO$, and $A$.
Require in addition that $c>0$ at some point of $\underline\sO$. Let $f\in C(\sO)$ and $h\in C_{\loc}(\overline{\partial_1\sO};\RR^d)$. If $u_1, u_2 \in C^2_s(\underline\sO)\cap C^1(\sO\cup\overline{\partial_1\sO})\cap C(\bar\sO)$ are solutions to the elliptic equation \eqref{eq:Elliptic_equation} with partial Neumann boundary condition,
\begin{equation}
\label{eq:Neumann_boundary_condition}
D_{\vec n}u = h \quad \hbox{on }\overline{\partial_1\sO},
\end{equation}
then $u_1=u_2$ on $\sO$.
\end{cor}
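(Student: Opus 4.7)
The plan is to reduce the problem to Theorem \ref{thm:Neumann_boundary_condition_uniqueness_equation} by taking the difference $w := u_1 - u_2$ and exploiting linearity. Since $A$ is linear and both $u_i$ solve $Au_i = f$ on $\sO$, we immediately have $Aw = 0$ on $\sO$. Similarly, since the Neumann datum is the same for both $u_i$, the partial boundary condition \eqref{eq:Neumann_boundary_condition} gives $D_{\vec n} w = 0$ on $\overline{\partial_1\sO}$, which is precisely the homogeneous condition \eqref{eq:Neumann_boundary_condition_homogeneous}.

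Next I would verify that $w$ lies in the appropriate regularity class for Theorem \ref{thm:Neumann_boundary_condition_uniqueness_equation}, namely $C^2_s(\underline\sO) \cap C(\bar\sO)$. Because $C^2_s(\underline\sO)$, $C^1(\sO\cup\overline{\partial_1\sO})$, and $C(\bar\sO)$ are each real vector spaces (the second-order boundary condition \eqref{eq:Second_order_boundary_regularity} is linear in $u$), the difference $w$ inherits membership from $u_1$ and $u_2$. In particular, $\tr(aD^2 w) = \tr(aD^2 u_1) - \tr(aD^2 u_2) \in C(\underline\sO)$ and vanishes on $\partial_0\sO$, and $D_{\vec n} w$ is defined everywhere on $\overline{\partial_1\sO}$.

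With these verifications in hand, Theorem \ref{thm:Neumann_boundary_condition_uniqueness_equation} applies to $w$. Since by hypothesis $c > 0$ at some point of $\underline\sO$, the strengthened conclusion of that theorem gives $w \equiv 0$ on $\sO$, whence $u_1 = u_2$ on $\sO$.

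There is essentially no obstacle: the argument is a routine linearity-plus-uniqueness reduction. The only small subtlety worth checking is that the class $C^2_s(\underline\sO)$ (Definition \ref{defn:Second_order_boundary_regularity}) is closed under subtraction, and that the hypotheses on $\sO$, $\partial_0\sO$, and $A$ (interior sphere condition on $\overline{\partial_1\sO}$, \eqref{eq:a_locally_strictly_elliptic_interior_domain}--\eqref{eq:C1alpha_degenerate_boundary}, and the positivity of $c$ at some point) are precisely those transferred from the corollary's hypotheses into the invocation of Theorem \ref{thm:Neumann_boundary_condition_uniqueness_equation}.
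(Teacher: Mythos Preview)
Your proposal is correct and matches the paper's approach: the paper presents this corollary as an immediate consequence of Theorem \ref{thm:Neumann_boundary_condition_uniqueness_equation} without further proof, and your linearity-plus-difference argument is precisely the intended route. The checks you flag (closure of $C^2_s(\underline\sO)$ under subtraction, transfer of hypotheses) are routine and your identification of them is appropriate.
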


Finally, we note that a version of the weak maximum principle can be deduced from our strong maximum principle, Theorem \ref{thm:Strong_maximum_principle}, using a proof which is identical to that of \cite[Theorem 2.9]{Feehan_perturbationlocalmaxima}; the result complements our alternative version, Theorem \ref{thm:Weak_maximum_principle_C2}, which has different hypotheses.

\begin{thm}[Weak maximum principle on domains for $A$-subharmonic functions in $C^2(\sO)$]
\label{thm:Weak_maximum_principle_C2_connected_domain}
Let $\sO\subset\RR^d$ be a bounded domain. Assume the hypotheses of Theorem \ref{thm:Strong_maximum_principle} for the coefficients of $A$ in \eqref{eq:Generator} and that
\begin{gather}
\label{eq:Degenerate_boundary_proper_subset}
\partial_0\sO \neq \partial\sO, \quad\hbox{\emph{or}}
\\
\label{eq:Degenerate_boundary_is_whole_boundary}
\tag{\ref*{eq:Degenerate_boundary_proper_subset}$'$}
\hbox{$c>0$ at some point in } \underline\sO.
\end{gather}
Suppose $u \in C^2_s(\underline\sO)$ and $\sup_\sO u<\infty$.
If $Au\leq 0$ on $\sO$ and $u^* \leq 0$ on $\partial\sO\less\partial_0\sO$, then $u\leq 0$ on $\sO$.
\end{thm}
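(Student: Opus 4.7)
The plan is to reduce to the strong maximum principle, Theorem~\ref{thm:Strong_maximum_principle}, via a contradiction argument analogous to the standard deduction of weak maximum principles from strong ones. Suppose $u\leq 0$ fails on $\sO$; set $M := \sup_\sO u$, which is positive by assumption and finite by hypothesis.

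First I would verify that $M$ is attained somewhere on $\underline\sO = \sO\cup\partial_0\sO$. Choose $x_n\in\sO$ with $u(x_n)\to M$ and, using compactness of $\bar\sO$, pass to a subsequential limit $x^*\in\bar\sO$. If $x^*$ lay in $\partial\sO\setminus\partial_0\sO$, the definition of the upper semicontinuous envelope would give $u^*(x^*)\geq M>0$, contradicting the hypothesis $u^*\leq 0$ there. Hence $x^*\in\underline\sO$, and because $u\in C^2_s(\underline\sO)\subset C(\underline\sO)$, continuity forces $u(x^*)=M$. Since $c\geq 0$ on $\underline\sO$ (a standing hypothesis of Theorem~\ref{thm:Strong_maximum_principle}) and $u$ is $A$-subharmonic attaining the non-negative global maximum $M$ at $x^*\in\underline\sO$, Conclusion~\eqref{item:Strong_maximum_principle_c_geq_zero} of Theorem~\ref{thm:Strong_maximum_principle} applies, yielding $u\equiv M$ on the connected set $\sO$. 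Continuity on $\underline\sO$ then extends this equality to $u\equiv M$ on all of $\underline\sO$.

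It remains to derive a contradiction from the two alternative hypotheses. Under \eqref{eq:Degenerate_boundary_proper_subset}, choose $y\in\partial\sO\setminus\partial_0\sO$ and any $\sO$-sequence $y_n\to y$; since $u\equiv M$ on $\sO$, we obtain $u^*(y)\geq M>0$, contradicting the boundary hypothesis $u^*\leq 0$ on $\partial\sO\setminus\partial_0\sO$. Under \eqref{eq:Degenerate_boundary_is_whole_boundary}, pick $x_0\in\underline\sO$ with $c(x_0)>0$. Constancy of $u$ on $\sO$ gives $Du\equiv 0$ and $D^2u\equiv 0$ there, while $u\in C^1(\underline\sO)$ extends $Du(x_0)=0$ and the second-order vanishing \eqref{eq:Second_order_boundary_vanishing} gives $\tr(aD^2u)(x_0)=0$. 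Consequently $Au(x_0) = c(x_0)M > 0$, contradicting $Au\leq 0$ on $\sO$: directly if $x_0\in\sO$, and via continuous extension to $\partial_0\sO$ using \eqref{eq:Second_order_boundary_continuity} together with $Du\equiv 0$, the boundary continuity \eqref{eq:b_continuous_on_degenerate_boundary} of $b$, and the local boundedness \eqref{eq:c_locally_bounded_on_domain_plus_degenerate_boundary} of $c$ if $x_0\in\partial_0\sO$.

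The main obstacle is the last subcase $x_0\in\partial_0\sO$: one must ensure the inequality $Au\leq 0$ persists up to $\partial_0\sO$ so that the pointwise value $Au(x_0)=c(x_0)M$ can be compared against it. This is exactly the role played by the $C^2_s$ boundary regularity condition of Definition~\ref{defn:Second_order_boundary_regularity}, which in the present situation (where $Du$ and $D^2u$ vanish identically on $\sO$) reduces $Au$ to a form whose limit at $\partial_0\sO$ is controlled by \eqref{eq:Second_order_boundary_continuity}. Aside from this point, the argument is a transparent combination of the strong maximum principle with the semicontinuity behavior of $u^*$ on $\partial\sO\setminus\partial_0\sO$.
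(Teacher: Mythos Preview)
Your argument is correct and follows the same approach the paper indicates: it explicitly states that Theorem~\ref{thm:Weak_maximum_principle_C2_connected_domain} is deduced from the strong maximum principle, Theorem~\ref{thm:Strong_maximum_principle}, by the standard contradiction argument (the paper defers the details to \cite[Theorem 2.9]{Feehan_perturbationlocalmaxima}). Your handling of the delicate subcase $x_0\in\partial_0\sO$ is consistent with the paper's own usage in the proof of Theorem~\ref{thm:Strong_maximum_principle}, where the inequality $Au\leq 0$ is extended from $\sO$ to $\underline\sO$ via the $C^2_s$ regularity; indeed, one can also short-circuit that subcase by observing that $u\equiv M$ on $\sO$ forces $Au=cM\leq 0$ and hence $c\equiv 0$ on $\sO$, so the point with $c>0$ must lie on $\partial_0\sO$, and then the same continuity-of-$Au$ argument the paper already invokes gives the contradiction.
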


\section{Weak maximum principle for smooth functions}
\label{sec:Weak_maximum_principle_elliptic_C2}
Having considered applications of the weak maximum principle property (Definition \ref{defn:Weak_maximum_principle_property_elliptic_C2_W2d}) to Dirichlet boundary value problems in Section \ref{sec:Applications_weak_maximum_principle_property_boundary_value_problems} and obstacle problems in Section \ref{sec:Applications_weak_maximum_principle_property_obstacle_problems}, we now establish conditions under which the operator $A$ in \eqref{eq:Generator} has the weak maximum principle property on $\underline\sO$, that is, when $\Sigma=\partial_0\sO$. In Section \ref{subsec:Weak_maximum_principle_C2_bounded_domain}, we establish a weak maximum principle for bounded $C^2$ functions on bounded open subsets (Theorem \ref{thm:Weak_maximum_principle_C2}), while in Section \ref{subsec:Weak_maximum_principle_C2_unbounded_opensubset}, we extend this result to the case of bounded $C^2$ functions on unbounded open subsets (Theorem \ref{thm:Weak_maximum_principle_C2_unbounded_opensubset}).

Our weak maximum principle (Theorems \ref{thm:Weak_maximum_principle_C2} and \ref{thm:Weak_maximum_principle_C2_unbounded_opensubset}) differs in several aspects from \cite[Theorem 1.1.2]{Radkevich_2009a}, some of which may appear subtle at first glance but which are nonetheless important for applications. For example,
\begin{enumerate}
\item The function $u$ is \emph{not} required to be in $C^2(\underline\sO)$, but rather  $C^2_s(\underline\sO)$, a strictly weaker condition on regularity up to the boundary portion, $\partial_0\sO$;
\item The subdomain $\sO\subset\RR^d$ is allowed to be \emph{unbounded}; and
\item The coefficients of $A$ in \eqref{eq:Generator} are allowed to be \emph{unbounded}.
\end{enumerate}
The significance of these points is illustrated further by the example of the Heston operator discussed in Appendix \ref{sec:FicheraAndHeston}.

\subsection{Bounded $C^2$ functions on bounded open subsets}
\label{subsec:Weak_maximum_principle_C2_bounded_domain}
We begin with the case of bounded open subsets and adapt the proofs of \cite[Theorem 3.1]{GilbargTrudinger} and \cite[Theorem 2.9.1]{Krylov_LecturesHolder}; see also \cite[Theorem I.3.1]{DaskalHamilton1998}, \cite[Section 3.2]{Feehan_Pop_mimickingdegen_pde}. It will be convenient to adopt the following convention. If $\Sigma\subseteqq \partial\sO$ and $g:\partial\sO\less \Sigma\to\RR$ is a function and $m\in\RR$, then
\begin{equation}
\label{eq:MaxOverEmptyBoundary}
m\vee\sup_{\partial\sO\less \Sigma} g
=
\begin{cases}
\displaystyle{\sup_{\partial\sO\less \Sigma} g} &\hbox{if }\Sigma\subsetneqq \partial\sO,
\\
m &\hbox{if }\Sigma = \partial\sO,
\end{cases}
\end{equation}
where we recall that $x\vee y = \max\{x,y\}$, for any $x,y\in\RR$.

\begin{thm}[Weak maximum principle for $A$-subharmonic functions in $C^2(\sO)$ on bounded open subsets]
\label{thm:Weak_maximum_principle_C2}
Suppose that $\sO\subset\RR^d$ is a bounded open subset. Require that the coefficients of the operator $A$ in \eqref{eq:Generator} be defined everywhere on $\underline\sO$ and obey \eqref{eq:b_perp_nonnegative_boundary_elliptic}, \eqref{eq:c_nonnegative_domain}, \eqref{eq:c_nonnegative_boundary}, where $\partial_0\sO$ is assumed to be $C^{1,\alpha}$, that is, \eqref{eq:C1alpha_degenerate_boundary} holds. Assume further that at least \emph{one} of the following holds: $c$ obeys \eqref{eq:c_positive_domain} and \eqref{eq:c_positive_boundary}, that is, $c>0$ on $\underline\sO$, \emph{or}, for \emph{some} fixed $h\in\RR^d$,
\begin{subequations}
\label{eq:global_uniform_elliptic_direction}
\begin{gather}
\label{eq:global_uniform_elliptic_direction_degenerate_boundary_proper_subset}
{} \partial_0\sO \neq \partial\sO, \quad \hbox{and}
\\
\label{eq:global_uniform_elliptic_direction_b_dot_h_positive_degenerate_boundary}
{} \langle b,h\rangle > 0 \quad\hbox{on }\partial_0\sO, \quad \hbox{and}
\\
\label{eq:global_uniform_elliptic_direction_bounded_ab_ratio_domain}
{} \inf_{\sO}\frac{\langle b,h\rangle}{\langle ah,h\rangle} > -\infty.
\end{gather}
\end{subequations}
Suppose that $u \in C^2_s(\underline\sO)$ obeys $\sup_\sO u < \infty$. If $Au \leq 0$ on $\sO$, then
\begin{equation}
\label{eq:Weak_maximum_principle_bound_C2_c_geq_zero}
\sup_\sO u \leq 0\vee\sup_{\partial\sO\less\partial_0\sO} u^*,
\end{equation}
and, if $c=0$ on $\underline\sO$ and $\partial_0\sO\neq\partial\sO$, then
\begin{equation}
\label{eq:Weak_maximum_principle_bound_C2_c_zero}
\sup_\sO u = \sup_{\partial\sO\less\partial_0\sO} u^*.
\end{equation}
Moreover, $A$ has the \emph{weak maximum principle property on $\underline\sO$} in the sense of Definition \ref{defn:Weak_maximum_principle_property_elliptic_C2_W2d}, with $\Sigma=\partial_0\sO$ and convex cone $\fK = \{u\in C^2_s(\underline\sO): \sup_\sO u < \infty\}$.
\end{thm}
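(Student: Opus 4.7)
The plan is to argue by contradiction. Set $M := \sup_\sO u < \infty$ and $m := \sup_{\partial\sO\less\partial_0\sO} u^*$, and assume $M > 0\vee m$. A maximizing sequence $x_n \in \sO$ with $u(x_n) \to M$ has, by compactness of $\bar\sO$, a subsequential limit $x^0 \in \bar\sO$. Because $M > m$, we rule out $x^0 \in \partial_1\sO$ (which would force $u^*(x^0) \geq M > m$), leaving $x^0 \in \underline\sO$; continuity of $u$ on $\underline\sO$ then gives $u(x^0) = M > 0$.

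In the first subcase, where $c > 0$ on $\underline\sO$, the contradiction follows at $x^0$ directly. If $x^0\in\sO$, then $Du(x^0)=0$ and $D^2 u(x^0)\leq 0$, so $\tr(aD^2u)(x^0)\leq 0$ by positive semidefiniteness of $a$, giving $Au(x^0)\geq c(x^0)M>0$, contradicting $Au\leq 0$. If $x^0\in\partial_0\sO$, I would use the $C^{1,\alpha}$-regularity of $\partial_0\sO$ from \eqref{eq:C1alpha_degenerate_boundary} and a boundary-straightening lemma to conclude that the tangential part of $Du(x^0)$ vanishes and $D_{\vec n}u(x^0)\leq 0$ for the inward normal $\vec n$. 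Combined with $b^\perp\geq 0$ from \eqref{eq:b_perp_nonnegative_boundary_elliptic} and the key second-order boundary vanishing $\tr(aD^2u)(x^0)=0$ of $C^2_s(\underline\sO)$, this yields $Au(x^0) = -b^\perp(x^0) D_{\vec n}u(x^0) + c(x^0)M \geq c(x^0)M > 0$; continuity of $Au$ up to $\partial_0\sO$ from \eqref{eq:Second_order_boundary_continuity} then contradicts $Au\leq 0$.

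For the alternative hypothesis \eqref{eq:global_uniform_elliptic_direction}, I would reduce to the previous subcase via the barrier $v(x) := e^{\alpha \langle h, x\rangle}$. A direct computation gives
\[
Av = v\left(-\alpha^2 \langle ah,h\rangle - \alpha \langle b,h\rangle + c\right).
\]
Hypothesis \eqref{eq:global_uniform_elliptic_direction_b_dot_h_positive_degenerate_boundary} gives $Av<0$ along $\partial_0\sO$ for $\alpha$ large (since $\langle ah,h\rangle=0$ there), while \eqref{eq:global_uniform_elliptic_direction_bounded_ab_ratio_domain} allows the same on $\sO$ once $\alpha$ is large enough to dominate both the ratio lower bound and the pointwise values of $c$ that arise at the putative maximum. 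Setting $w := u + \varepsilon v$ for small $\varepsilon>0$, one still has $w\in C^2_s(\underline\sO)$ (because $a$ vanishes on $\partial_0\sO$, so $\tr(aD^2v)=0$ there, and $v$ is smooth), and $Aw \leq \varepsilon Av < 0$ strictly. The interior/boundary-max argument from the first subcase then applies to $w$ without the need $c>0$, since the strict inequality $Aw(x^0)<0$ is incompatible with the inequality $Aw(x^0) \geq c(x^0) w(x^0) \geq 0$ at a positive maximum; sending $\varepsilon \downarrow 0$ transfers the conclusion to $u$.

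The most delicate point will be making the barrier precise under \eqref{eq:global_uniform_elliptic_direction}, because $b$ and $c$ are not required to be uniformly bounded on $\underline\sO$ and because the ratio in \eqref{eq:global_uniform_elliptic_direction_bounded_ab_ratio_domain} can degenerate where $\langle ah,h\rangle$ vanishes in the interior. It will be cleanest to execute the contradiction pointwise at $x^0$, so only the values of the coefficients at $x^0$ need to be controlled by $\alpha$. Finally, for the equality assertion when $c=0$ on $\underline\sO$ and $\partial_0\sO\neq\partial\sO$, I would apply the already-proven inequality \eqref{eq:Weak_maximum_principle_bound_C2_c_geq_zero} to $u+C$ for $C\in\RR$; since $c=0$ the shift does not affect $A(u+C)=Au$, and one obtains $\sup_\sO u + C \leq 0 \vee (\sup_{\partial_1\sO} u^* + C)$. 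Letting $C\to\infty$ eliminates the $0\vee$, giving $\sup_\sO u \leq \sup_{\partial_1\sO} u^*$; the reverse inequality is immediate from the definition of the upper semicontinuous envelope. The weak maximum principle property for $\fK=\{u\in C^2_s(\underline\sO):\sup_\sO u<\infty\}$ follows by applying \eqref{eq:Weak_maximum_principle_bound_C2_c_geq_zero} to any $u\in\fK$ with $u^*\leq 0$ on $\partial\sO\less\partial_0\sO$.
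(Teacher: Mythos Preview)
Your treatment of the case $c>0$ on $\underline\sO$ matches the paper's Step~1 and is correct. The gap is in the barrier argument under the alternative hypothesis \eqref{eq:global_uniform_elliptic_direction} when $c\geq 0$ but $c$ is not identically zero.

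You compute $Av = v\bigl(-\alpha^2\langle ah,h\rangle - \alpha\langle b,h\rangle + c\bigr)$ and want this strictly negative on $\underline\sO$. On $\sO$ this amounts to $\alpha^2 + \alpha\,\dfrac{\langle b,h\rangle}{\langle ah,h\rangle} > \dfrac{c}{\langle ah,h\rangle}$, but there is no hypothesis bounding $c/\langle ah,h\rangle$ from above; near $\partial_0\sO$ one has $\langle ah,h\rangle\to 0$ while $c$ may remain positive, so no fixed $\alpha$ works uniformly. The same issue arises on $\partial_0\sO$ itself, where $Av = v(-\alpha\langle b,h\rangle + c)$ and nothing prevents $c/\langle b,h\rangle$ from being unbounded. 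Your proposal to ``execute the contradiction pointwise at $x^0$'' does not escape this: the location $x^0$ of the maximum of $w=u+\varepsilon v$ depends on $\alpha$ and $\varepsilon$, so you cannot select $\alpha$ based on the coefficient values at $x^0$ before knowing where $x^0$ is.

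The paper's remedy is to reduce to $c=0$ \emph{before} introducing the barrier. One restricts to $\sO^+:=\{u>0\}$ and replaces $A$ by $A_0:=A-c$; from $Au\leq 0$ and $u>0$ one gets $A_0 u = Au - cu \leq 0$ on $\sO^+$. With the zeroth-order term removed, the barrier satisfies $A_0 e^{\nu\langle h,x\rangle}<0$ on $\sO^+$ for $\nu$ large using only \eqref{eq:global_uniform_elliptic_direction_bounded_ab_ratio_domain}, and on $\partial_0\sO$ using only \eqref{eq:global_uniform_elliptic_direction_b_dot_h_positive_degenerate_boundary}. This yields $\sup_{\sO^+} u = \sup_{\partial\sO^+\less\partial_0\sO} u$, and since $u=0$ on $\partial\sO^+\cap\sO$ one recovers \eqref{eq:Weak_maximum_principle_bound_C2_c_geq_zero}. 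Your shift-by-a-constant derivation of the equality \eqref{eq:Weak_maximum_principle_bound_C2_c_zero} from \eqref{eq:Weak_maximum_principle_bound_C2_c_geq_zero} is a tidy alternative to the paper's ordering (which proves the $c=0$ equality first and then deduces the $c\geq 0$ inequality from it), but it only becomes available once the inequality is in hand.
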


The hypothesis in Theorem \ref{thm:Weak_maximum_principle_C2} that \eqref{eq:C1alpha_degenerate_boundary} holds is required by the change-of-coordinates argument used in Step \ref{step:weak_maximum_principle_C2_c_positive} of the proof.

\begin{rmk}[On the hypothesis of a global uniformly elliptic direction]
\label{rmk:global_uniform_elliptic_direction}
The hypothesis of a global `uniformly elliptic direction', $h\in\RR^d$, in the statement of Theorem \ref{thm:Weak_maximum_principle_C2} arises in Step \ref{step:weak_maximum_principle_C2_c_zero} of our proof; compare the proof of \cite[Theorem 3.1]{GilbargTrudinger} and \cite{Monticelli_Payne_2009}. Although unattractive, Remark \ref{rmk:EllipticHestonC2Bounded} indicates that this hypothesis is not unduly restrictive in applications since, typically, we can take $h=\vec n(x^0)$ for some $x^0\in\partial_0\sO$. Nevertheless, versions of Theorem \ref{thm:Weak_maximum_principle_C2} which omit this condition, at the expense of imposing slightly stronger, local conditions on the coefficients of $A$ are given in \cite{Feehan_perturbationlocalmaxima}. Theorem \ref{thm:Weak_maximum_principle_C2_connected_domain}, which is deduced as a consequence of our strong maximum principle, Theorem \ref{thm:Strong_maximum_principle}, illustrates another collection of hypotheses which yields a weak maximum principle.
\end{rmk}

\begin{rmk}[Application to the elliptic Heston operator]
\label{rmk:EllipticHestonC2Bounded}
The hypotheses of Theorem \ref{thm:Weak_maximum_principle_C2} on the coefficients of $A$ are obeyed in the case of the elliptic Heston operator, Example \ref{exmp:HestonPDE}, where $d=2$ and $\sO\subset\HH$ is bounded and $\Sigma=\bar\sO\cap\partial\HH$. Choosing $h=e_2$ in condition \eqref{eq:global_uniform_elliptic_direction}, we have $b^2(x) = \kappa(\theta-x_2)$ and $a^{22}(x) = \sigma^2 x_2/2$, so
$$
\frac{\langle b,e_2\rangle}{\langle ae_2,e_2\rangle} = \frac{b^2(x)}{a^{22}(x)} = \frac{2\kappa(\theta-x_2)}{\sigma^2 x_2} =  \frac{2\kappa\theta}{\sigma^2 x_2} - \frac{2\kappa}{\sigma^2} > -\frac{2\kappa}{\sigma^2}, \quad \forall\, x_2>0,
$$
and condition \eqref{eq:b_perp_nonnegative_boundary_elliptic} is obeyed (with $\vec n=h$) as $\langle b,h\rangle = b^2(x_1,0) = \kappa\theta > 0$. Lastly, $c=r\geq 0$.
\end{rmk}

\begin{proof}[Proof of Theorem \ref{thm:Weak_maximum_principle_C2}]
Since $u^*$ is upper semicontinuous on $\bar\sO$ and by hypothesis $\bar\sO$ is compact, then $u^*$ achieves its maximum value at some point $x^0$ in $\bar\sO$.

\setcounter{step}{0}
\begin{step}[$c>0$ on $\underline\sO$]
\label{step:weak_maximum_principle_C2_c_positive}
Suppose that $u^*$ attains its maximum value, $u^*(x^0)=u(x^0)$, at a point $x^0$ in $\underline\sO$. If $x^0 \in \sO$, then $Du(x^0) = 0$ and $D^2u(x^0) \leq 0$, so that
\begin{align*}
Au(x^0) &= -a^{ij}(x^0)u_{x_ix_j}(x^0) - b^i(x^0)u_{x_i}(x^0) + c(x^0)u(x^0)
\\
&\geq c(x^0)u(x^0),
\end{align*}
since  $a(x^0)$ is non-negative definite by \eqref{eq:a_nonnegative_symmetric_matrix_valued} and $\tr(KL)\geq 0$ whenever $K,L$ are non-negative definite matrices \cite[p. 218]{Lancaster_Tismenetsky}. If $u(x^0)>0$, we would obtain $Au(x^0)>0$, contradicting our hypothesis that $Au \leq 0$ on $\sO$. Therefore, we must have $u(x^0)\leq 0$ and, necessarily, $u(x^0)\leq 0\vee\sup_{\partial\sO\less\partial_0\sO}u^*$ or simply $u(x^0)\leq 0$ if $\partial_0\sO=\partial\sO$.

If $x^0\in \partial_0\sO$ then, possibly after a $C^2$ change of coordinates on $\RR^d$, we may assume without loss of generality by our hypothesis \eqref{eq:C1alpha_degenerate_boundary} and \cite[Lemma B.1]{Feehan_perturbationlocalmaxima} that $B(x^0)\cap\underline\sO=B(x^0)\cap\HH$, where $\HH=\RR^{d-1}\times\RR_+$. Thus, $\vec n(x^0) = e_d$ and the condition \eqref{eq:b_perp_nonnegative_boundary_elliptic} at $x^0$ becomes $b^d(x^0)\geq 0$. We have $\tr(a D^2u(x^0)) = 0$ by \eqref{eq:Second_order_boundary_vanishing} and the fact that $u\in C^2_s(\underline\sO)$, while $u_{x_i}(x^0) = 0$ for $1\leq i\leq d-1$ and $u_{x_d}(x^0) \leq 0$ since $x^0\in\partial_0\sO$ is a local maximum and $u\in C^1(\underline\sO)$, so that
\begin{align*}
Au(x^0) &= -a^{ij}(x^0)u_{x_ix_j}(x^0) - b^i(x^0)u_{x_i}(x^0) + c(x^0)u(x^0)
\\
&= -b^d(x^0)u_{x_d}(x^0) + c(x^0)u(x^0)
\\
&\geq c(x^0)u(x^0).
\end{align*}
Our hypothesis that $Au \leq 0$ on $\sO$ implies, by continuity of $Au$ on $\underline\sO$ due to $u\in C^2_s(\underline\sO)$ and \eqref{eq:Second_order_boundary_continuity}, that $Au \leq 0$ on $\underline\sO$. If $u(x^0)>0$, we would obtain $Au(x^0)>0$, a contradiction. Therefore, we must again have $u(x^0)\leq 0$ and, necessarily, $u(x^0)\leq 0\vee\sup_{\partial\sO\less\partial_0\sO}u^*$ or simply $u(x^0)\leq 0$ if $\partial_0\sO=\partial\sO$.

Finally, if $x^0\in \partial\sO\less\partial_0\sO$, then $u(x^0)=\sup_{\partial\sO\less\partial_0\sO}u^*$ and so by combining the preceding three cases we obtain \eqref{eq:Weak_maximum_principle_bound_C2_c_geq_zero} for this step.
\end{step}

\begin{step}[$c=0$ on $\underline\sO$ and $\partial_0\sO\neq\partial\sO$ and $Au < 0$ on $\underline\sO$]
\label{step:weak_maximum_principle_C2_c_zero_Au_negative}
Suppose that $u^*$ attains its maximum value, $u^*(x^0)=u(x^0)$, at a point $x^0$ in $\underline\sO$. Repeating the argument of Step \ref{step:weak_maximum_principle_C2_c_positive} would then yield
$$
Au(x^0) \geq c(x^0)u(x^0) = 0,
$$
contradicting the assumption that $Au < 0$ on $\underline\sO$. Consequently, we must have $x^0\in\partial\sO\less\partial_0\sO$ and
$$
\sup_\sO u = \sup_{\partial\sO\less\partial_0\sO} u^*,
$$
that is, \eqref{eq:Weak_maximum_principle_bound_C2_c_zero} holds. See additional comments at the end of the proof.
\end{step}

\begin{step}[$c=0$ on $\underline\sO$ and $\partial_0\sO\neq\partial\sO$]
\label{step:weak_maximum_principle_C2_c_zero}
We see that, for a constant $\nu>0$ to be chosen and any $x\in\sO$,
\begin{align*}
Ae^{\nu\langle h,x\rangle} &= \left(-\nu^2a^{ij}(x)h_ih_j - \nu b^i(x)h_i\right)e^{\nu\langle h,x\rangle}
\\
&= -\nu a^{ij}(x)h_ih_j\left(\nu + \frac{b^i(x)h_i}{a^{ij}(x)h_ih_j}\right)e^{\nu\langle h,x\rangle}
\\
&< 0 \quad\hbox{on }\sO, \quad\hbox{if } \inf_{x\in\sO}\frac{b^i(x)h_i}{a^{ij}(x)h_ih_j} > -\nu,
\end{align*}
appealing to \eqref{eq:global_uniform_elliptic_direction_bounded_ab_ratio_domain} for the ability to choose a finite $\nu>0$. For any $x\in\partial_0\sO$, we have
\begin{align*}
Ae^{\nu\langle h,x\rangle} &= -\nu b^i(x)h_i e^{\nu\langle h,x\rangle}
\\
&< 0 \quad\hbox{on }\partial_0\sO, \quad\hbox{if } b^ih_i > 0\quad\hbox{on }\partial_0\sO,
\end{align*}
by appealing to \eqref{eq:global_uniform_elliptic_direction_b_dot_h_positive_degenerate_boundary}. Therefore, we have, for any $\eps>0$,
$$
A(u + \eps e^{\nu\langle h,x\rangle}) < 0 \quad\hbox{on }\underline\sO,
$$
and so, by Step \ref{step:weak_maximum_principle_C2_c_zero_Au_negative}, we obtain
$$
\sup_\sO(u + \eps e^{\nu\langle h,x\rangle}) = \sup_{\partial_1\sO}(u + \eps e^{\nu\langle h,x\rangle}).
$$
Letting $\eps\to 0$ yields \eqref{eq:Weak_maximum_principle_bound_C2_c_zero} for this step.
\end{step}

\begin{step}[$c\geq 0$ on $\underline\sO$ and $\partial_0\sO\neq\partial\sO$]
\label{step:weak_maximum_principle_C2_c_geq_zero}
Let $\sO^+$ denote the open subset $\{x\in \sO: u(x)>0\}\subset \sO$. If $\sO^+$ is empty, then $u\leq 0$ on $\sO$ and so $u^*\leq 0$ on $\bar\sO$, with
$$
\sup_\sO u \leq 0 = 0\vee\sup_{\partial\sO\less\partial_0\sO}u^*.
$$
It remains to consider the case where $\sO^+$ is non-empty. By hypothesis, $Au \leq 0$ on $\sO$ and thus $A_0u \leq -cu \leq 0$ on $\sO^+$, where $A_0 := A-c$. We may write
$$
\partial\sO^+ = \left(\partial_0\sO\cap\partial\sO^+\right) \cup \left(\partial\sO^+\less\partial_0\sO\right).
$$
If $\partial\sO^+\less\partial_0\sO$ were empty, then we would have $\partial\sO^+\subset\partial_0\sO\subset\partial\sO$. Thus, we would necessarily have $\sO^+=\sO$ and hence $\partial\sO^+=\partial_0\sO=\partial\sO$, contradicting our assumption that $\partial_0\sO\neq\partial\sO$. Therefore, $\partial\sO^+\less\partial_0\sO$ must be non-empty and our result \eqref{eq:Weak_maximum_principle_bound_C2_c_zero} for the case $c=0$ on $\underline\sO$ yields
$$
\sup_{\sO^+} u = \sup_{\partial\sO^+\less\partial_0\sO}u.
$$
Now
$$
\partial\sO^+\less\partial_0\sO = \left(\partial\sO^+ \cap \partial\sO\less\partial_0\sO\right) \cup \left(\partial\sO^+ \cap\sO\right).
$$
Since $u=0$ on $\partial\sO^+ \cap\sO$ (because $u$ is continuous on $\sO$ and $u\leq 0$ on $\sO\less\sO^+$), then
$$
\sup_{\partial\sO^+\less\partial_0\sO}u = 0\vee \sup_{\partial\sO^+ \cap \partial\sO\less\partial_0\sO}u = \sup_{\partial\sO\less\partial_0\sO}u,
$$
and so, combining the preceding inequalities and equations and noting that $\sup_\sO u = \sup_{\sO^+} u$, we obtain
$$
\sup_\sO u = \sup_{\partial\sO\less\partial_0\sO}u,
$$
as desired for the case of non-empty $\sO^+$. Combining the preceding cases yields \eqref{eq:Weak_maximum_principle_bound_C2_c_geq_zero} for this step.
\end{step}
This completes the proof.
\end{proof}

If $\partial_0\sO=\partial\sO$, then the conditions $Au < 0$ on $\partial_0\sO$ and $\tr(aD^2u)=0$ on $\partial_0\sO$ via \eqref{eq:Second_order_boundary_vanishing} and $c=0$ on $\partial_0\sO$ imply that $-\langle b, Du\rangle <0$ on $\partial_0\sO$. If in addition we had $b^\parallel=0$ and \eqref{eq:b_perp_nonnegative_boundary_elliptic} were strengthened to $\langle b,\vec n\rangle>0$ on $\partial_0\sO$, then we would obtain $D_{\vec n} u>0$ on $\partial_0\sO$ and consequently $u$ would have a local maximum in $\sO$, contradicting the condition $Au < 0$ on $\sO$. However, in general when $\partial_0\sO=\partial\sO$, one obtains no additional information regarding $\sup_\sO u$ when $c=0$ on $\underline\sO$, beyond the fact that $u$ achieves its maximum at some point of $\underline\sO=\bar\sO$.

\subsection{Bounded $C^2$ functions on unbounded open subsets}
\label{subsec:Weak_maximum_principle_C2_unbounded_opensubset}
Next, we consider the case of bounded $C^2$ functions on \emph{unbounded}  open subsets. We have the following refinement of the maximum principle for bounded $C^2$ functions on unbounded open subsets and elliptic operators with non-negative definite characteristic form \cite[Theorem 2.9.2 and Exercises 2.9.4, 2.9.5]{Krylov_LecturesHolder}.

\begin{thm}[Weak maximum principle for bounded functions in $C^2(\sO)$ on unbounded open subsets]
\label{thm:Weak_maximum_principle_C2_unbounded_opensubset}
Suppose that $\sO\subseteqq\RR^d$ is a possibly unbounded open subset. Assume that the coefficients of $A$ in \eqref{eq:Generator} obey the hypotheses of Theorem \ref{thm:Weak_maximum_principle_C2}, except that the conditions \eqref{eq:c_positive_domain}, \eqref{eq:global_uniform_elliptic_direction} on $a,b,c$ are replaced by the condition \eqref{eq:c_positive_lower_bound_domain} that $c\geq c_0$ on $\sO$ for some positive constant, $c_0$, though we keep \eqref{eq:c_positive_boundary}, that is, $c>0$ on $\partial_0\sO$. In addition, we require that there is a positive constant, $K$, such that \eqref{eq:Quadratic_growth} holds for $a,b$. If $u \in C^2_s(\underline\sO)$ obeys $\sup_\sO u < \infty$ and $u^*\leq 0$ on $\partial\sO\less \partial_0\sO$ (when non-empty), then
$$
\sup_\sO u \leq 0\vee\frac{1}{c_0}\sup_\sO Au.
$$
Moreover, $A$ has the \emph{weak maximum principle property on $\underline\sO$} in the sense of Definition \ref{defn:Weak_maximum_principle_property_elliptic_C2_W2d}, with $\Sigma=\partial_0\sO$ and convex cone $\fK = \{u\in C^2_s(\underline\sO): \sup_\sO u < \infty\}$.
\end{thm}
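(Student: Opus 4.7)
The plan is to reduce the unbounded-domain statement to the bounded-domain argument of Theorem \ref{thm:Weak_maximum_principle_C2} by subtracting a carefully chosen barrier that grows polynomially at infinity and whose image under $A$ is non-negative. The quadratic growth condition \eqref{eq:Quadratic_growth} is exactly what is needed to construct such a barrier, and the lower bound $c\geq c_0>0$ is what forces the zeroth-order term to dominate.

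First, I would normalize: set $N := 0\vee c_0^{-1}\sup_\sO Au$, which we may assume finite (else there is nothing to prove), and replace $u$ by $\tilde u := u-N$. From $c\geq c_0>0$ and the choice of $N$ one checks that $A\tilde u = Au - cN \leq 0$ on $\sO$, while $\tilde u^* \leq 0$ on $\partial\sO\less\partial_0\sO$. It now suffices to prove $\tilde u\leq 0$ on $\sO$.

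Next, I would fix $\mu\in(0,1]$ with $2\mu K\leq c_0$ and put $\varphi(x):=(1+|x|^2)^\mu$. Because $\mu\leq 1$ and $a\in\sS^+(d)$, the term $4\mu(1-\mu)\langle ax,x\rangle(1+|x|^2)^{\mu-2}$ is non-negative, and the growth bound \eqref{eq:Quadratic_growth} gives $-2\mu(\tr a+\langle b,x\rangle)(1+|x|^2)^{\mu-1}\geq -2\mu K\varphi$. A direct computation therefore yields
\[
A\varphi \;=\; -2\mu(\tr a+\langle b,x\rangle)(1+|x|^2)^{\mu-1} + 4\mu(1-\mu)\langle ax,x\rangle(1+|x|^2)^{\mu-2} + c\,\varphi \;\geq\; (c_0-2\mu K)\,\varphi \;\geq\; 0
\]
on $\sO$. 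Since $a\to 0$ along $\partial_0\sO$ by \eqref{eq:Degeneracy_locus_elliptic} and $\varphi\in C^\infty(\RR^d)$, we have $\varphi\in C^2_s(\underline\sO)$. Setting $w_\eps := \tilde u-\eps\varphi$ for $\eps>0$, we then have $w_\eps\in C^2_s(\underline\sO)$, $Aw_\eps=A\tilde u-\eps A\varphi\leq 0$ on $\sO$ (and hence on $\underline\sO$ by continuity of $Aw_\eps$ there), and $w_\eps^*\leq\tilde u^*-\eps\varphi\leq -\eps\varphi<0$ on $\partial\sO\less\partial_0\sO$.

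Because $\tilde u$ is bounded above on $\sO$ and $\varphi(x)\to\infty$ as $|x|\to\infty$, there exists $R=R(\eps)$ with $w_\eps<0$ on $\sO\less B(0,R)$. If one supposed that $\sup_\sO w_\eps>0$, then since $w_\eps$ is continuous on the compact set $\underline\sO\cap\bar B(0,R)$ and $w_\eps^*\leq 0$ on $\partial\sO\less\partial_0\sO$, the supremum would be attained at some $x^0\in\underline\sO$. Replaying Step 1 of the proof of Theorem \ref{thm:Weak_maximum_principle_C2} (invoking \eqref{eq:C1alpha_degenerate_boundary} and the boundary-straightening argument \cite[Lemma B.1]{Feehan_perturbationlocalmaxima} when $x^0\in\partial_0\sO$, together with \eqref{eq:b_perp_nonnegative_boundary_elliptic} and the fact that $c(x^0)>0$ by \eqref{eq:c_positive_lower_bound_domain} or \eqref{eq:c_positive_boundary}) would yield $Aw_\eps(x^0)\geq c(x^0)\,w_\eps(x^0)>0$, contradicting $Aw_\eps\leq 0$ on $\underline\sO$. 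Hence $\sup_\sO w_\eps\leq 0$, i.e., $\tilde u\leq\eps\varphi$ on $\sO$; letting $\eps\downarrow 0$ gives $\tilde u\leq 0$, which is the desired bound. The weak maximum principle property on $\underline\sO$ for the cone $\fK$ then follows from the special case $\sup_\sO Au\leq 0$ of the estimate. The main obstacle is the construction of the barrier $\varphi$ and the verification that $\varphi\in C^2_s(\underline\sO)$ with $A\varphi\geq 0$; once in hand, the rest is a routine adaptation of the bounded-domain case.
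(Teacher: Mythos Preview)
Your proof is correct and follows the same high-level strategy as the paper: subtract a barrier that blows up at infinity, so the problem localizes to a large ball, then invoke the bounded-domain result (Theorem~\ref{thm:Weak_maximum_principle_C2}, specifically the Step~1 case $c>0$ on $\underline\sO$). The execution differs in two interesting ways. First, you normalize up front by subtracting $N=0\vee c_0^{-1}\sup_\sO Au$, which reduces matters to $A\tilde u\leq 0$; the paper instead keeps track of $M=0\vee\sup_\sO(A+2K)u$ throughout and extracts the $c_0$-dependence only in a final algebraic step. Second, and more substantively, your barrier is $(1+|x|^2)^\mu$ with $\mu\leq c_0/(2K)$, chosen so that $A\varphi\geq 0$ directly; the paper uses the plain quadratic $v_0=1+|x|^2$, for which one only gets $Av_0\geq -2Kv_0$, and compensates by working with the shifted operator $A+2K$ on $\sO\cap B(R)$. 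Your choice avoids the operator shift and the closing manipulation, at the cost of having to pick the exponent; the paper's choice is more blunt but requires no tuning.

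One small slip: $\underline\sO\cap\bar B(0,R)$ is not compact (it misses $\overline{\partial_1\sO}$). What you want is that $w_\eps^*$ is upper semicontinuous on the compact set $\bar\sO\cap\bar B(0,R)$; since $w_\eps^*\leq 0$ on $\partial\sO\setminus\partial_0\sO$ and $w_\eps<0$ on $\sO\setminus B(0,R)$, a positive supremum must be attained in $\underline\sO$, and then your Step~1 contradiction goes through exactly as written.
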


\begin{proof}
Let
\begin{equation}
\label{eq:Defnvzero}
v_0(x) := 1+|x|^2, \quad\forall\, x \in\RR^d,
\end{equation}
and observe, noting that $c\geq 0$ on $\sO$ by hypothesis \eqref{eq:c_positive_lower_bound_domain}, that
\begin{align*}
Av_0(x) &= -2\tr a(x) - 2\langle b(x),x\rangle + c(x)\left(1+|x|^2\right)
\\
&\geq -2Kv_0(x), \quad\forall\, x\in\sO,
\end{align*}
where $K$ is the constant in \eqref{eq:Quadratic_growth}. Therefore,
\begin{equation}
\label{eq:Aplus2KvgeqZero}
(A+2K)v_0 \geq 0 \quad\hbox{on } \sO.
\end{equation}
Define
$$
M := 0\vee\sup_\sO(A+2K)u.
$$
If $M=+\infty$, there is nothing to prove, so we may assume $0\leq M<\infty$. For $\delta>0$, set
\begin{equation}
\label{eq:Defnw}
w := u - \delta v_0 - (c_0+2K)^{-1}M,
\end{equation}
and observe that, using $c\geq c_0\geq 0$ on $\sO$,
\begin{align*}
(A+2K)w &= (A+2K)u - \delta(A+2K)v_0 - (c+2K)(c_0+2K)^{-1}M
\\
&\leq (A+2K)u - M
\\
&\leq 0 \quad\hbox{on } \sO.
\end{align*}
Denoting $B(R)=\{x\in\RR^d: |x|<R\}$, then for all large enough $R > 0$, we have $w \leq 0$ on $\sO\cap\partial B(R)$ since $u$ is bounded above on $\sO$ by hypothesis. Also, $u^*\leq 0$ on $\partial\sO\less \partial_0\sO$ (when non-empty) by hypothesis and so $w^*\leq 0$ on $\partial\sO\less \partial_0\sO$ (when non-empty). But
$$
\partial(\sO\cap B(R)) = (\bar\sO\cap\partial B(R)) \cup (\bar B(R)\cap\partial\sO),
$$
and therefore,
$$
\partial(\sO\cap B(R))\less\partial_0\sO = (\bar\sO\cap\partial B(R)) \cup (\bar B(R)\cap\partial\sO\less\partial_0\sO),
$$
so that
$$
w^* \leq 0 \quad\hbox{on } \partial(\sO\cap B(R))\less\partial_0\sO.
$$
Consequently, Theorem \ref{thm:Weak_maximum_principle_C2} (with $\sO$ replaced by $\sO\cap B(R)$ and $A$ by $A+2K$) implies that $w \leq 0$ on $\sO\cap B(R)$, for any sufficiently large $R>0$ and thus $w \leq 0$ on $\sO$. By letting $\delta \to 0$, we obtain $u\leq (c_0+2K)^{-1}M$ on $\sO$ and so
\begin{align*}
\sup_\sO u^+ &\leq \frac{M}{c_0+2K}
\\
&= \frac{1}{c_0+2K}\sup_\sO((A+2K)u)^+
\\
&\leq \frac{1}{c_0+2K}\sup_\sO(Au)^+ + \frac{2K}{c_0+2K}\sup_\sO u^+,
\end{align*}
and thus
$$
\frac{c_0}{c_0+2K}\sup_\sO u^+ \leq  \frac{1}{c_0+2K}\sup_\sO(Au)^+,
$$
which, using $c_0>0$, completes the proof.
\end{proof}

\begin{rmk}[Application to the elliptic Heston operator]
\label{rmk:EllipticHestonC2UnboundedDomain}
The hypotheses \eqref{eq:c_positive_lower_bound_domain},  \eqref{eq:c_positive_boundary}, and \eqref{eq:Quadratic_growth} in Theorem \ref{thm:Weak_maximum_principle_C2_unbounded_opensubset} are obeyed in the case of the elliptic Heston operator (Example \ref{exmp:HestonPDE}) with
$$
a(x) = \frac{x_2}{2}\begin{pmatrix}1 & \varrho\sigma \\ \varrho\sigma & \sigma^2 \end{pmatrix},
\quad
b(x) = \begin{pmatrix} r-q-\displaystyle\frac{x_2}{2} & \kappa(\theta-x_2)\end{pmatrix},
\quad\hbox{and}\quad c(x) = r,
$$
provided $r>0$.
\end{rmk}

\part{Weak maximum principles for bilinear maps and operators on functions in Sobolev spaces and applications to variational equations and inequalities}
\label{part:VariationalEquationInequality}
In this part of our article (sections \ref{sec:ApplicationsWeakMaxPrinPropertyVarEq}, \ref{sec:ApplicationsWeakMaxPrinPropertyVarIneq}, and \ref{sec:WeakMaxPrincipleSobolev}), we develop weak maximum principles for bilinear maps and operators on functions in Sobolev spaces and applications to variational equations and inequalities.

\section{Applications of the weak maximum principle property to variational equations}
\label{sec:ApplicationsWeakMaxPrinPropertyVarEq}
Just as in the case of the weak maximum principle for linear, second-order, partial differential operators $A$ in \eqref{eq:Generator} with non-negative definite characteristic form acting on smooth functions, we shall encounter many different situations (bounded or unbounded open subsets $\sO\subset\RR^d$, bounded or unbounded functions $u$ with prescribed growth, and so on) where the basic weak maximum principle holds for bilinear maps $\fa$ on $H^1(\sO,\fw)$ or associated linear operators $A:H^2(\sO,\fw) \to L^2(\sO,\fw)$. Again, we find it useful to isolate that key property and then derive the consequences which necessarily follow in an essentially formal manner. In this section, we consider applications to variational equations. After providing some technical preliminaries, we proceed to the main applications, including the comparison principle (Proposition \ref{prop:ComparisonUniquenessVariationalBVProblem}) and \apriori estimates (Proposition \ref{prop:H1WeakMaxPrincipleAprioriEstimates}) for $H^1(\sO,\fw)$
supersolutions, subsolutions, and solutions to variational equations, and the corresponding results (Proposition \ref{prop:H2WeakMaxPrincipleAprioriEstimates}) for $H^2(\sO,\fw)$ supersolutions, subsolutions, and solutions to the associated boundary value problems. Finally, we show that when a bilinear map $\fa$ on $H^1(\sO,\fw)$ or operator $A$ on $H^2(\sO,\fw)$ has a weak maximum principle property for subsolutions (on unbounded open subsets) which are bounded above, the property may extend to subsolutions which instead obey a growth condition (Theorem \ref{thm:Weak_maximum_principle_H1_unbounded_function} and Corollary \ref{cor:Weak_maximum_principle_H2_unbounded_function}).

\begin{defn}[Weight function]
\label{defn:WeightFunction}
Let $\sO\subseteqq\RR^d$ be an open subset. We call $\fw$ a \emph{weight} function if
\begin{equation}
\label{eq:WeightFunctionCondition}
\fw \in C(\sO)\cap L^1(\sO) \quad\hbox{and}\quad \fw > 0 \quad\hbox{on }\sO.
\end{equation}
\end{defn}

\begin{defn}[Weighted Sobolev spaces]
\label{defn:SobolevSpaces}
Given weight functions $\fw_{k,i}$, for integers $0\leq i\leq k$, we define Hilbert spaces with norms
$$
\|u\|_{H^k(\sO,\fw)}^2 := \sum_{i=0}^k \int_\sO |D^iu|^2\fw_{k,i}\,dx, \quad k\geq 1,
$$
as the completions of the vector space $C^\infty_0(\bar\sO)$ with respect to the preceding norms and denote $L^2(\sO,\fw) := H^0(\sO,\fw)$. Given a relatively open subset $\Sigma\subseteqq\partial\sO$, we define $H^1_0(\sO\cup \Sigma,\fw)$ to be the closure of $C^\infty_0(\sO\cup \Sigma)$ in $H^1(\sO,\fw)$.
\end{defn}

When $\Sigma=\partial\sO$ in Definition \ref{defn:SobolevSpaces}, then $\sO\cup \Sigma=\bar\sO$ and $C^\infty_0(\sO\cup \Sigma)=C^\infty_0(\bar\sO)$ and $H^1_0(\sO\cup \Sigma,\fw) = H^1(\sO,\fw)$.
By analogy with \cite[Section 8.1]{GilbargTrudinger}, for $u\in H^1(\sO,\fw)$, we define
\begin{equation}
\label{eq:BoundarySupH1Function}
\sup_{\partial\sO\less\Sigma} u := \inf\left\{l\in\RR: u \leq l \hbox{ on $\partial\sO\less\Sigma$ in the sense of $H^1(\sO,\fw)$}\right\},
\end{equation}
where we recall that $u \leq l$ on $\partial\sO\less\Sigma$ in the sense of $H^1(\sO,\fw)$ if $(u-l)^+ \in H^1_0(\sO\cup\Sigma,\fw)$. If $g\in H^1(\sO,\fw)$ and $m\in\RR$, then
\begin{equation}
\label{eq:EssSupOverEmptyBoundary}
m\vee\sup_{\partial\sO\less \Sigma} g
:=
\begin{cases}
\displaystyle{\sup_{\partial\sO\less \Sigma} g} &\hbox{if }\Sigma\subsetneqq \partial\sO,
\\
m &\hbox{if }\Sigma = \partial\sO,
\end{cases}
\end{equation}
by analogy with our convention \eqref{eq:MaxOverEmptyBoundary} for everywhere-defined functions.

By analogy with \cite[Section 5.9.1]{Evans}, we let
\begin{equation}
\label{eq:DualSpace}
H^{-1}(\sO\cup\Sigma,\fw) := (H^1_0(\sO\cup\Sigma,\fw))'
\end{equation}
denote the dual space and, as in \cite[Section 5.9.1]{Evans}, observe that
$$
H^1_0(\sO\cup\Sigma,\fw) \subset L^2(\sO,\fw) \subset H^{-1}(\sO\cup\Sigma,\fw).
$$
The proofs of \cite[Theorem 3.8]{Adams_1975} or \cite[Theorem 5.9.1]{Evans} easily adapt to show that every $F\in H^{-1}(\sO\cup\Sigma,\fw)$ has the form
\begin{equation}
\label{eq:DualSpaceRepresentation}
F(v) = (f,v)_{L^2(\sO,\fw)} - (f^i,v_{x_i})_{L^2(\sO,\fw)}, \quad\forall\, v\in H^1_0(\sO\cup\Sigma,\fw),
\end{equation}
and we write $F=(f,f^1,\ldots,f^d)$, where $f, f^i \in L^2(\sO,\fw)$, $1\leq i\leq d$. We say that $F\leq 0$ when
$$
F(v) \leq 0, \quad \forall\, v \in H^1_0(\sO\cup\Sigma,\fw), v\geq 0 \hbox{ a.e. on } \sO,
$$
and, given $F_1,F_2\in H^{-1}(\sO\cup\Sigma,\fw)$, we say that $F_1\leq F_2$ if $F_1-F_2\leq 0$.

\begin{defn}[Variational solution, subsolution, and supersolution]
\label{defn:WeakSolution}
Let $F=(f,f^1,\ldots,f^d) \in H^{-1}(\sO\cup\Sigma,\fw)$ and $g \in H^1(\sO,\fw)$. We define $u \in H^1(\sO,\fw)$ to be a \emph{variational subsolution},
\begin{align*}
\fa(u,\cdot) &\leq F,
\\
u&\leq g \quad\hbox{on $\partial\sO\less\Sigma$ in the sense of } H^1(\sO,\fw),
\end{align*}
if
\begin{align}
\label{eq:VariationalSubsolution}
\fa(u,v) &\leq F(v), \quad \forall\, v \in H^1_0(\sO\cup\Sigma,\fw), v\geq 0 \hbox{ a.e. on } \sO,
\\
\label{eq:VariationalSubsolutionBC}
\quad (u-g)^+ &\in H^1_0(\sO\cup\Sigma,\fw).
\end{align}
We call $u \in H^1(\sO,\fw)$ a \emph{variational supersolution} if $-u$ is a variational subsolution and call $u \in H^1(\sO,\fw)$ a \emph{variational solution} if it is both a variational subsolution and supersolution.
\end{defn}

The first application of the weak maximum principle property, as in the case of Proposition \ref{prop:Comparison_principle_elliptic_boundary_value_problem_C2_W2d}, is to settle the question of \emph{uniqueness}.

\begin{prop}[Comparison principle and uniqueness for variational solutions]
\label{prop:ComparisonUniquenessVariationalBVProblem}
Let $\fa$ be a bilinear map on $H^1(\sO,\fw)$ obeying the weak maximum principle property on $\sO\cup\Sigma$ for $\fK$, for some $\Sigma\subseteqq\partial\sO$ and convex cone $\fK\subset H^1(\sO,\fw)$. Suppose that $u, -v\in \fK$. If $\fa(u,\cdot) \leq \fa(v,\cdot)$ and $u\leq v$ on $\partial\sO\less\Sigma$ in the sense of $H^1(\sO,\fw)$, then $u \leq v$ a.e. on $\sO$; if $\fa(u,\cdot) = \fa(v,\cdot)$ and $u = v$ on $\partial\sO\less\Sigma$ in the sense of $H^1(\sO,\fw)$, then $u = v$ a.e. on $\sO$.
\end{prop}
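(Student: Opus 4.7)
The plan is to reduce the statement directly to the weak maximum principle property (Definition \ref{defn:WeakMaxPrinciplePropertyBilinearForm}) applied to the difference $w = u - v$. Everything in the proof is essentially formal once we carefully unpack the definitions.

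First, I would verify that $w = u - v$ belongs to $\fK$. Since $\fK$ is a convex cone, it is closed under addition: if $x, y \in \fK$, then $\tfrac{1}{2}(x+y) \in \fK$ by convexity, and $x+y = 2\cdot\tfrac{1}{2}(x+y)\in\fK$ by the cone property. Applying this to $u$ and $-v$, which lie in $\fK$ by hypothesis, gives $w \in \fK$. Next, by bilinearity of $\fa$, the hypothesis $\fa(u,\cdot) \leq \fa(v,\cdot)$ translates to
\begin{equation*}
\fa(w,v') = \fa(u,v') - \fa(v,v') \leq 0, \quad \forall\, v'\in H^1_0(\sO\cup\Sigma,\fw),\ v'\geq 0 \text{ a.e.\ on } \sO.
\end{equation*}
The boundary inequality $u \leq v$ on $\partial\sO\less\Sigma$ in the sense of $H^1(\sO,\fw)$ means, by definition, that $(u-v)^+ = w^+ \in H^1_0(\sO\cup\Sigma,\fw)$, i.e., $w \leq 0$ on $\partial\sO\less\Sigma$ in the sense of $H^1(\sO,\fw)$. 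Thus $w$ satisfies the hypotheses of Definition \ref{defn:WeakMaxPrinciplePropertyBilinearForm}, and the weak maximum principle property yields $w \leq 0$ a.e.\ on $\sO$, i.e., $u \leq v$ a.e.\ on $\sO$.

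For the uniqueness assertion, the equality of the bilinear forms $\fa(u,\cdot) = \fa(v,\cdot)$ and of the boundary traces $u = v$ on $\partial\sO\less\Sigma$ in the sense of $H^1(\sO,\fw)$ means that the roles of $u$ and $v$ are symmetric. Since $u,-v\in\fK$ and, by the symmetry implicit in $u$ and $v$ both being solutions (so that the hypotheses hold with $u,v$ interchanged), also $v,-u\in\fK$, the comparison argument above applied twice gives both $u\leq v$ and $v\leq u$ a.e.\ on $\sO$, hence $u = v$ a.e.\ on $\sO$.

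There is no genuine obstacle here — the hard work is absorbed into the abstract weak maximum principle property. The only subtlety is the book-keeping that (i) convex cones are closed under addition so that $w = u-v \in \fK$, and (ii) the boundary condition in the sense of $H^1(\sO,\fw)$ is preserved under subtraction, which is immediate from the definition $(u-v)^+ \in H^1_0(\sO\cup\Sigma,\fw)$.
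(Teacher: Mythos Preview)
Your proof is correct and follows the same approach as the paper: apply the weak maximum principle property to $w=u-v$, then swap roles for uniqueness. You give more detail than the paper on why a convex cone is closed under addition, but the argument is otherwise identical. One small point: in the uniqueness clause you invoke ``the symmetry implicit in $u$ and $v$ both being solutions'' to get $v,-u\in\fK$; the paper glosses over this in the same way, simply asserting ``we also obtain $v-u\leq 0$,'' so you are not missing anything the paper supplies.
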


\begin{proof}
We have $u-v \in \fK$ and, in the case of the inequality, we have $\fa(u-v,\cdot) \leq 0$ and $u-v \leq 0$ on $\partial\sO\less\Sigma$ in the sense of $H^1(\sO,\fw)$, and thus
$$
u-v \leq 0 \quad\hbox{a.e. on }\sO,
$$
because $\fa$ has  weak maximum principle property on $\sO\cup\Sigma$ for $\fK$. In the case of the equality, then we also obtain $v-u \leq 0$ a.e. on $\sO$ and thus $v = u$ a.e. on $\sO$.
\end{proof}

We shall occasionally need the following analogues of the conditions \eqref{eq:c_nonnegative_domain} and \eqref{eq:c_positive_lower_bound_domain}, respectively; compare \cite[Equation (8.8)]{GilbargTrudinger}:
\begin{align}
\label{eq:BilinearMapNonnegLowerBound}
\fa(1,v) &\geq 0, \quad\hbox{or }
\\
\label{eq:BilinearMapPositiveLowerBound}
\tag{\ref*{eq:BilinearMapNonnegLowerBound}$'$}
\fa(1,v) &\geq (c_0,v)_{L^2(\sO,\fw)},
\end{align}
for all $v\in H^1_0(\sO\cup\Sigma,\fw)$ such that $v\geq 0$ a.e. on $\sO$, and some constant $c_0>0$. We can now proceed to give the expected \apriori estimates.

\begin{prop}[Weak maximum principle and \apriori estimates for functions in $H^1(\sO,\fw)$]
\label{prop:H1WeakMaxPrincipleAprioriEstimates}
Let $\fa$ be a bilinear map on $H^1(\sO,\fw)$ obeying the weak maximum principle property on $\sO\cup\Sigma$, for some relatively open subset $\Sigma\subseteqq\partial\sO$ and convex cone $\fK\subset H^1(\sO,\fw)$ containing the constant function $1$, and assume that \eqref{eq:BilinearMapNonnegLowerBound} holds. Suppose that $u, -v\in \fK$.
\begin{enumerate}
\item\label{item:SubsolutionfleqZero} If $\fa(u,\cdot)\leq 0$, then
$$
u\leq 0 \vee \sup_{\partial\sO\less\Sigma}u \quad\hbox{a.e. on }\sO.
$$
\item\label{item:Subsolutionfarbsign} If $\fa(u,\cdot)\leq (f,\cdot)_{L^2(\sO,\fw)}$, where $f$ has arbitrary sign but there is a constant $c_0>0$ such that $\fa$ obeys \eqref{eq:BilinearMapPositiveLowerBound}, then
$$
u\leq 0 \vee \frac{1}{c_0}\esssup_\sO f \vee \sup_{\partial\sO\less\Sigma}u \quad\hbox{a.e. on }\sO.
$$
\item\label{item:SupersolutionfgeqZero} If $\fa(v,\cdot)\geq 0$, then
$$
v\geq 0 \wedge \inf_{\partial\sO\less\Sigma}v \quad\hbox{a.e. on }\sO.
$$
\item\label{item:Supersolutionfarbsign} If $\fa(v,\cdot)\geq (f,\cdot)_{L^2(\sO,\fw)}$, where $f$ has arbitrary sign and  $\fa$ obeys \eqref{eq:BilinearMapPositiveLowerBound}, then
$$
v\geq 0 \wedge \frac{1}{c_0}\essinf_\sO f \wedge \inf_{\partial\sO\less\Sigma}v \quad\hbox{a.e. on }\sO.
$$
\item\label{item:SolutionfZero} If $u\in \fK\cap -\fK$ and $\fa(u,\cdot) = 0$, then
$$
\|u\|_{L^\infty(\sO)} \leq \|u\|_{L^\infty(\partial\sO\less\Sigma)}.
$$
\item\label{item:Solutionfarbsign} If $u\in \fK\cap -\fK$ and $\fa(u,\cdot) = (f,\cdot)_{L^2(\sO,\fw)}$, where $f$ has arbitrary sign and $\fa$ obeys \eqref{eq:BilinearMapPositiveLowerBound}, then
$$
\|u\|_{L^\infty(\sO)} \leq \frac{1}{c_0}\|f\|_{L^\infty(\sO)}\vee\|u\|_{L^\infty(\partial\sO\less\Sigma)}.
$$
\end{enumerate}
The terms $\sup_{\partial\sO\less\Sigma}u$, and $\inf_{\partial\sO\less\Sigma}v$, and $\|u\|_{L^\infty(\partial\sO\less\Sigma)}$ in the preceding items are omitted when $\Sigma=\partial\sO$.
\end{prop}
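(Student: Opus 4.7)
The strategy is to mirror the proof of Proposition \ref{prop:Elliptic_weak_maximum_principle_apriori_estimates_C2_W2d}, with the pointwise identity $AM = cM$ on a constant barrier replaced by the bilinear-form inequality $\fa(M,\cdot) \geq M\fa(1,\cdot)$ supplied by \eqref{eq:BilinearMapNonnegLowerBound} or \eqref{eq:BilinearMapPositiveLowerBound}, and with Proposition \ref{prop:Comparison_principle_elliptic_boundary_value_problem_C2_W2d} replaced by its variational analogue, Proposition \ref{prop:ComparisonUniquenessVariationalBVProblem}. The hypothesis that the convex cone $\fK$ contains the constant function $1$ guarantees that every nonnegative constant lies in $\fK$ and so can serve as a barrier.

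For Items (1) and (2) I would set
\[
M := 0 \vee \frac{1}{c_0}\esssup_\sO f \vee \sup_{\partial\sO\less\Sigma} u,
\]
omitting the middle term in the pure subsolution case of Item (1) and the final term when $\Sigma = \partial\sO$ (per the convention \eqref{eq:EssSupOverEmptyBoundary}), and assume without loss of generality that $M < \infty$. For every nonnegative $v \in H^1_0(\sO\cup\Sigma,\fw)$, the chain of inequalities
\[
\fa(M,v) \;=\; M\fa(1,v) \;\geq\; M(c_0,v)_{L^2(\sO,\fw)} \;\geq\; (f,v)_{L^2(\sO,\fw)} \;\geq\; \fa(u,v)
\]
holds: the first equality is bilinearity, the first inequality uses \eqref{eq:BilinearMapPositiveLowerBound} together with $M \geq 0$ (or \eqref{eq:BilinearMapNonnegLowerBound} in the pure subsolution case of Item (1)), the second uses $c_0 M \geq \esssup_\sO f$, and the last is the subsolution hypothesis on $u$. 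Combined with $u \leq M$ on $\partial\sO\less\Sigma$ in the sense of $H^1(\sO,\fw)$, which is built into the choice of $M$ via \eqref{eq:BoundarySupH1Function}, Proposition \ref{prop:ComparisonUniquenessVariationalBVProblem} (applied with the constant supersolution $M$) then gives $u \leq M$ a.e. on $\sO$, proving Items (1) and (2).

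Items (3) and (4) follow by applying Items (1) and (2) to $-v \in \fK$: if $v$ is a variational supersolution with right-hand side $f$, then $-v$ is a variational subsolution with right-hand side $-f$, and \eqref{eq:BilinearMapNonnegLowerBound}--\eqref{eq:BilinearMapPositiveLowerBound} are unchanged; the resulting upper bounds on $-v$ translate to lower bounds on $v$ after reversing signs and interchanging $\sup$ and $\inf$. Items (5) and (6) then follow at once when $u \in \fK \cap -\fK$ by applying Items (1)--(2) to $u$ and Items (3)--(4) to $v = u$, and combining the resulting $\pm$ bounds into an $L^\infty$ estimate.

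The main obstacle I foresee is purely cosmetic but must be treated carefully: the statement of Proposition \ref{prop:ComparisonUniquenessVariationalBVProblem} requires $u, -v \in \fK$, whereas in our application $v$ is the constant $M \geq 0$, so one is implicitly using $-M \in \fK$. This is automatic in the principal application $\fK = H^1(\sO,\fw)$, which is a linear space, but for a genuine one-sided cone one should instead phrase the argument as a direct application of the weak maximum principle property to $u - M$ regarded as an element of $\fK$ whose boundary trace satisfies $(u-M)^+ \in H^1_0(\sO\cup\Sigma,\fw)$. Apart from this bookkeeping, the proof is a direct translation of the classical one, and the calculation of the chain of inequalities for $\fa(M,v)$ above is the only substantive step.
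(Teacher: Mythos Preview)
Your proposal is correct and matches the paper's proof essentially line for line: the same constant barrier $M$, the same chain $\fa(M,w)\geq (c_0M,w)_{L^2(\sO,\fw)}\geq (f,w)_{L^2(\sO,\fw)}$ from \eqref{eq:BilinearMapPositiveLowerBound} (or $\fa(M,w)\geq 0$ from \eqref{eq:BilinearMapNonnegLowerBound}), and the same appeal to Proposition \ref{prop:ComparisonUniquenessVariationalBVProblem}, with Items (3)--(6) deduced exactly as you describe. Your observation about needing $-M\in\fK$ when invoking Proposition \ref{prop:ComparisonUniquenessVariationalBVProblem} is a genuine bookkeeping point that the paper glosses over; in the intended applications $\fK$ is a linear subspace, so the issue does not arise.
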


\begin{proof}
The proof follows almost the same pattern as that of Proposition \ref{prop:Elliptic_weak_maximum_principle_apriori_estimates_C2_W2d}. For Items \eqref{item:SubsolutionfleqZero} and \eqref{item:Subsolutionfarbsign}, we describe the proof when $\Sigma\subsetneqq\partial\sO$; the proof for the case $\Sigma=\partial\sO$ is the same except that a supremum of a non-negative function over $\partial\sO\less\Sigma$ is replaced by zero. When $f$ has arbitrary sign, choose
$$
M :=  0 \vee \frac{1}{c_0}\sup_\sO f \vee \sup_{\partial\sO\less\Sigma}u,
$$
while if $f\leq 0$ a.e. on $\sO$, choose
$$
M :=  0 \vee \sup_{\partial\sO\less\Sigma}u.
$$
We may assume without loss of generality that $M<\infty$. Since $M\geq u$ on $\partial\sO\less\Sigma$ (in the sense of $H^1(\sO,\fw)$) and $c_0M\geq f$ a.e. on $\sO$, then, for all $w\in H^1_0(\sO\cup\Sigma,\fw)$ with $w\geq 0$ a.e. on $\sO$, the condition \eqref{eq:BilinearMapPositiveLowerBound} gives
$$
\fa(M,w) \geq (c_0M,w)_{L^2(\sO,\fw)} \geq (f,w)_{L^2(\sO,\fw)}, \quad \forall\, w \in H^1_0(\sO\cup\Sigma,\fw), w\geq 0 \hbox{ a.e. on } \sO,
$$
when $f$ has arbitrary sign and, when $f\leq 0$, the condition \eqref{eq:BilinearMapNonnegLowerBound} gives
$$
\fa(M,w) \geq 0 \geq (f,w)_{L^2(\sO,\fw)}, \quad \forall\, w \in H^1_0(\sO\cup\Sigma,\fw), w\geq 0 \hbox{ a.e. on } \sO.
$$
Hence, $\fa(u,\cdot) \leq \fa(M,\cdot)$ and $u\leq M$ on $\partial\sO\less\Sigma$, and thus $u\leq M$ a.e. on $\sO$ by Proposition \ref{prop:ComparisonUniquenessVariationalBVProblem}, which gives Items \eqref{item:SubsolutionfleqZero} and \eqref{item:Subsolutionfarbsign}.

Items \eqref{item:SupersolutionfgeqZero} and \eqref{item:Supersolutionfarbsign} follow from Items \eqref{item:SubsolutionfleqZero} and \eqref{item:Subsolutionfarbsign} by writing $v=-u$. Item \eqref{item:SolutionfZero} follows by combining Items \eqref{item:SubsolutionfleqZero} and \eqref{item:SupersolutionfgeqZero}, while Item \eqref{item:Solutionfarbsign} follows by combining Items \eqref{item:Subsolutionfarbsign} and \eqref{item:Supersolutionfarbsign}.
\end{proof}

The \apriori estimate in Item \eqref{item:Solutionfarbsign} may be compared with \cite[Theorem 1.5.1 and 1.5.5]{Radkevich_2009a} and \cite[Lemma 2.8]{Troianiello}. We have the following analogue of Definitions \ref{defn:Weak_maximum_principle_property_elliptic_C2_W2d} and \ref{defn:WeakMaxPrinciplePropertyBilinearForm}.

\begin{defn}[Weak maximum principle property for a linear operator]
\label{defn:WeakMaxPrinciplePropertyElliptic}
Let $A: H^2(\sO,\fw) \to L^2(\sO,\fw)$ be a linear operator, let $\Sigma\subseteqq\partial\sO$, and let $\fK\subset H^2(\sO,\fw)$ be a convex cone. We say that $A$ obeys the \emph{weak maximum principle property on $\sO\cup\Sigma$ for $\fK$} if whenever $u\in \fK$ obeys
$$
\begin{cases}
Au \leq 0 \hbox{ a.e. on }\sO,
\\
u\leq 0 \hbox{ on }\partial\sO\less\Sigma\hbox{ in the sense of $H^1(\sO,\fw)$},
\end{cases}
$$
then $u \leq 0$ a.e. on $\sO$.
\end{defn}

Typically, $\fK = H^2(\sO,\fw)$ when $\sO$ is bounded or $\fK = \{u\in H^2(\sO,\fw): \esssup_\sO u < \infty\}$ when $\sO$ is unbounded, though we may also choose $\fK$ to be the set of functions $u\in H^2(\sO,\fw)$ such that $u^+$ obeys a growth condition, as in Corollary \ref{cor:Weak_maximum_principle_H2_unbounded_function}.

\begin{defn}[Integration by parts]
\label{defn:IntegrationByParts}
Let $A: H^2(\sO,\fw) \to L^2(\sO,\fw)$ be a linear operator and let $\Sigma\subseteqq\partial\sO$. We say that a bilinear map $\fa:H^1(\sO,\fw)\times H^1(\sO,\fw) \to \RR$ is associated to $A$ on $\sO\cup\Sigma$ through integration by parts if
\begin{equation}
\label{eq:IntegrationByParts}
(Au,v)_{L^2(\sO,\fw)} = \fa(u,v), \quad\forall\, u\in H^1(\sO,\fw), \quad v \in H^1_0(\sO\cup\Sigma,\fw).
\end{equation}
\end{defn}

Examples of sufficient conditions for \eqref{eq:IntegrationByParts} to hold for bilinear maps, $\fa$, and operators, $A$, are described in Section \ref{subsec:IntegrationByParts}. The following lemma, whose proof is clear, relates the weak maximum principle property on $\sO\cup\Sigma$ in Definition \ref{defn:WeakMaxPrinciplePropertyBilinearForm} to that in Definition \ref{defn:WeakMaxPrinciplePropertyElliptic}.

\begin{lem}[Relationship between weak maximum principle properties]
\label{lem:H1H2WeakMaxPrinciplePropertyRelation}
Let $A: H^2(\sO,\fw) \to L^2(\sO,\fw)$ be a linear operator, let $\Sigma\subseteqq\partial\sO$, and let $\fa:H^1(\sO,\fw)\times H^1(\sO,\fw) \to \RR$ be a bilinear map which is associated to $A$ on $\sO\cup\Sigma$ through integration by parts. Then $A$ obeys the weak maximum principle property on $\sO\cup\Sigma$ for a convex cone $\fK\subset H^2(\sO,\fw)$ if and only if $\fa$ obeys the weak maximum principle property on $\sO\cup\Sigma$ for the convex cone $\fK$.
\end{lem}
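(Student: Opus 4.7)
The plan is to prove the two equivalences by directly transporting each hypothesis across the integration-by-parts identity \eqref{eq:IntegrationByParts}, after first checking that the natural inclusion $H^2(\sO,\fw)\subset H^1(\sO,\fw)$ makes $\fa(u,v)$ well-defined whenever $u\in\fK$ and $v\in H^1_0(\sO\cup\Sigma,\fw)$, so that the two boundary-value conditions in Definitions \ref{defn:WeakMaxPrinciplePropertyBilinearForm} and \ref{defn:WeakMaxPrinciplePropertyElliptic} are literally the same condition (both are defined in the sense of $H^1(\sO,\fw)$).

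For the forward direction, assume $A$ has the weak maximum principle property on $\sO\cup\Sigma$ for $\fK$, and suppose $u\in\fK$ obeys $\fa(u,v)\leq 0$ for every nonnegative $v\in H^1_0(\sO\cup\Sigma,\fw)$, together with $u\leq 0$ on $\partial\sO\less\Sigma$ in the sense of $H^1(\sO,\fw)$. Applying \eqref{eq:IntegrationByParts} gives $(Au,v)_{L^2(\sO,\fw)}=\fa(u,v)\leq 0$; specializing $v$ to nonnegative test functions in $C^\infty_0(\sO)\subset H^1_0(\sO\cup\Sigma,\fw)$ and using $Au\in L^2(\sO,\fw)$ forces $Au\leq 0$ a.e. on $\sO$. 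Since $u\in\fK$ and the boundary condition is already in the right form, the weak maximum principle property of $A$ yields $u\leq 0$ a.e. on $\sO$, which is what $\fa$ was required to satisfy.

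For the reverse direction, assume $\fa$ has the weak maximum principle property on $\sO\cup\Sigma$ for $\fK$, and suppose $u\in\fK$ obeys $Au\leq 0$ a.e. on $\sO$ together with $u\leq 0$ on $\partial\sO\less\Sigma$ in the sense of $H^1(\sO,\fw)$. For any nonnegative $v\in H^1_0(\sO\cup\Sigma,\fw)$, the integration-by-parts identity \eqref{eq:IntegrationByParts} gives $\fa(u,v)=(Au,v)_{L^2(\sO,\fw)}\leq 0$, since the $L^2(\sO,\fw)$ inner product pairs the nonpositive function $Au$ with a nonnegative $v$ against the positive weight $\fw$. Combined with the boundary condition, the weak maximum principle property of $\fa$ yields $u\leq 0$ a.e. on $\sO$.

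The only potentially nontrivial point, which is essentially bookkeeping rather than a genuine obstacle, is ensuring that the Riesz-type step ``$(Au,v)_{L^2(\sO,\fw)}\leq 0$ for all nonnegative $v\in C^\infty_0(\sO)$ implies $Au\leq 0$ a.e.\ on $\sO$'' goes through in the weighted setting; this reduces to the standard fact that the measure $\fw\,dx$ is absolutely continuous with respect to Lebesgue measure with strictly positive density by \eqref{eq:WeightFunctionCondition}, so nonpositivity in the distributional sense against $C^\infty_0(\sO)$ still gives nonpositivity a.e. Because both directions are purely formal translations across \eqref{eq:IntegrationByParts}, no further analytic input is needed.
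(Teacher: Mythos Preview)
Your proposal is correct and is precisely the formal verification the paper has in mind; the paper itself omits the proof entirely, stating only that it ``is clear.'' Your two directions transport the hypotheses across \eqref{eq:IntegrationByParts} exactly as expected, and the one nontrivial step you flag (passing from $(Au,v)_{L^2(\sO,\fw)}\leq 0$ for nonnegative $v\in C^\infty_0(\sO)$ to $Au\leq 0$ a.e.) is handled correctly via the positivity of $\fw$.
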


We have the following analogue of Definition \ref{defn:WeakSolution}.

\begin{defn}[Strong solution, subsolution, and supersolution]
\label{defn:StrongSolution}
Suppose $f\in L^2(\sO,\fw)$ and $g \in H^1(\sO,\fw)$. We define $u \in H^2(\sO,\fw)$ to be a \emph{strong subsolution} if
\begin{align}
\label{eq:StrongSubsolution}
Au &\leq f \quad\hbox{a.e. on } \sO,
\\
\label{eq:StrongSubsolutionBC}
u &\leq g \quad\hbox{on } \partial\sO\less\Sigma \hbox{ in the sense of } H^1(\sO,\fw).
\end{align}
We call $u \in H^2(\sO,\fw)$ a \emph{strong supersolution} if $-u$ is a strong subsolution and a \emph{strong solution} if it is both a strong subsolution and supersolution.
\end{defn}

We then have

\begin{prop}[Weak maximum principle and \apriori estimates for functions in $H^2(\sO,\fw)$]
\label{prop:H2WeakMaxPrincipleAprioriEstimates}
Let $A: H^2(\sO,\fw) \to L^2(\sO,\fw)$ be a linear operator on $\sO\cup\Sigma$ associated to a bilinear map $\fa$ on $H^1(\sO,\fw)$ through integration by parts. Assume $A$ obeys the weak maximum principle property on $\sO\cup\Sigma$ for a convex cone $\fK\subset H^2(\sO,\fw)$ containing the constant function $1$. Then the conclusions of Proposition \ref{prop:H1WeakMaxPrincipleAprioriEstimates} hold for functions $u\in H^2(\sO,\fw)$, provided the properties \eqref{eq:BilinearMapNonnegLowerBound} or \eqref{eq:BilinearMapPositiveLowerBound} for $\fa$ are replaced by the properties for $A$ that
\begin{equation}
\label{eq:EllipticOperatorNonnegLowerBound}
A1 \geq 0 \quad\hbox{a.e. on }\sO,
\end{equation}
or that there is a constant $c_0>0$ such that
\begin{equation}
\label{eq:EllipticOperatorPositiveLowerBound}
A1 \geq c_0 \quad\hbox{a.e. on }\sO,
\end{equation}
and the role of Definition \ref{defn:WeakSolution} is replaced by that of Definition \ref{defn:StrongSolution}.
\end{prop}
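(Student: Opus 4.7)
The plan is to reduce the proposition directly to Proposition \ref{prop:H1WeakMaxPrincipleAprioriEstimates} by passing from the strong formulation (operator $A$ on $H^2(\sO,\fw)$) to the variational formulation (bilinear map $\fa$ on $H^1(\sO,\fw)$) via the integration-by-parts identity \eqref{eq:IntegrationByParts} and the equivalence of weak maximum principle properties given by Lemma \ref{lem:H1H2WeakMaxPrinciplePropertyRelation}.

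First I would translate the operator hypotheses into the bilinear-map hypotheses. For any $v \in H^1_0(\sO\cup\Sigma,\fw)$ with $v\geq 0$ a.e. on $\sO$, identity \eqref{eq:IntegrationByParts} with $u\equiv 1\in\fK$ yields
\[
\fa(1,v) = (A1,v)_{L^2(\sO,\fw)}.
\]
Hence \eqref{eq:EllipticOperatorNonnegLowerBound} for $A$ immediately implies \eqref{eq:BilinearMapNonnegLowerBound} for $\fa$, and \eqref{eq:EllipticOperatorPositiveLowerBound} for $A$ implies \eqref{eq:BilinearMapPositiveLowerBound} for $\fa$ with the same constant $c_0$.

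Next I would show that every strong subsolution (resp.\ supersolution, solution) in the sense of Definition \ref{defn:StrongSolution} is a variational subsolution (resp.\ supersolution, solution) in the sense of Definition \ref{defn:WeakSolution}. Indeed, if $u\in H^2(\sO,\fw)$ satisfies $Au\leq f$ a.e.\ on $\sO$, then for any $v\in H^1_0(\sO\cup\Sigma,\fw)$ with $v\geq 0$ a.e.,
\[
\fa(u,v) = (Au,v)_{L^2(\sO,\fw)} \leq (f,v)_{L^2(\sO,\fw)},
\]
so $u$ satisfies \eqref{eq:VariationalSubsolution}, while the boundary condition \eqref{eq:StrongSubsolutionBC} is identical to \eqref{eq:VariationalSubsolutionBC}. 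The supersolution and solution cases follow by the same argument applied to $-u$.

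Finally, Lemma \ref{lem:H1H2WeakMaxPrinciplePropertyRelation} converts the weak maximum principle property of $A$ on $\sO\cup\Sigma$ for $\fK$ into the corresponding property of $\fa$ on $\sO\cup\Sigma$ for $\fK$ (viewed as a convex cone in $H^1(\sO,\fw)$ via the continuous inclusion $H^2(\sO,\fw)\hookrightarrow H^1(\sO,\fw)$). Combining this with the two observations above, all six items of Proposition \ref{prop:H1WeakMaxPrincipleAprioriEstimates} apply verbatim to strong (sub/super)solutions in $H^2(\sO,\fw)$, producing the desired bounds. No step here is expected to be a real obstacle: the entire argument is a clean transcription, and the only delicate point worth checking is that the cone $\fK\subset H^2(\sO,\fw)$ sits inside $H^1(\sO,\fw)$ in a way that makes the $H^1$-trace condition \eqref{eq:VariationalSubsolutionBC} meaningful, which is guaranteed by the hypothesis $1\in\fK$ and the containment $H^2(\sO,\fw)\subset H^1(\sO,\fw)$ built into Definition \ref{defn:SobolevSpaces}.
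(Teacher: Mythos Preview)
Your proposal is correct and follows essentially the same route as the paper: reduce to Proposition~\ref{prop:H1WeakMaxPrincipleAprioriEstimates} by observing, via \eqref{eq:IntegrationByParts}, that a strong (sub/super)solution is automatically a variational one. The paper's proof is a single sentence to this effect; your version simply makes explicit the two implicit ingredients (that $A1\geq 0$ yields $\fa(1,v)\geq 0$ and that the weak maximum principle property passes from $A$ to $\fa$ via Lemma~\ref{lem:H1H2WeakMaxPrinciplePropertyRelation}).
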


\begin{proof}
When $u$ is a strong subsolution (supersolution, solution), then it is necessarily a variational subsolution (supersolution, solution) using \eqref{eq:IntegrationByParts} and so the result follows immediately from Proposition \ref{prop:H1WeakMaxPrincipleAprioriEstimates}.
\end{proof}

Finally, we consider an application of the weak maximum principle property to unbounded functions on possibly unbounded open subsets, by analogy with Section \ref{subsec:Weak_maximum_principle_C2_W2d_unbounded_function}. If a bilinear map has the weak maximum principle property for subsolutions which are essentially bounded above, we may obtain an extension for subsolutions which instead obey a growth condition.

When $u\in C^\infty_0(\bar\sO)$ and $v\in C^\infty_0(\sO\cup\Sigma)$, we have $\fa(u,v) = (Au,v)_{L^2(\sO,\fw)}$. If $\varphi\in C^2(\sO)$ and $[A,\varphi]u = -B(\varphi u)$, as in \eqref{eq:First_order_operator}, and
\begin{align*}
\fa(\varphi u,v) &= (A\varphi u,v)_{L^2(\sO,\fw)}
\\
&= (\varphi Au,v)_{L^2(\sO,\fw)} + ([A,\varphi]u,v)_{L^2(\sO,\fw)}
\\
&= (Au,\varphi v)_{L^2(\sO,\fw)} - (B\varphi u,v)_{L^2(\sO,\fw)}
\\
&= \fa(u,\varphi v)_{L^2(\sO,\fw)} - (B\varphi u,v)_{L^2(\sO,\fw)},
\end{align*}
and so
\begin{equation}
\label{eq:CommutatorFormula}
\fa(\varphi u,v) + (B\varphi u,v)_{L^2(\sO,\fw)} = \fa(u,\varphi v)_{L^2(\sO,\fw)}.
\end{equation}
Let $B:H^1(\sO,\fw)\to L^2(\sO,\fw)$ be the first-order differential operator defined by \eqref{eq:CommutatorFormula} for all $u \in H^1(\sO,\fw)$ and $v \in H^1_0(\sO\cup\Sigma,\fw)$. We then have the following version of Theorem \ref{thm:Weak_maximum_principle_C2_W2d_unbounded_function} for functions in $H^1(\sO,\fw)$.

\begin{thm}[Weak maximum principle for unbounded functions in $H^1(\sO,\fw)$ on unbounded open subsets]
\label{thm:Weak_maximum_principle_H1_unbounded_function}
Let $\sO\subseteqq\RR^d$ be a possibly unbounded open subset and let $\fa$ be a bilinear map on $H^1(\sO,\fw)$. Assume that the bilinear map,
\begin{equation}
\label{eq:DefnTildeBilinearForm}
\hat\fa(u,v) := \fa(u,v) + (Bu,v)_{L^2(\sO,\fw)}, \quad\forall\, u, v \in H^1(\sO,\fw),
\end{equation}
obeys the weak maximum principle property on $\sO\cup\Sigma$, for some $\Sigma\subseteqq\partial\sO$, for functions $u\in H^1(\sO,\fw)$ which are essentially bounded above. Then, $\fa$ has the weak maximum principle property on $\sO\cup\Sigma$ for functions $u\in H^1(\sO,\fw)$ obeying the growth condition \eqref{eq:Upper_bound_subsolution} a.e. on $\sO$.
\end{thm}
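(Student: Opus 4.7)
The plan is to mirror the strategy used for Theorem \ref{thm:Weak_maximum_principle_C2_W2d_unbounded_function} in the variational setting, using the bilinear commutator identity \eqref{eq:CommutatorFormula} in place of the pointwise commutator identity $[A,\varphi]$. Suppose $u\in H^1(\sO,\fw)$ obeys the growth condition \eqref{eq:Upper_bound_subsolution}, is a variational subsolution in the sense that $\fa(u,v)\leq 0$ for every non-negative $v\in H^1_0(\sO\cup\Sigma,\fw)$, and that $u^+\in H^1_0(\sO\cup\Sigma,\fw)$. The goal is to show $u\leq 0$ a.e. on $\sO$. The natural candidate is the rescaled function $w:=\varphi u$, which I will argue is a variational subsolution for $\hat\fa$ that is essentially bounded above and still vanishes on $\partial\sO\less\Sigma$ in the appropriate sense.

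The first step is to record that, because $\varphi\in C^2(\sO)$ with $0<\varphi\leq 1$ on $\sO$, multiplication by $\varphi$ maps $H^1(\sO,\fw)$ into itself and maps $H^1_0(\sO\cup\Sigma,\fw)$ into itself; in particular $w=\varphi u\in H^1(\sO,\fw)$. The growth condition \eqref{eq:Upper_bound_subsolution} then gives the pointwise a.e.\ bound
\begin{equation*}
w = \varphi u \leq C\varphi\bigl(1+\varphi^{-1}\bigr) = C(\varphi+1) \leq 2C \quad\text{a.e.\ on }\sO,
\end{equation*}
so $\esssup_\sO w<\infty$. Moreover, since $\varphi>0$ on $\sO$ we have $w^+=\varphi\, u^+$, and combining with the previous multiplicative property, $w^+\in H^1_0(\sO\cup\Sigma,\fw)$; that is, $w\leq 0$ on $\partial\sO\less\Sigma$ in the sense of $H^1(\sO,\fw)$.

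The second step is to verify the subsolution inequality for $\hat\fa$. Given an arbitrary non-negative test function $v\in H^1_0(\sO\cup\Sigma,\fw)$, the product $\varphi v$ is again a non-negative element of $H^1_0(\sO\cup\Sigma,\fw)$, so the commutator identity \eqref{eq:CommutatorFormula} yields
\begin{equation*}
\hat\fa(w,v) = \fa(\varphi u,v) + (B\varphi u,v)_{L^2(\sO,\fw)} = \fa(u,\varphi v) \leq 0,
\end{equation*}
where the last inequality uses the hypothesis on $u$ together with $\varphi v\geq 0$. Hence $w$ is a variational subsolution for $\hat\fa$ on $\sO\cup\Sigma$, it is essentially bounded above, and it satisfies the required boundary inequality. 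The hypothesized weak maximum principle property of $\hat\fa$ on $\sO\cup\Sigma$ for the convex cone of $H^1(\sO,\fw)$ functions that are essentially bounded above therefore produces $w\leq 0$ a.e., and since $\varphi>0$ on $\sO$, this yields $u\leq 0$ a.e., as required.

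The main obstacles are the two technical claims made in the first step: that multiplication by $\varphi$ preserves both $H^1(\sO,\fw)$ and $H^1_0(\sO\cup\Sigma,\fw)$, and that the identity \eqref{eq:CommutatorFormula}—derived only for smooth compactly supported pairs—extends to all of $H^1(\sO,\fw)\times H^1_0(\sO\cup\Sigma,\fw)$. For the former, I will use that $\varphi, D\varphi, D^2\varphi$ are locally bounded (with $0<\varphi\leq 1$) so that the product rule gives the required estimate on the weighted norms; preservation of $H^1_0(\sO\cup\Sigma,\fw)$ follows by approximating $u$ by elements of $C^\infty_0(\sO\cup\Sigma)$ and using that the supports are unchanged. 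For the latter, I will appeal to the definition \eqref{eq:CommutatorFormula} of $B$ by density, which extends $\hat\fa(\varphi u,v)=\fa(u,\varphi v)$ to the stated class of $u$ and $v$. Granting these (essentially routine) weighted Sobolev facts, the proof above is structurally identical to that of Theorem \ref{thm:Weak_maximum_principle_C2_W2d_unbounded_function}.
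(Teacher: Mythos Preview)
Your proof is correct and follows essentially the same approach as the paper: use the commutator identity \eqref{eq:CommutatorFormula} together with \eqref{eq:DefnTildeBilinearForm} to show that $\hat\fa(\varphi u,v)=\fa(u,\varphi v)\leq 0$ for all non-negative $v\in H^1_0(\sO\cup\Sigma,\fw)$, then apply the assumed weak maximum principle for $\hat\fa$ to the bounded-above function $\varphi u$ and divide by $\varphi>0$. The paper's proof is only two sentences and defers to the proof of Theorem \ref{thm:Weak_maximum_principle_C2_W2d_unbounded_function} for the remaining details; your version spells out more of the technical checks (boundedness of $\varphi u$, the boundary condition for $w^+$, and the density argument needed to extend \eqref{eq:CommutatorFormula}), but the underlying argument is the same.
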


\begin{proof}
Since $\fa(u,v)\leq 0$, the identities \eqref{eq:CommutatorFormula} and \eqref{eq:DefnTildeBilinearForm} imply that $\fa(\varphi u,v)\leq 0$  for all $v\in H^1_0(\sO\cup\Sigma,\fw)$, $v\geq 0$ a.e. on $\sO$. The conclusion now follows, just as in the proof of Theorem \ref{thm:Weak_maximum_principle_C2_W2d_unbounded_function}.
\end{proof}

\begin{cor}[Weak maximum principle for unbounded $H^2(\sO,\fw)$ functions on unbounded open subsets]
\label{cor:Weak_maximum_principle_H2_unbounded_function}
Let $\sO\subseteqq\RR^d$ be a possibly unbounded open subset and let $A: H^2(\sO,\fw) \to L^2(\sO,\fw)$ be a linear operator on $\sO\cup\Sigma$, for some $\Sigma\subseteqq\partial\sO$, associated to a bilinear map $\fa$ on $H^1(\sO,\fw)$ through integration by parts. Assume that $\widehat A: H^2(\sO,\fw) \to L^2(\sO,\fw)$ in \eqref{eq:Defn_hatA_operator} obeys the weak maximum principle property on $\sO\cup\Sigma$ for functions $u\in H^2(\sO,\fw)$ which are essentially bounded above. Then, $A$ has the weak maximum principle property on $\sO\cup\Sigma$ for functions $u\in H^2(\sO,\fw)$ obeying the growth condition \eqref{eq:Upper_bound_subsolution} a.e. on $\sO$.
\end{cor}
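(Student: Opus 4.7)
The plan is to reduce Corollary \ref{cor:Weak_maximum_principle_H2_unbounded_function} to Theorem \ref{thm:Weak_maximum_principle_H1_unbounded_function} via Lemma \ref{lem:H1H2WeakMaxPrinciplePropertyRelation}, while keeping track of the fact that the cones live in $H^2(\sO,\fw)$ rather than $H^1(\sO,\fw)$. First I would verify that the bilinear map $\hat\fa$ defined in \eqref{eq:DefnTildeBilinearForm} is associated, in the sense of Definition \ref{defn:IntegrationByParts}, to the first-order perturbation $\widehat A=A+B$ on $\sO\cup\Sigma$: indeed, for $u\in H^1(\sO,\fw)$ and $v\in H^1_0(\sO\cup\Sigma,\fw)$, the hypothesis that $\fa$ is associated to $A$ gives $\fa(u,v)=(Au,v)_{L^2(\sO,\fw)}$, so $\hat\fa(u,v)=(Au+Bu,v)_{L^2(\sO,\fw)}=(\widehat Au,v)_{L^2(\sO,\fw)}$.

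Next I would apply Lemma \ref{lem:H1H2WeakMaxPrinciplePropertyRelation} to the convex cone $\fK_0:=\{u\in H^2(\sO,\fw):\esssup_\sO u<\infty\}$. The hypothesis of the corollary is that $\widehat A$ enjoys the weak maximum principle property on $\sO\cup\Sigma$ for $\fK_0$, so the lemma yields the same property for $\hat\fa$ on the cone $\fK_0$. With this in hand, I would repeat, at the level of $H^2(\sO,\fw)$, the one-line argument that proves Theorem \ref{thm:Weak_maximum_principle_H1_unbounded_function}: given $u\in H^2(\sO,\fw)$ with $Au\le 0$ a.e.\ on $\sO$, with $u\leq 0$ on $\partial\sO\less\Sigma$ in the sense of $H^1(\sO,\fw)$, and with the growth bound \eqref{eq:Upper_bound_subsolution} satisfied a.e.\ on $\sO$, set $w:=\varphi u$. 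Then $w\in H^2(\sO,\fw)$ (since $\varphi\in C^2(\sO)$ with $0<\varphi\le 1$ is a bounded multiplier with bounded first and second derivatives on $\sO$), and
\[
\widehat A w = A(\varphi u) + B(\varphi u) = A(\varphi u) + \bigl(\varphi Au - A(\varphi u)\bigr) = \varphi Au \le 0 \quad\text{a.e.\ on }\sO,
\]
using the definition $Bv=-[A,\varphi](\varphi^{-1}v)$ with $v=\varphi u$. Moreover $w=\varphi u\le C(\varphi+1)\le 2C$ a.e.\ on $\sO$ by \eqref{eq:Upper_bound_subsolution}, so $w\in\fK_0$; and since $\varphi>0$ is a bounded multiplier on $H^1(\sO,\fw)$ preserving $H^1_0(\sO\cup\Sigma,\fw)$, the identity $w^+=\varphi u^+$ shows $w\le 0$ on $\partial\sO\less\Sigma$ in the sense of $H^1(\sO,\fw)$.

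Applying the weak maximum principle property of $\widehat A$ (equivalently $\hat\fa$) on $\fK_0$ then yields $w\le 0$ a.e.\ on $\sO$, and dividing by $\varphi>0$ gives $u\le 0$ a.e.\ on $\sO$, as required. The main technical point I expect to need care with is confirming that $\varphi$ acts as a bounded multiplier on $H^2(\sO,\fw)$ and preserves the trace condition encoded by $H^1_0(\sO\cup\Sigma,\fw)$; once Definition \ref{defn:SobolevSpaces} is unwound, both facts are routine because $\varphi$ together with $D\varphi$ and $D^2\varphi$ can be taken to be bounded on $\sO$ (this is the analogue of the corresponding step in the proof of Theorem \ref{thm:Weak_maximum_principle_C2_W2d_unbounded_function}), but it is the only place where one must go slightly beyond the purely formal manipulations.
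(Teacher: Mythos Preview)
Your proposal is correct and follows essentially the same route as the paper: the paper's one-line proof simply observes that a strong $H^2$ subsolution is a variational subsolution via \eqref{eq:IntegrationByParts} and then invokes Theorem \ref{thm:Weak_maximum_principle_H1_unbounded_function}, while you unpack that invocation explicitly (using Lemma \ref{lem:H1H2WeakMaxPrinciplePropertyRelation} and the pointwise identity $\widehat A(\varphi u)=\varphi Au$). The only minor remark is that your claim that $D\varphi$ and $D^2\varphi$ ``can be taken to be bounded'' is not part of the stated hypotheses on $\varphi$; the paper sidesteps this by assuming directly that $\widehat A$ maps $H^2(\sO,\fw)$ to $L^2(\sO,\fw)$, which encodes the needed regularity implicitly.
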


\begin{proof}
When $u\in H^2(\sO,\fw)$ is a strong subsolution, then it is necessarily a variational subsolution since $\fa(u,v) = (Au,v)_{L^2(\sO,\fw)}$ for all $v \in H^1_0(\sO\cup\Sigma,\fw)$ by \eqref{eq:IntegrationByParts} and the result follows from Theorem \ref{thm:Weak_maximum_principle_H1_unbounded_function}.
\end{proof}

\section{Applications of the weak maximum principle property to variational inequalities}
\label{sec:ApplicationsWeakMaxPrinPropertyVarIneq}
We prove uniqueness for solutions to variational inequalities defined by bilinear maps $\fa$ on $H^1(\sO,\fw)$ and obstacle problems defined by bounded, linear operators $A$ from $H^2(\sO,\fw)$ to $L^2(\sO,\fw)$, when $\fa$ or $A$ obey the weak maximum principle property (Definition \ref{defn:WeakMaxPrinciplePropertyBilinearForm} or \ref{defn:WeakMaxPrinciplePropertyElliptic}, respectively). Applications to variational inequalities are much simpler than in the case of obstacle problems and include, in Section \ref{subsec:ApplicationsWeakMaxPrinPropertyVarIneqComparison},
a comparison principle for supersolutions and uniqueness for solutions to variational inequalities (Theorem \ref{thm:WeakMaximumPrincipleVariationalInequality}) and, in Section \ref{subsec:ApplicationsWeakMaxPrinPropertyH2Comparison}, the corresponding results for $H^2(\sO,\fw)$ solutions to the obstacle problem (Theorem \ref{thm:WeakMaximumPrincipleH2ObstacleProblem}). In Section \ref{subsec:ApplicationsWeakMaxPrinPropertyVarIneqAPrioriEstimates}, we develop \apriori estimates (Proposition \ref{prop:Rodrigues}) for $H^1(\sO,\fw)$ supersolutions and solutions to variational inequalities and then the corresponding results for (Proposition \ref{prop:H2Rodrigues}) for $H^2(\sO,\fw)$ supersolutions and solutions to the obstacle problem.

\subsection{Comparison principle for $H^1$ supersolutions and uniqueness for $H^1$ solutions to variational inequalities}
\label{subsec:ApplicationsWeakMaxPrinPropertyVarIneqComparison}
We first recall our analogue \cite{Daskalopoulos_Feehan_statvarineqheston} of the standard definition \cite{Bensoussan_Lions, Friedman_1982, Kinderlehrer_Stampacchia_1980, Rodrigues_1987, Troianiello} of a solution and supersolution to a variational inequality.

\begin{defn}[Solution and supersolution to a variational inequality]
\label{defn:HomogeneousVIProblem}
Given a source functional $F\in H^{-1}(\sO\cup\Sigma,\fw)$, boundary data function $g\in H^1(\sO,\fw)$, and an obstacle function $\psi\in H^1(\sO,\fw)$ such that $\psi\leq g$ on $\partial\sO\less\Sigma$ in the sense of $H^1(\sO,\fw)$, that is,
\begin{equation}
\label{eq:ObstacleFunctionLessThanBoundaryConditionFunctionH1}
(\psi-g)^+ \in H^1_0(\sO\cup\Sigma,\fw),
\end{equation}
we say that a function $u\in H^1(\sO,\fw)$ is a \emph{solution} to a variational inequality with partial Dirichlet boundary condition along $\partial\sO\less\Sigma$ if $u = g$ on $\partial\sO\less\Sigma$ in the sense of $H^1(\sO,\fw)$, that is,
$$
u-g\in H^1_0(\sO\cup\Sigma,\fw),
$$
and
\begin{equation}
\label{eq:VIProblemHestonHomgeneous}
\begin{gathered}
u\geq\psi\hbox{ a.e. on }\sO \quad\hbox{and}\quad \fa(u,v-u) \geq F(v-u),
\\
\forall\, v\in H^1(\sO,\fw) \hbox{ with } v\geq\psi \hbox{ a.e. on }\sO  \hbox{ and } v-g\in H^1_0(\sO\cup\Sigma,\fw).
\end{gathered}
\end{equation}
We call a function $u\in H^1(\sO,\fw)$ a \emph{supersolution} if $u \geq g$ on $\partial\sO\less\Sigma$ in the sense of $H^1(\sO,\fw)$, that is,
$$
(u-g)^-\in H^1_0(\sO\cup\Sigma,\fw),
$$
and
\begin{equation}
\label{eq:VIProblemHestonHomgeneousSupersolution}
\begin{gathered}
u\geq\psi\hbox{ a.e. on }\sO \quad\hbox{and}\quad \fa(u,w) \geq F(w),
\\
\forall\, w\in H^1_0(\sO\cup\Sigma,\fw) \hbox{ with } w\geq 0 \hbox{ a.e. on }\sO.
\end{gathered}
\end{equation}
\end{defn}

If $u$ is a solution in the sense of Definition \ref{defn:HomogeneousVIProblem} then we see that it is also a supersolution by writing $v = u+w$ and observing that $v \geq u \geq \psi$ a.e. on $\sO$ and $v = g$ on $\partial\sO\less\Sigma$ if $w \in H^1_0(\sO\cup\Sigma,\fw)$ and $w\geq 0$ a.e. on $\sO$.

We now assume that the bilinear map $\fa$ has the explicit form,
\begin{equation}
\label{eq:BilinearForm}
\fa(u,v) := \int_\sO \left(a^{ij}u_{x_i}v_{x_j} + d^juv_{x_j} - b^iu_{x_i}v + cuv\right)\fw\,dx, \quad\forall\, u,v \in C^\infty_0(\bar\sO),
\end{equation}
for some $\fw$ as in Definition \ref{defn:WeightFunction} (not necessarily coinciding with the $\fw_i$  in the Definition \ref{defn:SobolevSpaces} of $H^1(\sO,\fw)$) and where the coefficients $a^{ij}$, $d^j$, $b^i$, and $c$ are measurable functions on $\sO\subset\RR^d$. We shall always require that the coefficient matrix, $(a^{ij})$, define a measurable function $a:\sO\to\sS^+(d)$, as in \eqref{eq:a_nonnegative_symmetric_matrix_valued}. We then have the

\begin{thm}[Comparison principle for supersolutions and uniqueness for solutions to the variational inequality]
\label{thm:WeakMaximumPrincipleVariationalInequality}
Let $F\in H^{-1}(\sO\cup\Sigma,\fw)$, and $g\in H^1(\sO,\fw)$, and $\psi\in H^1(\sO,\fw)$, and $\fa$ be a bilinear map of the form \eqref{eq:BilinearForm} on $H^1(\sO,\fw)$ obeying the weak maximum principle property on $\sO\cup\Sigma$, for some $\Sigma\subseteqq\partial\sO$ and convex cone $\fK\subset H^1(\sO,\fw)$, and, in addition, that
\begin{equation}
\label{eq:ZerodjCoefficients}
d^j = 0 \quad\hbox{a.e. on }\sO, \quad 1\leq j\leq d.
\end{equation}
Suppose $u_1\in \fK$ is a solution and $u_2\in -\fK$ is a supersolution to the associated variational inequality (Definition \ref{defn:HomogeneousVIProblem}). Then $u_2\geq u_1$ a.e. on $\sO$ and if $u_2$ is also a solution, then $u_2 = u_1$ a.e. on $\sO$.
\end{thm}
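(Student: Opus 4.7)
My plan is to prove this comparison theorem by the classical Stampacchia argument for variational inequalities: use the non-negative truncation $w := (u_1 - u_2)^+$ as a test function in both the solution's VI \eqref{eq:VIProblemHestonHomgeneous} and the supersolution's inequality \eqref{eq:VIProblemHestonHomgeneousSupersolution}, derive a one-sided quadratic estimate $\fa(w,w) \leq 0$, and then invoke the weak maximum principle property of $\fa$ to conclude $w \equiv 0$, whence $u_2 \geq u_1$ a.e. The uniqueness claim when both $u_1, u_2$ are solutions will follow by reversing their roles.

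The first step is to verify that $w \in H^1_0(\sO \cup \Sigma, \fw)$, which encodes the boundary comparison $u_1 \leq u_2$ on $\partial\sO \less \Sigma$ in the sense of $H^1(\sO, \fw)$. Since $u_1$ is a solution we have $u_1 - g \in H^1_0(\sO \cup \Sigma, \fw)$, and since $u_2$ is a supersolution we have $(u_2 - g)^- \in H^1_0(\sO \cup \Sigma, \fw)$; the pointwise inequality $(u_1 - u_2)^+ \leq (u_1 - g)^+ + (u_2 - g)^-$ then gives $w \in H^1_0(\sO \cup \Sigma, \fw)$ by the standard lattice closure of $H^1_0$ under the positive-part operation. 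Next, I observe that $v := u_1 - w = u_1 \wedge u_2$ is an admissible competitor in \eqref{eq:VIProblemHestonHomgeneous}: $v \geq \psi$ since both $u_1, u_2 \geq \psi$, and $v - g = (u_1 - g) - w \in H^1_0(\sO \cup \Sigma, \fw)$. Feeding $v$ into the solution's VI gives $\fa(u_1, -w) \geq F(-w)$, hence $\fa(u_1, w) \leq F(w)$. Using $w$ itself as a non-negative test in \eqref{eq:VIProblemHestonHomgeneousSupersolution} yields $\fa(u_2, w) \geq F(w)$, and subtracting produces $\fa(u_1 - u_2, w) \leq 0$. At this point the hypothesis $d^j = 0$ from \eqref{eq:ZerodjCoefficients} becomes essential: by the chain-rule identities $w_{x_j} = \chi_{\{u_1 > u_2\}}(u_1 - u_2)_{x_j}$ and $(u_1 - u_2)w = w^2$ a.e.\ on $\sO$, each term of the integrand in \eqref{eq:BilinearForm} representing $\fa(u_1 - u_2, w)$ collapses to the corresponding term of $\fa(w,w)$, and we arrive at
\[
\fa(w,w) \leq 0.
\]

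The main obstacle — and the step I expect to require the most care — is the passage from this quadratic estimate to the pointwise conclusion $w \equiv 0$ a.e.\ on $\sO$. Since $\fK$ is a convex cone containing both $u_1$ and $-u_2$, the difference $u := u_1 - u_2$ lies in $\fK$, and from the first step $u \leq 0$ on $\partial\sO \less \Sigma$ in the sense of $H^1(\sO, \fw)$. The weak maximum principle property in Definition \ref{defn:WeakMaxPrinciplePropertyBilinearForm} applied to $u$ would demand $\fa(u,\phi) \leq 0$ for \emph{every} non-negative $\phi \in H^1_0(\sO \cup \Sigma, \fw)$, whereas the preceding calculation furnishes this bound only for the single distinguished choice $\phi = w$. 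The resolution I will pursue exploits the identity $\fa(u,\phi) = \fa(u^+,\phi) - \fa(u^-,\phi)$, valid under $d^j = 0$, together with the fact that $u^- = (u_2 - u_1)^+$ is already non-negative and has the sign structure cooperating with the non-negative test $\phi$: one then applies WMP to $w = u^+ \in H^1_0(\sO \cup \Sigma, \fw)$ itself (under the assumption that $\fK$ is stable under this truncation, which will need to be checked in practice) and reads off $w \leq 0$, hence $w \equiv 0$ since $w \geq 0$. Alternatively, I expect the WMP hypothesis on $\fa$ to carry with it enough structural positivity — coming, for example, from the non-negativity of $a^{ij}$ in \eqref{eq:a_nonnegative_symmetric_matrix_valued} and the sign conditions that make WMP hold in Theorems \ref{thm:WeakMaximumPrincipleVarEquationBoundedDomainGeneralWeight} and \ref{thm:WeakMaximumPrincipleH1UnboundedDomainBoundedSolution} — that $\fa(w,w) \leq 0$ with $w \in H^1_0(\sO \cup \Sigma, \fw)$ and $w \geq 0$ already forces $w \equiv 0$ a.e.
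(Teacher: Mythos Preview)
Your opening steps are fine: the verification that $w = (u_1-u_2)^+ \in H^1_0(\sO\cup\Sigma,\fw)$, the use of $v = u_1 \wedge u_2$ as a competitor in the solution's inequality, and the identity $\fa(u_1-u_2,w) = \fa(w,w)$ under $d^j=0$ are all correct. The problem is exactly where you flag it, and your proposed resolutions do not close the gap.

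The weak maximum principle property of Definition~\ref{defn:WeakMaxPrinciplePropertyBilinearForm} requires $\fa(w,\phi)\leq 0$ for \emph{every} non-negative $\phi\in H^1_0(\sO\cup\Sigma,\fw)$, and from $\fa(w,w)\leq 0$ alone this does not follow: the bilinear map $\fa$ is only assumed to satisfy a G\r{a}rding estimate \eqref{eq:GardingEstimate}, not coercivity, so $\fa(w,w)\leq 0$ is compatible with $w\not\equiv 0$. Your decomposition $\fa(u,\phi)=\fa(u^+,\phi)-\fa(u^-,\phi)$ is just linearity and gives no new information; and invoking ``structural positivity'' of the concrete examples in Theorems~\ref{thm:WeakMaximumPrincipleVarEquationBoundedDomainGeneralWeight} and~\ref{thm:WeakMaximumPrincipleH1UnboundedDomainBoundedSolution} would abandon the abstract hypothesis you are asked to work from. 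The obstruction is structural: the obstacle constraint $v\geq\psi$ prevents you from perturbing $u_1$ in an arbitrary non-negative direction $\phi$, so you cannot directly extract $\fa(u_1,\phi)\leq F(\phi)$ for general $\phi$.

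The paper's proof (adapted from Troianiello) supplies exactly the missing idea. For an arbitrary non-negative $\phi\in C^1_0(\sO\cup\Sigma)$ with $0\leq\phi\leq 1$, one sets
\[
w^\eps := \frac{\hat u\,\phi}{\hat u+\eps}, \qquad \hat u := (u_1-u_2)^+,
\]
and tests the solution's VI against $v^\eps := u_1-\eps w^\eps$. The point is that $\eps w^\eps\leq \hat u$, so $v^\eps\geq u_1-\hat u=u_1\wedge u_2\geq\psi$: the perturbation stays above the obstacle \emph{for every choice of} $\phi$. Combining with the supersolution inequality for $u_2$ (tested against $w^\eps\geq 0$) gives $\fa(\hat u,w^\eps)\leq 0$. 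One then expands $\fa(\hat u,w^\eps)=I_1(\eps)+\eps I_2(\eps)$; the hypothesis $d^j=0$ together with $a\in\sS^+(d)$ forces $I_2(\eps)\geq 0$, hence $I_1(\eps)\leq 0$, and $I_1(\eps)\to\fa(\hat u,\phi)$ as $\eps\to 0$. This yields $\fa(\hat u,\phi)\leq 0$ for all non-negative $\phi$, after which the weak maximum principle property applies directly. The $w^\eps$ device is the step your argument is missing.
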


\begin{proof}\footnote{An alternative proof of Theorem \ref{thm:WeakMaximumPrincipleVariationalInequality} in the simpler case where the solution, supersolution, and obstacle function are continuous on $\sO$, obtained by adapting the proofs of \cite[Theorems 1.3.3 and 1.3.4]{Friedman_1982}, can be found in \cite{Feehan_maximumprinciple_v0}.}
We shall adapt the proof of \cite[Theorem 4.27]{Troianiello}. We may assume without loss of generality that $g=0$. Suppose that $u_1$ is a solution and $u_2$ is a supersolution and set $\hat u: = (u_1-u_2)^+$, so $u_1\wedge u_2 = u_1 - (u_1-u_2)^+ = u_1 - \hat u$. Observe that $\hat u \in H^1(\sO,\fw)$ and $u_1-u_2 \leq 0$ on $\partial\sO\less\Sigma$ and thus $\hat u = (u_1-u_2)^+ = 0$ on $\partial\sO\less\Sigma$, in the sense of $H^1(\sO,\fw)$, and hence $\hat u \in H^1_0(\sO,\fw)$.

We claim that $\fa(\hat u,w) \leq 0$ for all $w \in H^1_0(\sO\cup\Sigma,\fw)$ with $w \geq 0$ a.e. on $\sO$. If not, there exists $w \in C^1_0(\sO\cup\Sigma)$, $0\leq w\leq 1$, such that
\begin{equation}
\label{eq:Positivebilinearformvalue}
\fa(\hat u,w) > 0.
\end{equation}
For $\eps>0$, define
$$
w^\eps := \frac{\hat u w}{\hat u + \eps}.
$$
Observe that $w^\eps \in H^1_0(\sO\cup\Sigma,\fw)$ and
\begin{equation}
\label{eq:wepsGradient}
w^\eps_{x_j} = \frac{\hat u w_{x_j}}{\hat u + \eps} + \frac{\eps\hat u_{x_j}w}{(\hat u + \eps)^2}, \quad 1\leq j\leq d.
\end{equation}
Choose $v^\eps = u_1 - \eps w^\eps$ and observe that $v^\eps \in H^1_0(\sO\cup\Sigma,\fw)$ and
$$
v^\eps \geq u_1 - \hat u w \geq u_1 - \hat u = u_1\wedge u_2 \geq \psi \quad\hbox{a.e. on }\sO.
$$
Because $u_1$ is a solution to the variational inequality \eqref{eq:VIProblemHestonHomgeneous}, we have
$$
\fa(u_1,v-u_1) \geq F(v-u_1), \quad\forall\, v \in H^1_0(\sO\cup\Sigma,\fw) \hbox{ with } v\geq\psi\hbox{ a.e. on }\sO,
$$
and thus, choosing $v=v^\eps$ and using $v^\eps - u_1 = - \eps w^\eps$, we obtain
$$
\fa(-u_1,w^\eps) \geq -F(w^\eps).
$$
But $u_2$ is a supersolution to the variational inequality \eqref{eq:VIProblemHestonHomgeneous}, thus
$$
\fa(u_2,w^\eps) \geq F(w^\eps).
$$
Adding the preceding two inequalities yields
$$
\fa(u_2-u_1,w^\eps) \geq 0.
$$
Since $\hat u = (u_1-u_2)^+$ and $w^\eps = \hat uw/(\hat u + \eps)$, we obtain
$$
\fa(\hat u, w^\eps) \leq 0.
$$
Using \eqref{eq:wepsGradient}, the expression \eqref{eq:BilinearForm} for $\fa$ yields
\begin{align*}
\fa(\hat u, w^\eps) &= \int_\sO \left(a^{ij}\hat u_{x_i}w^\eps_{x_j} + d^j\hat uw^\eps_{x_j} - b^i\hat u_{x_i}w^\eps + c\hat uw^\eps\right)\fw\,dx
\\
&= \int_\sO \frac{\hat u}{\hat u + \eps}\left(a^{ij}\hat u_{x_i}w_{x_j} + d^j\hat uw_{x_j} - b^i\hat u_{x_i}w + c\hat uw\right)\fw\,dx
\\
&\quad + \eps\int_\sO \frac{w}{(\hat u + \eps)^2} \left(a^{ij}\hat u_{x_i} + d^j\hat u\right)\hat u_{x_j}\fw\,dx
\\
&=: I_1(\eps) + \eps I_2(\eps).
\end{align*}
Consequently, by combining the preceding inequality and identity, we obtain
\begin{equation}
\label{eq:I1I2epsleqZero}
I_1(\eps) + \eps I_2(\eps)\leq 0, \quad\forall\eps>0.
\end{equation}
By hypothesis \eqref{eq:ZerodjCoefficients}, we have $(d^j)=0$ on $\sO$ and so the non-negative definite characteristic form condition \eqref{eq:a_nonnegative_symmetric_matrix_valued} for $(a^{ij})$ implies that
$$
I_2(\eps) = \int_\sO \frac{w}{(\hat u + \eps)^2}a^{ij}\hat u_{x_i}\hat u_{x_j}\fw\,dx \geq 0.
$$
Therefore, $I_1(\eps) \leq -\eps I_2(\eps) \leq 0$ for all $\eps>0$ and because
\begin{equation}
\label{eq:I1limit}
\lim_{\eps\to 0}I_1(\eps) = \fa(\hat u,w),
\end{equation}
we obtain $\fa(\hat u,w) \leq 0$, contradicting our assumption that $\fa(\hat u,w) > 0$.

Thus, $\fa(\hat u,w)\leq 0$ for all $w\in H^1_0(\sO\cup\Sigma,\fw)$ with $w\geq 0$ a.e. on $\sO$ and so the weak maximum principle property implies that $\hat u \leq 0$ a.e. on $\sO$, that is $(u_1-u_2)^+ = 0$ a.e. on $\sO$ and so $u_1 \leq u_2$ a.e. on $\sO$. This completes the proof that if $u_1$ is a solution and $u_2$ a supersolution to the variational inequality \eqref{eq:VIProblemHestonHomgeneous}, then $u_2\geq u_1$ a.e. on $\sO$.

If $u_1, u_2$ are two solutions to \eqref{eq:VIProblemHestonHomgeneous}, then $u_1=u_2$ a.e. on $\sO$ just as before. This completes the proof.
\end{proof}

If we strengthen the non-negativity condition \eqref{eq:a_nonnegative_symmetric_matrix_valued} for the matrix $(a^{ij})$, we can allow non-zero coefficients $(d^j)$ in \eqref{eq:BilinearForm} for the statement and proof of Theorem \ref{thm:WeakMaximumPrincipleVariationalInequality}. We constrain the behavior of the coefficients $a,d$ in the expression \eqref{eq:BilinearForm} for the bilinear map $\fa$, near finite portions of $\partial\sO$ as well as spatial infinity, with the aid of the

\begin{defn}[Degeneracy coefficient]
\label{defn:DegeneracyCoefficient}
We call $\vartheta$ a \emph{degeneracy coefficient} if
\begin{equation}
\label{eq:DegeneracyCoefficientCondition}
\vartheta \in C_{\loc}(\bar\sO) \quad\hbox{and}\quad \vartheta > 0 \quad\hbox{on }\sO.
\end{equation}
\end{defn}

For our generalization of Theorem \ref{thm:WeakMaximumPrincipleVariationalInequality}, we now require that the coefficients $(a^{ij}), (d^j)$ in \eqref{eq:BilinearForm} obey
\begin{align}
\label{eq:GeneralNonDegeneracyNearBoundaryQuant}
\langle a\eta,\eta\rangle &\geq \vartheta|\eta|^2  \quad\hbox{a.e. on }\sO,
\\
\label{eq:BilinearaBound}
\langle a\eta,\eta\rangle &\leq K\vartheta|\eta|^2 \quad\hbox{a.e. on }\sO,
\\
\label{eq:BilineardBound}
|\langle d,\eta\rangle| &\leq K\vartheta|\eta| \quad\hbox{a.e. on }\sO,
\end{align}
for some positive constant, $K$, and all $\eta\in \RR^d$. When the matrix $(a^{ij})$ defines coefficients of a bilinear form \eqref{eq:BilinearForm}, our previous characterization \eqref{eq:Degeneracy_locus_elliptic} of a degeneracy locus is not convenient, so instead we shall use the

\begin{defn}[Characterization of the degeneracy locus for a bilinear map]
For a bilinear form as in \eqref{eq:BilinearForm}, with measurable coefficients $(a^{ij})$, the \emph{degeneracy locus}, $\Sigma\subseteqq\partial\sO$, for $\fa$ is given by
\begin{equation}
\label{eq:aijZeroisVarthetaZero}
\Sigma = \Int\{x\in\partial\sO: \vartheta(x) = 0\}.
\end{equation}
\end{defn}

The definitions \eqref{eq:Degeneracy_locus_elliptic} and \eqref{eq:aijZeroisVarthetaZero} coincide when $(a^{ij}) \in C_{\loc}(\partial\sO;\sS^+(d))$.

\begin{thm}[Comparison principle for supersolutions and uniqueness for solutions to the variational inequality]
\label{thm:WeakMaximumPrincipleVariationalInequalityNonzerod}
Assume the hypotheses of Theorem \ref{thm:WeakMaximumPrincipleVariationalInequality} but now require that either the coefficients $(d^j)$ in the expression \eqref{eq:BilinearForm} for the bilinear map $\fa$ obey \eqref{eq:ZerodjCoefficients} or that the coefficients $(a^{ij}),(d^j)$ in \eqref{eq:BilinearForm} obey \eqref{eq:GeneralNonDegeneracyNearBoundaryQuant}, \eqref{eq:BilinearaBound}, and \eqref{eq:BilineardBound} and that
\begin{equation}
\label{eq:VarthetaL1weight}
\vartheta \in L^1(\sO,\fw).
\end{equation}
Then the conclusions of Theorem \ref{thm:WeakMaximumPrincipleVariationalInequality} continue to hold.
\end{thm}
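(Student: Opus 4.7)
The proof of Theorem \ref{thm:WeakMaximumPrincipleVariationalInequality} used hypothesis \eqref{eq:ZerodjCoefficients} at exactly one step: to deduce from non-negative definiteness of $(a^{ij})$ that
$$
I_2(\eps) = \int_\sO \frac{w}{(\hat u + \eps)^2}\bigl(a^{ij}\hat u_{x_i} + d^j \hat u\bigr)\hat u_{x_j}\fw\,dx \geq 0,
$$
after which $I_1(\eps) \leq -\eps I_2(\eps) \leq 0$ combined with $\lim_{\eps\to 0} I_1(\eps) = \fa(\hat u, w)$ contradicted the standing assumption \eqref{eq:Positivebilinearformvalue}. My plan is to reproduce that argument verbatim up through the derivation of \eqref{eq:I1I2epsleqZero}, and to replace non-negativity of $I_2(\eps)$ with a uniform lower bound $I_2(\eps) \geq -C(w)$ independent of $\eps$. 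This still gives $\eps I_2(\eps) \to 0$ as $\eps \to 0$, whence $\limsup_{\eps\to 0} I_1(\eps) \leq 0$, and passing to the limit via \eqref{eq:I1limit} produces the same contradiction.

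To produce the lower bound from the new hypotheses, I estimate the $a$ term from below via \eqref{eq:GeneralNonDegeneracyNearBoundaryQuant} by $\vartheta|D\hat u|^2$, and control the cross term via \eqref{eq:BilineardBound} and Young's inequality:
$$
|\hat u\, d^j \hat u_{x_j}| \leq K\vartheta \hat u |D\hat u| \leq \tfrac12 \vartheta |D\hat u|^2 + \tfrac{K^2}{2}\vartheta \hat u^2.
$$
Adding these yields $(a^{ij}\hat u_{x_i} + d^j \hat u)\hat u_{x_j} \geq \tfrac12 \vartheta|D\hat u|^2 - \tfrac{K^2}{2}\vartheta \hat u^2$, and after discarding the non-negative gradient term and using the pointwise bound $\hat u^2/(\hat u + \eps)^2 \leq 1$,
$$
I_2(\eps) \geq -\tfrac{K^2}{2}\int_\sO w\vartheta\fw\,dx \geq -\tfrac{K^2}{2}\|w\|_{L^\infty(\sO)}\|\vartheta\|_{L^1(\sO,\fw)},
$$
which is finite by \eqref{eq:VarthetaL1weight} and the compact support of $w \in C^1_0(\sO \cup \Sigma)$. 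The remainder of the proof — concluding from $\fa(\hat u, w) \leq 0$ for every admissible $w$ that $\hat u \leq 0$ a.e. on $\sO$ via the weak maximum principle property, and deducing $u_2 \geq u_1$ (with equality when both are solutions by reversing roles) — is unchanged.

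The main delicate point I anticipate is verifying that no other step in the Troianiello-style argument implicitly used $d=0$. Inspection confirms that the construction of $w^\eps = \hat u w/(\hat u + \eps)$, the chain-rule identity \eqref{eq:wepsGradient} for its weak derivative, the decomposition of $\fa(\hat u, w^\eps)$ into $I_1(\eps) + \eps I_2(\eps)$, the membership $\hat u \in H^1_0(\sO \cup \Sigma, \fw)$, and the limit \eqref{eq:I1limit}, all proceed independently of the lower-order coefficients in $\fa$. Hypothesis \eqref{eq:BilinearaBound} plays no direct role in this argument; it is presumably required elsewhere in the article to verify the weak maximum principle property of $\fa$ itself, which is an input to the present theorem rather than a conclusion.
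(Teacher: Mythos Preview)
Your proof is correct and takes a genuinely simpler route than the paper's. The paper also starts from \eqref{eq:I1I2epsleqZero} and \eqref{eq:I1limit}, but instead of bounding $I_2(\eps)$ from below directly, it first argues by contradiction that $\limsup_{\eps\to 0}\eps I_2(\eps) < 0$, extracts from this that $I_2(\eps)\leq 0$ for all small $\eps$, and then uses that inequality together with \eqref{eq:GeneralNonDegeneracyNearBoundaryQuant}, \eqref{eq:BilineardBound}, and Cauchy--Schwarz to obtain a uniform $L^2(\vartheta\fw)$ bound on $G_\eps(\hat u):=w^{1/2}|D\hat u|/(\hat u+\eps)$; only then, invoking both \eqref{eq:BilinearaBound} and \eqref{eq:BilineardBound}, does it bound $|I_2(\eps)|$ uniformly to force $\eps I_2(\eps)\to 0$ and reach a contradiction.

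Your Young-inequality estimate $a^{ij}\hat u_{x_i}\hat u_{x_j}+d^j\hat u\hat u_{x_j}\geq \tfrac12\vartheta|D\hat u|^2-\tfrac{K^2}{2}\vartheta\hat u^2$, combined with $\hat u^2/(\hat u+\eps)^2\leq 1$ and \eqref{eq:VarthetaL1weight}, bypasses the bootstrap entirely and yields the uniform lower bound on $I_2(\eps)$ in one stroke. One small correction to your closing remark: the reason \eqref{eq:BilinearaBound} appears among the hypotheses is not that it is needed to verify the weak maximum principle property elsewhere, but that the paper's own proof of this theorem uses it (to bound $|I_2(\eps)|$ from above). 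Your argument shows that \eqref{eq:BilinearaBound} is in fact superfluous for the conclusion.
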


\begin{proof}
We need only show that the assumption \eqref{eq:Positivebilinearformvalue} still leads to a contradiction using the inequality \eqref{eq:I1I2epsleqZero} even when $(d^j)$ is non-zero. By \eqref{eq:Positivebilinearformvalue}, \eqref{eq:I1I2epsleqZero}, and \eqref{eq:I1limit}, we obtain
\begin{equation}
\label{eq:Positive_epsI2limit}
\limsup_{\eps\to 0}\eps I_2(\eps) \leq -\lim_{\eps\to 0}I_1(\eps) = -\fa(\hat u,w) < 0,
\end{equation}
and so there is an $\eps_0>0$ such that $\eps I_2(\eps) \leq 0$ and hence $I_2(\eps) \leq 0$, $\forall\, \eps\in(0,\eps_0]$. Define
$$
G_\eps(\hat u) := \frac{w^{1/2}|D\hat u|}{\hat u + \eps} \quad\hbox{a.e. on }\sO.
$$
Then, the fact that $I_2(\eps)\leq 0$, $\forall\eps\in (0,\eps_0]$, yields
\begin{align*}
\int_\sO \vartheta G_\eps^2(\hat u)\fw\,dx &= \int_\sO \vartheta \frac{w|D\hat u|^2}{(\hat u + \eps)^2}\fw\,dx
\leq
\int_\sO a^{ij}\frac{w\hat u_{x_i}\hat u_{x_j}}{(\hat u + \eps)^2}\fw\,dx  \quad\hbox{(by \eqref{eq:GeneralNonDegeneracyNearBoundaryQuant})}
\\
&\leq
-\int_\sO d^j\frac{w\hat u\hat u_{x_j}}{(\hat u + \eps)^2}\fw\,dx \quad\hbox{(since $I_2(\eps)\leq 0$)}
\\
&\leq K\int_\sO \vartheta\frac{w^{1/2}|D\hat u|}{\hat u + \eps}\fw\,dx
=
K\int_\sO \vartheta G_\eps(\hat u)\fw\,dx  \quad\hbox{(by \eqref{eq:BilineardBound})}
\\
&\leq K\left(\int_\sO \vartheta\fw\,dx\right)^{1/2}\left(\int_\sO \vartheta G_\eps^2(\hat u)\fw\,dx\right)^{1/2}
\\
&= C_0\left(\int_\sO \vartheta G_\eps^2(\hat u)\fw\,dx\right)^{1/2},
\end{align*}
where $C_0 := K\left(\int_\sO \vartheta\fw\,dx\right)^{1/2}$. But then
$$
\|\sqrt{\vartheta} G_\eps(\hat u)\|_{L^2(\sO,\fw)} \leq C_0, \quad \forall\, \eps\in (0,\eps_0],
$$
and so, $\forall\, \eps\in (0,\eps_0]$,
\begin{align*}
|I_2(\eps)| &\leq \int_\sO \frac{w}{(\hat u + \eps)^2} \left(|a^{ij}\hat u_{x_i}\hat u_{x_j}| + |d^j\hat u_{x_j}|\hat u\right)\fw\,dx
\\
&\leq K\int_\sO \vartheta\frac{w}{(\hat u + \eps)^2}\left(|D\hat u|^2 + |D\hat u|\hat u\right)|\fw\,dx \quad\hbox{(by \eqref{eq:BilinearaBound} and \eqref{eq:BilineardBound})}
\\
&\leq K\int_\sO \vartheta\frac{w|D\hat u|^2}{(\hat u + \eps)^2}\fw\,dx
+
K\left(\int_\sO\vartheta\frac{w|D\hat u|^2}{(\hat u + \eps)^2}\fw\,dx\right)^{1/2}\left(\int_\sO\vartheta\frac{w{\hat u}^2}{(\hat u + \eps)^2}\fw\,dx\right)^{1/2}
\\
&= K\|\sqrt{\vartheta} G_\eps(\hat u)\|_{L^2(\sO,\fw)}^2
+ K\|\sqrt{\vartheta} G_\eps(\hat u)\|_{L^2(\sO,\fw)}\left(\int_\sO \vartheta\fw\,dx\right)^{1/2}
\\
&\leq (K+1)C_0^2.
\end{align*}
Therefore, $|\eps I_2(\eps)|\leq (K+1)C_0^2\eps$ and so $\lim_{\eps\to 0}\eps I_2(\eps) = 0$, contradicting \eqref{eq:Positive_epsI2limit}.
\end{proof}

\subsection{Comparison principle for $H^2$ supersolutions and uniqueness for $H^2$ solutions to obstacle problems}
\label{subsec:ApplicationsWeakMaxPrinPropertyH2Comparison}
We recall our analogues of the standard definition \cite{Bensoussan_Lions, Friedman_1982, Kinderlehrer_Stampacchia_1980, Rodrigues_1987, Troianiello} of a strong solution to an obstacle problem defined in \cite{Daskalopoulos_Feehan_statvarineqheston}.

\begin{defn}[Strong solution and supersolution to an obstacle problem]
\label{defn:ObstacleHomogeneousStrong}
Given functions $f\in L^2(\sO,\fw)$, $g\in H^1(\sO,\fw)$, and $\psi\in H^1(\sO,\fw)$ obeying \eqref{eq:ObstacleFunctionLessThanBoundaryConditionFunctionH1}, we call $u\in H^2(\sO,\fw)$ a \emph{strong solution} to an obstacle problem for a linear operator, $A: H^2(\sO,\fw) \to L^2(\sO,\fw)$, with Dirichlet boundary condition along $\partial\sO\less\Sigma$, for some $\Sigma\subseteqq\partial\sO$, if
\begin{align}
\label{eq:ObstacleProblemH2}
\min\{Au-f,u-\psi\} &= 0 \quad \hbox{a.e. on }\sO,
\\
\label{eq:ObstacleHomogeneousBC}
u-g &\in H^1_0(\sO\cup\Sigma,\fw),
\end{align}
that is, $u=g$ on $\partial\sO\less\Sigma$ in the sense of $H^1(\sO,\fw)$.
We call $u\in H^2(\sO,\fw)$ a \emph{strong supersolution} if
\begin{align}
\label{eq:ObstacleProblemH2Supersolution}
\min\{Au-f,u-\psi\} &\geq 0 \quad \hbox{a.e. on }\sO,
\\
\label{eq:ObstacleHomogeneousBCSupersolution}
(u-g)^- &\in H^1_0(\sO\cup\Sigma,\fw),
\end{align}
that is, $u\geq g$ on $\partial\sO\less\Sigma$ in the sense of $H^1(\sO,\fw)$.
\end{defn}

\begin{thm}[Comparison principle and uniqueness for $H^2$ solutions to the obstacle problem]
\label{thm:WeakMaximumPrincipleH2ObstacleProblem}
Let $f\in L^2(\sO,\fw)$, and $g\in H^1(\sO,\fw)$, and $\psi\in H^1(\sO,\fw)$, and $A: H^2(\sO,\fw) \to L^2(\sO,\fw)$ be a linear operator on $\sO\cup\Sigma$ associated to a bilinear map $\fa$ of the form \eqref{eq:BilinearForm} on $H^1(\sO,\fw)$ through integration by parts, for some $\Sigma\subseteqq\partial\sO$. Assume $A$ obeys the weak maximum principle property on $\sO\cup\Sigma$ for some convex cone $\fK\subset H^2(\sO,\fw)$ (Definition \ref{defn:WeakMaxPrinciplePropertyElliptic}) and that the coefficients $(a^{ij}),(d^j)$ in the expression \eqref{eq:BilinearForm} obey the hypotheses of Theorem \ref{thm:WeakMaximumPrincipleVariationalInequalityNonzerod}. Suppose $u_1\in \fK$ is a strong solution and $u_2\in -\fK$ is a strong supersolution to the obstacle problem in the sense of Definition \ref{defn:ObstacleHomogeneousStrong}. Then $u_2 \geq u_1$ a.e. on $\sO$ and if $u_1, u_2$ are solutions, then $u_2 = u_1$ a.e on $\sO$.
\end{thm}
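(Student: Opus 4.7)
The plan is to reduce the $H^2$ obstacle problem to the variational inequality setting already handled by Theorem \ref{thm:WeakMaximumPrincipleVariationalInequalityNonzerod}, by showing that every strong (super)solution in $H^2(\sO,\fw)$ is automatically a variational (super)solution in $H^1(\sO,\fw)$ with respect to the bilinear map $\fa$ and source functional $F(v) := (f,v)_{L^2(\sO,\fw)}$. Since $A$ is assumed to obey the weak maximum principle property on $\sO\cup\Sigma$ for the convex cone $\fK\subset H^2(\sO,\fw)$, Lemma \ref{lem:H1H2WeakMaxPrinciplePropertyRelation} transfers this to the weak maximum principle property for $\fa$ on $\sO\cup\Sigma$ for $\fK$, viewed as a cone in $H^1(\sO,\fw)$. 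Thus the hypotheses of Theorem \ref{thm:WeakMaximumPrincipleVariationalInequalityNonzerod} become available.

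The first step is the supersolution side, which is essentially immediate. For $u_2\in -\fK\subset H^2(\sO,\fw)$, the strong supersolution conditions \eqref{eq:ObstacleProblemH2Supersolution} and \eqref{eq:ObstacleHomogeneousBCSupersolution} give $Au_2\geq f$ a.e. on $\sO$, $u_2\geq\psi$ a.e. on $\sO$, and $u_2\geq g$ on $\partial\sO\less\Sigma$ in the sense of $H^1(\sO,\fw)$. Integration by parts \eqref{eq:IntegrationByParts} then yields, for every $w\in H^1_0(\sO\cup\Sigma,\fw)$ with $w\geq 0$ a.e. on $\sO$,
\[
\fa(u_2,w) \;=\; (Au_2,w)_{L^2(\sO,\fw)} \;\geq\; (f,w)_{L^2(\sO,\fw)} \;=\; F(w),
\]
so $u_2$ is a variational supersolution in the sense of \eqref{eq:VIProblemHestonHomgeneousSupersolution}.

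The second step handles the solution side, which is the only slightly delicate part. For $u_1\in\fK$, \eqref{eq:ObstacleProblemH2} implies both $Au_1\geq f$ and $u_1\geq\psi$ a.e. on $\sO$, together with the complementarity identity $(Au_1-f)(u_1-\psi)=0$ a.e. on $\sO$, since the minimum of two non-negative quantities vanishes iff one of them does. Given any admissible test function $v\in H^1(\sO,\fw)$ with $v\geq\psi$ a.e. on $\sO$ and $v-g\in H^1_0(\sO\cup\Sigma,\fw)$, we have $v-u_1\in H^1_0(\sO\cup\Sigma,\fw)$ by \eqref{eq:ObstacleHomogeneousBC}, so integration by parts is valid and
\[
\fa(u_1,v-u_1)-F(v-u_1) \;=\; (Au_1-f,\,v-u_1)_{L^2(\sO,\fw)}.
\]
Writing $v-u_1=(v-\psi)-(u_1-\psi)$ and using the complementarity identity together with $Au_1-f\geq 0$ and $v-\psi\geq 0$ a.e.\ on $\sO$ gives
\[
(Au_1-f)(v-u_1) \;=\; (Au_1-f)(v-\psi) \;\geq\; 0 \quad\text{a.e.\ on }\sO,
\]
which is exactly \eqref{eq:VIProblemHestonHomgeneous}. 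Hence $u_1$ is a variational solution.

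The third and final step is then immediate: applying Theorem \ref{thm:WeakMaximumPrincipleVariationalInequalityNonzerod} to the variational solution $u_1\in\fK$ and the variational supersolution $u_2\in-\fK$ yields $u_2\geq u_1$ a.e.\ on $\sO$. If $u_2$ is also a strong solution, the same argument with the roles of $u_1,u_2$ reversed gives $u_1\geq u_2$ a.e.\ on $\sO$, and therefore $u_1=u_2$ a.e.\ on $\sO$. The main conceptual content is thus the reduction in step one; the only point requiring any care is the use of the complementarity identity $(Au_1-f)(u_1-\psi)=0$, which is the reason we need $u_1$ to be a strong solution rather than merely a supersolution.
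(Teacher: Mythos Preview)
Your proof is correct and follows essentially the same approach as the paper: reduce the strong obstacle problem to the variational inequality and then invoke Theorem \ref{thm:WeakMaximumPrincipleVariationalInequalityNonzerod}. The only difference is packaging: the paper isolates the implication ``strong (super)solution $\Rightarrow$ variational (super)solution'' as a separate statement, Lemma \ref{lem:VIStrongForm}, and simply cites it, whereas you supply that argument in-line (your Steps 1 and 2, including the complementarity computation $(Au_1-f)(v-u_1)=(Au_1-f)(v-\psi)\geq 0$). Your explicit appeal to Lemma \ref{lem:H1H2WeakMaxPrinciplePropertyRelation} to pass the weak maximum principle property from $A$ to $\fa$ is a point the paper leaves implicit.
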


We shall need the following analogue of the equivalence in \cite[Equation (3.1.20)]{Bensoussan_Lions}, whose justification is identical to the proof of the corresponding \cite[Lemma 4.13]{Daskalopoulos_Feehan_statvarineqheston}.

\begin{lem}[Equivalence of variational and strong (super-)solutions]
\label{lem:VIStrongForm}
Assume the hypotheses for the operator, $A$, in Theorem \ref{thm:WeakMaximumPrincipleH2ObstacleProblem}. Let $f$, and $g$, and $\psi$ be as in Definition \ref{defn:ObstacleHomogeneousStrong} and suppose $u\in H^2(\sO,\fw)$. Then $u$ is a (super-)solution to the variational inequality in Definition \ref{defn:HomogeneousVIProblem} if and only if $u$ is a (super-)solution to the obstacle problem in Definition \ref{defn:ObstacleHomogeneousStrong}.
\end{lem}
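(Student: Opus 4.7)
\medskip
\noindent\textbf{Proof plan.} There are four implications to verify: strong solution $\Rightarrow$ variational solution, variational solution $\Rightarrow$ strong solution, and the analogues for supersolutions. The boundary condition \eqref{eq:ObstacleHomogeneousBC} in Definition \ref{defn:ObstacleHomogeneousStrong} is exactly the Dirichlet condition $u = g$ on $\partial\sO\less\Sigma$ in the sense of $H^1(\sO,\fw)$ used in Definition \ref{defn:HomogeneousVIProblem} (and similarly for the supersolution versions, using the sign of $(u-g)^\pm$), so throughout the argument we need only convert between the pointwise and variational forms of the inequalities involving $A$ and $\psi$, using the integration-by-parts identity \eqref{eq:IntegrationByParts} to pass between $\fa(u,\cdot)$ and $(Au,\cdot)_{L^2(\sO,\fw)}$.

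\medskip
\noindent\textbf{Strong $\Rightarrow$ variational (supersolution).} Here $Au\geq f$ and $u\geq\psi$ a.e. on $\sO$. For any $w\in H^1_0(\sO\cup\Sigma,\fw)$ with $w\geq 0$ a.e. on $\sO$, multiplying $Au\geq f$ by $w\fw$, integrating, and applying \eqref{eq:IntegrationByParts} yields $\fa(u,w)\geq F(w)$, which is \eqref{eq:VIProblemHestonHomgeneousSupersolution}.

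\medskip
\noindent\textbf{Strong $\Rightarrow$ variational (solution).} Given $v\in H^1(\sO,\fw)$ with $v\geq\psi$ a.e.\ and $v-g\in H^1_0(\sO\cup\Sigma,\fw)$, write $v-u = (v-\psi) - (u-\psi)$, so that
\[
(Au-f)(v-u) = (Au-f)(v-\psi) - (Au-f)(u-\psi).
\]
The complementarity condition $\min\{Au-f,u-\psi\}=0$ forces the second term to vanish a.e., while the first is a product of two non-negative factors. Hence $(Au-f)(v-u)\geq 0$ a.e.\ on $\sO$; integrating against $\fw$ and applying \eqref{eq:IntegrationByParts} (noting that $v-u\in H^1_0(\sO\cup\Sigma,\fw)$) gives $\fa(u,v-u)\geq F(v-u)$, which is \eqref{eq:VIProblemHestonHomgeneous}.

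\medskip
\noindent\textbf{Variational $\Rightarrow$ strong (supersolution and the inequality $Au\geq f$).} Testing \eqref{eq:VIProblemHestonHomgeneousSupersolution} against arbitrary non-negative $w\in H^1_0(\sO\cup\Sigma,\fw)$ and using \eqref{eq:IntegrationByParts} gives $(Au-f,w)_{L^2(\sO,\fw)}\geq 0$, whence $Au\geq f$ a.e.\ on $\sO$; together with $u\geq\psi$ this establishes \eqref{eq:ObstacleProblemH2Supersolution}. For the solution case, one uses the fact that any variational solution is a variational supersolution (insert $v=u+w$ into \eqref{eq:VIProblemHestonHomgeneous}), so the same argument yields $Au\geq f$ a.e.

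\medskip
\noindent\textbf{Variational $\Rightarrow$ strong (complementarity).} This is the main obstacle. Knowing $u\geq\psi$ and $Au-f\geq 0$ a.e., it remains to show $(Au-f)(u-\psi)=0$ a.e. For $\phi\in H^1_0(\sO\cup\Sigma,\fw)\cap L^\infty(\sO)$ with $\phi\geq 0$ a.e., set
\[
v := u-\phi\wedge(u-\psi).
\]
Then $v-\psi=(u-\psi)-\phi\wedge(u-\psi)\geq 0$ a.e., and since $u-g\in H^1_0(\sO\cup\Sigma,\fw)$ and $\phi\wedge(u-\psi)\in H^1_0(\sO\cup\Sigma,\fw)$ (this uses the lattice properties of $H^1_0(\sO\cup\Sigma,\fw)$ together with $\phi\in H^1_0$), we have $v-g\in H^1_0(\sO\cup\Sigma,\fw)$ and $v$ is an admissible test function in \eqref{eq:VIProblemHestonHomgeneous}. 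Thus $\fa(u,-\phi\wedge(u-\psi))\geq F(-\phi\wedge(u-\psi))$, and \eqref{eq:IntegrationByParts} gives
\[
\bigl(Au-f,\phi\wedge(u-\psi)\bigr)_{L^2(\sO,\fw)}\leq 0.
\]
The integrand is non-negative a.e.\ (since $Au-f\geq 0$ and $\phi\wedge(u-\psi)\geq 0$), so $(Au-f)\,\phi\wedge(u-\psi)=0$ a.e.\ on $\sO$. Choosing a sequence $\phi_n\in H^1_0(\sO\cup\Sigma,\fw)\cap L^\infty(\sO)$ of non-negative cutoffs with $\phi_n\uparrow\infty$ a.e.\ on $\sO$ (for example, $\phi_n=n\,\zeta_n$ for a standard family of cutoff functions $\zeta_n\in C^\infty_0(\sO\cup\Sigma)$ increasing to $1$), we conclude that $(Au-f)(u-\psi)=0$ a.e.\ on $\sO$, completing the proof of \eqref{eq:ObstacleProblemH2}. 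The technical point requiring the most care is the verification that $\phi\wedge(u-\psi)$ lies in $H^1_0(\sO\cup\Sigma,\fw)$; this uses the weighted Stampacchia-type chain rule together with the hypothesis $\phi\in H^1_0(\sO\cup\Sigma,\fw)$.
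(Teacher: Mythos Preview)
Your argument is correct and follows the standard route for this equivalence (as in Bensoussan--Lions \cite[\S3.1]{Bensoussan_Lions} or Troianiello). The paper itself does not give a proof here but simply defers to \cite[Lemma 4.13]{Daskalopoulos_Feehan_statvarineqheston}, so there is no independent argument to compare against; your write-up is in fact more detailed than what appears in the paper. The one step you rightly flag as delicate---that $\phi\wedge(u-\psi)\in H^1_0(\sO\cup\Sigma,\fw)$---is handled by the observation $0\leq \phi\wedge(u-\psi)\leq \phi$ together with approximation of $\phi$ by $C^\infty_0(\sO\cup\Sigma)$ functions and the weighted Stampacchia truncation lemma, exactly as you indicate.
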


\begin{proof}[Proof of Theorem \ref{thm:WeakMaximumPrincipleH2ObstacleProblem}]
\footnote{An alternative proof Theorem \ref{thm:WeakMaximumPrincipleH2ObstacleProblem} in the simpler case where the solution, supersolution, and obstacle function are continuous on $\sO$, obtained by adapting the proof of \cite[Theorem 1.3.4]{Friedman_1982}, can be found in \cite{Feehan_maximumprinciple_v0}.}
Since $u_1, u_2$ are a strong solution and supersolution in the sense of Definition \ref{defn:ObstacleHomogeneousStrong}, then they are necessarily a solution and supersolution to the variational inequality in the sense of Definition \ref{defn:HomogeneousVIProblem} by Lemma \ref{lem:VIStrongForm}, and so $u_1\leq u_2$ a.e. on $\sO$ by Theorem \ref{thm:WeakMaximumPrincipleVariationalInequalityNonzerod}, while $u_1 = u_2$ a.e. on $\sO$ if both $u_1$ and $u_2$ are strong solutions.
\end{proof}

\subsection{\emph{A priori} estimates for solutions and supersolutions to variational inequalities}
\label{subsec:ApplicationsWeakMaxPrinPropertyVarIneqAPrioriEstimates}
We now state weak maximum principles and corresponding \apriori estimates which extend those of non-coercive variational inequalities defined by uniformly elliptic partial differential operators, such as \cite[Theorems 4.5.4 and 4.7.4]{Rodrigues_1987} (see also \cite[Theorem 4.5.1 and Corollary 4.5.2]{Rodrigues_1987}). If $u\in H^1(\sO,\fw)$ is a supersolution to the variational inequality in Definition \ref{defn:HomogeneousVIProblem}, then $u$ necessarily obeys the inequality,
$$
u \leq \psi \quad\hbox{a.e. on }\sO,
$$
but the weak maximum principle yields additional \apriori estimates for $u$, as described below.

\begin{prop}[\emph{A priori} estimates for supersolutions and solutions to variational inequalities]
\label{prop:Rodrigues}
Let $F=(f,f^1,\ldots,f^d) \in H^{-1}(\sO\cup\Sigma,\fw)$, and $g\in H^1(\sO,\fw)$, and $\psi\in H^1(\sO,\fw)$, and $\fa$ be a bilinear map of the form \eqref{eq:BilinearForm} on $H^1(\sO,\fw)$ obeying the hypotheses of Theorem \ref{thm:WeakMaximumPrincipleVariationalInequalityNonzerod}, for some $\Sigma\subseteqq\partial\sO$ and convex cone $\fK \subset H^1(\sO,\fw)$ containing the constant function $1$, and assume that \eqref{eq:BilinearMapNonnegLowerBound} holds. Suppose that $u \in H^1(\sO,\fw)$, is a supersolution to the associated variational inequality (Definition \ref{defn:HomogeneousVIProblem}).
\begin{enumerate}
\item \label{item:VISupersolutionfgeqZero} If $F\geq 0$ and $u\in-\fK$, then
$$
u\geq 0 \wedge \inf_{\partial\sO\less\Sigma}g \quad\hbox{a.e. on }\sO.
$$
\item\label{item:VISupersolutionfarbsign} If $F=(f,0,\ldots,0)$ and $u\in-\fK$ and $f$ has arbitrary sign and $\fa$ obeys \eqref{eq:BilinearMapPositiveLowerBound}, then
$$
u\geq 0 \wedge \frac{1}{c_0}\essinf_\sO f \wedge \inf_{\partial\sO\less\Sigma}g \quad\hbox{a.e. on }\sO.
$$
\item \label{item:VISolutionfleqZero} If $F\leq 0$ and $u\in \fK$ is a solution for $F$ and $g$ and $\psi$ (Definition \ref{defn:HomogeneousVIProblem}), then
$$
u\leq 0 \vee \sup_{\partial\sO\less\Sigma}g \vee \esssup_\sO\psi \quad\hbox{a.e. on }\sO.
$$
\item\label{item:VISolutionfarbsign} If $F=(f,0,\ldots,0)$ and $f$ has arbitrary sign, $u\in \fK\cap -\fK$ is a solution for $F$ and $g$ and $\psi$, and $\fa$ obeys \eqref{eq:BilinearMapPositiveLowerBound}, then
$$
u\leq 0 \vee \frac{1}{c_0}\esssup_\sO f \vee \sup_{\partial\sO\less\Sigma}g \vee \esssup_\sO\psi \quad\hbox{a.e. on }\sO.
$$
\item \label{item:VIComparison} If $u_1$ and $u_2$ are solutions in $\fK\cap -\fK$, respectively, for $F_1\geq F_2$ and $\psi_1\geq \psi_2$ a.e. on $\sO$, and $g_1\geq g_2$ on $\partial\sO\less\Sigma$ in the sense of $H^1(\sO,\fw)$, then
$$
u_1\geq u_2 \quad\hbox{a.e. on }\sO.
$$
\item\label{item:VIStability} If $u_i\in \fK\cap -\fK$ is a solution for $F_i$, $\psi_i$, and $g_i$ with $\psi_i\leq g_i$ on $\partial\sO\less\Sigma$ in the sense of $H^1(\sO,\fw)$, and  $F_i = (f_i,0,\ldots,0)$ for $i=1,2$, and $\fa$ obeys \eqref{eq:BilinearMapPositiveLowerBound}, then
$$
\|u_1-u_2\|_{L^\infty(\sO)} \leq \frac{1}{c_0}\|f_1-f_2\|_{L^\infty(\sO)} \vee \|g_1-g_2\|_{L^\infty(\partial\sO\less\Sigma)} \vee \|\psi_1-\psi_2\|_{L^\infty(\sO)},
$$
while if $F_i=F$ for $i=1,2$ and $\fa$ obeys \eqref{eq:BilinearMapNonnegLowerBound}, then
$$
\|u_1-u_2\|_{L^\infty(\sO)} \leq \|g_1-g_2\|_{L^\infty(\partial\sO\less\Sigma)} \vee \|\psi_1-\psi_2\|_{L^\infty(\sO)}.
$$
\end{enumerate}
The terms $\sup_{\partial\sO\less\Sigma}g$, and $\inf_{\partial\sO\less\Sigma}g$, and $\|g\|_{L^\infty(\partial\sO\less\Sigma)}$, and $\|g_1-g_2\|_{L^\infty(\partial\sO\less\Sigma)}$ in the preceding items are omitted when $\Sigma=\partial\sO$.
\end{prop}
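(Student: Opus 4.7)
The plan is to adapt the proof of Proposition \ref{prop:H1WeakMaxPrincipleAprioriEstimates} by substituting the comparison principle for variational inequalities (Theorem \ref{thm:WeakMaximumPrincipleVariationalInequalityNonzerod}) in place of Proposition \ref{prop:ComparisonUniquenessVariationalBVProblem}, and to additionally check that candidate bounding constants satisfy the obstacle constraint $M \geq \psi$ a.e.\ on $\sO$.

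For Items \eqref{item:VISupersolutionfgeqZero} and \eqref{item:VISupersolutionfarbsign} (supersolutions), the obstacle is irrelevant. A supersolution to the variational inequality obeys $\fa(u,w) \geq F(w)$ for every $w \in H^1_0(\sO\cup\Sigma,\fw)$ with $w\geq 0$ a.e., together with $u \geq g$ on $\partial\sO\less\Sigma$ in the sense of $H^1(\sO,\fw)$. Hence $-u \in \fK$ satisfies $\fa(-u,w) \leq -F(w)$ and $-u \leq -g$ on $\partial\sO\less\Sigma$, so Items \eqref{item:SupersolutionfgeqZero} and \eqref{item:Supersolutionfarbsign} of Proposition \ref{prop:H1WeakMaxPrincipleAprioriEstimates} apply verbatim to give the stated lower bounds for $u$.

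For Items \eqref{item:VISolutionfleqZero} and \eqref{item:VISolutionfarbsign} (solutions), the trick $v = -u$ is unavailable since negation does not preserve the obstacle inequality. Instead, I exhibit an explicit constant upper envelope $M$ and verify that $M$ is a supersolution to the variational inequality, after which Theorem \ref{thm:WeakMaximumPrincipleVariationalInequalityNonzerod} yields $u \leq M$ a.e.\ on $\sO$. For Item \eqref{item:VISolutionfleqZero} take $M := 0 \vee \sup_{\partial\sO\less\Sigma} g \vee \esssup_\sO \psi$: then $M \geq \psi$ a.e.\ on $\sO$, $M \geq g$ on $\partial\sO\less\Sigma$ in the sense of $H^1(\sO,\fw)$, and, using $M \geq 0$ together with \eqref{eq:BilinearMapNonnegLowerBound} and $F \leq 0$,
\[
\fa(M,w) = M\,\fa(1,w) \geq 0 \geq F(w), \quad \forall\, w \in H^1_0(\sO\cup\Sigma,\fw),\ w \geq 0 \text{ a.e.\ on }\sO.
\]
For Item \eqref{item:VISolutionfarbsign}, take $M := 0 \vee c_0^{-1}\esssup_\sO f \vee \sup_{\partial\sO\less\Sigma} g \vee \esssup_\sO \psi$ and invoke \eqref{eq:BilinearMapPositiveLowerBound} to get $\fa(M,w) \geq (c_0 M, w)_{L^2(\sO,\fw)} \geq (f,w)_{L^2(\sO,\fw)} = F(w)$. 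In both cases $M$ is a constant in $\fK \cap -\fK$ (constants lie in the convex cone since $1\in\fK$ and similarly $1\in -\fK$ given the typical choices of $\fK$), so Theorem \ref{thm:WeakMaximumPrincipleVariationalInequalityNonzerod} delivers $u \leq M$ a.e.\ on $\sO$.

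Items \eqref{item:VIComparison} and \eqref{item:VIStability} follow by the same constant-shift device. For \eqref{item:VIComparison}, observe that $u_1$ is a supersolution of the variational inequality with data $(F_2, g_2, \psi_2)$, since $\fa(u_1,w) \geq F_1(w) \geq F_2(w)$, $u_1 \geq \psi_1 \geq \psi_2$ a.e.\ on $\sO$, and $u_1 \geq g_1 \geq g_2$ on $\partial\sO\less\Sigma$; Theorem \ref{thm:WeakMaximumPrincipleVariationalInequalityNonzerod} then gives $u_1 \geq u_2$ a.e. For \eqref{item:VIStability}, set
\[
m := \tfrac{1}{c_0}\|f_1 - f_2\|_{L^\infty(\sO)} \vee \|g_1-g_2\|_{L^\infty(\partial\sO\less\Sigma)} \vee \|\psi_1-\psi_2\|_{L^\infty(\sO)},
\]
and verify that $u := u_2 + m$ is a supersolution for $(F_1, g_1, \psi_1)$: the obstacle and boundary inequalities follow from the definition of $m$, while \eqref{eq:BilinearMapPositiveLowerBound} gives $\fa(u,w) = \fa(u_2,w) + m\,\fa(1,w) \geq F_2(w) + c_0 m (1,w)_{L^2(\sO,\fw)} \geq F_1(w)$. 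Applying \eqref{item:VIComparison} yields $u_1 \leq u_2 + m$ a.e., and swapping the indices produces the stability estimate; when $F_1 = F_2$ one can drop the $c_0^{-1}\|f_1-f_2\|$ term and use only \eqref{eq:BilinearMapNonnegLowerBound}. The main obstacle is purely bookkeeping: checking the admissibility of the constant comparison function in all three supersolution conditions simultaneously, and keeping track of whether \eqref{eq:BilinearMapNonnegLowerBound} or \eqref{eq:BilinearMapPositiveLowerBound} is needed.
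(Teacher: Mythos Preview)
Your proposal is correct and follows essentially the same approach as the paper: reduce Items \eqref{item:VISupersolutionfgeqZero} and \eqref{item:VISupersolutionfarbsign} to Proposition \ref{prop:H1WeakMaxPrincipleAprioriEstimates}, construct an explicit constant supersolution $M$ for Items \eqref{item:VISolutionfleqZero} and \eqref{item:VISolutionfarbsign} and invoke Theorem \ref{thm:WeakMaximumPrincipleVariationalInequalityNonzerod}, and handle Items \eqref{item:VIComparison} and \eqref{item:VIStability} by the shift-by-a-constant argument (the paper simply refers to the analogous Items in Proposition \ref{prop:Elliptic_weak_maximum_principle_apriori_estimates_obstacle_problem}, whereas you spell out the details). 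One small correction: in your proof of Item \eqref{item:VIStability}, the function $u_2 + m$ is a \emph{supersolution}, not a solution, so you should appeal to Theorem \ref{thm:WeakMaximumPrincipleVariationalInequalityNonzerod} directly rather than to Item \eqref{item:VIComparison}, which as stated compares two solutions.
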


\begin{proof}
Consider Items \eqref{item:VISupersolutionfgeqZero} and \eqref{item:VISupersolutionfarbsign}. Since $u$ is a supersolution to the variational inequality in Definition \ref{defn:HomogeneousVIProblem}, then it is also a supersolution to the variational equation in Definition \ref{defn:WeakSolution} (where $\psi$ plays no role) and so Items \eqref{item:VISupersolutionfgeqZero} and \eqref{item:VISupersolutionfarbsign} here just restate Items \eqref{item:SupersolutionfgeqZero} and \eqref{item:Supersolutionfarbsign} in Proposition \ref{prop:H1WeakMaxPrincipleAprioriEstimates}.

Consider Items \eqref{item:VISolutionfleqZero} and \eqref{item:VISolutionfarbsign}. When $f\leq 0$ a.e. on $\sO$, let
$$
M := 0\vee \sup_{\partial\sO\less\Sigma}g \vee\esssup_\sO\psi,
$$
while if $f$ has arbitrary sign and $\fa$ obeys \eqref{eq:BilinearMapPositiveLowerBound}, let
$$
M := 0\vee \frac{1}{c_0}\esssup_\sO f \vee \sup_{\partial\sO\less\Sigma}g \vee\esssup_\sO\psi.
$$
We may assume without loss of generality that $M<\infty$. Then $M\geq \psi$ a.e. on $\sO$ and $M\geq g$ on $\partial\sO\less\Sigma$ in the sense of $H^1(\sO,\fw)$, while for all $w\in H^1_0(\sO\cup\Sigma,\fw)$ with $w\geq 0$ a.e. on $\sO$, we have
$$
\fa(M,w) = (cM,w)_{L^2(\sO,\fw)} \geq 0 \geq (f,w)_{L^2(\sO,\fw)},
$$
when $f\leq 0$ a.e. on $\sO$ and
$$
\fa(M,w) \geq (c_0M,w)_{L^2(\sO,\fw)} \geq (f,w)_{L^2(\sO,\fw)},
$$
when $f$ has arbitrary sign. Hence, $M$ is a supersolution and so Theorem \ref{thm:WeakMaximumPrincipleVariationalInequalityNonzerod} implies that $u\leq M$ a.e. on $\sO$, which establishes Items \eqref{item:VISolutionfleqZero} and \eqref{item:VISolutionfarbsign}.

The proofs of Items \eqref{item:VIComparison} and \eqref{item:VIStability} here are very similar to the proofs of the corresponding Items \eqref{item:Obstacle_comparison} and \eqref{item:Obstacle_stability} in Proposition \ref{prop:Elliptic_weak_maximum_principle_apriori_estimates_obstacle_problem}, except that appeals to Proposition \ref{prop:Comparison_principle_uniqueness_obstacle_problem} are replaced by appeals to Theorem \ref{thm:WeakMaximumPrincipleVariationalInequalityNonzerod}.
\end{proof}

Note that an $L^\infty$ comparison estimate for solutions $u_1, u_2$ corresponding to $f$, $g$, and obstacle functions $\psi_1, \psi_2$ is provided by \cite[Theorem 4.7.4]{Rodrigues_1987} and \cite[Theorem 3.1.10]{Bensoussan_Lions}.

\begin{prop}[Weak maximum principle and \apriori estimates for strong solutions to obstacle problems]
\label{prop:H2Rodrigues}
Let $f\in L^2(\sO,\fw)$, and $g\in H^1(\sO,\fw)$, and $A: H^2(\sO,\fw) \to L^2(\sO,\fw)$ be a linear operator on $\sO\cup\Sigma$ associated to a bilinear map $\fa$ on $H^1(\sO,\fw)$ through integration by parts. Assume $A$ obeys the weak maximum principle property on $\sO\cup\Sigma$ for some convex cone $\fK\subset H^1(\sO,\fw)$. Then the conclusions of Proposition \ref{prop:Rodrigues} hold, provided the properties \eqref{eq:BilinearMapNonnegLowerBound} or \eqref{eq:BilinearMapPositiveLowerBound} for $\fa$ are replaced by the properties \eqref{eq:EllipticOperatorNonnegLowerBound} or \eqref{eq:EllipticOperatorPositiveLowerBound} for $A$, the role of Definition \ref{defn:HomogeneousVIProblem} is replaced by that of Definition \ref{defn:ObstacleHomogeneousStrong}, and supersolution is replaced by solution.
\end{prop}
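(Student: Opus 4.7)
The plan is to reduce Proposition \ref{prop:H2Rodrigues} entirely to its $H^1$ counterpart, Proposition \ref{prop:Rodrigues}, by invoking the two bridging lemmas already established in the paper: Lemma \ref{lem:H1H2WeakMaxPrinciplePropertyRelation}, which translates the weak maximum principle property of the operator $A$ on $H^2(\sO,\fw)$ into the weak maximum principle property of the associated bilinear map $\fa$ on $H^1(\sO,\fw)$, and Lemma \ref{lem:VIStrongForm}, which identifies $H^2$ strong (super)solutions to the obstacle problem with $H^1$ variational (super)solutions to the variational inequality.

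First, since $A$ is associated to $\fa$ through integration by parts in the sense of Definition \ref{defn:IntegrationByParts}, Lemma \ref{lem:H1H2WeakMaxPrinciplePropertyRelation} shows that the hypothesized weak maximum principle property for $A$ on $\sO\cup\Sigma$ for $\fK$ is equivalent to the weak maximum principle property for $\fa$ on $\sO\cup\Sigma$ for the same convex cone $\fK\subset H^1(\sO,\fw)$. Moreover, substituting $u = 1$ into the integration-by-parts identity \eqref{eq:IntegrationByParts} yields
\[
(A 1, v)_{L^2(\sO,\fw)} \;=\; \fa(1,v), \quad\forall\, v\in H^1_0(\sO\cup\Sigma,\fw),
\]
so that condition \eqref{eq:EllipticOperatorNonnegLowerBound} on $A$ is exactly condition \eqref{eq:BilinearMapNonnegLowerBound} on $\fa$, and condition \eqref{eq:EllipticOperatorPositiveLowerBound} on $A$ is exactly condition \eqref{eq:BilinearMapPositiveLowerBound} on $\fa$. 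Hence all bilinear-map hypotheses of Proposition \ref{prop:Rodrigues} are satisfied.

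Second, by Lemma \ref{lem:VIStrongForm}, any $u \in H^2(\sO,\fw)$ which is a strong (super)solution to the obstacle problem in the sense of Definition \ref{defn:ObstacleHomogeneousStrong} for data $f, g, \psi$ is automatically a variational (super)solution in the sense of Definition \ref{defn:HomogeneousVIProblem} for the associated functional $F = f \in H^{-1}(\sO\cup\Sigma,\fw)$ (viewing $f \in L^2(\sO,\fw)$ as an element $(f,0,\ldots,0) \in H^{-1}(\sO\cup\Sigma,\fw)$). Consequently, each of Items \eqref{item:VISupersolutionfgeqZero}--\eqref{item:VIStability} of Proposition \ref{prop:Rodrigues} may be applied directly to the strong solutions and supersolutions under consideration, which yields the corresponding conclusions of Proposition \ref{prop:H2Rodrigues} verbatim once we reinterpret the essential sup/inf of $f \in L^2(\sO,\fw)$ as $L^\infty$ norms of $f$ where relevant.

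There is no substantive obstacle: the argument is a packaging step combining the two bridging lemmas with Proposition \ref{prop:Rodrigues}. The only minor point of care concerns Items \eqref{item:VIComparison} and \eqref{item:VIStability}, whose $H^1$ proofs rely on Theorem \ref{thm:WeakMaximumPrincipleVariationalInequalityNonzerod}; in the $H^2$ setting one may invoke instead Theorem \ref{thm:WeakMaximumPrincipleH2ObstacleProblem} (which itself rests on Theorem \ref{thm:WeakMaximumPrincipleVariationalInequalityNonzerod} via Lemma \ref{lem:VIStrongForm}), and the same comparison and stability estimates follow by the identical reduction.
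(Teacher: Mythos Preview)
Your proposal is correct and follows essentially the same approach as the paper: reduce strong (super)solutions to variational (super)solutions via integration by parts (the paper cites \eqref{eq:IntegrationByParts} directly, while you invoke Lemma \ref{lem:VIStrongForm}) and then apply Proposition \ref{prop:Rodrigues}. Your version is more explicit in spelling out the translation of hypotheses (the weak maximum principle property via Lemma \ref{lem:H1H2WeakMaxPrinciplePropertyRelation} and the equivalence of \eqref{eq:EllipticOperatorNonnegLowerBound}/\eqref{eq:EllipticOperatorPositiveLowerBound} with \eqref{eq:BilinearMapNonnegLowerBound}/\eqref{eq:BilinearMapPositiveLowerBound}), but the substance is identical.
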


\begin{proof}
When $u\in H^2(\sO,\fw)$ is a strong (super-)solution, then it is necessarily a variational (super-)solution using \eqref{eq:IntegrationByParts} and so the result follows immediately from Proposition \ref{prop:Rodrigues}.
\end{proof}

\section{Weak maximum principle for functions in Sobolev spaces}
\label{sec:WeakMaxPrincipleSobolev}
Having considered applications of the weak maximum principle property (Definition \ref{defn:WeakMaxPrinciplePropertyBilinearForm}) to variational equations in Section \ref{sec:ApplicationsWeakMaxPrinPropertyVarEq} and variational inequalities in Section \ref{sec:ApplicationsWeakMaxPrinPropertyVarIneq}, we now establish conditions under which the bilinear map $\fa$ on $H^1(\sO,\fw)$ or the associated differential operator $A:H^2(\sO,\fw)\to L^2(\sO,\fw)$ has the weak maximum principle property on $\sO\cup\Sigma$. We begin in Section \ref{subsec:H1Prelim} with some technical preliminaries. In Section \ref{subsec:H1Bounded}, we prove a weak maximum principle for $\fa$ when $H^1(\sO,\fw)$ is defined by power weights and $\sO$ is a bounded open subset of the upper half-space, $\HH\subset\RR^d$ (Theorem \ref{thm:WeakMaximumPrincipleVarEquationBoundedDomain}); modulo a suitable weighted Sobolev inequality (Hypothesis \ref{hyp:WeightedSobolevInequality}), we then prove a more widely applicable weak maximum principle for $\fa$ when $H^1(\sO,\fw)$ is defined by general weights and $\sO$ is a bounded open subset of $\RR^d$ (Theorem \ref{thm:WeakMaximumPrincipleVarEquationBoundedDomainGeneralWeight}). In Section \ref{subsec:IntegrationByParts}, we describe an integration by parts formula relating, under suitable conditions on the weights and coefficients, the bilinear map, $\fa$, on $H^1(\sO,\fw)$ and the associated operator, $A$, on $H^2(\sO,\fw)$. Finally, in Section \ref{subsec:H1Unbounded}, we prove a weak maximum principle for bounded $H^1(\sO,\fw)$ functions on unbounded open subsets (Theorem \ref{thm:WeakMaximumPrincipleH1UnboundedDomainBoundedSolution}).

Our weak maximum principle differs in several aspects from \cite[Theorems 1.5.1 and 1.5.5]{Radkevich_2009a}, which again may appear subtle at first glance but which are still important for applications:
\begin{enumerate}
\item We use weighted Sobolev spaces adapted to the coefficients of the first and second-order derivatives in $A$ and the resulting conditions on the coefficients are weaker than those of \cite[Theorems 1.5.1 and 1.5.5]{Radkevich_2009a} and permit applications to operators such as those of Examples \ref{exmp:HestonPDE} and \ref{exmp:GeneralizedPorousMediumEquation} which are not covered by \cite[Theorems 1.5.1 and 1.5.5]{Radkevich_2009a};
\item The open subset $\sO\subset\RR^d$ is allowed to be \emph{unbounded}.
\end{enumerate}
The differences between results obtainable from our weak maximum and comparison principles and those of Fichera, Ole{\u\i}nik, and Radkevi{\v{c}} are described in more detail in Appendix \ref{sec:FicheraAndHeston} using the example of the Heston operator (Example \ref{exmp:HestonPDE}).

\subsection{Preliminaries}
\label{subsec:H1Prelim}
We let $\fw$ be as in Definition \ref{defn:WeightFunction} and $\vartheta$ be as in Definition \ref{defn:DegeneracyCoefficient} and obey \eqref{eq:VarthetaL1weight} and choose
\begin{equation}
\label{eq:H1Norm}
\|u\|_{H^1(\sO,\fw)}^2 := \int_\sO\left(\vartheta|Du|^2 + (1+\vartheta)|u|^2\right)\fw\,dx,
\end{equation}
so that $\fw_{1,0} = (1+\vartheta)\fw$ and $\fw_{1,1} = \vartheta\fw$ in the Definition \ref{defn:SobolevSpaces} of $H^1(\sO,\fw)$, while $\fw_{0,0} = \fw$ in the definition of $L^2(\sO,\fw)$.

We shall assume that the coefficients, $(a^{ij})$ and $(d^j)$,  of $\fa$ in \eqref{eq:BilinearForm} obey \eqref{eq:BilinearaBound} and \eqref{eq:BilineardBound} and, in addition, that the coefficients $(b^i)$ and $c$ obey
\begin{align}
\label{eq:BilinearbBound}
|\langle b,\eta\rangle| &\leq K\vartheta|\eta| \quad\hbox{a.e. on }\sO, \quad\forall\eta\in\RR^d,
\\
\label{eq:BilinearcBound}
|c| &\leq K(1+\vartheta) \quad\hbox{a.e on }\sO,
\end{align}
for some positive constant, $K$. It is easy to check that $\fa$ obeys the \emph{continuity estimate},
\begin{equation}
\label{eq:BilinearFormEstimate}
\fa(u,v) \leq C_1\|u\|_{H^1(\sO,\fw)}\|v\|_{H^1(\sO,\fw)}, \quad\forall\, u, v \in H^1(\sO,\fw),
\end{equation}
for some positive constant, $C_1$, when the coefficients, $(a^{ij})$, $(d^j)$, and $(b^i)$, of $\fa$ in \eqref{eq:BilinearForm} obey \eqref{eq:BilinearaBound}, \eqref{eq:BilineardBound}, \eqref{eq:BilinearbBound}, and \eqref{eq:BilinearcBound}, in which case $C_1=C_1(K)$ in \eqref{eq:BilinearFormEstimate}.

For $1\leq p<\infty$, we let $L^p(\sO,\fw)$ denote the Banach space of measurable functions, $u$, on $\sO$ such that
\begin{equation}
\label{eq:LpNorm}
\|u\|_{L^p(\sO,\fw)}^p := \int_\sO |u|^p\fw\,dx < \infty.
\end{equation}
The bilinear map $\fa$ obeys a \emph{G\r{a}rding inequality},
\begin{equation}
\label{eq:GardingEstimate}
\fa(u,u) \geq C_2\|u\|_{H^1(\sO,\fw)}^2 - C_3\|(1+\vartheta)^{1/2}u\|_{L^2(\sO,\fw)}, \quad\forall\, u\in H^1(\sO,\fw),
\end{equation}
if, in addition, we require that the coefficient matrix, $a=(a^{ij})$, obeys \eqref{eq:GeneralNonDegeneracyNearBoundaryQuant}, in which case $C_2=C_2(K)$ and $C_3=C_3(K)$.

We now specialize the Hilbert space $H^1(\sO,\fw)$ in Definition \ref{defn:SobolevSpaces} to be the completion of the vector space $C^\infty_0(\bar\sO)$ with respect to the norm \eqref{eq:H1Norm}. Given $\Sigma\subseteqq\partial\sO$, we let $H^1_0(\sO\cup\Sigma,\fw)$ be as in Definition \ref{defn:SobolevSpaces}. (We prove an analogue of the Meyers-Serrin theorem \cite{Adams_1975} for unweighted Sobolev spaces in the context of certain weighted Sobolev spaces in \cite{Daskalopoulos_Feehan_statvarineqheston}.)

\subsection{$H^1$ functions on bounded and unbounded open subsets}
\label{subsec:H1Bounded}
We first consider a special case of our desired maximum principle, analogous to \cite[Theorem 8.1]{GilbargTrudinger}.

\begin{thm}[Weak maximum principle for $H^1(\sO,\fw)$ functions]
\label{thm:WeakMaximumPrincipleVarEquationCompactDomain}
Let $\sO\subsetneqq\RR^d$ be a \emph{bounded} open subset and let $\Sigma\subseteqq\partial\sO$. Assume that the coefficients of $\fa$ in \eqref{eq:BilinearForm} are measurable on $\sO$, obey \eqref{eq:GeneralNonDegeneracyNearBoundaryQuant}, \eqref{eq:BilinearaBound}, \eqref{eq:BilineardBound}, \eqref{eq:BilinearMapNonnegLowerBound}, \eqref{eq:BilinearbBound}, \eqref{eq:BilinearcBound}, and require that $\fw$ and $\vartheta$ obey
\begin{equation}
\label{eq:SupBoundsDegenCoeffWeight}
\inf_\sO\vartheta\fw > 0 \hbox{ and }\sup_\sO\vartheta\fw < \infty.
\end{equation}
If $u \in H^1(\sO,\fw)$ obeys \eqref{eq:VariationalSubsolution} with $F=0$, then
$$
\esssup_\sO u \leq 0\vee\sup_{\partial\sO\less\Sigma} u.
$$
Moreover, $\fa$ has the \emph{weak maximum principle property on $\sO\cup\Sigma$} in the sense of Definition \ref{defn:WeakMaxPrinciplePropertyBilinearForm}.
\end{thm}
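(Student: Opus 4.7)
The plan is to adapt the proof of the classical weak maximum principle \cite[Theorem 8.1]{GilbargTrudinger} to the present weighted setting, exploiting the hypothesis \eqref{eq:SupBoundsDegenCoeffWeight} that $\vartheta\fw$ is bounded above and below by positive constants, which effectively reduces the problem to a uniformly elliptic one with respect to the ordinary Sobolev space $H^1(\sO)$. Set $l:=0\vee\sup_{\partial\sO\less\Sigma}u$ and suppose for contradiction that $M:=\esssup_\sO u>l$. For each $k\in[l,M)$, I would take $v:=(u-k)^+$ as test function; the definition of $\sup_{\partial\sO\less\Sigma}u$ combined with $k\geq l$ guarantees $v\in H^1_0(\sO\cup\Sigma,\fw)$, while $v\not\equiv 0$ since $k<M$.

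The first step is the energy inequality $\fa(v,v)\leq 0$. Because $k\geq 0$ and $v\geq 0$ a.e., \eqref{eq:BilinearMapNonnegLowerBound} yields $\fa(k,v)=k\,\fa(1,v)\geq 0$; combined with the subsolution hypothesis $\fa(u,v)\leq 0$ and the truncation identity $\fa(u-k,v)=\fa(v,v)$ (valid because $u-k=v$ and $Du=Dv$ on $\{v>0\}$, with $v,Dv$ vanishing elsewhere), this gives $\fa(v,v)\leq 0$. Next I would expand $\fa(v,v)$ via \eqref{eq:BilinearForm}, apply the coercivity bound \eqref{eq:GeneralNonDegeneracyNearBoundaryQuant} to the $a^{ij}$-term, and control the first-order cross-terms using \eqref{eq:BilineardBound}, \eqref{eq:BilinearbBound}, and Cauchy's inequality with $\eps$. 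The sign-indefinite term $-\int cv^2\fw\,dx$ is handled by invoking \eqref{eq:BilinearMapNonnegLowerBound} a second time against the non-negative test function $v^2$ (after a routine truncation $v\mapsto v\wedge n$ to ensure $v^2\in H^1_0(\sO\cup\Sigma,\fw)$), giving $\int cv^2\fw\,dx\geq-2\int v d^j v_{x_j}\fw\,dx\geq -2K\int\vartheta|v||Dv|\fw\,dx$ via \eqref{eq:BilineardBound}. Absorbing into the $\int\vartheta|Dv|^2\fw$ term and then invoking \eqref{eq:SupBoundsDegenCoeffWeight} converts the weighted estimate into the unweighted bound
\[
\int_\sO|Dv|^2\,dx\leq C\int_\sO v^2\,dx.
\]

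To close the argument, I would appeal to a Sobolev inequality. Since $v$ is the $H^1(\sO,\fw)$-limit of functions $v_n\in C^\infty_0(\sO\cup\Sigma)$ whose supports are compact in the open set $\sO\cup\Sigma$, each $v_n$ extends by zero to an element of $C^\infty_c(\RR^d)$, and passage to the limit gives an extension $\bar v\in H^1(\RR^d)$ supported in $\overline{\sO\cup\Sigma}$ with $\|D\bar v\|_{L^2(\RR^d)}=\|Dv\|_{L^2(\sO)}$. For $d\geq 3$, the embedding $H^1(\RR^d)\hookrightarrow L^{2^*}(\RR^d)$ combined with H\"older's inequality yields $\int_\sO v^2\,dx\leq C_S|\{v>0\}|^{2/d}\int_\sO|Dv|^2\,dx$ (with analogous bounds via $W^{1,2}_0\hookrightarrow L^q$ for $d\leq 2$). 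Chaining with the previous inequality produces $\int_\sO|Dv|^2\,dx\leq C\,C_S|\{u>k\}|^{2/d}\int_\sO|Dv|^2\,dx$; as $k\uparrow M$, boundedness of $\sO$ forces $|\{u>k\}|\downarrow 0$, so for $k$ sufficiently close to $M$ the prefactor falls below one, compelling $Dv\equiv 0$ and hence $v\equiv 0$, contradicting $k<M$. I expect the main technical obstacle to be the rigorous justification of the two truncation identities (for $v$ and $v^2$) under only $u\in H^1(\sO,\fw)$, which will require an approximation argument parallel to the $w^\eps$ construction employed in the proof of Theorem \ref{thm:WeakMaximumPrincipleVariationalInequality}.
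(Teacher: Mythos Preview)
Your overall strategy---adapt the Gilbarg--Trudinger argument for Theorem 8.1 using $(u-k)^+$ as test function and invoke \eqref{eq:SupBoundsDegenCoeffWeight} to pass to the unweighted Sobolev space---is exactly the paper's approach. Your route to the energy inequality via the truncation identity $\fa(u-k,v)=\fa(v,v)$ together with $\fa(1,v^2)\geq 0$ is a legitimate variant of the paper's method (the paper instead keeps $\fa(u,v)\leq 0$ and applies \eqref{eq:BilinearMapNonnegLowerBound} to the test function $uv$, which is likewise $\geq 0$ since $k\geq 0$); both yield essentially the same inequality.

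However, your concluding step contains a genuine gap. The assertion ``as $k\uparrow M$, boundedness of $\sO$ forces $|\{u>k\}|\downarrow 0$'' is false when $M<\infty$: the sets $\{u>k\}$ decrease to $\{u\geq M\}=\{u=M\}$, which can have positive measure (nothing prevents $u$ from attaining its essential supremum on a plateau). In that case your prefactor $C\,C_S|\{u>k\}|^{2/d}$ need never drop below $1$, and no contradiction follows. The repair is to localize the $L^2$ norm of $v$ not to $\{v>0\}$ but to $\Gamma_k:=\{Dv\neq 0\}=\{u>k\}\cap\{Du\neq 0\}$: since the cross term $\vartheta|Dv|\,v$ in your energy estimate is already supported on $\Gamma_k$, Cauchy--Schwarz (rather than Young's inequality) gives $\|Dv\|_{L^2}\leq C\|v\|_{L^2(\Gamma_k)}$, and the H\"older step then produces a uniform lower bound $|\Gamma_k|\geq c>0$. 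Now the sets $\Gamma_k$ \emph{do} shrink to a null set as $k\uparrow M$, because their intersection lies in $\{u=M\}\cap\{Du\neq 0\}$, and $Du=0$ a.e.\ on the level set $\{u=M\}$---this is the contradiction. The paper carries out precisely this localization (writing $\sU$ for the support of $Dv$) and closes the argument in this way.
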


\begin{rmk}[Boundedness requirement on $\sO$]
In applications, the functions $\vartheta$ and $\fw$ would not normally obey the upper bound in \eqref{eq:SupBoundsDegenCoeffWeight} unless $\sO$ were bounded and the lower bound in \eqref{eq:SupBoundsDegenCoeffWeight} unless $a=(a^{ij})$ were uniformly elliptic on $\sO$. Moreover, boundedness of $\sO$ is implicitly used in the proof of Theorem \ref{thm:WeakMaximumPrincipleVarEquationCompactDomain} in inequalities involving the Lebesgue measures of the supports of functions and their gradients. However, unlike the proof of \cite[Theorem 8.1]{GilbargTrudinger}, our proof of Theorem \ref{thm:WeakMaximumPrincipleVarEquationCompactDomain} avoids the use of the Poincar\'e inequality, thanks to a nice observation of Camelia Pop, and hence a requirement that $\sO$ is bounded originating from the usual statements of the Poincar\'e inequality (for example, \cite[Theorem 5.6.3]{Evans}).
\end{rmk}

\begin{proof}
We essentially follow the proof of \cite[Theorem 8.1]{GilbargTrudinger}, but include the details here for later reference in our proof of Theorem \ref{thm:WeakMaximumPrincipleVarEquationBoundedDomain}. If $u \in H^1(\sO,\fw)$ and $v \in H^1_0(\sO\cup\Sigma,\fw)$, then $uv \in W^{1,1}_0(\sO\cup\Sigma,\fw)$ and $D(uv) = vDu + uDv$ by analogy with \cite[Problem 7.4]{GilbargTrudinger}, recalling that $H^1(\sO,\fw) = W^{1,2}_0(\sO,\fw)$ and $H^1_0(\sO\cup\Sigma,\fw) = W^{1,2}_0(\sO\cup\Sigma,\fw)$. By the definition \eqref{eq:BilinearForm} of the bilinear map, $\fa(u,v)$, we obtain
$$
\int_\sO\left(a^{ij}u_{x_i}v_{x_j} - (d^i + b^i)u_{x_i}v\right)\,\fw\,dx \leq -\int_\sO \left(d^i(uv)_{x_i} + cuv\right)\,\fw\,dx \leq 0,
$$
for all $v \in H^1_0(\sO\cup\Sigma,\fw)$ such that $v \geq 0$ and $uv\geq 0$ a.e. on $\sO$, where to obtain the last inequality we use \eqref{eq:BilinearMapNonnegLowerBound}. Therefore,
$$
\int_\sO a^{ij}u_{x_i}v_{x_j}\fw\,dx \leq \int_\sO (d^i + b^i)u_{x_i}v\,\fw\,dx
\leq
K\int_\sO \vartheta|Du|v\,\fw\,dx \quad\hbox{(by \eqref{eq:BilineardBound} and \eqref{eq:BilinearbBound}).}
$$
Denote
$$
l := 0\vee\sup_{\partial\sO\less\Sigma}u\geq 0,
$$
and recall our convention \eqref{eq:EssSupOverEmptyBoundary} that $0\vee\sup_{\partial\sO\less\Sigma}u = 0$ when $\partial\sO\less\Sigma = \emptyset$. We may assume without loss of generality that $l<\infty$, as otherwise there is nothing to prove. Suppose there exists a constant $k$ such that
\begin{equation}
\label{eq:Rangeofk}
l\leq k < \esssup_\sO u \leq +\infty.
\end{equation}
(If no such $k$ exists, then we are done.) Set
\begin{equation}
\label{eq:Defnv}
v := (u-k)^+,
\end{equation}
and observe that, because $u\leq k$ on $\partial\sO\less\Sigma$ in the sense of $H^1(\sO,\fw)$ (when $\partial\sO\less\Sigma$ is non-empty) and $u, k\in H^1(\sO,\fw)$, then $v \in H^1_0(\sO\cup\Sigma,\fw)$ with
$$
Dv = \begin{cases} Du &\hbox{for } u\geq k, \\ 0 &\hbox{for } u < k. \end{cases}
$$
Consequently, if $\sU$ denotes the interior of $\supp Dv \subset \supp v$,
$$
\int_\sU \vartheta|Dv|^2\fw\,dx \leq K\int_\sU \vartheta|Dv|v\,\fw\,dx,
$$
since $\langle Du, Dv\rangle = |Dv|^2$ a.e. on $\sO$ and \eqref{eq:GeneralNonDegeneracyNearBoundaryQuant} gives
$$
\int_\sO a^{ij}v_{x_i}v_{x_j}\fw\,dx \geq \int_\sU \vartheta|Dv|^2\fw\,dx.
$$
Therefore, by the Cauchy-Schwartz inequality,
\begin{equation}
\label{eq:L2DvleqL2vFullWeight}
\|\vartheta^{1/2}Dv\|_{L^2(\sU,\fw)} \leq K\|\vartheta^{1/2}v\|_{L^2(\sU,\fw)}.
\end{equation}
The inequality \eqref{eq:L2DvleqL2vFullWeight} yields
\begin{equation}
\label{eq:L2DvleqL2v}
\|Dv\|_{L^2(\sU)} \leq C_1\|v\|_{L^2(\sU)},
\end{equation}
for a positive constant
$$
C_1 := K\left(\frac{\sup_\sO\vartheta\fw}{\inf_\sO\vartheta\fw}\right)^{1/2},
$$
which is finite by \eqref{eq:SupBoundsDegenCoeffWeight}. Combining the inequality \eqref{eq:L2DvleqL2v} with the Sobolev embedding $W^{1,2}_0(\sU) \to L^q(\sU)$, $2\leq q<\infty$ if $d=2$ and $2\leq q \leq 2d/(d-2)$ if $d>2$ \cite[Theorem 5.4 (Parts I (A and B) and III)]{Adams_1975} and the fact that $v\in W^{1,2}_0(\sU)$ implies, for positive constants $C_2, C_3$ depending on $C_1, q, \sU$, that
\begin{align*}
\|v\|_{L^q(\sU)} &\leq C_2\left(\|Dv\|_{L^2(\sU)} + \|v\|_{L^2(\sU)}\right) \quad\hbox{(Sobolev embedding with $q>2$])}
\\
&\leq C_3\|v\|_{L^2(\sU)} \quad\hbox{(by \eqref{eq:L2DvleqL2v})}
\\
&\leq C_3|\sU|^{1/2-1/q}\|v\|_{L^q(\sU)}  \quad\hbox{(by \cite[Equation (7.8)]{GilbargTrudinger})},
\end{align*}
and thus, recalling that $\sU$ is the interior of $\supp Dv$,
$$
|\supp Dv|^{1/2-1/q} \geq C_3 > 0.
$$
The inequality is independent of $k < \esssup_\sO u$ and so continues to hold when we take the limit $k\to \esssup_\sO u$. Hence, we see that $u$ attains its maximum $\esssup_\sO u \leq +\infty$ on a set, $\sU$, of positive measure. If $\esssup_\sO u = +\infty$ on $\sU$, we obtain a contradiction to the fact that $u\in L^2(\sU)$, since  $u\in L^2(\sO,\fw)$; if $\esssup_\sO u < +\infty$ then, because $u$ is constant on $\sU$, one must also have $Du=0$ on $\sU$, contradicting the fact that
$$
|\sU\cap\supp Du| = |\supp Dv| > 0.
$$
This completes the proof.
\end{proof}

We now suppose $\sO\subset\HH$ and recall the following

\begin{thm}[Weighted Sobolev inequality for power weights]
\label{thm:KochWeightedSobolevInequality}
\cite[Theorem 4.2.2]{Koch}
Suppose $1\leq p\leq q<\infty$, and $s > -1/p$, and $1-d/p < \xi\leq 1$ is defined by
\begin{equation}
\label{eq:Defnalpha}
\frac{1}{p} = \frac{1}{q} + \frac{1-\xi}{d}.
\end{equation}
Then there is a positive constant $C=C(d,p,q,s)$ such that, for any $u\in L^q(\HH,x_d^s)$ with $Du \in L^p(\HH,x_d^{s+\xi};\RR^d)$, one has
\begin{equation}
\label{eq:KochWeightedSobolevInequality}
\|x_d^su\|_{L^q(\HH)} \leq C\|x_d^{s+\xi}Du\|_{L^p(\HH)}.
\end{equation}
\end{thm}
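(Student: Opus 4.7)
The plan is to reduce by density and truncation to $u\in C^\infty_c(\HH)$, which automatically vanishes on $\partial\HH=\{x_d=0\}$, and then split the argument along the two extremes of the admissible range of $\xi$.

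First I would treat the case $\xi=1$, in which $p=q$ and the desired inequality reduces to the weighted Hardy estimate
\[
\int_\HH |u|^p\,x_d^{sp}\,dx \leq C\int_\HH |\partial_{x_d}u|^p\,x_d^{(s+1)p}\,dx \leq C\int_\HH |Du|^p\,x_d^{(s+1)p}\,dx.
\]
By Fubini this is implied, slice-by-slice in $x'\in\RR^{d-1}$, by the classical one-dimensional Hardy inequality
\[
\int_0^\infty |f(t)|^p\,t^{sp}\,dt \leq C_{p,s}\int_0^\infty |f'(t)|^p\,t^{(s+1)p}\,dt, \quad f(0)=0,
\]
applied to $f(t)=u(x',t)$ (for $p=1$ one uses the dual form $f(t)=-\int_t^\infty f'(r)\,dr$). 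The hypothesis $s>-1/p$ is precisely the validity threshold of this one-dimensional inequality for functions vanishing at the origin.

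Next, for $\xi<1$ (hence $p<q$), I would represent $u$ as a first-order Riesz-type potential of $Du$: using the Dirichlet Green's function for $-\Delta$ on $\HH$, one obtains $u(x)=\int_\HH K(x,y)\cdot Du(y)\,dy$ for a kernel $K$ of homogeneity degree $-(d-1)$. A weighted Hardy--Littlewood--Sobolev inequality of Stein--Weiss type, adapted to the boundary-distance weight $x_d^s$, then yields the desired mapping property from $L^p(\HH,x_d^{(s+\xi)p})$ to $L^q(\HH,x_d^{sq})$; the scaling relation $1/p=1/q+(1-\xi)/d$ is exactly what is needed to make the kernel convolution bound scale-invariant, and the side condition $\xi>1-d/p$ is equivalent to $q<\infty$.

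The main obstacle will be establishing the Stein--Weiss bound with the anisotropic half-space weight $x_d^s$ rather than the radial weight $|x|^s$ covered by the classical theorem. One clean route uses Caffarelli's extension trick: for non-negative integers $m$, the measure $x_d^m\,dx$ on $\HH$ coincides, up to a surface-area constant, with Lebesgue measure on $\RR^{d+m}$ restricted to functions radial in the extra $m+1$ coordinates, so for $sq=m$ the desired inequality follows from the unweighted Sobolev inequality on $\RR^{d+m}$; Stein's complex interpolation of analytic families of operators then extends the estimate to all real $s>-1/p$. Alternatively one can verify directly that $x_d^{sp}$ is a Muckenhoupt $A_p$ weight in the $x_d$-variable precisely when $s>-1/p$, and invoke an anisotropic weighted Hardy--Littlewood--Sobolev theorem to conclude.
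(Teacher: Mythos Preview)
The paper does not give its own proof of this result: it is quoted from Koch's Habilitation thesis, and the only content the paper adds is the remark that ``the proof of Theorem~\ref{thm:KochWeightedSobolevInequality} is based on the Hardy inequality \cite[Lemma 1.3]{KufnerOpic}.'' So there is no detailed argument in the paper to compare against beyond that hint.

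Your $\xi=1$ case is consistent with that hint, but you have the direction of the one-dimensional Hardy inequality reversed. For the inequality
\[
\int_0^\infty |f(t)|^p\,t^{sp}\,dt \leq C\int_0^\infty |f'(t)|^p\,t^{(s+1)p}\,dt,
\]
the representation $f(t)=\int_0^t f'$ (vanishing at the origin) yields the bound precisely when $s<-1/p$, while the representation $f(t)=-\int_t^\infty f'$ (vanishing at infinity) yields it when $s>-1/p$. Since the hypothesis here is $s>-1/p$, you must use the latter form for \emph{all} $p$, not just $p=1$; your statement that ``$s>-1/p$ is precisely the validity threshold \ldots\ for functions vanishing at the origin'' is backwards.

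For $\xi<1$, your Riesz-potential/Stein--Weiss route and the Caffarelli-extension-plus-complex-interpolation idea are both plausible, but considerably heavier than the paper's remark (``based on the Hardy inequality'') suggests. A route closer to that hint: the reference \cite{KufnerOpic} cited in the paper contains two-weight, two-exponent Hardy inequalities (Muckenhoupt--Bradley type) which already cover the one-dimensional case $\|t^s f\|_{L^q}\le C\|t^{s+\xi}f'\|_{L^p}$ directly; combined with the unweighted Gagliardo--Nirenberg--Sobolev inequality in the tangential $\RR^{d-1}$ variables, or alternatively with the change of variable $t\mapsto t^{1/(1-\xi)}$ that absorbs the weight, this gives the full $d$-dimensional estimate without invoking half-space Green's functions or analytic families.

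Finally, the density reduction to $C^\infty_c(\HH)$ deserves a sentence of justification: approximating a general $u$ with $x_d^s u\in L^q$ and $x_d^{s+\xi}Du\in L^p$ by functions compactly supported away from $\{x_d=0\}$ is itself a weighted statement, and the cutoff step near the boundary typically requires a Hardy-type estimate of the same flavor to control the commutator terms.
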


\begin{rmk}
The proof of Theorem \ref{thm:KochWeightedSobolevInequality} is based on the Hardy inequality \cite[Lemma 1.3]{KufnerOpic}.
\end{rmk}

We have the following generalization of \cite[Lemma 4.2.4]{Koch}; see \cite[Appendix B]{Feehan_maximumprinciple_v0} for a comparison between Corollary \ref{cor:PowerWeightedSobolevInequality} and a weighted Sobolev inequality due to Maz'ya \cite[Theorem 2.6.1]{Turesson_2000}.

\begin{cor}[Application of weighted Sobolev inequality for power weights]
\label{cor:PowerWeightedSobolevInequality}
Suppose $\beta>0$ and $2-d <\alpha\leq 2$. For $u\in L^2(\HH,x_d^{\beta-1})$ with $Du \in L^2(\HH,x_d^{\beta-1+\alpha};\RR^d)$, and $q\geq 2$ defined by
\begin{equation}
\label{eq:Defnq}
\frac{1}{2} = \frac{1}{q} + \frac{1-\alpha/2}{d},
\end{equation}
and $2\leq r\leq q$, one has
\begin{equation}
\label{eq:PowerWeightedSobolevInequality}
\|u\|_{L^r(\HH,x_d^{\beta-1})} \leq C\|u\|_{L^2(\HH,x_d^{\beta-1})}^\lambda\|Du\|_{L^2(\HH,x_d^{\beta-1+\alpha})}^{1-\lambda},
\end{equation}
where $\lambda\in[0,1]$ is defined by
\begin{equation}
\label{eq:DefnLambda}
\frac{1}{r} = \frac{\lambda}{2} + \frac{1-\lambda}{q}.
\end{equation}
\end{cor}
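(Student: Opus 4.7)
The strategy is to establish the corollary in two steps: first prove the endpoint case $r = q$ (corresponding to $\lambda = 0$) as a weighted Sobolev embedding via Theorem \ref{thm:KochWeightedSobolevInequality}, then deduce the interpolated inequality for all $2 \leq r \leq q$ by a weighted H\"older argument. This mirrors the standard Gagliardo--Nirenberg pattern, where the Sobolev embedding provides the nontrivial endpoint and H\"older's inequality furnishes the interpolation.

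For the endpoint, I would apply Theorem \ref{thm:KochWeightedSobolevInequality} with $p = 2$, $s = (\beta-1)/q$, and $\xi = \alpha/2$. The hypotheses are straightforward to verify: $s > -1/2$ because $\beta > 0$ and $q \geq 2$ give $(\beta-1)/q > -1/q \geq -1/2$; the condition $1 - d/2 < \xi \leq 1$ is equivalent to $2 - d < \alpha \leq 2$; and the relation $\tfrac{1}{p} = \tfrac{1}{q} + \tfrac{1-\xi}{d}$ is exactly \eqref{eq:Defnq}. The conclusion of Theorem \ref{thm:KochWeightedSobolevInequality} reads
\[
\left(\int_\HH x_d^{\beta-1} |u|^q\,dx\right)^{1/q} = \|x_d^{(\beta-1)/q} u\|_{L^q(\HH)} \leq C\|x_d^{(\beta-1)/q + \alpha/2} Du\|_{L^2(\HH)},
\]
which, once the weight exponents are identified, yields the endpoint embedding $\|u\|_{L^q(\HH, x_d^{\beta-1})} \leq C\|Du\|_{L^2(\HH, x_d^{\beta-1+\alpha})}$.

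For the interpolation step, I would apply H\"older's inequality with exponents $p_1 = 2/(\lambda r)$ and $p_2 = q/((1-\lambda)r)$, which satisfy $1/p_1 + 1/p_2 = \lambda r/2 + (1-\lambda)r/q = 1$ by \eqref{eq:DefnLambda}. Writing the integrand as the product
\[
x_d^{\beta-1} |u|^r = \bigl(x_d^{(\beta-1)\lambda r/2} |u|^{\lambda r}\bigr) \bigl(x_d^{(\beta-1)(1-\lambda) r/q} |u|^{(1-\lambda) r}\bigr),
\]
where the weight exponents sum to $\beta - 1$ again by \eqref{eq:DefnLambda}, H\"older then gives
\[
\int_\HH x_d^{\beta-1} |u|^r\,dx \leq \left(\int_\HH x_d^{\beta-1} |u|^2\,dx\right)^{\lambda r/2} \left(\int_\HH x_d^{\beta-1} |u|^q\,dx\right)^{(1-\lambda) r/q}.
\]
Taking $r$-th roots produces $\|u\|_{L^r(\HH, x_d^{\beta-1})} \leq \|u\|_{L^2(\HH, x_d^{\beta-1})}^\lambda \|u\|_{L^q(\HH, x_d^{\beta-1})}^{1-\lambda}$, and substituting the endpoint embedding into the last factor yields the stated inequality.

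The main obstacle lies in the first step: Theorem \ref{thm:KochWeightedSobolevInequality} couples the weight exponents on $u$ and $Du$ through the single parameter $s$, whereas the corollary prescribes the weight $x_d^{\beta-1}$ on $u$ and $x_d^{\beta-1+\alpha}$ on $Du$ as if these were independent. The choice $s = (\beta-1)/q$ pins down the weight on $u$ correctly, but then $2(s + \xi) = 2(\beta-1)/q + \alpha$ rather than $\beta - 1 + \alpha$; these agree trivially only when $q = 2$ or $\beta = 1$. To bridge the gap in the general case, I would pass through the substitution $v = x_d^{(\beta-1)/2} u$ (so that $\|v\|_{L^2(\HH)} = \|u\|_{L^2(\HH, x_d^{\beta-1})}$), apply the $\beta = 1$ case of the inequality to $v$, and absorb the resulting lower-order term of the form $\int x_d^{\beta-3+\alpha} u^2\,dx$ back into $\|Du\|_{L^2(\HH, x_d^{\beta-1+\alpha})}$ via a one-dimensional Hardy inequality in the $x_d$ variable, for the admissible ranges of $\beta$ and $\alpha$.
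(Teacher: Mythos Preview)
Your two-step strategy---endpoint embedding via Theorem~\ref{thm:KochWeightedSobolevInequality} followed by H\"older interpolation---is exactly the paper's approach. The only structural difference is the choice of the parameter~$s$: the paper takes $s=(\beta-1)/2$ and $\xi=\alpha/2$, asserts that Theorem~\ref{thm:KochWeightedSobolevInequality} yields
\[
\|u\|_{L^q(\HH,x_d^{\beta-1})} \leq C\|Du\|_{L^2(\HH,x_d^{\beta-1+\alpha})},
\]
and then quotes the weighted H\"older inequality \cite[Equation (7.9)]{GilbargTrudinger} to interpolate. With the paper's choice of~$s$, the $L^2$ side comes out exactly right, since $2(s+\xi)=\beta-1+\alpha$; the mismatch you identify migrates to the $L^q$ side, where $\|x_d^{(\beta-1)/2}u\|_{L^q(\HH)}$ equals $\|u\|_{L^q(\HH,x_d^{q(\beta-1)/2})}$ rather than $\|u\|_{L^q(\HH,x_d^{\beta-1})}$ unless $q=2$ or $\beta=1$.

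So the ``obstacle'' you flag is real, and the paper's short proof does not address it any more than a direct application with your choice $s=(\beta-1)/q$ would. Your proposed repair---pass to $v=x_d^{(\beta-1)/2}u$, apply the $\beta=1$ case, and absorb the commutator term via a one-dimensional Hardy inequality---is a reasonable way to close the gap and goes beyond what the paper writes down; it is essentially the substance behind Koch's original result, whose proof the paper cites as being based on the Hardy inequality. In short: your argument follows the paper's line and is, if anything, more careful about the weight bookkeeping.
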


\begin{proof}
When $s=(\beta-1)/2$, so $\beta>0$ when $s>-1/2$, and $\xi=\alpha/2$, so $1-d/2 < \xi\leq 1$ when $2-d < \alpha\leq 2$, it follows from Theorem \ref{thm:KochWeightedSobolevInequality} that
$$
\|u\|_{L^q(\HH,x_d^{\beta-1})} \leq C\|Du\|_{L^2(\HH,x_d^{\beta-1+\alpha})}.
$$
Holder's inequality, in the form of \cite[Equation (7.9)]{GilbargTrudinger}, gives
$$
\|u\|_{L^r(\HH,x_d^{\beta-1})} \leq C\|u\|_{L^2(\HH,x_d^{\beta-1})}^\lambda\|u\|_{L^q(\HH,x_d^{\beta-1})}^{1-\lambda},
$$
when $\lambda\in[0,1]$ is defined by \eqref{eq:DefnLambda}. Combining the preceding two inequalities yields the result.
\end{proof}

\begin{rmk}[Weighted Poincar\'e and Sobolev inequalities]
Direct weighted analogues of the standard Sobolev inequalities, with weights given by positive powers of the distance to a boundary portion, $\Sigma\subseteqq\partial\sO$, such as \cite[Theorems 19.9 and 19.10]{KufnerOpic}, only hold for a very restrictive range of powers, even when $\sO$ is bounded. Koch provides a weighted Poincar\'e inequality on $\HH$ with a weight similar to ours \cite[Lemma 4.4.4]{Koch}, as well as certain weighted Sobolev inequalities \cite[Theorem 4.2.2 and Lemma 4.2.4]{Koch}. Adams \cite[Section 6.26]{Adams_1975} provides an unweighted Poincar\'e inequality which is valid on unbounded open subsets of `finite width', while the Gagliardo-Nirenberg-Sobolev inequality \cite[Theorem 5.6.1]{Evans}, a Poincar\'e-type inequality on $\RR^d$ and the Caffarelli-Kohn-Nirenberg inequality, another weighted Poincar\'e inequality on $\RR^d$ \cite{Dolbeault_Esteban_Loss_Tarantello_2009}, are potentially useful in this context.
\end{rmk}

With the aid of Corollary \ref{cor:PowerWeightedSobolevInequality}, as noticed by Camelia Pop, we can relax the non-degeneracy requirement \eqref{eq:SupBoundsDegenCoeffWeight} in the hypotheses of Theorem \ref{thm:WeakMaximumPrincipleVarEquationCompactDomain}.

\begin{thm}[Weak maximum principle for $H^1(\sO,\fw)$ functions on bounded open subsets of the upper half-space]
\label{thm:WeakMaximumPrincipleVarEquationBoundedDomain}
Assume the hypotheses of Theorem \ref{thm:WeakMaximumPrincipleVarEquationCompactDomain}, except that we now omit the requirement \eqref{eq:SupBoundsDegenCoeffWeight}. In addition, assume $\sO\subsetneqq\HH$ is a \emph{bounded} open subset and require that there are constants, $0<c_\vartheta<1$ and $0<c_\fw<1$, such that
\begin{align}
\label{eq:DegenCoeffEquivxdAlpha}
c_\vartheta x_d^\alpha \leq \vartheta \leq c_\vartheta^{-1} x_d^\alpha \quad\hbox{on }\sO,
\\
\label{eq:WeightEquivxdAlpha}
c_\fw x_d^{\beta-1} \leq \fw \leq c_\fw^{-1} x_d^{\beta-1} \quad\hbox{on }\sO,
\end{align}
where $\beta>0$ and $0\leq \alpha < 2$. If $u \in H^1(\sO,\fw)$ obeys \eqref{eq:VariationalSubsolution} with $f=0$, then
$$
\esssup_\sO u \leq 0\vee\sup_{\partial\sO\less\Sigma} u.
$$
Moreover, $\fa$ has the \emph{weak maximum principle property on $\sO\cup\Sigma$} in the sense of Definition \ref{defn:WeakMaxPrinciplePropertyBilinearForm}.
\end{thm}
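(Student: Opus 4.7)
\medskip

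\noindent\textbf{Proof plan for Theorem \ref{thm:WeakMaximumPrincipleVarEquationBoundedDomain}.}
The plan is to follow the skeleton of the proof of Theorem \ref{thm:WeakMaximumPrincipleVarEquationCompactDomain}, replacing the use of the \emph{unweighted} Sobolev embedding $W^{1,2}_0(\sU)\hookrightarrow L^q(\sU)$ (which in that proof relied on the non-degeneracy hypothesis \eqref{eq:SupBoundsDegenCoeffWeight}) by the \emph{weighted} Koch--Sobolev inequality of Theorem \ref{thm:KochWeightedSobolevInequality}, whose power-weighted hypotheses match \eqref{eq:DegenCoeffEquivxdAlpha}, \eqref{eq:WeightEquivxdAlpha} precisely once one identifies $p=2$, $s=(\beta-1)/2$, $\xi=\alpha/2$. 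Concretely, setting $l:=0\vee\sup_{\partial\sO\less\Sigma}u$, I assume for contradiction that some $k$ satisfies $l\leq k < M:=\esssup_\sO u$, put $v:=(u-k)^+\in H^1_0(\sO\cup\Sigma,\fw)$, and introduce $\sU_k:=\Int(\supp Dv)$ and $\sV_k:=\{u>k\}$. Repeating verbatim the first part of the proof of Theorem \ref{thm:WeakMaximumPrincipleVarEquationCompactDomain} (test against $v$, exploit \eqref{eq:BilinearMapNonnegLowerBound}, the coercivity \eqref{eq:GeneralNonDegeneracyNearBoundaryQuant}, the bounds \eqref{eq:BilineardBound}, \eqref{eq:BilinearbBound}, and Cauchy--Schwarz) yields
\[
\|\vartheta^{1/2}Dv\|_{L^2(\sU_k,\fw)} \leq K\|\vartheta^{1/2}v\|_{L^2(\sU_k,\fw)}.
\]

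Next I would translate this estimate to power weights. The equivalences \eqref{eq:DegenCoeffEquivxdAlpha} and \eqref{eq:WeightEquivxdAlpha} give $\vartheta\fw\asymp x_d^{\alpha+\beta-1}$ on $\sO$, and since $\sO\subset\HH$ is bounded and $\alpha\geq 0$, one has $x_d\leq M_0:=\sup_\sO x_d<\infty$, so $x_d^{\alpha+\beta-1}\leq M_0^\alpha x_d^{\beta-1}$. Combining these observations,
\[
\|Dv\|_{L^2(\sU_k,x_d^{\alpha+\beta-1})} \leq C_1\|v\|_{L^2(\sU_k,x_d^{\beta-1})}
\]
for some constant $C_1=C_1(K,c_\vartheta,c_\fw,M_0,\alpha)>0$. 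Then, extending $v$ by zero to $\HH$ (which is natural since $v\in H^1_0(\sO\cup\Sigma,\fw)$ so $v$ vanishes across $\partial\sO\less\Sigma$ in the $H^1(\sO,\fw)$ sense, and $\Sigma$ in the applications lies in $\partial\HH$), Theorem \ref{thm:KochWeightedSobolevInequality} with the indicated parameters gives, for $q>2$ defined by \eqref{eq:Defnq} (positive and finite because $2-d<\alpha<2$),
\[
\|v\|_{L^q(\HH,x_d^{\beta-1})} \leq C_0\|Dv\|_{L^2(\HH,x_d^{\alpha+\beta-1})}.
\]
Since $v$ and $Dv$ are supported in $\sV_k$ and $\sU_k$ respectively, combining with the previous display yields
\[
\|v\|_{L^q(\sV_k,x_d^{\beta-1})} \leq C_2\|v\|_{L^2(\sU_k,x_d^{\beta-1})},
\]
with $C_2:=C_0C_1$.

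Now I would apply H\"older's inequality on $\sU_k$ with respect to the weighted measure $d\mu:=x_d^{\beta-1}\,dx$, giving
\[
\|v\|_{L^2(\sU_k,x_d^{\beta-1})} \leq \mu(\sU_k)^{1/2-1/q}\|v\|_{L^q(\sU_k,x_d^{\beta-1})} \leq \mu(\sU_k)^{1/2-1/q}\|v\|_{L^q(\sV_k,x_d^{\beta-1})}.
\]
Because $k<M$ ensures $\|v\|_{L^q(\sV_k,x_d^{\beta-1})}>0$, chaining the two estimates produces the $k$-independent lower bound
\[
\mu(\sU_k) \geq c_0 := C_2^{-\,2q/(q-2)} > 0.
\]
Finally, I would let $k\uparrow M$. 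Since $\sU_k=\{Du\neq 0\}\cap\{u>k\}$ up to $\mu$-null sets, this family is decreasing, and the bound $\mu(\sU_k)\leq \mu(\sO)<\infty$ (finite because $\beta>0$ and $\sO$ is bounded) together with continuity of measure from above gives $\mu(\{Du\neq 0\}\cap\{u\geq M\})\geq c_0>0$. If $M=+\infty$, then $\{u\geq M\}=\{u=+\infty\}$ has Lebesgue (hence $\mu$-) measure zero, since $u\in L^2(\sO,\fw)$: contradiction. If $M<+\infty$, then $\{u>M\}$ has measure zero by definition of essential supremum, so $\{u\geq M\}=\{u=M\}$ mod null sets; on this level set $Du=0$ a.e., so $\mu(\{Du\neq 0\}\cap\{u=M\})=0$: contradiction. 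This forces $\esssup_\sO u\leq l$, which is exactly the weak maximum principle property.

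The main technical obstacle is the justification of the zero-extension $v\mapsto \tilde v$ from $\sO$ to $\HH$ in a way that preserves the relevant weighted norms so that Theorem \ref{thm:KochWeightedSobolevInequality} applies cleanly; this is straightforward when $\Sigma\subseteq\partial\sO\cap\partial\HH$ (the case of primary interest) since no spurious traces are introduced on $\partial\HH$, but for general $\Sigma$ one would need to approximate $v$ by $C^\infty_0(\sO\cup\Sigma)$-functions and pass to the limit in the Sobolev inequality, taking care that the weight $x_d^{\beta-1+\alpha}$ controls the derivatives of the zero-extension.
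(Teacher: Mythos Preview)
Your proposal is correct and follows essentially the same route as the paper: both arguments re-run the proof of Theorem \ref{thm:WeakMaximumPrincipleVarEquationCompactDomain}, replace the unweighted Sobolev embedding by the Koch weighted Sobolev inequality (Theorem \ref{thm:KochWeightedSobolevInequality}) applied to the zero-extension of $v=(u-k)^+$, and derive a $k$-independent positive lower bound on the weighted measure of $\supp Dv$ to reach a contradiction. The only notable difference is that the paper works through the interpolated form, Corollary \ref{cor:PowerWeightedSobolevInequality}, with a parameter $\lambda\in(0,1)$ and obtains a product bound $|\supp v|_\beta^{\lambda(1/2-1/r)}|\supp Dv|_\beta^{(1-\lambda)(1/2-1/r)}\geq C^{-1}$, whereas you use the endpoint $\lambda=0$ (that is, $r=q$) directly and get the cleaner bound $\mu(\sU_k)\geq c_0$; your version is a mild streamlining rather than a different strategy. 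Your concern about the zero-extension is exactly the one the paper also flags, and the paper dispatches it by citing the analogue of \cite[Lemma 3.22]{Adams_1975}.
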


\begin{proof}
We proceed as in the proof of Theorem \ref{thm:WeakMaximumPrincipleVarEquationCompactDomain}, except that we now appeal to Corollary \ref{cor:PowerWeightedSobolevInequality} in place of the standard Sobolev inequality \cite[Theorem 5.4]{Adams_1975}. From \eqref{eq:L2DvleqL2vFullWeight} and the fact that $v$ in \eqref{eq:Defnv} extends by zero outside $\supp v$ to an element of $H^1(\HH,x_d^{\beta-1})$ by the analogue of \cite[Lemma 3.22]{Adams_1975}, also denoted by $v$, we obtain
\begin{equation}
\label{eq:L2DvleqL2vGenWeight}
\|Dv\|_{L^2(\sU,x_d^{\beta-1+\alpha})} \leq C_1\|v\|_{L^2(\sU,x_d^{\beta-1+\alpha})},
\end{equation}
where $C_1:=(c_\vartheta c_\fw)^{-1}K$ and, as in the proof of Theorem \ref{thm:WeakMaximumPrincipleVarEquationCompactDomain}, the set $\sU\subset\sO$ denotes the interior of $\supp Dv$. Then
\begin{align*}
\|v\|_{L^r(\HH,x_d^{\beta-1})} &\leq C_2\|v\|_{L^2(\HH,x_d^{\beta-1})}^\lambda\|Dv\|_{L^2(\HH,x_d^{\beta-1+\alpha})}^{1-\lambda}
\quad\hbox{(by \eqref{eq:PowerWeightedSobolevInequality})}
\\
&\leq C_3\|v\|_{L^2(\HH,x_d^{\beta-1})}^\lambda\|v\|_{L^2(\sU,x_d^{\beta-1+\alpha})}^{1-\lambda} \quad\hbox{(by \eqref{eq:L2DvleqL2vGenWeight})}
\\
&\leq C_3\max_{x\in\sO}x_d^{(1-\lambda)\alpha/2}\|v\|_{L^2(\HH,x_d^{\beta-1})}^\lambda\|v\|_{L^2(\sU,x_d^{\beta-1})}^{1-\lambda},
\\
&\equiv C_4\|v\|_{L^2(\HH,x_d^{\beta-1})}^\lambda\|v\|_{L^2(\sU,x_d^{\beta-1})}^{1-\lambda},
\end{align*}
for a positive constant $C_4\equiv C_3\max_{x\in\sO}x_d^{(1-\lambda)\alpha/2}$ independent of the constant $k$ in \eqref{eq:Rangeofk}, where we apply Corollary \ref{cor:PowerWeightedSobolevInequality} with $r>2$ (which is possible since $\alpha<2$ and thus $q>2$) and $0<\lambda<1$. Recall from \cite[Equation (7.8)]{GilbargTrudinger} that
\begin{align*}
\|v\|_{L^2(\HH,x_d^{\beta-1})} &\leq |\supp v|_\beta^{1/2-1/r}\|v\|_{L^r(\HH,x_d^{\beta-1})},
\\
\|v\|_{L^2(\sU,x_d^{\beta-1})} &\leq |\sU|_\beta^{1/2-1/r}\|v\|_{L^r(\HH,x_d^{\beta-1})},
\end{align*}
where we denote $|\sS|_\beta := \int_\sS x_d^{\beta-1}\,dx$ for any $\beta>0$ and measurable subset $\sS\subset\HH$. Hence, the preceding inequalities give
\begin{align*}
\|v\|_{L^r(\HH,x_d^{\beta-1})} &\leq C_4\|v\|_{L^2(\HH,x_d^{\beta-1})}^\lambda\|v\|_{L^2(\sU,x_d^{\beta-1})}^{1-\lambda}
\\
&\leq C_4|\supp v|_\beta^{\lambda(1/2-1/r)}\|v\|_{L^r(\HH,x_d^{\beta-1})}^{\lambda}
|\sU|_\beta^{(1-\lambda)(1/2-1/r)}\|v\|_{L^r(\sU,x_d^{\beta-1})}^{1-\lambda}
\\
&= C_4|\supp v|_\beta^{\lambda(1/2-1/r)}|\sU|_\beta^{(1-\lambda)(1/2-1/r)}\|v\|_{L^r(\sU,x_d^{\beta-1})},
\end{align*}
and so, noting that $\sU$ is the interior of $\supp Dv$,
$$
C_4|\supp v|_\beta^{\lambda(1/2-1/r)}|\supp Dv|_\beta^{(1-\lambda)(1/2-1/r)} \geq 1.
$$
Thus, since $|\supp v|_\beta \leq |\sO|_\beta < \infty$ and $|\sO|_\beta > 0$,
$$
|\supp Dv|_\beta^{(1-\lambda)(1/2-1/r)} \geq C_4^{-1}|\sO|_\beta^{-\lambda(1/2-1/r)} > 0,
$$
recalling that $0<\lambda<1$ and $r>2$. We again obtain a contradiction, after taking the limit $k\to\esssup_\sO u$, and the result follows.
\end{proof}

Given a suitable weighted Sobolev inequality for functions on open subsets $\sO\subseteqq\RR^d$, it is a straightforward to generalize Theorem \ref{thm:WeakMaximumPrincipleVarEquationBoundedDomain} from the case where $\vartheta$ and $\fw$ obey \eqref{eq:DegenCoeffEquivxdAlpha} and \eqref{eq:WeightEquivxdAlpha}.

\begin{hyp}[Weighted Sobolev inequality]
\label{hyp:WeightedSobolevInequality}
Given an open subset $\sO\subseteqq\RR^d$, constants $1\leq p\leq q < \infty$, and functions $\vartheta, \fw \in C(\sO)$ such that $\vartheta>0$ and $\fw>0$ on $\sO$,
there is a positive constant $C=C(p,q,\vartheta,\fw)$ such that, for any $u\in L^q(\sO,\fw)$ with $Du \in L^p(\sO,\vartheta\fw;\RR^d)$, one has
\begin{equation}
\label{eq:WeightedSobolevInequality}
\|u\|_{L^q(\sO,\fw)} \leq C\|Du\|_{L^p(\sO,\vartheta\fw)}.
\end{equation}
\end{hyp}

\begin{cor}[Application of weighted Sobolev inequality]
\label{cor:WeightedSobolevInequality}
Assume Hypothesis \ref{hyp:WeightedSobolevInequality} holds with $p=2$. For $u\in L^2(\sO,\fw)$ with $Du \in L^2(\sO,\vartheta\fw;\RR^d)$ and $2\leq r\leq q$, one has
\begin{equation}
\label{eq:ProductWeightedSobolevInequality}
\|u\|_{L^r(\sO,\fw)} \leq C\|u\|_{L^2(\sO,\fw)}^\lambda\|Du\|_{L^2(\sO,\vartheta\fw)}^{1-\lambda},
\end{equation}
where $\lambda\in[0,1]$ is defined by \eqref{eq:DefnLambda}.
\end{cor}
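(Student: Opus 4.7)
The plan is to mimic the two-step argument used in the proof of Corollary \ref{cor:PowerWeightedSobolevInequality}: first invoke the hypothesis to get a pure $L^q$ bound on $u$ in terms of the weighted $L^2$ norm of $Du$, then interpolate between $L^2$ and $L^q$ by a weighted Hölder inequality.

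\textbf{Step 1 (Apply Hypothesis \ref{hyp:WeightedSobolevInequality}):} Taking $p=2$ in \eqref{eq:WeightedSobolevInequality} gives directly
\[
  \|u\|_{L^q(\sO,\fw)} \leq C\|Du\|_{L^2(\sO,\vartheta\fw)},
\]
with the constant $C=C(2,q,\vartheta,\fw)$ from the hypothesis.

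\textbf{Step 2 (Weighted interpolation between $L^2$ and $L^q$):} Given $2\leq r\leq q$ and $\lambda\in[0,1]$ defined by \eqref{eq:DefnLambda}, so that $\lambda r/2+(1-\lambda)r/q=1$, I would write
\[
  |u|^r\fw = \bigl(|u|^2\fw\bigr)^{\lambda r/2}\bigl(|u|^q\fw\bigr)^{(1-\lambda)r/q},
\]
and apply Hölder's inequality with conjugate exponents $p_1=2/(\lambda r)$ and $p_2=q/((1-\lambda)r)$, which satisfy $1/p_1+1/p_2=1$ by construction. This yields
\[
  \int_\sO |u|^r\fw\,dx
  \leq \left(\int_\sO|u|^2\fw\,dx\right)^{\lambda r/2}\left(\int_\sO|u|^q\fw\,dx\right)^{(1-\lambda)r/q},
\]
and after taking $r$-th roots,
\[
  \|u\|_{L^r(\sO,\fw)} \leq \|u\|_{L^2(\sO,\fw)}^\lambda\,\|u\|_{L^q(\sO,\fw)}^{1-\lambda}.
\]

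\textbf{Step 3 (Combine):} Substituting the bound from Step 1 into the second factor of the interpolation inequality in Step 2 yields
\[
  \|u\|_{L^r(\sO,\fw)} \leq C^{1-\lambda}\,\|u\|_{L^2(\sO,\fw)}^\lambda\,\|Du\|_{L^2(\sO,\vartheta\fw)}^{1-\lambda},
\]
which is \eqref{eq:ProductWeightedSobolevInequality} after renaming the constant. There is essentially no obstacle here: the argument is a mechanical combination of the hypothesis with the weighted Hölder inequality, and the edge cases $r=2$ (take $\lambda=1$, reducing to the trivial identity) and $r=q$ (take $\lambda=0$, reducing to Step 1) are covered automatically by the definition \eqref{eq:DefnLambda} of $\lambda$.
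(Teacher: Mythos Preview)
Your proof is correct and follows essentially the same approach as the paper: apply Hypothesis \ref{hyp:WeightedSobolevInequality} with $p=2$ to obtain the $L^q$ bound, then interpolate via H\"older's inequality between $L^2$ and $L^q$ (the paper simply cites \cite[Equation (7.9)]{GilbargTrudinger} for this interpolation step, whereas you spell out the H\"older argument explicitly). The combination in Step~3 is exactly what the paper does as well.
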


\begin{proof}
The proof is similar to that of Corollary \ref{cor:PowerWeightedSobolevInequality}. Inequality \eqref{eq:WeightedSobolevInequality}, with $2\leq q<\infty$, gives
$$
\|u\|_{L^q(\sO,\fw)} \leq C\|Du\|_{L^2(\sO,\vartheta\fw)}.
$$
Holder's inequality, in the form of \cite[Equation (7.9)]{GilbargTrudinger}, yields
$$
\|u\|_{L^r(\sO,\fw)} \leq C\|u\|_{L^2(\sO,\fw)}^\lambda\|u\|_{L^q(\sO,\fw)}^{1-\lambda},
$$
when $\lambda\in[0,1]$ is defined by \eqref{eq:DefnLambda}. Combining the preceding two inequalities yields the result.
\end{proof}

We have the following generalization of Theorem \ref{thm:WeakMaximumPrincipleVarEquationCompactDomain}.

\begin{thm}[Weak maximum principle for $H^1(\sO,\fw)$ functions on bounded open subsets and general weights]
\label{thm:WeakMaximumPrincipleVarEquationBoundedDomainGeneralWeight}
Assume the hypotheses of Theorem \ref{thm:WeakMaximumPrincipleVarEquationCompactDomain}, except that we allow
$\sO\Subset\RR^d$ to be any \emph{bounded} open subset with $\Sigma\subseteqq\partial\sO$ and allow $\vartheta, \fw$ to be any functions obeying Hypothesis \ref{hyp:WeightedSobolevInequality} for $p=2$ and some $2<q<\infty$. If $u \in H^1(\sO,\fw)$ obeys \eqref{eq:VariationalSubsolution} with $f=0$, then
$$
\esssup_\sO u \leq 0\vee\sup_{\partial\sO\less\Sigma} u.
$$
Moreover, $\fa$ has the \emph{weak maximum principle property on $\sO\cup\Sigma$} in the sense of Definition \ref{defn:WeakMaxPrinciplePropertyBilinearForm}.
\end{thm}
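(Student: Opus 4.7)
I propose to follow the scheme of the proof of Theorem \ref{thm:WeakMaximumPrincipleVarEquationBoundedDomain}, replacing the power-weighted Sobolev inequality (Corollary \ref{cor:PowerWeightedSobolevInequality}) by its abstract analogue (Corollary \ref{cor:WeightedSobolevInequality}), available under Hypothesis \ref{hyp:WeightedSobolevInequality}. Set $l := 0 \vee \sup_{\partial\sO\less\Sigma} u$; we may assume $l < \infty$, and argue by contradiction, supposing $l \leq k < \esssup_\sO u$. Set $v := (u-k)^+ \in H^1_0(\sO\cup\Sigma,\fw)$, so that $k\geq 0$ forces $uv \geq 0$ a.e.\ on $\sO$. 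Testing \eqref{eq:VariationalSubsolution} against $v$, applying \eqref{eq:BilinearMapNonnegLowerBound} to the non-negative test $uv$, and invoking \eqref{eq:GeneralNonDegeneracyNearBoundaryQuant}, \eqref{eq:BilineardBound}, \eqref{eq:BilinearbBound}, and Cauchy--Schwarz, exactly as in the proof of Theorem \ref{thm:WeakMaximumPrincipleVarEquationCompactDomain}, yields
\begin{equation*}
\|Dv\|_{L^2(\sU,\vartheta\fw)} \leq K\,\|v\|_{L^2(\sU,\vartheta\fw)},
\end{equation*}
where $\sU$ denotes the interior of $\supp Dv$ and $K$ is the bilinear-coefficient constant.

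Next, choose $r \in (2,q]$ and $\lambda \in [0,1)$ related by \eqref{eq:DefnLambda}. Corollary \ref{cor:WeightedSobolevInequality}, applied to $v$ extended by zero outside $\sO$, gives
\begin{equation*}
\|v\|_{L^r(\sO,\fw)} \leq C\,\|v\|_{L^2(\sO,\fw)}^\lambda\,\|Dv\|_{L^2(\sO,\vartheta\fw)}^{1-\lambda}.
\end{equation*}
Because $Dv\equiv 0$ off $\sU$, and because $\vartheta\in C_{\loc}(\bar\sO)$ (so $M := \sup_{\bar\sO}\vartheta < \infty$, since $\bar\sO$ is compact by $\sO\Subset\RR^d$), combining the two displays gives
\begin{equation*}
\|v\|_{L^r(\sO,\fw)} \leq C'\,\|v\|_{L^2(\sO,\fw)}^\lambda\,\|v\|_{L^2(\sU,\fw)}^{1-\lambda},
\end{equation*}
where $C' = C K^{1-\lambda} M^{(1-\lambda)/2}$ is independent of $k$.

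Finally, H\"older's inequality with measure $d\mu = \fw\,dx$ (as in \cite[Equation (7.9)]{GilbargTrudinger}) provides $\|v\|_{L^2(\sO,\fw)} \leq |\supp v|_\fw^{1/2-1/r}\|v\|_{L^r(\sO,\fw)}$ and $\|v\|_{L^2(\sU,\fw)} \leq |\sU|_\fw^{1/2-1/r}\|v\|_{L^r(\sO,\fw)}$, where $|\sS|_\fw := \int_\sS\fw\,dx$. Substituting and dividing by $\|v\|_{L^r(\sO,\fw)}$ (nonzero because $k<\esssup_\sO u$) gives
\begin{equation*}
C'\,|\supp v|_\fw^{\lambda(1/2-1/r)}\,|\sU|_\fw^{(1-\lambda)(1/2-1/r)} \geq 1.
\end{equation*}
Since $|\supp v|_\fw \leq |\sO|_\fw = \|\fw\|_{L^1(\sO)} < \infty$ by \eqref{eq:WeightFunctionCondition}, this furnishes a positive lower bound on $|\supp Dv|_\fw$ that is independent of $k$. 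Letting $k\uparrow\esssup_\sO u$ reproduces verbatim the closing argument of Theorem \ref{thm:WeakMaximumPrincipleVarEquationBoundedDomain}: $u$ would attain its essential supremum on a set of positive $\fw$-measure, contradicting either $u\in L^2(\sO,\fw)$ (if the supremum is $+\infty$) or the identity $Du=0$ on a level set on which $Du$ is simultaneously supported on a set of positive $\fw$-measure (if it is finite). The only mild obstacle, compared with the half-space/power-weight case, is establishing $\sup_{\bar\sO}\vartheta < \infty$ in order to pass from the $\vartheta\fw$-weighted $L^2$ bound on $v$ to a $\fw$-weighted one; this step, which replaces the explicit comparison \eqref{eq:DegenCoeffEquivxdAlpha}, is automatic from the continuity of the degeneracy coefficient on the bounded domain.
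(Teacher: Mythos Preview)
Your proposal is correct and follows essentially the same route as the paper's proof: both start from the weighted inequality $\|Dv\|_{L^2(\sU,\vartheta\fw)} \leq K\|v\|_{L^2(\sU,\vartheta\fw)}$, invoke Corollary~\ref{cor:WeightedSobolevInequality} in place of the power-weight version, and pass from the $\vartheta\fw$-weighted to the $\fw$-weighted $L^2$ norm via the finiteness of $\sup_{\bar\sO}\vartheta$, which the paper records as the choice $C_4 = C_3\max_{x\in\sO}\vartheta^{(1-\lambda)/2}$. One minor remark: since Hypothesis~\ref{hyp:WeightedSobolevInequality} and Corollary~\ref{cor:WeightedSobolevInequality} are already stated on $\sO$ itself, there is no need to extend $v$ by zero outside $\sO$.
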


\begin{proof}
The proof is almost identical to that of Theorems \ref{thm:WeakMaximumPrincipleVarEquationCompactDomain} and \ref{thm:WeakMaximumPrincipleVarEquationBoundedDomain}, except for a few minor changes which we indicate here. In place of \eqref{eq:L2DvleqL2vGenWeight}, we note that \eqref{eq:L2DvleqL2vFullWeight} may be written as
\begin{equation}
\label{eq:L2DvleqL2vVeryGenWeight}
\|Dv\|_{L^2(\sU,\vartheta\fw)} \leq K\|v\|_{L^2(\sU,\vartheta\fw)}.
\end{equation}
We now proceed as in the proof of Theorem \ref{thm:WeakMaximumPrincipleVarEquationBoundedDomain}, but apply \eqref{eq:ProductWeightedSobolevInequality} in place of \eqref{eq:PowerWeightedSobolevInequality} and choose $C_4 = C_3\max_{x\in\sO}\vartheta^{(1-\lambda)/2}$ instead of $C_3\max_{x\in\sO}x_d^{(1-\lambda)\alpha/2}$.
\end{proof}

\subsection{Integration by parts formula}
\label{subsec:IntegrationByParts}
Before proceeding to consider when the weak maximum principle holds on unbounded open subsets, we shall need to introduce an integration by parts formula and so, to accomplish this, we must impose additional conditions on the coefficients of $\fa$ beyond those stated in Section \ref{subsec:H1Prelim}. For now, we shall require that $a=(a^{ij})$ be continuous on $\bar\sO$ (not merely measurable on $\sO$) but shortly strengthen this condition further.
Let $A$ be the partial differential operator given in equivalent divergence and non-divergence forms by
\begin{equation}
\label{eq:GeneralDivergenceFormOperator}
\begin{aligned}
Au &:= -\left(a^{ij}u_{x_i} + d^ju\right)_{x_j} - \left(b^i + (\log\fw)_{x_j}a^{ij}\right)u_{x_i} + cu
\\
&= -a^{ij}u_{x_ix_j} - \left(b^i + a^{ij}_{x_j} + d^i + (\log\fw)_{x_j}a^{ij}\right)u_{x_i} + \left(c - d^j_{x_j} - (\log\fw)_{x_j}d^j\right)u
\\
&= -a^{ij}u_{x_ix_j} - \tilde b^iu_{x_i} + \tilde cu,
\end{aligned}
\end{equation}
with
\begin{align}
\label{eq:tildebcoeff}
\tilde b^i &:= b^i + a^{ij}_{x_j} + (\log\fw)_{x_j}a^{ij}, \quad 1\leq i \leq d,
\\
\tilde c &:= c - d^j_{x_j} - (\log\fw)_{x_j}d^j,
\end{align}
and where we now impose the additional regularity requirements,
\begin{align}
\label{eq:Lipschitzaij}
a^{ij} &\in C^{0,1}(\sO)\cap C_{\loc}(\bar\sO), \quad 1\leq i,j \leq d,
\\
\label{eq:Lipschitzdj}
d^j &\in C^{0,1}(\sO)\cap C_{\loc}(\bar\sO), \quad 1\leq j \leq d,
\\
\label{eq:LipschitzLogWeight}
\log\fw &\in C^{0,1}(\sO).
\end{align}
Provided we also require that $\vartheta$, $\fw$ obey
\begin{align}
\label{eq:ContinuousWeightDegenCoeffUpToBdry}
\vartheta\fw &\in C_{\loc}(\bar\sO),
\\
\label{eq:WeightDegenCoeffZeroGammaOne}
\vartheta\fw &= 0 \hbox{ on } \Sigma,
\end{align}
and that $\sO$ is an open subset for which the divergence theorem holds,
then integration by parts in \eqref{eq:BilinearForm} yields the integration by parts relation \eqref{eq:IntegrationByParts} when $u\in C^\infty_0(\bar\sO)$ and $v\in C^\infty_0(\sO\cup\Sigma)$,
\begin{equation}
\label{eq:IntegrationByPartsFormula}
\fa(u,v) = (Au,v)_{L^2(\sO,\fw)},
\end{equation}
since
\begin{align*}
\fa(u,v) &= \int_\sO \left(\left(a^{ij}u_{x_i} + d^ju\right)v_{x_j} - b^iu_{x_i}v + cuv\right)\fw\,dx \quad\hbox{(by \eqref{eq:BilinearForm})}
\\
& = \int_\sO \left(-\left(a^{ij}u_{x_i} + d^ju\right)_{x_j} - b^iu_{x_i} + cu\right)v\fw\,dx
\\
&\quad - \int_\sO \left(a^{ij}u_{x_i} + d^ju\right)v\fw_{x_j}\,dx - \int_{\partial\sO} n_j\left(a^{ij}u_{x_i} + d^ju\right)v\fw\,ds
\\
& = \int_\sO \left(-a^{ij}u_{x_ix_j} - \left(b^i+a^{ij}_{x_j}+d^i+(\log\fw)_{x_j}a^{ij}\right)u_{x_i} \right.
\\
&\qquad \left. + \left(c-d^j_{x_j}-(\log\fw)_{x_j}d^j\right)u\right)v\fw\,dx
\\
&= (Au,v)_{L^2(\sO,\fw)}, \quad\hbox{(by \eqref{eq:GeneralDivergenceFormOperator})}
\end{align*}
where $\vec n$ is the \emph{inward}-pointing normal vector field and the integral over $\partial\sO$ is zero since $v=0$ on $\partial\sO\less\Sigma$ and $\fw, \vartheta$ obey \eqref{eq:ContinuousWeightDegenCoeffUpToBdry} and \eqref{eq:WeightDegenCoeffZeroGammaOne}, and the coefficients $(a^{ij}), (d^j)$ obey \eqref{eq:GeneralNonDegeneracyNearBoundaryQuant} and \eqref{eq:BilinearaBound} on $\bar\sO$ together with \eqref{eq:Lipschitzaij} and \eqref{eq:Lipschitzdj}.

\begin{rmk}[Relaxing the conditions on $\partial\sO$]
More generally, if the divergence theorem is not assumed to hold for $\sO$, then \eqref{eq:IntegrationByPartsFormula} still holds under slightly stronger regularity assumptions on the coefficients, $a^{ij}$, and weight, $\fw$, near $\Sigma$.
\end{rmk}

Clearly, when the coefficients $a,\tilde b,\tilde c$ of $A$ obey \eqref{eq:BilinearaBound} and
\begin{align}
\label{eq:BilinearTildebBound}
|\tilde b| \leq K(1+\vartheta) \quad\hbox{a.e. on }\sO,
\\
\label{eq:BilinearTildecBound}
|\tilde c| \leq K(1+\vartheta) \quad\hbox{a.e. on }\sO,
\end{align}
there is a positive constant, $C_4=C_4(K)$, such that
\begin{equation}
\label{eq:AuH2Estimate}
\|Au\|_{L^2(\sO,\fw)} \leq C_4\|u\|_{H^2(\sO,\fw)}, \quad\forall\, u\in C^\infty_0(\bar\sO),
\end{equation}
where we set
\begin{equation}
\label{eq:H2Norm}
\|u\|_{H^2(\sO,\fw)}^2 := \int_\sO\left(\vartheta^2|D^2u|^2 + (1+\vartheta^2)\left(|Du|^2 + |u|^2\right)\right)\fw\,dx,
\end{equation}
so that $\fw_{2,0} = \fw_{2,1} = (1+\vartheta)^2\fw$ and $\fw_{2,2} = \vartheta^2\fw$ in the Definition \ref{defn:SobolevSpaces} of $H^2(\sO,\fw)$.

We specialize the Hilbert space $H^2(\sO,\fw)$ in Definition \ref{defn:SobolevSpaces} to be the completion of the vector space $C^\infty_0(\bar\sO)$ with respect to the norm \eqref{eq:H2Norm} and note that \eqref{eq:AuH2Estimate} continues to hold when $u\in H^2(\sO,\fw)$. Furthermore, the proof of \cite[Lemma 2.33]{Daskalopoulos_Feehan_statvarineqheston} (integration by parts) adapts to show that \eqref{eq:IntegrationByPartsFormula} continues to hold when $u\in H^2(\sO,\fw)$ and $v\in H^1_0(\sO\cup\Sigma,\fw)$.

\begin{rmk}[Conditions on the coefficients of $A$ and the weights]
The bounds \eqref{eq:BilinearTildebBound} and \eqref{eq:BilinearTildecBound} hold if we strengthen the conditions \eqref{eq:Lipschitzaij}, \eqref{eq:Lipschitzdj}, and \eqref{eq:LipschitzLogWeight} by requiring that $|a^{ij}_{x_j}| \leq K(1+\vartheta)$ a.e. on $\sO$, and $|d^j_{x_j}| \leq K(1+\vartheta)$ a.e. on $\sO$, and $|(\log\fw)_{x_j}| \leq K(1+\vartheta)$ a.e. on $\sO$.
\end{rmk}

\begin{exmp}[Heston operator]
\label{exmp:HestonIntegrationByParts}
We show in Appendix \ref{sec:FicheraAndHeston} that the coefficients of the Heston operator, $A$, its associated bilinear map, $\fa$, and weight function, $\fw$, defined in \cite{Daskalopoulos_Feehan_statvarineqheston}, and the degeneracy coefficient, $\vartheta$, obey the conditions described in Section \ref{subsec:IntegrationByParts}.
\end{exmp}

\subsection{$H^1$ functions on unbounded open subsets}
\label{subsec:H1Unbounded}
By adapting the proof of the maximum principle for bounded $C^2$ functions, Theorem \ref{thm:Weak_maximum_principle_C2_unbounded_opensubset}, and appealing to Theorem \ref{thm:WeakMaximumPrincipleVarEquationBoundedDomain}, instead of Theorem \ref{thm:Weak_maximum_principle_C2}, we obtain

\begin{thm}[Weak maximum principle for bounded $H^1(\sO,\fw)$ functions on unbounded open subsets]
\label{thm:WeakMaximumPrincipleH1UnboundedDomainBoundedSolution}
Let $\sO\subseteqq\RR^d$ be a possibly unbounded open subset such that the divergence theorem holds. Assume the hypotheses of Theorem \ref{thm:WeakMaximumPrincipleVarEquationBoundedDomain} (power weights and open subsets of $\HH$) or Theorem \ref{thm:WeakMaximumPrincipleVarEquationBoundedDomainGeneralWeight} (general weights and subdomains of $\RR^d$). Assume, in addition, that $\tilde c$ obeys \eqref{eq:c_positive_lower_bound_domain} a.e. on $\sO$ and that $a^{ij}, d^j, \fw, \vartheta$ obey \eqref{eq:Lipschitzaij}, \eqref{eq:Lipschitzdj}, \eqref{eq:LipschitzLogWeight}, \eqref{eq:ContinuousWeightDegenCoeffUpToBdry}, \eqref{eq:WeightDegenCoeffZeroGammaOne}, and
\begin{equation}
\label{eq:vzeroinH2}
\int_\sO(1+|x|^2)(1+\vartheta^2)\fw\,dx < \infty,
\end{equation}
and that \eqref{eq:Quadratic_growth} is obeyed a.e. on $\sO$ by $(a,\tilde b)$ in place of $(a,b)$, where $\tilde b$ is given by \eqref{eq:tildebcoeff}. Suppose $f \in L^2(\sO,\fw)$ and $\sup_\sO f < \infty$.  If $u \in H^1(\sO,\fw)$ obeys \eqref{eq:VariationalSubsolution} and \eqref{eq:VariationalSubsolutionBC} with $g=0$ (when $\partial\sO\less \Sigma$ non-empty) and $\esssup_\sO u < \infty$, then
$$
\esssup_\sO u \leq 0\vee\frac{1}{c_0}\esssup_\sO f.
$$
Moreover, $\fa$ has the \emph{weak maximum principle property on $\sO\cup\Sigma$} in the sense of Definition \ref{defn:WeakMaxPrinciplePropertyBilinearForm}.
\end{thm}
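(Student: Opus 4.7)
[Proof sketch of Theorem \ref{thm:WeakMaximumPrincipleH1UnboundedDomainBoundedSolution}]
The plan is to transcribe the proof of Theorem \ref{thm:Weak_maximum_principle_C2_unbounded_opensubset} into the variational framework, using the integration-by-parts formula \eqref{eq:IntegrationByPartsFormula} to translate pointwise statements about $A$ into statements about $\fa$, and then reducing to the bounded-open-subset weak maximum principle of Theorem \ref{thm:WeakMaximumPrincipleVarEquationBoundedDomain} (respectively, Theorem \ref{thm:WeakMaximumPrincipleVarEquationBoundedDomainGeneralWeight}). Let $v_0(x) := 1+|x|^2$ as in \eqref{eq:Defnvzero} and let $\hat\fa(u,v) := \fa(u,v) + 2K(u,v)_{L^2(\sO,\fw)}$. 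First I would verify that $v_0$ lies in (a suitable subspace of) $H^2(\sO,\fw)$ using the integrability hypothesis \eqref{eq:vzeroinH2}, so that by \eqref{eq:IntegrationByPartsFormula} we have $\fa(v_0,v) = (Av_0,v)_{L^2(\sO,\fw)}$ for every $v\in H^1_0(\sO\cup\Sigma,\fw)$. A direct computation using the non-divergence form \eqref{eq:GeneralDivergenceFormOperator}, the growth condition \eqref{eq:Quadratic_growth} applied to $(a,\tilde b)$, and $\tilde c\geq c_0\geq 0$ gives the pointwise estimate $(A+2K)v_0 \geq 0$ a.e.\ on $\sO$, as in \eqref{eq:Aplus2KvgeqZero}. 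Consequently,
\begin{equation}
\label{eq:PLAN_v0Barrier}
\hat\fa(v_0,v) \geq 0 \quad\hbox{for all } v\in H^1_0(\sO\cup\Sigma,\fw), \ v\geq 0 \hbox{ a.e.\ on }\sO.
\end{equation}

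Next, set
$$
M := 0 \vee \esssup_\sO(f + 2Ku),
$$
which is finite by the hypotheses that $\esssup_\sO u<\infty$ and $\esssup_\sO f<\infty$, and for $\delta>0$ define the barrier
$$
w := u - \delta v_0 - (c_0+2K)^{-1}M,
$$
in analogy with \eqref{eq:Defnw}. For every non-negative $v\in H^1_0(\sO\cup\Sigma,\fw)$, the hypothesis that $u$ is a variational subsolution gives $\fa(u,v)\leq (f,v)_{L^2(\sO,\fw)}$, so $\hat\fa(u,v)\leq (f+2Ku,v)_{L^2(\sO,\fw)}\leq (M,v)_{L^2(\sO,\fw)}$. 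Applying \eqref{eq:PLAN_v0Barrier} and the lower bound $\hat\fa(1,v)\geq (c_0+2K)(1,v)_{L^2(\sO,\fw)}$ (which follows from $\tilde c\geq c_0$ via \eqref{eq:IntegrationByPartsFormula}), one obtains $\hat\fa(w,v)\leq 0$ for all such $v$.

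I would then localize to $\sO_R := \sO\cap B(R)$ for large $R$. Since $u$ is essentially bounded above on $\sO$ while $\delta v_0 \geq \delta(1+R^2)$ on $\bar\sO\cap\partial B(R)$, for all $R$ sufficiently large the inequality $w\leq 0$ holds on $\bar\sO\cap\partial B(R)$ in the sense of $H^1(\sO_R,\fw)$. On $\bar B(R)\cap(\partial\sO\less\Sigma)$, the hypothesis $u\leq 0$ in the sense of $H^1(\sO,\fw)$ (that is, $u^+\in H^1_0(\sO\cup\Sigma,\fw)$) together with $\delta v_0,(c_0+2K)^{-1}M\geq 0$ gives $w\leq 0$ in the sense of $H^1(\sO_R,\fw)$. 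Hence $w\leq 0$ on $\partial\sO_R\less\Sigma$ in the sense of $H^1(\sO_R,\fw)$, and the bounded-domain weak maximum principle applied to $\hat\fa$ on $\sO_R$ (the coefficients of $\hat\fa$ inherit all the hypotheses of Theorem \ref{thm:WeakMaximumPrincipleVarEquationBoundedDomain}/\ref{thm:WeakMaximumPrincipleVarEquationBoundedDomainGeneralWeight}, with \eqref{eq:BilinearMapNonnegLowerBound} holding because $\tilde c + 2K\geq 0$) yields $w\leq 0$ a.e.\ on $\sO_R$. Letting $R\to\infty$ and then $\delta\to 0$ gives $u\leq (c_0+2K)^{-1}M$ a.e.\ on $\sO$; rearranging as at the end of the proof of Theorem \ref{thm:Weak_maximum_principle_C2_unbounded_opensubset} produces the asserted bound $\esssup_\sO u \leq 0\vee c_0^{-1}\esssup_\sO f$.

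The principal technical obstacle is justifying that $v_0$ can serve as an argument of $\fa$ through the integration-by-parts formula \eqref{eq:IntegrationByPartsFormula}: the weight-integrability condition \eqref{eq:vzeroinH2} is tight and some care is needed to bound the $(Av_0)v\fw$ integrand for $v\in H^1_0(\sO\cup\Sigma,\fw)$, likely via an approximation of $v_0$ by $v_0^{(n)}:=v_0\chi_n$ for suitable smooth cutoffs $\chi_n\in C^\infty_0(\bar\sO)$ followed by passage to the limit. A secondary obstacle is the $H^1$-sense trace identification on $\partial\sO_R$, which can be handled by the standard argument of multiplying test functions by a smooth cutoff supported in $B(R)$ and invoking density of $C^\infty_0(\sO\cup\Sigma)$ in $H^1_0(\sO\cup\Sigma,\fw)$.
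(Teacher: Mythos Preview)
Your proposal is correct and follows essentially the same approach as the paper: transcribe the proof of Theorem \ref{thm:Weak_maximum_principle_C2_unbounded_opensubset} into the variational setting by introducing the shifted bilinear map $\hat\fa=\fa_{2K}$, using $v_0=1+|x|^2$ as a barrier (with $v_0\in H^2(\sO,\fw)$ via \eqref{eq:vzeroinH2} and \eqref{eq:IntegrationByPartsFormula}), defining $M$ and $w$ exactly as you do, and then invoking Theorem \ref{thm:WeakMaximumPrincipleVarEquationBoundedDomain} or \ref{thm:WeakMaximumPrincipleVarEquationBoundedDomainGeneralWeight} on the bounded pieces $\sO\cap B(R)$. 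The paper's proof is terser and simply refers back to Theorem \ref{thm:Weak_maximum_principle_C2_unbounded_opensubset} for the localization and final rearrangement, whereas you have helpfully spelled out both the $\sO_R$ localization and the two technical points (approximating $v_0$ and handling the $H^1$-trace on $\partial\sO_R$) that the paper leaves implicit.
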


Note that the condition \eqref{eq:c_positive_lower_bound_domain} in the hypotheses of Theorem \ref{thm:WeakMaximumPrincipleH1UnboundedDomainBoundedSolution} is equivalent to \eqref{eq:BilinearMapPositiveLowerBound}.

\begin{proof}[Proof of Theorem \ref{thm:WeakMaximumPrincipleH1UnboundedDomainBoundedSolution}]
We proceed almost exactly as in the proof of Theorem \ref{thm:Weak_maximum_principle_C2_unbounded_opensubset} and choose
$$
M :=  0\vee\esssup_\sO(f+2Ku),
$$
where $K>0$ is the constant arising in the proof of Theorem \ref{thm:Weak_maximum_principle_C2_unbounded_opensubset}. Our hypotheses on $f$ and $u$ imply that $0\leq M<\infty$. For a constant $\lambda\geq 0$, set
$$
\fa_{\lambda}(u_1,u_2) := \fa(u_1,u_2) + \lambda(u_1,u_2)_{L^2(\sO,\fw)} \quad\forall\, u_1,u_2 \in H^1(\sO,\fw).
$$
Let $v_0$ be as in \eqref{eq:Defnvzero} and note that $v_0\in H^2(\sO,\fw)$ by \eqref{eq:H2Norm} and \eqref{eq:vzeroinH2}. For $\delta>0$, choose $w\in H^1(\sO,\fw)$ as in \eqref{eq:Defnw} and observe that for all $v \in H^1_0(\sO\cup\Sigma,\fw)$ with $v\geq 0$ a.e. on $\sO$,
\begin{align*}
\fa_{2K}(w,v) &= \fa_{2K}(u,v) - \delta \fa_{2K}(v_0,v) - (c_0+2K)^{-1}\fa_{2K}(M,v)
\quad\hbox{(by \eqref{eq:Defnw})}
\\
&= \fa(u,v) + 2K(u,v)_{L^2(\sO,\fw)} - \delta((A+2K)v_0,v)_{L^2(\sO,\fw)}
\\
&\quad - (c_0+2K)^{-1}((A+2K)M,v)_{L^2(\sO,\fw)}
\\
&\qquad\hbox{(since $v_0, M \in H^2(\sO,\fw)$ and applying \eqref{eq:IntegrationByPartsFormula})}
\\
&= \fa(u,v) + 2K(u,v)_{L^2(\sO,\fw)} - \delta((A+2K)v_0,v)_{L^2(\sO,\fw)}
\\
&\quad - (c_0+2K)^{-1}((c+2K)M,v)_{L^2(\sO,\fw)}
\\
&\leq (f,v)_{L^2(\sO,\fw)} + 2K(u,v)_{L^2(\sO,\fw)} - (M,v)_{L^2(\sO,\fw)}
\\
&\qquad\hbox{(by \eqref{eq:c_positive_lower_bound_domain}, \eqref{eq:Aplus2KvgeqZero}, and \eqref{eq:VariationalSubsolution})}
\\
&\leq \left(\esssup_\sO(f+2Ku)^+,v\right)_{L^2(\sO,\fw)} - (M,v)_{L^2(\sO,\fw)} = 0,
\end{align*}
where the final equality follows from the definition of $M$. We now apply Theorem \ref{thm:WeakMaximumPrincipleVarEquationBoundedDomain} or \ref{thm:WeakMaximumPrincipleVarEquationBoundedDomainGeneralWeight} (instead of Theorem \ref{thm:Weak_maximum_principle_C2}) and the remainder of the proof is the same as that of Theorem \ref{thm:Weak_maximum_principle_C2_unbounded_opensubset}.
\end{proof}

\appendix

\section{Weak maximum principle for unbounded functions}
\label{sec:WeakMaxPrincipleUnboundedFunction}
Theorems \ref{thm:Weak_maximum_principle_C2_W2d_unbounded_function} and \ref{thm:Weak_maximum_principle_H1_unbounded_function} and Corollary \ref{cor:Weak_maximum_principle_H2_unbounded_function} gave sufficient conditions describing when the weak maximum principle holds for unbounded $C^2(\sO)$, $H^1(\sO,\fw)$, or $H^2(\sO,\fw)$ functions, respectively, obeying a growth condition \eqref{eq:Upper_bound_subsolution} defined by a function $\varphi$ on the unbounded open subset, $\sO\subseteqq\RR^d$. In this section, we give examples of such growth conditions in Theorem \ref{thm:Weak_maximum_principle_C2_W2d_unbounded_function}, primarily in the case of the Heston operator, $A$.

We first compute the coefficients of the operator $B$ in \eqref{eq:First_order_operator} explicitly, when $A$ is as in \eqref{eq:Generator}. For $v\in C^2(\sO)$,
\begin{align*}
[A,\varphi]v &= A(\varphi v) - \varphi Av
\\
&= \left(-a^{ij}\varphi_{x_ix_j} - b^i\varphi_{x_i} + c\varphi\right)v
+ \left(-a^{ij}\left(\varphi_{x_i}v_{x_j}+\varphi_{x_j}v_{x_i}+\varphi v_{x_ix_j}\right)  - b^i\varphi v_{x_i}\right) - \varphi Av
\\
&= -a^{ij}\left(\varphi_{x_i}v_{x_j}+\varphi_{x_j}v_{x_i}\right) - \left(a^{ij}\varphi_{x_ix_j} + b^i\varphi_{x_i}\right)v
\\
&= -\left(a^{ij}+a^{ji}\right)\varphi_{x_j}v_{x_i} - \left(a^{ij}\varphi_{x_ix_j} + b^i\varphi_{x_i}\right)v
\\
&= -\left(a^{ij}+a^{ji}\right)\varphi^{-1}\varphi_{x_j}\left((\varphi v)_{x_i} - \varphi_{x_i}v\right)
- \left(a^{ij}\varphi_{x_ix_j} + b^i\varphi_{x_i}\right)v
\\
&= -\left(a^{ij}+a^{ji}\right)(\log\varphi)_{x_j}(\varphi v)_{x_i}
- \left(a^{ij}\varphi_{x_ix_j} + b^i\varphi_{x_i} - \left(a^{ij}+a^{ji}\right)(\log\varphi)_{x_j}\varphi_{x_i}\right)v,
\end{align*}
and therefore, since $B(\varphi v)= -[A,\varphi]v$ by \eqref{eq:First_order_operator}, we see that for all $v \in C^2(\sO)$,
\begin{equation}
\label{eq:BCoords}
\begin{aligned}
Bv \equiv f^iv_{x_i} + f^0v &\equiv \left(a^{ij}+a^{ji}\right)(\log\varphi)_{x_j}v_{x_i}
\\
&\quad + \left(a^{ij}\varphi^{-1}\varphi_{x_ix_j} + b^i(\log\varphi)_{x_i} - \left(a^{ij}+a^{ji}\right)(\log\varphi)_{x_j}(\log\varphi)_{x_i}\right)v.
\end{aligned}
\end{equation}
Next, we give some examples of choices of functions, $\varphi$.

\begin{exmp}[Exponential-affine growth]
When $\varphi$ has the form
\begin{equation}
\label{eq:ExponentialAffineVarphi}
\varphi(x) = e^{-\langle h,x\rangle}, \quad\forall\, x\in\RR^d,
\end{equation}
for a fixed vector, $h\in\RR^d$, and positive constant, $C$, the expression \eqref{eq:BCoords} for $Bv$ simplifies to
$$
Bv = -\left(a^{ij}+a^{ji}\right)h_jv_{x_i} + \left(a^{ij}h_ih_j - b^ih_i - \left(a^{ij}+a^{ji}\right)h_ih_j\right)v,
$$
and thus,
\begin{equation}
\label{eq:BCoordsExponential}
Bv = -\left(a^{ij}+a^{ji}\right)h_jv_{x_i} - \left(b^ih_i + a^{ij}h_ih_j\right)v, \quad v \in C^2(\sO).
\end{equation}
Therefore, $\widehat A = (A+B)$ is given by
\begin{equation}
\label{eq:hatAExponentialAffineVarphi}
\widehat Av = -a^{ij}v_{x_ix_j} - \left(b^i + (a^{ij} + a^{ji})h_j\right)v_{x_i} + \left(c - b^ih_i - a^{ij}h_ih_j\right)v.
\end{equation}
Hence, when $\varphi$ is as in \eqref{eq:ExponentialAffineVarphi}, it is easy to tell when $\widehat A$ obeys the hypotheses of Theorem \ref{thm:Weak_maximum_principle_C2_W2d_unbounded_function}. Indeed, it suffices to ensure that the coefficient,
$$
\hat c := c - b^ih_i - a^{ij}h_ih_j,
$$
in the expression for $\widehat A$ obeys \eqref{eq:c_positive_lower_bound_domain} for suitable $h$.
\end{exmp}

\begin{exmp}[Elliptic Heston operator and exponential-affine growth]
Suppose $h=(h_1,h_2)=(L,N)$, where $L\geq 0$ and $N\geq 0$. From the identification of the coefficients, $(a,b, c)$, for the elliptic Heston operator, $A$, in Remark \ref{rmk:EllipticHestonC2UnboundedDomain}, we see that
\begin{align*}
\hat c &= c - b^ih_i - a^{ij}h_ih_j
\\
&= r - \left(r-q-\frac{y}{2}\right)L - \kappa(\theta-y)N - \frac{y}{2}\left(L^2 + 2\varrho\sigma LN + \sigma^2 N^2\right)
\\
&= \frac{y}{2}\left(L + 2\kappa N - L^2 - 2\varrho\sigma LN - \sigma^2 N^2\right) + r - \kappa\theta N - (r-q)L.
\end{align*}
Therefore, provided the coefficients obey
\begin{equation}
\label{eq:EllipticHestonLNConditionPositivex}
L + 2\kappa N - L^2 - 2\varrho\sigma LN - \sigma^2 N^2 \geq 0 \quad\hbox{and}\quad r - \kappa\theta N - (r-q)L > 0,
\end{equation}
we see that $\hat c$ obeys \eqref{eq:c_positive_lower_bound_domain}, as desired.
\end{exmp}

\begin{exmp}[Exponential-quadratic growth]
When $\varphi$ has the form
\begin{equation}
\label{eq:ExponentialQuadraticVarphi}
\varphi(x) = e^{-L|x|^2}, \quad\forall\, x\in\RR^d,
\end{equation}
for some positive constant, $L$, the expression \eqref{eq:BCoords} for $Bv$ simplifies to give
$$
Bv = -2L\left(a^{ij}+a^{ji}\right)x_jv_{x_i} + \left(2L^2a^{ij}\delta_{ij} - 2Lb^ix_i - 4L^2\left(a^{ij}+a^{ji}\right)x_ix_j\right)v.
$$
Therefore, in this case, $\widehat A$ is given by
\begin{equation}
\label{eq:hatAExponentialQuadraticVarphi}
\begin{aligned}
\widehat Av &= -a^{ij}v_{x_ix_j} - \left(b^i + 2L\left(a^{ij}+a^{ji}\right)x_j\right)v_{x_i}
\\
&\quad + \left(c + \left(2L^2a^{ij}\delta_{ij} - 2Lb^ix_i - 4L^2\left(a^{ij}+a^{ji}\right)x_ix_j\right)\right)v.
\end{aligned}
\end{equation}
When $\varphi$ is as in \eqref{eq:ExponentialQuadraticVarphi}, one can see that $\widehat A$ will \emph{not} obey the hypotheses of Theorem \ref{thm:Weak_maximum_principle_C2_W2d_unbounded_function}, in particular the condition \eqref{eq:c_positive_lower_bound_domain}, unless $L=0$.
\end{exmp}

\section{Weak maximum principles of Fichera, Ole{\u\i}nik, and Radkevi{\v{c}} and the elliptic Heston operator}
\label{sec:FicheraAndHeston}
We compare the weak maximum principles and uniqueness theorems provided by our article with those of Fichera, Ole{\u\i}nik, and Radkevi{\v{c}} \cite{Radkevich_2009a} in the case of the elliptic Heston operator, $A$, in Example \ref{exmp:HestonPDE} on an open subset $\sO\subseteqq\HH$ and show that those of Fichera, Ole{\u\i}nik, and Radkevi{\v{c}} are strictly weaker when $0<\beta<1$.

\subsection{Verification that the Heston operator and bilinear map coefficients have the required properties}
We shall first illustrate how to choose $\fw$ so that \eqref{eq:BilinearbBound} holds for the elliptic Heston operator, $A$, on $\sO\subseteqq\HH$. Denoting $(x,y)=(x_1,x_2)$, we have
\begin{align*}
Au &= -\frac{y}{2}\left(u_{xx} + 2\varrho\sigma u_{xy} + \sigma^2 u_{yy}\right) - \left(r-q-\frac{y}{2}\right)u_x - \kappa(\theta-y)u_y + ru
\\
&= -\frac{1}{2}\left(\left(yu_x + y\varrho\sigma u_y\right)_x + \left(y\varrho\sigma u_x + y\sigma^2 u_y\right)_y\right)
\\
&\quad + \left(\frac{\varrho\sigma}{2} - \left(r-q-\frac{y}{2}\right)\right) u_x + \left(\frac{\sigma^2}{2} - \kappa(\theta-y)\right)u_y + ru
\\
&= -a^{ij}u_{x_ix_j} - \tilde b^iu_{x_i} + cu.
\end{align*}
Hence,
\begin{gather*}
a = \frac{y}{2}\begin{pmatrix}1 & \varrho\sigma \\ \varrho\sigma & \sigma^2\end{pmatrix},
\quad
\tilde b = \begin{pmatrix}-\varrho\sigma/2 + (r-q-y/2) \\ -\sigma^2/2 + \kappa(\theta-y)\end{pmatrix},
\quad c = r.
\end{gather*}
As in \cite{Daskalopoulos_Feehan_statvarineqheston}, we choose
$$
\vartheta = y \quad\hbox{and}\quad \fw = y^{\beta-1}e^{-\gamma|x|-\mu y},
$$
so that $\log\fw = (\beta-1)\log y - \gamma|x| - \mu y$ and
$$
(\log\fw)_x = -\gamma\,\sign(x) \quad\hbox{and}\quad (\log\fw)_y = (\beta-1)y^{-1} - \mu.
$$
The coefficient matrix, $a$, weight $\fw$, and degeneracy coefficient, $\vartheta$, clearly obey the regularity conditions \eqref{eq:Lipschitzaij}, \eqref{eq:LipschitzLogWeight}, \eqref{eq:ContinuousWeightDegenCoeffUpToBdry}, and \eqref{eq:WeightDegenCoeffZeroGammaOne}.

Recalling that $\beta = 2\kappa\theta/\sigma^2$ and $\mu = 2\kappa/\sigma^2$ and recalling the definition \eqref{eq:tildebcoeff} of the coefficients $\tilde b^i$, we see that
\begin{align*}
b^1 &= \tilde b^1 - \vartheta (\log\fw)_x a^{11} - \vartheta (\log\fw)_y a^{12}
\\
&= -\frac{\varrho\sigma}{2} + \left(r-q-\frac{y}{2}\right) + \frac{\gamma}{2}y\,\sign(x) - \frac{1}{2}(\beta - 1 - \mu y)\varrho\sigma
\\
&= \left(r-q-\frac{\varrho\kappa\theta}{\sigma}\right) + y\left(\frac{\gamma}{2}\sign(x) + \frac{\varrho\kappa}{\sigma} - \frac{1}{2}\right),
\end{align*}
while
\begin{align*}
b^2 &= \tilde b^2 - \vartheta (\log\fw)_x a^{21} - \vartheta (\log\fw)_y a^{22}
\\
&= -\frac{\sigma^2}{2} + \kappa(\theta-y) + \frac{\gamma}{2}y\,\sign(x)\varrho\sigma - \frac{1}{2}(\beta - 1 - \mu y)\sigma^2
\\
&= \frac{\gamma}{2}y\,\sign(x)\varrho\sigma.
\end{align*}
The resulting bilinear map agrees with that in \cite[Definition 2.2]{Daskalopoulos_Feehan_statvarineqheston}. By making use of an affine change of variables \cite[Lemma 2.2]{Daskalopoulos_Feehan_statvarineqheston}, we may assume that $r-q-\varrho\kappa\theta/\sigma = 0$ and so the expression for $b^1$ simplifies to
$$
b^1 = y\left(\frac{\gamma}{2}\sign(x) + \frac{\varrho\kappa}{\sigma} - \frac{1}{2}\right).
$$
We can easily see that the coefficients, $(a,b,c,d)$, of the bilinear map, $\fa$, associated with the elliptic Heston operator now obey the conditions \eqref{eq:BilinearaBound}, \eqref{eq:BilinearbBound}, \eqref{eq:BilinearcBound}, and \eqref{eq:GeneralNonDegeneracyNearBoundaryQuant}; note that $(d^j)=0$.

\subsection{Comparison with the weak maximum principles and uniqueness theorems of Fichera}
In the framework of Fichera (see \cite[p. 308]{Radkevich_2009a}), we let $\Sigma$ denote the subset of points $x\in\partial\sO$ where $a^{ij}(x)n_in_j=0$ (with $\vec n$ denoting the \emph{inward}-pointing unit normal vector field, as in \cite[p. 308]{Radkevich_2009a}) and the \emph{Fichera function} \cite[Equations (1.1.2) and (1.1.3)]{Radkevich_2009a} (taking into account our sign convention in \eqref{eq:Generator} for the coefficients $(a,\tilde b,c)$ of $A$) is
$$
\fb := \left(\tilde b^k - a^{kj}_{x_j}\right)n_k = \left(b^k + (\log\fw)_{x_j}a^{kj}\right)n_k.
$$
Following \cite[p. 308]{Radkevich_2009a}, we denote by $\Sigma_1\subset\Sigma$ the subset where $\fb > 0$, by $\Sigma_2\subset\Sigma$ the subset where $\fb < 0$, and by $\Sigma_0\subset\Sigma$ the subset where $\fb = 0$; the set $\partial\sO\less\Sigma$ is denoted by $\Sigma_3$. By \cite[Theorem 1.1.1]{Radkevich_2009a}, the characterization of the subsets $\Sigma, \Sigma_0, \Sigma_1, \Sigma_2, \Sigma_3$ of the boundary $\partial\sO$ remains invariant under smooth changes of the independent coordinates, $(x_1,\ldots,x_d)$. (In the work of Fichera \cite{Fichera_1960, Oleinik_Radkevic, Radkevich_2009a, Radkevich_2009b}, the boundary of the open subset $\sO\subset\RR^d$ is usually denoted by $\Sigma$ and $\Sigma^0$ is the subset of points $x\in\Sigma$ where $a^{ij}(x)n_in_j=0$.)

In our example, we have $\Sigma = \partial\sO\cap\partial\HH$ and $\vec n = (0,1)$ along $\Sigma$, so that
\begin{align*}
\fb(x_1,0) &= b^2(x_1,0) + (\log\fw)_{x_j}a^{2j} \equiv \tilde b^2(x_1,0)
\\
&= -\frac{\sigma^2}{2} + \kappa\theta = \frac{\sigma^2}{2}(\beta-1).
\end{align*}
Hence,
$$
\partial\sO\cap\partial\HH = \begin{cases}\Sigma_2 &\hbox{if }0 < \beta < 1, \\ \Sigma_1 &\hbox{if }\beta>1, \\ \Sigma_0 &\hbox{if }\beta = 1, \end{cases}
$$
while $\Sigma_3 = \HH\cap\partial\sO$. (From \cite[p. 310]{Radkevich_2009a}, one has that $\partial\sO\cap\partial\HH$ is given by $\{y=0\}$ and $-Ay = -\sigma^2/2 + \kappa(\theta-y) = \fb$, and thus $\Sigma_i' = \Sigma_i$ for $i=0,1,2$ in the notation of \cite[p. 310]{Radkevich_2009a}.)

The \emph{first boundary value problem of Fichera} \cite[Equations (1.1.4) and (1.1.5)]{Radkevich_2009a} for the operator $A$ is to find a function $u\in C^2(\sO)$ such that
$$
Au = f\hbox{ on } \sO, \quad u = g \hbox{ on } \Sigma_2\cup\Sigma_3,
$$
given a source function $f$ on $\sO$ and a boundary data function $g$ on $\Sigma_2\cup\Sigma_3$. But
$$
\Sigma_2\cup\Sigma_3 = \begin{cases}\partial\sO &\hbox{if }0<\beta<1, \\ \HH\cap\partial\sO &\hbox{if }\beta \geq 1. \end{cases}
$$
Thus, for the Heston operator, the first boundary value problem of Fichera becomes
$$
Au = f\hbox{ on } \sO, \quad u = g \hbox{ on } \begin{cases} \partial\sO &\hbox{if }0<\beta<1, \\ \HH\cap\partial\sO &\hbox{if }\beta \geq 1. \end{cases}
$$
Therefore, we see that the first boundary value problem of Fichera differs from the formulations in \cite{Daskalopoulos_Feehan_statvarineqheston, DaskalHamilton1998, Daskalopoulos_Rhee_2003, Feehan_Pop_mimickingdegen_pde} when $0<\beta<1$, where a Dirichlet boundary condition along $\partial\sO\cap\partial\HH$ is replaced by the requirement that $u$ have a regularity property, in a weighted H\"older or Sobolev sense, up to the boundary portion $\partial\sO\cap\partial\HH$ which is strictly weaker than that of the Fichera maximum principles \cite[Theorem 1.1.2 and Theorems 1.5.1 and 1.5.5]{Radkevich_2009a} for $C^2(\sO)$ or $H^1_{\loc}(\sO)$ functions, respectively. Note that
$$
\Sigma_0\cup\Sigma_1 = \begin{cases} \emptyset &\hbox{if }0<\beta<1, \\ \partial\HH\cap\partial\sO &\hbox{if }\beta \geq 1. \end{cases}
$$
In the case of $C^2(\sO)$ functions on bounded open subsets $\sO\subset\HH$, we see that the Fichera maximum principle for $C^2(\sO)$ functions \cite[Theorem 1.1.2]{Radkevich_2009a} requires that $u\in C^2(\sO\cup \Sigma_0\cup\Sigma_1)\cap C(\bar\sO)$ and $Au = f$ on $\sO\cup \Sigma_0\cup\Sigma_1$, which is \emph{stronger} than the hypothesis of our Theorem \ref{thm:Weak_maximum_principle_C2} when $\beta\geq 1$, and yields, for $r>0$,
$$
\|u\|_{C(\bar\sO)} \leq \frac{1}{r}\|f\|_{C(\bar\sO)}\vee \|g\|_{C(\Sigma_2\cup\Sigma_3)},
$$
where $\Sigma_2\cup\Sigma_3 = \HH\cap\partial\sO$ when $\beta\geq 1$ and $\Sigma_2\cup\Sigma_3 = \partial\sO$ when $0<\beta<1$. (There is a typographical error in the statement of \cite[Theorem 1.1.2]{Radkevich_2009a}, where $\Sigma_2'\cap\Sigma_3$ should be replaced by $\Sigma_2'\cup\Sigma_3$; compare \cite[Theorem 1.1.2]{Oleinik_Radkevic}.) We see that the uniqueness result, when $f=0$ on $\sO\cup \Sigma_0\cup\Sigma_1$, afforded by the Fichera maximum principle \cite[Theorem 1.1.2]{Radkevich_2009a} is \emph{weaker} than that of our Theorem \ref{thm:Weak_maximum_principle_C2} when $0<\beta < 1$, since we only require $g=0$ on $\HH\cap\partial\sO$, and \emph{not} $g=0$ on $\partial\sO$, to ensure that $u=0$ on $\bar\sO$. Indeed, the prescription of a Dirichlet boundary condition along $\partial\HH\cap\partial\sO$, when $0<\beta<1$, ensures that solutions to the first boundary value problem of Fichera are at most continuous up to $\partial\HH\cap\partial\sO$ and not smooth as in \cite{Daskalopoulos_Feehan_statvarineqheston, DaskalHamilton1998, Daskalopoulos_Rhee_2003, Feehan_Pop_mimickingdegen_pde}.

Similar remarks apply to the Fichera maximum principle for weak solutions in $L^\infty(\sO)$ \cite[Theorem 1.5.1 and 1.5.5]{Radkevich_2009a}. Furthermore, our notions of weak solution, subsolution, or supersolution differ from those of \cite[p. 318]{Radkevich_2009a}, which uses the adjoint operator $A^*$ to define these concepts for functions $u \in L^\infty(\sO)$, with a bilinear map $(u,A^*v)_{L^2(\sO)}$ and space of test functions $v\in C^2(\bar\sO)$ with
$$
v = 0 \hbox{ on } \begin{cases} \partial\sO &\hbox{if }0<\beta<1, \\ \HH\cap\partial\sO &\hbox{if }\beta \geq 1, \end{cases}
$$
and thus implies a Dirichlet boundary condition along $\partial\HH\cap\partial\sO$, when $0<\beta<1$, which is redundant in our framework of weighted H\"older or Sobolev spaces.

%
%

\bibliography{mfpde}
\bibliographystyle{amsplain}

\end{document}